\def\Xint#1{\mathchoice
{\XXint\displaystyle\textstyle{#1}}%
{\XXint\textstyle\scriptstyle{#1}}%
{\XXint\scriptstyle\scriptscriptstyle{#1}}%
{\XXint\scriptscriptstyle\scriptscriptstyle{#1}}%
\!\int}
\def\XXint#1#2#3{{\setbox0=\hbox{$#1{#2#3}{\int}$ }
\vcenter{\hbox{$#2#3$ }}\kern-.6\wd0}}
\def\dashint{\Xint-}
\DeclareMathOperator*{\diam}{diam}
\long\def\symbolfootnote[#1]#2{\begingroup%
\def\thefootnote{\fnsymbol{footnote}}\footnote[#1]{#2}\endgroup}
\newtheoremstyle{remark}
  {}{}{}{}{\bfseries}{.}{.5em}{{\thmname{#1 }}{\thmnumber{#2}}{\thmnote{ (#3)}}}
\theoremstyle{remboldstyle}
\newcommand{\measurerestr}{%
  \,\raisebox{-.127ex}{\reflectbox{\rotatebox[origin=br]{-90}{$\lnot$}}}\,%
}
\newtheorem{tw}{Theorem}[section]
\newtheorem{defi}[tw]{Definition}
\newtheorem{corollary}[tw]{Corollary}
\newtheorem{lem}[tw]{Lemma}
\newtheorem{cor}[tw]{Corollary}
\newtheorem{prop}[tw]{Proposition}
\newtheorem*{rem}{Remark}
\newtheorem{ex}[tw]{Example}
\newtheorem{conv}[tw]{Convention}
\def\={\hspace{-3mm}&=&\hspace{-3mm}}
\let\oldmarginpar\marginpar
\renewcommand{\marginpar}[2][rectangle,draw,text width= 1.5cm,rounded corners]{
    \oldmarginpar{
    \scriptsize \tikz \node at (0,0) [#1]{#2};}
    }
\renewcommand{\tocsection}[3]{%
  \indentlabel{\@ifnotempty{#2}{\bfseries\ignorespaces#1 #2\quad}}\bfseries#3}
\renewcommand{\tocsubsection}[3]{%
  \indentlabel{\@ifnotempty{#2}{\ignorespaces#1 #2\quad}}#3}
\renewcommand{\tocsubsubsection}[3]{%
  \indentlabel{\@ifnotempty{#2}{\ignorespaces#1 #2\quad}}#3}
\newcommand\@dotsep{4.5}
\def\@tocline#1#2#3#4#5#6#7{\relax
  \ifnum #1>\c@tocdepth 
  \else
    \par \addpenalty\@secpenalty\addvspace{#2}%
    \begingroup \hyphenpenalty\@M
    \@ifempty{#4}{%
      \@tempdima\csname r@tocindent\number#1\endcsname\relax
    }{%
      \@tempdima#4\relax
    }%
    \parindent\z@ \leftskip#3\relax \advance\leftskip\@tempdima\relax
    \rightskip\@pnumwidth plus1em \parfillskip-\@pnumwidth
    #5\leavevmode\hskip-\@tempdima{#6}\nobreak
    \leaders\hbox{$\m@th\mkern \@dotsep mu\hbox{.}\mkern \@dotsep mu$}\hfill
    \nobreak
    \hbox to\@pnumwidth{\@tocpagenum{\ifnum#1=1\bfseries\fi#7}}\par
    \nobreak
    \endgroup
  \fi}
\renewcommand\csname r@tocindent0\endcsname{0pt}
\def\l@subsection{\@tocline{2}{0pt}{2.5pc}{5pc}{}}
\def\l@subsubsection{\@tocline{2}{0pt}{4.6pc}{1pc}{}}
\begin{document}

\date{}

\title[Compact embeddings of Sobolev, Besov, and Triebel--Lizorkin spaces]{\bf Compact embeddings of Sobolev, Besov, and Triebel--Lizorkin spaces}

\author{Ryan Alvarado, Przemys{\l}aw G\'{o}rka and Artur S{\l}abuszewski}

\keywords{Sobolev spaces, Triebel–Lizorkin spaces, Besov Spaces, metric-measure spaces, compact embeddings, Rellich--Kondrachov theorem}
\subjclass[2020]{Primary 46E35, 46E36, 30L99; Secondary 43A85, 42B35.}

\maketitle
\begin{abstract}
We establish necessary and sufficient conditions guaranteeing compactness of embeddings of fractional Sobolev spaces, Besov spaces, and Triebel--Lizorkin spaces, in the general context of quasi-metric-measure spaces.  Although stated in the setting of quasi-metric spaces, the main results in this article are new, even in the metric setting. Moreover, by considering the more general category of quasi-metric spaces we are able to obtain these characterizations for optimal ranges of exponents that depend (quantitatively) on the geometric makeup of the underlying space. 
\end{abstract}

\tableofcontents

\section{Introduction}
\label{sect:intro}

Compact embeddings of certain classes of function spaces, such as Sobolev spaces, have several important applications in analysis and are particularly useful in theory of elliptic differential operators. Some of the most fundamental results in this regard are the Sobolev embedding and Rellich--Kondrachov compactness theorems which state, among other things, that if $\Omega\subseteq\mathbb{R}^n$ is a bounded domain with a sufficiently regular boundary then for   $p\in[1,n)$, the classical first-order Sobolev space $W^{1,p}(\Omega)$ continuously embeds into the Lebesgue space $L^{np/(n-p)}(\Omega)$ and compactly embeds into  $L^{\tilde{p}}(\Omega)$ whenever $\tilde{p}\in[1,np/(n-p))$; see for instance \cite[6.2~Theorem]{ad} and the references therein. 


Continuous and compact embeddings for various scales of function spaces have been further studied by many individuals in settings much more general than the Euclidean one, including Riemannian manifolds and (quasi-)metric-measure spaces; see \cite{agh20,AYY22,AYY21,BK,GGP,G,G18,G17,GK,GS23,GS,Hajlasz2,Hajlasz,HK,hhhpl21,hebey,HK21,IK,K,Karak1,Krotov,R}. In particular, under various assumptions, compact embeddings in the spirit of the classical Rellich--Kondrachov theorem for (fractional) Haj\l{}asz--Sobolev and Poincar\'e spaces defined on metric-measure spaces were established by Haj\l{}asz--Koskela \cite{HK}, Ka\l{}amajska \cite{K},  Bj\"orn--Ka\l{}amajska \cite{BK}, and G\'{o}rka--S{\l}abuszewski \cite{GS} (see also \cite{GS} for results for fractional Slobodeckij--Sobolev spaces on metric spaces,
\cite{GK} for results for Sobolev spaces on metrizable groups, as well as \cite{IK}, \cite{Krotov}, and \cite{R} for results for certain generalized classes of Sobolev spaces on metric spaces).
Recently, Alvarado--Yang--Yuan \cite{AYY22,AYY21} identified necessary and sufficient conditions guaranteeing that the Sobolev embedding theorem holds for the fractional Haj\l{}asz--Sobolev spaces ${M}^{\alpha,p}$, the Haj\l{}asz--Besov spaces ${N}^\alpha_{p,q}$, and the Haj\l{}asz--Triebel--Lizorkin spaces ${M}^\alpha_{p,q}$ (introduced in \cite{Hajlasz,KYZ11,Y03}) in the general context of quasi-metric-measure spaces (see  \cite{Karak1,Karak2} for some partial results and \cite{HK21} for  results concerning Orlicz--Sobolev spaces). 

The primary goal of this article is to extend and refine some of the work presented in \cite{BK,GS,K}. More specifically, for an optimal range of the smoothness parameter, $\alpha$, and under minimal assumptions on the ambient space, we study compact embeddings of ${M}^{\alpha,p}(X)$, ${N}^\alpha_{p,q}(X)$, and ${M}^\alpha_{p,q}(X)$ into $L^{\tilde{p}}(X)$ spaces (equipped with a possibly different measure) in terms of the geometric and measure-theoretic properties of the underlying (quasi-)metric-measure space $(X,d,\mu)$. In particular, improve upon previous results by establishing these characterizations under weaker assumptions than were considered in \cite{BK,GS,K}, such as total boundedness of the metric space and doubling conditions on the metric space or measure. 

We stress here that we do not work with quasi-metrics for sake of generality. In fact, if we worked solely within the class of metrics and the standard triangle inequality
then we would not be able to establish the full strengths of our results. In place of metrics and the triangle inequality, we work with the more inclusive family of quasi-metrics satisfying the following quasi-subadditivity condition\footnote{It is easy to see that \eqref{TR-ineq.2} is equivalent to the so-called quasi-triangle inequality: $d(x,y)\leq C'\left[d(x,z)+d(z,y)\right].$}:
\begin{equation}\label{TR-ineq.2}
d(x,y)\leq C_d\max \left\{d(x,z),d(z,y)\right\},\ \forall\,x,y,z\in X,
\end{equation}
where $C_d\in[1,\infty)$ is an optimal constant\footnote{See (\ref{C-d}) in the next section.} independent of $x$, $y$, and $z$. Note that every metric satisfies \eqref{TR-ineq.2} for some $C_d\in[1,2]$, and a metric $d$ satisfying \eqref{TR-ineq.2} with $C_d=1$  is commonly referred to as an ultrametric. 
At the heart of why we need to work with quasi-metrics lies the fact that  \eqref{TR-ineq.2} (specifically the constant $C_d$) captures the interplay between the (quasi-)metric and the geometry of space $X$ better than the (quasi-)triangle inequality. For instance, the  Euclidean distance $|\,\cdot-\cdot\,|$ and its snowflaked counterpart, $|\,\cdot-\cdot\,|^{s}$,  $s\in(0,1)$, both satisfy the triangle inequality on $\mathbb{R}^n$; however, the constant in \eqref{TR-ineq.2}  for $|\,\cdot-\cdot\,|$ and $|\,\cdot-\cdot\,|^s$ can be taken to be $C_{|\,\cdot-\cdot\,|}=2$ and $C_{|\,\cdot-\cdot\,|^s}=2^s$, respectively. The inability of triangle inequality to detect these differences leads to a suboptimal function space theory, even in the metric setting, and it is for this reason that it is necessary for us to work in the general setting of quasi-metric spaces. As our main results reveal, the optimality of the range for $\alpha$ is directly related to the constant $C_d$ in \eqref{TR-ineq.2} and the nature of the ``best" quasi-metric on $X$ which is bi-Lipschitz equivalent to $d$, where the latter notion is quantified via the lower smoothness index of $(X,d)$, denoted by ${\rm ind}(X,d)$; see Subsection~\ref{subsect:QMS} for the definition and the properties of ${\rm ind}(X,d)$. Although our work is carried out in the context of quasi-metric spaces, many of our main results, to the best of our knowledge, are brand new, even in the metric setting. 



Even though all of the main results in this article concern the spaces ${M}^{\alpha,p}$, ${N}^\alpha_{p,q}$, and ${M}^\alpha_{p,q}$, for the sake of exposition and to put our results into context with those in the existing literature, in this introduction we will simply record our main conclusions for ${M}^{\alpha,p}={M}^\alpha_{p,\infty}$ spaces and refer the reader to the corresponding, more general, statements in the body of the article. 

Turning to specifics, an interesting facet of our main results is how the geometric nature of the optimal conditions guaranteeing compactness of the aforementioned embeddings varies depending on if $\tilde{p}=p$, $\tilde{p}\leq p$, or $\tilde{p}>p$.
Of the three cases, compactness of embeddings when  $\tilde{p}=p$, that is, for embeddings of the form
\begin{equation}
\label{embedintro}
{M}^{\alpha,p}(X)\hookrightarrow L^p(X)
\end{equation} 
turns out to be the most delicate as the geometric and measure-theoretic properties of the underlying (quasi-)metric-measure space intervene in these embedding theorems in a rather subtle manner. Indeed, for metric-measure spaces, Ka\l{}amajska \cite[Theorem~2]{K} proved that the embedding ${M}^{1,p}(X)\hookrightarrow L^p(X)$ is compact for $p\in[1,\infty)$, provided the metric-measure space is totally bounded\footnote{Compare the assumptions in \cite[Theorem~2]{K} with Proposition~\ref{tot} in this work.} and satisfies either a weakened geometric doubling or a weakened measure doubling type condition. Maintaining a very similar set of assumptions, Bj\"orn--Ka\l{}amajska in \cite[Proposition~3.9]{BK} extended \cite[Theorem~2]{K} to compact embeddings of ${M}^{\alpha,p}(X)$ into $L^p(E)$ (defined on a measurable and totally bounded set $E\subseteq X$) for certain ranges of $\alpha$ and $p$ that depend on the weakened geometric doubling and measure doubling type conditions. Compactness of the embedding \eqref{embedintro}
for all $\alpha\in(0,\infty)$ and $p\in[1,\infty)$ was provided in \cite[Proposition~1.2]{BK} under the stronger assumptions that $X$ is bounded and geometrically doubling, in the sense that every ball of radius $r$ can be covered by at most a fixed number of balls of radius $r/2$.\footnote{Every bounded and geometrically doubling quasi-metric space is totally bounded.} In \cite[Theorem~4.1]{GS}, G\'{o}rka--S{\l}abuszewski extended \cite[Propositions~1.2]{BK} (and \cite[Propositions~3.9]{BK} in the case $E=X$) by illustrating that total boundedness of the metric-measure space alone, without any additional geometric doubling or measure doubling type conditions, is enough to ensure that the embedding \eqref{embedintro} is compact for all $\alpha,p\in(0,\infty)$. In our first main result, we refine this line of work by further relaxing the total boundedness assumption on the underlying space using a new integrability condition on the measure (see Subsection~\ref{subsect: integrable measures} for specific definitions, examples, and properties).
\begin{tw}\label{remarkoint-intro}
Let $(X,d,\mu)$ be a quasi-metric-measure space and suppose that $\mu$ is integrable, in the sense that for each fixed $r\in(0,\infty)$, the mapping $x\mapsto\mu(B_d(x,r))$ is measurable\footnote{If $(X,d)$ is a metric space then the  mapping $x\mapsto\mu(B_d(x,r))$ is always measurable.} and
\begin{align*}
\int_{X} \frac{1}{\mu(B_{d}(x,r))} \, d\mu(x)<\infty,
\end{align*}
where $B_d(x,r)\subseteq X$ denotes the (quasi-)metric ball with center $x\in X$ and radius $r\in(0,\infty)$.
Then, for any $\alpha,p\in(0,\infty)$, the embedding ${M}^{\alpha,p}(X)\hookrightarrow L^p(X)$ is compact. 
\end{tw}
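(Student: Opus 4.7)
The plan is to combine a totally bounded exhaustion of $X$, provided by the integrability hypothesis, with the known compactness result in the totally bounded setting (cf.\ \cite[Theorem~4.1]{GS}), and to control the resulting tail via the pointwise Haj\l asz structure of $M^{\alpha,p}$.

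\textbf{Step 1 (Exhaustion by totally bounded sets).} For $r,\eta>0$, define
\[
X_{r,\eta}\;:=\;\{x\in X: \mu(B_d(x,r))\geq \eta\}.
\]
Chebyshev's inequality combined with the integrability hypothesis gives
\[
\mu(X\setminus X_{r,\eta}) \;\leq\; \eta\int_X \frac{d\mu(x)}{\mu(B_d(x,r))}.
\]
A Vitali-type covering argument (using the quasi-triangle constant $C_d$) shows that $X_{r,\eta}$ is covered by finitely many $C_d r$-balls: any disjoint family of $r$-balls centered in $X_{r,\eta}$ has each ball of $\mu$-measure at least $\eta$, enforcing finiteness. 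Choosing sequences $r_k\downarrow 0$ and $\eta_k>0$ so that $\mu(X\setminus X_{r_k,\eta_k})\leq \varepsilon/2^k$, and setting $Y_\varepsilon:=\bigcap_k X_{r_k,\eta_k}$ yields $\mu(X\setminus Y_\varepsilon)\leq \varepsilon$; moreover $Y_\varepsilon\subseteq X_{r_k,\eta_k}$ for every $k$, so $Y_\varepsilon$ admits a cover by finitely many $C_d r_k$-balls for each $k$, making it totally bounded in $(X,d)$.

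\textbf{Step 2 (Diagonal extraction and tail estimate).} Given a bounded sequence $(u_n)\subset M^{\alpha,p}(X)$ with corresponding Haj\l asz gradients $(g_n)$, \cite[Theorem~4.1]{GS} furnishes compactness of $M^{\alpha,p}(Y_\varepsilon)\hookrightarrow L^p(Y_\varepsilon)$ on the totally bounded subset $Y_\varepsilon$. Diagonalizing over $\varepsilon=1/k$ produces a subsequence $(u_{n_j})$ converging in $L^p(Y_{1/k})$ for every $k$. For the tail, averaging the Haj\l asz inequality $|u(x)-u(y)|\leq d(x,y)^\alpha(g(x)+g(y))$ over $y\in B_d(x,1)$ and applying H\"older's inequality to the ball-average $T_1 u(x):=\mu(B_d(x,1))^{-1}\int_{B_d(x,1)} u\,d\mu$ yields the pointwise envelope
\[
|u_n(x)|^p \;\lesssim\; \mu(B_d(x,1))^{-1}+g_n(x)^p,
\]
with implicit constant uniform in $n$. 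Integrability renders the first summand an $L^1$-function, so its integral on $X\setminus Y_{1/k}$ tends to $0$ by absolute continuity of the integral; the $g_n^p$-contribution is absorbed by a further subsequence extraction (using reflexivity/weak $L^p$-compactness when $p>1$, or direct approximation when $p\in(0,1]$). Combining these ingredients yields an $L^p(X)$-Cauchy subsequence of $(u_n)$.

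\textbf{Main obstacle.} The most delicate step is controlling the $g_n^p$-contribution on the exhausted tail: $L^p$-boundedness alone does not guarantee equi-integrability of $(g_n)$, so uniform smallness of this term requires passing to a further subsequence and leveraging the Haj\l asz structure. The integrability hypothesis is essential here, as it simultaneously provides the totally bounded exhaustion $\{Y_\varepsilon\}$ and a fixed $L^1$-envelope absorbing the dominant part of $|u_n|^p$ on the shrinking tail.
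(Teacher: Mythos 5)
Your overall skeleton (local compactness plus tail control via an averaged Haj\l{}asz inequality, with integrability supplying an $L^1$ envelope) is close in spirit to the paper's actual argument, but two of your steps fail as stated. First, Step~1: the finiteness of a disjoint family of balls of measure $\geq\eta$ centered in $X_{r,\eta}$ requires $\mu(X)<\infty$, and integrability does \emph{not} imply this. Example~\ref{expbeta} of the paper ($d\mu=e^{x^\beta}dx$ on $\mathbb{R}_{+}$ with $\beta>1$) is an integrable measure with $\mu(\mathbb{R}_{+})=\infty$; there $\mu(B_d(x,r))\to\infty$ as $x\to\infty$, so $X_{r,\eta}$ contains a ray $[M,\infty)$ and is not totally bounded, and indeed no set $Y_\varepsilon$ with $\mu(X\setminus Y_\varepsilon)\leq\varepsilon$ can be totally bounded, since the complement of any bounded set has infinite measure. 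Thus the exhaustion and the appeal to \cite[Theorem~4.1]{GS} collapse precisely in the new (unbounded, infinite-measure) cases the theorem is meant to cover. The paper instead obtains its local compactness in $L^0$ of a finite restricted measure $\nu\!\!\measurerestr\!B_\rho(x_0,R)$, via a Fr\'echet-type criterion together with the Ulam-type Lemma~\ref{Ulam} (see Theorems~\ref{emb_in_L0} and \ref{compL0}), and handles the region outside the big ball by a separate tail estimate (condition $ii)$ of Theorem~\ref{drugie}), which integrability supplies by absolute continuity of the finite integral $\int_X \mu(B_\rho(x,r))^{-1}\,d\mu$.

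Second, and more fundamentally, your ``main obstacle'' is not surmountable by the tools you invoke: weak $L^p$-compactness of $(g_n)$ gives no uniform smallness of $\int_{X\setminus Y_{1/k}}g_n^p$, because weak convergence does not rule out concentration --- normalized bumps $g_n=\mu(A_n)^{-1/p}\chi_{A_n}$ with $A_n$ escaping along the tail are weakly null for $p>1$ yet carry unit $p$-mass on every tail, and for $p\in(0,1]$ there is no weak-compactness tool at all. The actual fix is to \emph{not} fix the averaging radius at $1$: average the Haj\l{}asz inequality over $B_\rho(x,r)$ with $r$ a free small parameter, as in the paper's Lemma~\ref{ew-4}. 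Then the on-diagonal gradient term appears with the factor $r^{\alpha p}$, so its tail contribution is at most $C r^{\alpha p}\sup_n\|g_n\|_{L^p}^p$, made small uniformly in $n$ by shrinking $r$ (no subsequence extraction needed), while the off-diagonal terms sit inside a double integral against the kernel $\int_D \chi_{B_\rho(y,r)}(x)\,\mu(B_\rho(x,r))^{-1}\,d\mu(x)$, which is small uniformly in $y$ whenever $\mu(D)$ is small or $D$ is a far tail, again by integrability (Lemma~\ref{cor_i)} and the proof of Theorem~\ref{drugie}). With that single change your envelope yields $p$-equi-integrability and tightness of the full bounded family, and compactness follows from the Lebesgue--Vitali theorem (Theorem~\ref{compLp}); as written, however, both the exhaustion step and the gradient-tail step contain genuine gaps.
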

The reader is directed to Theorem~\ref{drugie} and Corollary~\ref{remarkoint} for a stronger, more complete account of Theorem~\ref{remarkoint-intro} which includes embeddings of ${N}^\alpha_{p,q}(X)$ and ${M}^\alpha_{p,q}(X)$ into $L^p(X)$, equipped with a different measure $\nu$.

Theorem~\ref{remarkoint-intro} fully extends and generalizes the compactness results in \cite{GS,K} and \cite{BK} (in the case $E=X$), even in the metric setting. Indeed, we prove in Proposition~\ref{TB-int} that if $(X, d,\mu)$ is a totally bounded quasi-metric-measure space, then $\mu$ is integrable. We go further and provide examples of integrable measures, including Gaussian-type measures, that are defined on unbounded metric spaces (see Examples~\ref{exint1} and \ref{expbeta}) as well as an example (see Example~\ref{infinitecomb}) of an integrable measure on a bounded metric space with the property that every ball is not totally bounded (hence the metric space is not totally bounded or geometrically doubling). In particular, Theorem~\ref{remarkoint-intro} is valid in these metric-measure spaces, but one cannot appeal to \cite{BK,GS,K} to conclude compactness of the embedding in \eqref{embedintro} as these results are not applicable in these settings.

To establish Theorem~\ref{remarkoint-intro}, we prove a new compactness criterion for sets in $L^{\tilde{p}}(X)$, where $X$ is separable quasi-metric space of finite measure; see Theorems~\ref{compLp} and \ref{compL0}. We show that this criterion is actually equivalent to compactness in $L^{\tilde{p}}(X)$ under minimal assumptions on the underlying measure space. This generalizes the results of \cite[Theorem~2]{Krotov} (established in the setting of bounded metric spaces equipped with a doubling measure) and \cite[Theorem~1.3]{BG} (which dealt with the case of totally bounded metric-measure spaces). This new criterion enables us to circumvent the need to assume that the quasi-metric space is totally bounded or geometrically doubling, or that the measure is doubling. 

Regarding necessary conditions under which the embedding \eqref{embedintro} is compact, it follows from \cite[Proposition~7.4]{BK} that if $\mu$ is a doubling measure\footnote{A quasi-metric-measure space with doubling measure is geometrically doubling.} on a metric space $(X,d)$, in the sense that there exists a positive  constant $C$ such that
\begin{equation}\label{doublingmeasure}
\mu(2B)\leq C\mu(B)
\end{equation}
for all balls $B\subseteq X$, where $2B$ denotes the ball with the same center as $B$ having twice its radius,  then compactness of the embedding \eqref{embedintro}, with $\alpha=1$, implies that $(X,d)$ is totally bounded. One of the main goals of this article is to clarify the degree to which the doubling condition \eqref{doublingmeasure} is necessary in this context. In a step towards this goal, we establish the following characterization which extends and refines \cite[Proposition~7.4]{BK} (in the case $E=X$) to ${M}^{\alpha,p}$ spaces defined on quasi-metric-measure spaces satisfying a doubling-type measure condition which is strictly weaker than \eqref{doublingmeasure}. To state this result, we introduce the following notional convention: Given a quasi-metric space $(X,d)$ and a number $\alpha\in(0,\infty)$, we will understand by $\alpha\preceq{\rm ind}(X,d)$ that $\alpha\leq{\rm ind}(X,d)$ and that the value $\alpha={\rm ind}(X,d)$ is only permissible when  the supremum defining ${\rm ind}(X,d)$ in \eqref{index} is attained.



\begin{tw} \label{piate-intro}
Let $(X,d,\mu)$ be a quasi-metric-measure space and suppose there exists $\delta_0\in(0,\infty)$ such that $\mu$ is $(C_d,\delta)$-doubling for every $\delta\in(0,\delta_0]$, in the sense that for each $\delta\in(0,\delta_0]$, there exists a constant $C(\delta)\in[1,\infty)$ such that
\begin{equation}
\label{deltadoubling}
\mu(B_d(x,C_d\delta))\leq C(\delta)\mu(B_d(x,\delta))\quad\,\mbox{for all $x\in X$,}
\end{equation}
where $C_d\in[1,\infty)$ is as in \eqref{TR-ineq.2}. Then the following statements are equivalent.
\begin{enumerate}
\item $(X,d)$ is totally bounded.
\item $\mu$ is integrable, in the sense described in Theorem~\ref{remarkoint-intro}.
\item For all exponents $\alpha,p\in(0,\infty)$ with $\alpha\preceq{\rm ind}(X,d)$, the embedding ${M}^{\alpha,p}(X)\hookrightarrow L^p(X)$ is compact.
\item There exist exponents $\alpha,p\in(0,\infty)$ with $\alpha\preceq{\rm ind}(X,d)$, the embedding ${M}^{\alpha,p}(X)\hookrightarrow L^p(X)$ is compact.
\end{enumerate}
\end{tw}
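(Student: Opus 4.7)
The plan is to establish the cycle $(1)\Rightarrow(2)\Rightarrow(3)\Rightarrow(4)\Rightarrow(1)$. The first three implications follow almost immediately from results already discussed in this introduction, while the substantive content of the theorem lies in the remaining implication $(4)\Rightarrow(1)$, which is where the $(C_d,\delta)$-doubling hypothesis enters in a crucial way.

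For $(1)\Rightarrow(2)$ I would invoke Proposition~\ref{TB-int}, which asserts that every totally bounded quasi-metric-measure space carries an integrable measure in the sense of Theorem~\ref{remarkoint-intro}. For $(2)\Rightarrow(3)$ I would appeal to Theorem~\ref{remarkoint-intro} itself, which furnishes compactness of ${M}^{\alpha,p}(X)\hookrightarrow L^p(X)$ for \emph{every} $\alpha,p\in(0,\infty)$; in particular, compactness holds on the restricted range $\alpha\preceq{\rm ind}(X,d)$. The implication $(3)\Rightarrow(4)$ is immediate after observing that the set of admissible exponents $\alpha$ with $\alpha\preceq{\rm ind}(X,d)$ is nonempty for any quasi-metric space.

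The heart of the proof is $(4)\Rightarrow(1)$, which I would establish by contrapositive. Assuming $X$ is not totally bounded, there exist a radius $r\in(0,\delta_0]$ and a countable sequence $\{x_k\}\subseteq X$ with $d(x_i,x_j)\ge r$ for $i\ne j$, so that the balls $B_k:=B_d(x_k,r/(2C_d))$ are pairwise disjoint. Since $\alpha\preceq{\rm ind}(X,d)$, I may select a metric $\rho$ on $X$ bi-Lipschitz equivalent to $d^\alpha$, and consider the bump functions
\[
\phi_k(x):=\max\!\Bigl\{1-\tfrac{1}{\tau}\rho(x,x_k),\,0\Bigr\},
\]
where the parameter $\tau$ is chosen comparable to $r^\alpha$ so that $\supp(\phi_k)\subseteq B_k$ and $\phi_k(x_k)=1$. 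Since $\phi_k$ is $1/\tau$-Lipschitz in $\rho$ and $\rho$ is bi-Lipschitz equivalent to $d^\alpha$, the function $\phi_k$ is $\alpha$-H\"older in $d$, and a Haj\l{}asz $\alpha$-gradient $g_k$ of $\phi_k$ may be chosen of size $\lesssim\tau^{-1}$ and supported on a ball comparable to $B_k$.

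The $(C_d,\delta)$-doubling hypothesis then enters to compare $\|g_k\|_{L^p}$ and $\|\phi_k\|_{L^p}$. Indeed, $\|\phi_k\|_{L^p}$ is bounded below by a multiple of $\mu(B_d(x_k,s))^{1/p}$ for a sufficiently small $s>0$ (independent of $k$) on which $\phi_k\ge 1/2$, whereas $\|g_k\|_{L^p}\lesssim \tau^{-1}\mu(B_k)^{1/p}$; iterating the inequality \eqref{deltadoubling} finitely many times bounds $\mu(B_k)$ by a constant multiple of $\mu(B_d(x_k,s))$ uniformly in $k$. The normalised sequence $\tilde\phi_k:=\phi_k/\|\phi_k\|_{L^p}$ is then bounded in ${M}^{\alpha,p}(X)$, yet its pairwise $L^p$-distances are uniformly bounded below (the supports are disjoint and each $\tilde\phi_k$ has unit $L^p$-norm), so no $L^p$-convergent subsequence can exist, contradicting the assumed compactness. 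The principal obstacle is to calibrate the three scales $r$, $s$, and $\tau$ across the related distance functions $d$, $d^\alpha$, and $\rho$ so that the support constraint and the gradient estimate both survive the bi-Lipschitz comparison, and it is precisely here that the structural condition $\alpha\preceq{\rm ind}(X,d)$ becomes essential.
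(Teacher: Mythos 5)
Your proposal is correct and follows essentially the same route as the paper: the implications $(1)\Rightarrow(2)\Rightarrow(3)\Rightarrow(4)$ are handled exactly as in Proposition~\ref{TB-int} and Corollary~\ref{remarkoint}, and your contrapositive for $(4)\Rightarrow(1)$ is the paper's Lemma~\ref{trzecie}/Theorem~\ref{czwarte} argument, since your tent functions $\phi_k$ built from a metric bi-Lipschitz equivalent to $d^\alpha$ (which exists by Lemma~\ref{DST1} and the definition of ${\rm ind}(X,d)$) are precisely how the Urysohn-type bumps of Lemma~\ref{GVa2} are produced, and your iteration of \eqref{deltadoubling} at $d$-scales below $\delta_0$ plays the role of Proposition~\ref{doub-diff}. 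The only cosmetic correction is that for a nonsymmetric quasi-metric the disjointness radius must also account for $\widetilde{C}_d$ (e.g.\ $r/(2C_d\widetilde{C}_d)$ rather than $r/(2C_d)$), as in the separation constant $\widetilde{C}_dC_d^2\delta$ of Lemma~\ref{trzecie}.
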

Theorem~\ref{piate-intro} is a direct consequence of Proposition~\ref{TB-int}, Corollary~\ref{remarkoint}, and  Theorem~\ref{czwarte}, which are stated for the spaces ${N}^\alpha_{p,q}$ and ${M}^\alpha_{p,q}$ (see also Theorem~\ref{piate}). Note that every doubling measure on a metric space (as in \eqref{doublingmeasure}) trivially satisfies \eqref{deltadoubling} since $C_d\in[1,2]$ whenever $d$ is a metric. However, measures satisfying \eqref{deltadoubling} need not even be locally doubling (see Example~\ref{nonlocaldoub}). 
Importantly, the $(C_d,\delta)$-doubling assumption in Theorem~\ref{piate-intro} is minimal in that the conclusion of Theorem~\ref{piate-intro} may fail to hold true in the absence of this property.
Indeed, Example~\ref{infinitecomb} provides an example of a bounded, but not totally bounded  metric-measure space (in fact every ball is not totally bounded) where the measure is not $(C_d,\delta)$-doubling, but the embedding in \eqref{embedintro} is compact for all $\alpha,p \in (0,\infty)$. The reader is referred to Subsection~\ref{subsect:doubling measures} for specific definitions, examples, and properties of $(c,\delta)$-doubling measures. See also \cite{BK,Hyt,K} wherein related doubling-type conditions have appeared.

The range of $\alpha$ in  Theorem~\ref{piate-intro} is the largest range of this type to be identified and it turns out to be in the nature of best possible in the following sense: In the Euclidean setting, the Lebesgue measure is doubling (as in \eqref{doublingmeasure}) and ${\rm ind}\,(\mathbb{R}^n, |\,\cdot-\cdot\,|)=1$  and so, Theorem~\ref{piate-intro} is valid for ${M}^{\alpha,p}$ with $\alpha\in(0,1]$ and $p\in(0,\infty)$. Theorem~\ref{piate-intro} also holds for the same range of $\alpha$ and $p$ whenever the underlying space is a domain $\Omega\subseteq\mathbb{R}^n$  or, more generally, a metric measure space. If $(X,d,\mu)$ is an ultrametric-measure space (such as a Cantor-type set) then $C_d=1$ and so, ${\rm ind}(X,d)=\infty$ and  $\mu$ is trivially $(C_d,\delta)$-doubling for all $\delta\in(0,\infty)$. Therefore, for ultrametric-measure spaces, the equivalences between the statements in Theorem~\ref{piate-intro} hold with $\alpha,p\in(0,\infty)$ without any assumptions on the quasi-metric-measure space. To provide yet another example, for the metric space $(\mathbb{R}^n,|\cdot-\cdot|^{s})$, $s\in(0,1)$ one has that ${\rm ind}\,(\mathbb{R}^n,|\cdot-\cdot|^{s})=1/s>1$ (see Subsection~\ref{subsect:QMS}) and hence, Theorem~\ref{piate-intro} is
valid with $\alpha\in(0,1/s]$ and $p\in(0,\infty)$. Thus, in certain metric spaces, Theorem~\ref{piate-intro} is valid for values of $\alpha>1$. In view of these examples, one can see from the lower smoothness index, ${\rm ind}(X,d)$, and the $(C_d,\delta)$-doubling properties in Theorem~\ref{piate-intro},  how the geometry of $(X,d,\mu)$ intervenes in the compactness of the embeddings in \eqref{embedintro}. 

Our strategy for proving Theorem~\ref{piate-intro} (or, more specifically, Theorem~\ref{czwarte}) is to construct a suitable family of maximally smooth H\"older-continuous ``bump" functions in the spirit of the classical Urysohn lemma (see Lemma~\ref{GVa2}). In the metric setting, it is always possible to construct such bump functions that are Lipschitz (i.e., H\"older-continuous of order 1); however, in a general quasi-metric space, there is no guarantee that nonconstant Lipschitz functions exist: For example, every Lipschitz function in the quasi-metric space $(\mathbb{R}^n,|\cdot-\cdot|^s)$, where $s>1$ and $|\cdot-\cdot|$ denotes the standard Euclidean distance, is constant. In fact, in this context the Haj\l{}asz--Sobolev space $M^{1,p}$ is trivial (only contains constant functions) for large values of $p$ (see \cite{AYY21}).  As such, we need construct our ``bump" functions within the more general class of H\"older continuous functions.  Interestingly, the maximal amount of the smoothness (measured on the H\"older scale) that such bump functions can possess is intimately linked to the geometry of the underlying space via the lower smoothness index of $(X,d)$, and it is the maximality of this smoothness parameter that allows us to characterize compact embeddings of the spaces ${M}^{\alpha,p}$, ${N}^\alpha_{p,q}$, and  ${M}^\alpha_{p,q}$ for an optimal range of $\alpha$.


For compact embeddings into $L^{\tilde{p}}(X)$ with $\tilde{p}<p$, we prove the following result which, to the best of our knowledge, is new even in Euclidean setting. 

\begin{tw} \label{drugie_plus-intro}
Let $(X,d,\mu)$ be a quasi-metric-measure space. If $\mu(X) <\infty$, then for any $\alpha, p\in (0,\infty)$ and  $\tilde{p}\in(0,p)$, the embedding
\begin{align} \label{emb_in_lower-intro}
{M}^{\alpha,p}(X)\hookrightarrow L^{\tilde{p}}(X)
\end{align}
is compact. On the other hand, if the embedding in \eqref{emb_in_lower-intro} is continuous for some $p\in (0,\infty)$, $\tilde{p}\in(0,p)$, and  $\alpha\in (0,\infty)$ with $\alpha\preceq{\rm ind}(X,d)$, then  $\mu(X)<\infty$.
\end{tw}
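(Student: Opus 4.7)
For the sufficiency direction, the plan is to combine a truncation argument with the new compactness criterion for $L^{\tilde{p}}$ spaces established earlier in the paper (Theorems~\ref{compLp} and \ref{compL0}). Let $\{f_n\}_n$ be a bounded sequence in ${M}^{\alpha,p}(X)$ with Haj\l{}asz $\alpha$-gradients $\{g_n\}_n$ uniformly bounded in $L^p(X)$. For each $k\in\mathbb{N}$, define the truncations $f_n^k:=\max(-k,\min(f_n,k))$. Since the map $t\mapsto\max(-k,\min(t,k))$ is $1$-Lipschitz on $\mathbb{R}$, the same $g_n$ serves as a Haj\l{}asz $\alpha$-gradient of $f_n^k$, so $\{f_n^k\}_n$ remains bounded in ${M}^{\alpha,p}(X)$ with the added property $\|f_n^k\|_{L^\infty(X)}\leq k$. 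Chebyshev's inequality combined with H\"older's inequality with exponents $p/\tilde{p}$ and $p/(p-\tilde{p})$ (where $\tilde{p}<p$ is essential) yields the uniform tail estimate
\[
\|f_n-f_n^k\|_{L^{\tilde{p}}(X)}^{\tilde{p}}\leq\int_{\{|f_n|>k\}}|f_n|^{\tilde{p}}\,d\mu\leq\|f_n\|_{L^p(X)}^{p}\,k^{-(p-\tilde{p})},
\]
which tends to $0$ uniformly in $n$ as $k\to\infty$. The task therefore reduces to showing that, for each fixed $k$, the uniformly $L^\infty$-bounded family $\{f_n^k\}_n$ is relatively compact in $L^{\tilde{p}}(X)$, after which a standard diagonal extraction delivers the desired convergent subsequence.

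For the necessity assertion, I would argue by contraposition: assume $\mu(X)=\infty$ and construct $f\in{M}^{\alpha,p}(X)$ with $\|f\|_{L^{\tilde{p}}(X)}=\infty$. The first step is to extract from the infinitude of $\mu$ a pairwise disjoint sequence of balls $\{B_n\}_n=\{B_d(x_n,r_n)\}_n$ whose radii $\{r_n\}_n$ and measures $m_n:=\mu(B_n)$ are both bounded below by positive constants. If no such sequence exists, some ball must carry infinite $\mu$-measure, and one obtains disjoint concentric annular regions with analogous properties by nesting inside that ball. Invoking Lemma~\ref{GVa2}, whose applicability rests on the assumption $\alpha\preceq{\rm ind}(X,d)$, construct bump functions $\phi_n\in{M}^{\alpha,p}(X)$ supported in $B_n$, identically equal to $1$ on a comparable concentric sub-ball, and with a Haj\l{}asz $\alpha$-gradient dominated by a fixed multiple of $r_n^{-\alpha}\chi_{B_n}$. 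Then set $f:=\sum_n c_n\phi_n$ with $c_n:=(nm_n)^{-1/\tilde{p}}$. Since the supports are pairwise disjoint, a direct computation gives $\|f\|_{L^{\tilde{p}}(X)}^{\tilde{p}}\gtrsim\sum_n 1/n=\infty$, whereas the lower bounds on $\{m_n\}_n$ and $\{r_n\}_n$ together with $p/\tilde{p}>1$ force both $\|f\|_{L^p(X)}$ and the $L^p(X)$-norm of a Haj\l{}asz gradient of $f$ to be finite, contradicting continuity of the embedding.

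The main obstacle is the last step of the sufficiency: establishing relative compactness of the truncated family $\{f_n^k\}_n$ in $L^{\tilde{p}}(X)$ for each fixed $k$, based solely on $\mu(X)<\infty$, the uniform ${M}^{\alpha,p}$ bound, and $\|f_n^k\|_{L^\infty(X)}\leq k$, \emph{without} any geometric doubling, measure doubling, or total-boundedness hypothesis on $(X,d,\mu)$. This is precisely where the new $L^{\tilde{p}}$-compactness criterion intervenes, since the classical Arzel\`a--Ascoli approach via Lusin-type truncations used in \cite{BK,GS,K} is unavailable in this generality. A subsidiary issue is the extraction of the disjoint ball/annulus sequence in the necessity proof, where the atomic versus non-atomic and bounded versus unbounded diameter cases require short separate arguments but all funnel into the same series estimate above.
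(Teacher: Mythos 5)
Your sufficiency argument is correct and ultimately rests on the same machinery as the paper's proof. The truncations $f_n^k$ keep the same Haj\l{}asz gradients, so for each fixed $k$ the truncated family is bounded in $M^{\alpha,p}(X)$ and uniformly bounded by $k$; Theorem~\ref{emb_in_L0} (via the criterion of Theorem~\ref{compL0}, with separability supplied by Proposition~\ref{sepequiv}) gives total boundedness in $L^0(X,\mu)$, the $L^\infty$-bound together with $\mu(X)<\infty$ gives $\tilde{p}$-equi-integrability trivially, and Theorem~\ref{compLp} plus your uniform tail estimate closes the argument. The truncation layer is in fact dispensable: since $\tilde{p}<p$, H\"older's inequality yields $\int_D|f_n|^{\tilde{p}}\,d\mu\leq\|f_n\|_{L^p(X,\mu)}^{\tilde{p}}\,[\mu(D)]^{1-\tilde{p}/p}$, i.e.\ $\tilde{p}$-equi-integrability of the original family directly, which is how the paper proceeds (Theorem~\ref{comp_p/theta} and Corollary~\ref{Idontknow} with $\theta=p/\tilde{p}$ and $\nu=\mu$).

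The necessity half, however, has a genuine gap. The fallback branch of your dichotomy can never occur: by the definition of a quasi-metric-measure space used throughout this paper, \emph{every} ball has finite measure, so ``some ball must carry infinite $\mu$-measure'' is vacuous, and your argument needs the first branch to hold whenever $\mu(X)=\infty$. That is false. Glue countably many rays $[0,\infty)$ at a point $x_0$ with the path metric, and on the $i$-th ray take density $\varepsilon_i e^{-t}$ plus a bump of mass $1/i$ near $t_i=2^i$, where $\sum_i\varepsilon_i<\infty$. Then $\mu(X)=\infty$ and all balls have positive finite measure, but any ball not containing $x_0$ lies inside a single ray and hence has measure at most $\varepsilon_i+1/i\to0$, while any two balls containing $x_0$ intersect; consequently every infinite pairwise disjoint family of balls satisfies $\inf_n\mu(B_n)=0$, and your extraction fails. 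Even granting disjoint balls, your coefficients are too rigid: with $m_n\approx1/n$ and $c_n=(nm_n)^{-1/\tilde{p}}$ one computes $\sum_n c_n^p m_n=\sum_n n^{-p/\tilde{p}}m_n^{1-p/\tilde{p}}=\sum_n 1/n=\infty$, so $f\notin L^p(X,\mu)$. A disjointly supported witness can be salvaged precisely when one can find disjoint balls, radii bounded below, with $\sum_n\mu(B_n)=\infty$ (then a non-power, block-wise choice of $c_n$ works by $\ell^{p/\tilde{p}}$-duality), but that such balls always exist when $\mu(X)=\infty$ is a nontrivial claim you neither state nor prove.

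The paper's proof of Theorem~\ref{drugie_plus} avoids disjoint supports altogether. Using $\alpha\preceq_q{\rm ind}(X,d)$ and Lemma~\ref{DST1}, it selects $\rho\approx d$ and $\beta\geq\alpha$ with $\rho^\beta$ a genuine metric, squeezes infinitely many nested balls $B_k=B_\rho(z,r_k)$ with $r\leq r_k<3^{1/\beta}r$ and ${\rm dist}_\rho(B_k,X\setminus B_{k+1})\geq 2^{-k/\beta}r$, and applies Lemma~\ref{GVa2} to each consecutive pair to produce bump functions whose norm estimates, combined with continuity of the embedding, give the chain of inequalities $[\mu(B_{k+1})]^{-1/p}\leq 2C\widetilde{C}\,2^{k}\,[\mu(B_k)]^{-1/\tilde{p}}$. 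The iteration lemma \cite[Lemma 16]{agh20} then converts this chain into the bound $[\mu(B_\rho(z,r))]^{1-\tilde{p}/p}\leq\big(2C\widetilde{C}\big)^{\tilde{p}}2^{p\tilde{p}/(p-\tilde{p})}$, uniformly in $r$, and letting $r\to\infty$ yields $\mu(X)<\infty$. Note this is a quantitative argument from mere continuity of the embedding, with all bumps concentrated around a single point; it handles spaces such as the glued-ray example above, where your disjoint-ball strategy cannot get started, and it is the route you should adopt for the necessity direction.
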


The reader is directed to Theorem~\ref{drugie_plus} for the corresponding version of Theorem~\ref{drugie_plus-intro} adapted to the spaces ${M}^\alpha_{p,q}(X)$ and ${N}^\alpha_{p,q}(X)$.
Note that in any \textit{metric}-measure space, Theorem~\ref{drugie_plus-intro} is always valid for all $\alpha\in(0,1]$ and $p\in(0,\infty)$.

In contrast to Theorem~\ref{piate-intro}, embeddings into $L^{\tilde{p}}(X)$ with $\tilde{p}\in(0,p)$ are fully characterized in terms of the finiteness of the measure. Our approach to proving Theorem~\ref{drugie_plus-intro} follows along lines similar to the proof of Theorems~\ref{remarkoint-intro} and \ref{piate-intro}.

By considering a strengthening of the assumptions in Theorem~\ref{remarkoint-intro}\footnote{The assumptions in Theorem~\ref{snr-3-intro} imply that the quasi-metric-measure space is totally bounded and hence, the measure $\mu$ is integrable.}, we are able to prove the following Sobolev embedding and Rellich--Kondrachov compactness theorems which give continuous and compact embeddings into $L^{\tilde{p}}(X)$ with $\tilde{p}>p$.

\begin{tw}\label{snr-3-intro}
Let $(X,d,\mu)$ be a quasi-metric-measure space such that $\mu(X)<\infty$ and the measure $\mu$ is locally lower $s$-Ahlfors-regular for some $s\in(0,\infty)$, in the sense that there exists a constant $b\in(0,\infty)$ such that $\mu(B_d(x,r))\geq br^s$ for all $x\in X$ and $r\in(0,1]$. Then the following statements are valid for all $\alpha,p\in(0,\infty)$:
\begin{enumerate}
\item If $\alpha p<s$, then ${M}^{\alpha,p}(X)\hookrightarrow L^{sp/(s-\alpha p)}(X)$, and
for each fixed ${\tilde{p}}\in(0,sp/(s-\alpha p))$, the embedding ${M}^{\alpha,p}(X)\hookrightarrow L^{\tilde{p}}(X)$ is compact.

\item If $\alpha p>s$, then ${M}^{\alpha,p}(X)\hookrightarrow \mathcal{C}^{\alpha-s/p}(X)$, and
for each fixed $\gamma\in(0,\alpha-s/p)$ the embedding ${M}^{\alpha,p}(X)\hookrightarrow \mathcal{C}^{\gamma}(X)$ is compact, where, for any $\beta\in(0,\infty)$, $\mathcal{C}^{\beta}(X)$ denotes the inhomogeneous H\"older space of order $\beta$ (see Subsection~\ref{subsect:fnctspaces}).  In particular, the embedding ${M}^{\alpha,p}(X)\hookrightarrow L^{\tilde{p}}(X)$ is compact for all $\tilde{p}\in(0,\infty)$.
\end{enumerate}
\end{tw}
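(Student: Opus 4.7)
The plan is to split the theorem into its subcritical ($\alpha p < s$) and supercritical ($\alpha p > s$) cases, establishing the continuous embedding first in each case and then upgrading to compactness via an interpolation argument or Arzel\`a--Ascoli. In both cases, the local lower $s$-Ahlfors regularity provides the crucial measure-theoretic lower bound on small balls that drives the Sobolev- and Morrey-type inequalities; together with $\mu(X)<\infty$ it also forces $X$ to be totally bounded. Indeed, for any $r\in(0,1]$, a maximal family of pairwise disjoint balls $\{B_d(x_i,r/2)\}$ has cardinality at most $\mu(X)/(b(r/2)^s)$, and by maximality the centers form an $r$-net; this latter observation is what allows Arzel\`a--Ascoli to operate below, and it also matches the fact (recorded before Theorem~\ref{snr-3-intro}) that the hypotheses imply $\mu$ is integrable, making Theorem~\ref{remarkoint-intro} and Theorem~\ref{drugie_plus-intro} available.

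For part (1), I would first invoke the fractional Sobolev-type inequality for ${M}^{\alpha,p}$-spaces on quasi-metric-measure spaces supporting a locally lower $s$-Ahlfors regular measure (established by Alvarado--Yang--Yuan in \cite{AYY22,AYY21}) to obtain the continuous embedding ${M}^{\alpha,p}(X)\hookrightarrow L^{sp/(s-\alpha p)}(X)$. For compactness into $L^{\tilde{p}}(X)$ with $\tilde{p}\in(0,sp/(s-\alpha p))$, I would argue as follows. Given a bounded sequence $(f_n)\subseteq {M}^{\alpha,p}(X)$, the continuous embedding yields uniform boundedness in $L^{sp/(s-\alpha p)}(X)$. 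Choose any $p_0\in(0,\min\{p,\tilde{p}\})$; since $\mu(X)<\infty$, Theorem~\ref{drugie_plus-intro} supplies a subsequence $(f_{n_k})$ converging in $L^{p_0}(X)$. A standard H\"older interpolation between $L^{p_0}$-convergence and $L^{sp/(s-\alpha p)}$-boundedness then yields convergence of $(f_{n_k})$ in $L^{\tilde{p}}(X)$, since $\tilde p$ lies strictly between the two exponents.

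For part (2), the continuous embedding ${M}^{\alpha,p}(X)\hookrightarrow \mathcal{C}^{\alpha-s/p}(X)$ in the supercritical regime follows again from a Morrey-type pointwise inequality for Haj\l{}asz gradients under lower $s$-Ahlfors regularity, as in \cite{AYY22,AYY21}. Since $X$ is totally bounded (hence precompact), any bounded sequence in ${M}^{\alpha,p}(X)$ is uniformly bounded and equicontinuous with common modulus $Cr^{\alpha-s/p}$, so Arzel\`a--Ascoli produces a uniformly convergent subsequence $(f_{n_k})$. To promote uniform convergence to convergence in $\mathcal{C}^{\gamma}(X)$ for $\gamma\in(0,\alpha-s/p)$, I would apply the elementary interpolation bound
\begin{equation*}
|g(x)-g(y)|\leq \bigl(2\|g\|_{L^{\infty}(X)}\bigr)^{1-\gamma/(\alpha-s/p)}\bigl([g]_{\mathcal{C}^{\alpha-s/p}(X)}\,d(x,y)^{\alpha-s/p}\bigr)^{\gamma/(\alpha-s/p)}
\end{equation*}
to $g=f_{n_k}-f_{n_\ell}$; dividing by $d(x,y)^{\gamma}$ and taking the supremum over $x\neq y$ bounds the $\mathcal{C}^{\gamma}$-seminorm by a positive power of $\|f_{n_k}-f_{n_\ell}\|_{L^{\infty}(X)}$, so uniform convergence upgrades to $\mathcal{C}^{\gamma}$-convergence. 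Finally, uniform convergence on $X$ combined with $\mu(X)<\infty$ yields $L^{\tilde p}$-convergence for every $\tilde p\in(0,\infty)$.

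The main technical obstacle I anticipate is the interpolation step in part (1) when $\tilde p\geq p$: here Theorem~\ref{drugie_plus-intro} does not apply directly to yield compactness into $L^{\tilde p}(X)$, and one must carefully balance the $L^{p_0}$-convergence coming from Theorem~\ref{drugie_plus-intro} against the higher-integrability bound from the Sobolev embedding. Outside this point, the argument is structural, relying on previously established continuous embeddings together with the compactness already proved in Theorem~\ref{drugie_plus-intro} and standard Arzel\`a--Ascoli-type reasoning made available by total boundedness.
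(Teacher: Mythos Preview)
Your proposal is correct and follows essentially the same architecture as the paper's proof of Theorem~\ref{snr-3}: total boundedness from lower Ahlfors regularity plus finite measure, continuous Sobolev/Morrey embeddings from \cite{AYY21}, and then compactness via interpolation (part~(1)) or an Arzel\`a--Ascoli/H\"older-interpolation argument (part~(2), which the paper packages as Lemma~\ref{holdcpt}).

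The one organizational difference worth noting is in part~(1). You obtain $L^{p_0}$-convergence for some $p_0<\min\{p,\tilde p\}$ via Theorem~\ref{drugie_plus-intro} and then interpolate against the $L^{p^\ast}$-bound; this is why you flag the case $\tilde p\geq p$ as a technical obstacle. The paper instead uses Corollary~\ref{remarkoint} (available because $X$ is totally bounded, hence $\mu$ is integrable) to get compactness directly at the exponent $p$ itself, and then splits: H\"older for $\tilde p\leq p$, interpolation between $L^p$ and $L^{p^\ast}$ for $\tilde p\in(p,p^\ast)$. Both routes work, but the paper's choice sidesteps the ``obstacle'' you identified by anchoring the interpolation at $p$ rather than below it.
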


The assumptions in Theorem~\ref{snr-3-intro} arise naturally in analysis on (quasi-)metric spaces. Indeed, in any  quasi-metric-measure space, if $\mu$ is doubling and $X$ is bounded then $\mu(X)<\infty$ and $\mu$ is locally lower $s$-Ahlfors-regular for any $s\in(0,\infty)$ satisfying \eqref{doubdim}; see Section~\ref{subsect:selfimprove}.

The approach to proving Theorem~\ref{snr-3-intro} is different from the case of embeddings into $L^{\tilde{p}}(X)$ with $\tilde{p}\in(0,p]$ and is based on continuous embeddings obtained in \cite{AYY21}.

It follows from \cite[Theorem~4.13]{AYY21} that if $\alpha,p\in(0,\infty)$ and $\alpha\preceq{\rm ind}(X,d)$
then continuity of the embeddings ${M}^{\alpha,p}(X)\hookrightarrow L^{sp/(s-\alpha p)}(X)$, when $\alpha p<s$, and ${M}^{\alpha,p}(X)\hookrightarrow \mathcal{C}^{\alpha-s/p}(X)$, when $\alpha p>s$ and $(X,d)$ is uniformly perfect, imply the local $s$-lower Ahlfors regular condition in Theorem~\ref{snr-3-intro}. Therefore, for (uniformly perfect) quasi-metric-measure spaces of finite measure, the local $s$-lower Ahlfors regular condition fully characterizes the embeddings in Theorem~\ref{snr-3-intro}.

By \cite[Theorem~1]{hajlaszkt1}, a domain $\Omega\subseteq\mathbb{R}^n$ is a $W^{1,p}$-extension domain for $p\in(1,\infty)$, in the sense that there exists a bounded and linear operator 
$\mathscr{E}: W^{1,p}(\Omega) \to W^{1,p}(\mathbb{R}^n)$
satisfying $\mathscr{E}u|_\Omega=u$ for any $u\in W^{1,p}(\Omega)$, if and only if $W^{1,p}(\Omega)=M^{1,p}(\Omega)$ and there exists a positive constant $C$ such that, for any $x\in\Omega$ and $r\in(0,1]$, $|B(x,r)\cap \Omega|\ge C r^n$,
where $|B(x,r)\cap \Omega|$ denotes the $n$-dimensional Lebesgue measure of $B(x,r)\cap \Omega$
and $B(x,r)$ denotes the open ball in $\mathbb{R}^n$ centered at $x$ with the radius $r$. In light of this, if $\Omega\subseteq\mathbb{R}^n$ is a $W^{1,p}$-extension domain with $p\in(1,\infty)$  and $|\Omega|<\infty$ then Theorem~\ref{snr-3-intro} implies the classical Rellich--Kondrachov compactness theorem about compactness of the embeddings
$W^{1,p}(\Omega)\hookrightarrow L^{\tilde{p}}(\Omega)$, when ${\tilde{p}}\in(0,np/(n-p))$, and
$W^{1,p}(\Omega)\hookrightarrow \mathcal{C}^{\gamma}(\Omega)$, when $\gamma\in(0,1-n/p)$.

As a corollary of Theorems~\ref{piate-intro}, \ref{drugie_plus-intro}, and \ref{snr-3-intro}, we prove that under the  assumption that the measure is doubling, compactness of embeddings of the form ${M}^{\alpha,p}(X)\hookrightarrow L^{\tilde{p}}(X)$, $\alpha,p\in(0,\infty)$  and $\tilde{p}\in(0,p]$ with $\alpha\preceq{\rm ind}(X,d)$, self-improve in the sense that all of the embeddings given by the Rellich--Kondrachov compactness theorem (Theorem~\ref{snr-3-intro}) hold. More specifically, we prove the following; see Theorem~\ref{improve} for the corresponding result stated for the spaces ${M}^\alpha_{p,q}(X)$ and ${N}^\alpha_{p,q}(X)$.

\begin{tw}\label{improve-intro}
Let $(X,d,\mu)$ be a quasi-metric-measure space where the measure $\mu$ is doubling as in \eqref{doublingmeasure}, and suppose that the embedding ${M}^{\alpha,p}(X,d,\mu)\hookrightarrow L^{\tilde{p}}(X,\mu)$ is compact for some $\alpha,p\in(0,\infty)$ and $\tilde{p}\in(0,p]$ with $\alpha\preceq{\rm ind}(X,d)$. Then, the following statements are valid for all $\beta,u,s\in(0,\infty)$ where $s$ is the dimensional exponent given by the doubling measure $($see \eqref{doubdim}$)$:
\begin{enumerate}
\item If $\beta u<s$, then ${M}^{\beta,u}(X)\hookrightarrow L^{su/(s-\beta u)}(X)$, and
for each fixed $\tilde{q}\in(0,su/(s-\beta u))$, the embedding ${M}^{\beta,u}(X)\hookrightarrow L^{\tilde{q}}(X)$ is compact.

\item If $\beta u>s$, then ${M}^{\beta,u}(X)\hookrightarrow \mathcal{C}^{\beta-s/u}(X)$, and
for each fixed $\gamma\in(0,\beta-s/u)$ the embedding ${M}^{\beta,u}(X)\hookrightarrow \mathcal{C}^{\gamma}(X)$ is compact.  Moreover, the embedding ${M}^{\beta,u}(X)\hookrightarrow L^{\tilde{q}}(X)$ is compact for all $\tilde{q}\in(0,\infty)$.
\end{enumerate}
\end{tw}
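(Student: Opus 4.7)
My plan is to reduce Theorem~\ref{improve-intro} to a direct application of Theorem~\ref{snr-3-intro} with $s$ taken to be the dimensional exponent from \eqref{doubdim} associated to the doubling measure $\mu$. Once we verify that $\mu(X)<\infty$ and that $\mu$ is locally lower $s$-Ahlfors regular in the sense of Theorem~\ref{snr-3-intro}, both conclusions (1) and (2) of Theorem~\ref{improve-intro} follow immediately from Theorem~\ref{snr-3-intro} for any $\beta,u\in(0,\infty)$, with no residual restriction of the form $\beta\preceq\mathrm{ind}(X,d)$.

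To confirm the finiteness of $\mu(X)$ together with boundedness of $(X,d)$, I would split on the value of $\tilde{p}$. When $\tilde{p}=p$, I note that the doubling condition \eqref{doublingmeasure} implies the weaker $(C_d,\delta)$-doubling condition \eqref{deltadoubling}, so Theorem~\ref{piate-intro} applies directly and yields total boundedness of $(X,d)$, hence boundedness and $\mu(X)<\infty$. When $\tilde{p}<p$, compactness of $M^{\alpha,p}(X)\hookrightarrow L^{\tilde{p}}(X)$ implies its continuity, so Theorem~\ref{drugie_plus-intro} gives $\mu(X)<\infty$; combined with the doubling condition (under which every ball has positive measure), this in turn forces $X$ to be bounded via a standard doubling chain argument. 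To establish the local lower $s$-Ahlfors regularity, I would iterate \eqref{doublingmeasure} in the usual fashion to obtain the dimensional estimate
\begin{equation*}
\frac{\mu(B_d(x,R))}{\mu(B_d(x,r))}\le C\left(\frac{R}{r}\right)^s, \qquad \forall\,x\in X,\ 0<r\le R,
\end{equation*}
with $s$ as in \eqref{doubdim}. Choosing $R$ to be a constant multiple of $\diam(X)$ so that $B_d(x,R)\supseteq X$ and $\mu(B_d(x,R))=\mu(X)$, the finiteness $\mu(X)<\infty$ then yields $\mu(B_d(x,r))\ge b r^s$ for all $x\in X$ and $r\in(0,1]$, with a positive constant $b$ depending only on $\mu(X)$, $\diam(X)$, $s$, and $C$.

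The principal obstacle is the case $\tilde{p}<p$ of the first verification: upgrading the conclusion $\mu(X)<\infty$ (furnished directly by Theorem~\ref{drugie_plus-intro}) to outright boundedness of $X$, which is what the Ahlfors-regularity step genuinely needs. One expects this to follow from the fact that a doubling measure on an unbounded quasi-metric space must carry infinite total mass, but the chain-of-balls argument underlying this fact must be executed using the quasi-subadditivity condition \eqref{TR-ineq.2} rather than the standard triangle inequality, which introduces the constant $C_d$ into the iteration and requires some bookkeeping to ensure the chain constructed actually witnesses a uniform lower bound on ball measures that blows up $\mu(X)$.
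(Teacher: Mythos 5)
Your proposal is correct and follows essentially the same route as the paper's own proof of Theorem~\ref{improve}: split on $\tilde{p}=p$ versus $\tilde{p}<p$, deduce total boundedness from Theorem~\ref{czwarte} in the first case and $\mu(X)<\infty$ (hence boundedness) from Theorem~\ref{drugie_plus} in the second, verify local lower $s$-Ahlfors regularity by iterating \eqref{doubdim}, and conclude via Theorem~\ref{snr-3}. The ``principal obstacle'' you flag---that a doubling measure of finite total mass forces $(X,d)$ to be bounded, with the $C_d$-bookkeeping in the chain argument---is precisely the paper's Lemma~\ref{doubbdd}, already proved there with exactly that quasi-metric care, after which the paper routes through Propositions~\ref{tot1} and \ref{tot} to get $\inf_{x\in X}\mu(B_d(x,1))>0$ where you instead use $B_d(x,R)\supseteq X$; both are equivalent one-line iterations of the doubling estimate.
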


Note that for \textit{metric}-measure spaces equipped with a doubling measure, the conclusions of Theorem~\ref{improve-intro} are valid whenever the embedding ${M}^{\alpha,p}(X,d,\mu)\hookrightarrow L^{\tilde{p}}(X,\mu)$ is compact with $\alpha\in(0,1]$, $p\in(0,\infty)$, and $\tilde{p}\in(0,p]$.

Our final main results concern compact embeddings between the spaces ${N}^\alpha_{p,q}(X)$ and ${M}^\alpha_{p,q}(X)$ (and hence, also between ${M}^{\alpha,p}(X)$ spaces) with different choices of exponents, $\alpha$, $p$, and $q$; see Theorems~\ref{Triebel_comp} and \ref{Bes_comp} as well as  Corollaries~\ref{cor1} and \ref{beta}. As with our other main results, we do not require that the underlying quasi-metric space is totally bounded nor do we assume that the measures are doubling.

The remainder of this paper is organized as follows. In Section~\ref{sect:prelim}, we review some basic terminology and results pertaining to quasi-metric spaces and the main classes of function spaces considered in this article,
including the fractional Haj\l{}asz--Sobolev spaces ${M}^{\alpha,p}$, the Haj\l{}asz--Triebel--Lizorkin spaces ${M}^\alpha_{p,q}$, and the Haj\l{}asz--Besov spaces  ${N}^\alpha_{p,q}$. We also establish a Urysohn-type lemma (Lemma~\ref{GVa2}) that guarantees the existence of maximally smooth H\"older-continuous ``bump" functions.

The main aim of Section~\ref{sect:meas} is to introduce and study a number of topological and measure theoretic notions that are utilized in the statements of our main results, including the $(c,\delta)$-doubling and integrability conditions on the measure. Section~\ref{sect:lusin} is devoted to recording a version of Lusin's theorem (Theorem~\ref{lusin}) under minimal assumptions on the ambient topological measure space. Characterizations of totally bounded sets in $L^p$ with $p\geq0$ are recorded in Section~\ref{sect:totalbdd}. In particular, the aforementioned Lusin's theorem is used to establish necessary conditions for totally boundedness of sets in space of measurable functions, $L^0$, defined on separable quasi-metric space of finite measure, which generalizes \cite[Theorem~1.3]{BG} and \cite[Theorem~2]{Krotov}.

In Section~\ref{sect:cptembedd} we employ the tools developed in Sections~\ref{sect:prelim}-\ref{sect:totalbdd} to prove the main results in this paper concerning necessary and sufficient conditions under which the embeddings ${M}^{\alpha,p}(X)\hookrightarrow L^{\tilde{p}}(X)$, ${N}^\alpha_{p,q}(X)\hookrightarrow L^{\tilde{p}}(X)$, and ${M}^\alpha_{p,q}(X)\hookrightarrow L^{\tilde{p}}(X)$ are compact. We consider separately the cases $\tilde{p}=p$, $\tilde{p}<p$, and $\tilde{p}>p$ in Subsections~\ref{subsect:p}, \ref{subsect:p-less}, and \ref{subsect:p-greater}, respectively. The main result in Subsection~\ref{subsect:p-greater} is Theorem~\ref{snr-3}, which extends the classical Rellich--Kondrachov compactness theorem to the setting of quasi-metric-measure spaces for the class of fractional Haj\l{}asz--Sobolev, Haj\l{}asz--Triebel--Lizorkin, and Haj\l{}asz--Besov spaces. In Subsection~\ref{subsect:selfimprove} we state and prove a self-improvement property for compact embeddings into $L^{\tilde{p}}(X)$ with $\tilde{p}\leq p$ under the assumption that the measure is doubling; see Theorem~\ref{improve}.
Finally, we study compact embeddings between the spaces ${N}^\alpha_{p,q}(X)$ and ${M}^\alpha_{p,q}(X)$ (and hence, also between ${M}^{\alpha,p}(X)$ spaces) with different choices of exponents in Section~\ref{sect:betweenbesov}.

\section{Preliminaries}
\label{sect:prelim}

Let $\mathbb{N}$ denote all (strictly) positive integers and $\mathbb{N}_0:=\mathbb{N}\cup\{0\}$. Also, let $\mathbb{R}_{+}$ be the set of all (strictly) positive real numbers and set $\overline{\mathbb{R}}:=\mathbb{R}\cup\{\pm\infty\}$. By $C$ we denote a generic constant whose actual value may change from line to line. Given a set $A$, we let $\#A$ stand for its cardinality.

\subsection{The class of quasi-metric spaces}
\label{subsect:QMS}
A pair $(X,d)$ shall be called a {\it quasi-metric} {\it space} if
$X$ is a nonempty set (of cardinality $\geq2$) and $d$ is a quasi-metric on $X$, i.e., 
$d: X\times X\to[0,\infty)$ is a function for which there exist
constants $C_0,C_1\in(0,\infty)$ such  that, for any $x$, $y$, $z\in X$, 
\begin{enumerate}
\item $d(x,y)=0$ if and only if $x=y$;
\item $d(y,x)\leq C_0\,d(x,y)$;
\item $d(x,y)\leq C_1\max\{d(x,z),d(z,y)\}$.
\end{enumerate}
In this context, we let
\begin{eqnarray}\label{C-d}
C_d:=\sup_{\substack{x,y,z\in X\\\mbox{\scriptsize{{not all equal}}}}}
\frac{d(x,y)}{\max\{d(x,z),d(z,y)\}}\in[1,\infty).
\end{eqnarray}
and
\begin{eqnarray}\label{C-d-tilde}
\widetilde{C}_d:=\sup_{x,y\in X,\ x\not=y}
\frac{d(y,x)}{d(x,y)}\in[1,\infty).
\end{eqnarray}
When $C_d=\widetilde{C}_d=1$, $d$ is a genuine metric that is typically referred to as an \textit{ultrametric}.
Note that if the underlying quasi-metric space is $\mathbb{R}^n$, $n\in\mathbb{N}$, equipped with the Euclidean distance $d$, then $\widetilde{C}_d=1$ and $C_d=2$.

Two quasi-metrics $d$ and $\rho$ on $X$ are said to be {\it equivalent},
denoted by $d\approx\rho$, if there exists a positive constant $\kappa$ such that for all $x,y\in X$,
$$
\kappa^{-1}\rho(x,y)\leq d(x,y)\leq \kappa\rho(x,y).
$$
The \textit{lower smoothness index} of a quasi-metric space $(X,d)$ is defined as
\begin{equation}
\label{index}
{\rm ind}(X,d):=\sup_{\rho\approx d}\left(\log_2C_\rho\right)^{-1}\in(0,\infty],
\end{equation}
where the supremum is taken over all quasi-metrics $\rho$ on $X$ which are
equivalent to $d$. Here, we adopt  the convention $1/0:=\infty$.
The lower smoothness index was originally introduced in \cite[Definition 4.26]{MMMM13}
and its value is reflects certain geometrical properties of
the ambient quasi-metric space, as evidenced by the  following examples:
\begin{itemize}
\item {${\rm ind}(X,d)\geq 1$ whenever there is a genuine metric on $X$ which is equivalent to $d$;}

\item {${\rm ind}\,(\mathbb{R}^n,|\cdot-\cdot|)=1$ and ${\rm ind}\,([0,1]^n,|\cdot-\cdot|)=1$, where $|\cdot-\cdot|$ denotes the Euclidean distance; in fact, ${\rm ind}\,(Y,\|\cdot-\cdot\|)=1$ whenever $Y$ is a subset of a normed vector space $(X,\|\cdot\|)$ containing an open line segment;}

\item {${\rm ind}(X,d)\leq 1$ whenever the interval $[0,1]$ can be
bi-Lipschitzly embedded into $(X,d)$;}

\item {$(X,d)$ cannot be bi-Lipschitzly embedded into some ${\mathbb{R}}^n$ with
$n\in{\mathbb{N}}$, whenever ${\rm ind}(X,d)<1$;}

\item {${\rm ind}(X,d)=\infty$ whenever there is an ultrametric 
on $X$ which is equivalent to $d$; in particular, ${\rm ind}(X,d)=\infty$ whenever the set $X$ has finite cardinality.}

\item  {${\rm ind}(X,d)=1$ if $(X,d)$ is a metric space that is equipped with a doubling measure  and supports a weak $(1, p)$-Poincar\'e inequality with $p>1$;}

\item ${\rm ind}(X,d^s)=\frac{1}{s}\,{\rm ind}(X,d)$ for all $s\in(0,\infty)$;

\item ${\rm ind}(Y,d)\geq{\rm ind}(X,d)$ for any subset $Y$ of $X$.
\end{itemize}
See \cite[Remark~4.6]{AYY21} and \cite[Section 4.7]{MMMM13} or \cite[Section 2.5]{AM15} for more details.

Given a nonempty set $E\subseteq X$, we let ${\rm diam}_d(E):=\sup\{d(x,y): x,y\in E\}$ and if $F\subseteq X$ is another nonempty set then we define ${\rm dist}_d(E,F):=\inf\{d(x,y): x\in E,\,\,y\in F\}$. When $E=\{x\}$ for some $x\in X$,  we will abbreviate  ${\rm dist}_d(x,F):={\rm dist}_d(\{x\},F)$.

Balls (with respect to $d$) shall be denoted by $B_d(x,r):=\{y\in X: d(x,y)<r\}$, where $x\in X$ and $r\in(0,\infty)$. 
Since $d$ is not necessarily symmetric, one needs to pay particular attention to the order of $x$ and $y$ in the definition of $B_d(x,r)$. While the center and the radius of a ball in a quasi-metric space is not necessarily uniquely determined, our balls will always have specified centers and radii so formally a ball is a triplet: a set, a center and a radius. 

The quasi-metric $d$ naturally induces a topology on $X$, denoted by $\tau_d$, where a set $\mathcal{O}\subseteq X$ is open in $\tau_d$ provided, for any $x\in\mathcal{O}$, there exists $r\in(0,\infty)$ such that quasi-metric ball $B_d(x,r)\subseteq\mathcal{O}$. Note that if $\rho$ is another quasi-metric on $X$ that is equivalent to $d$, then there exists a positive constant $\kappa$ such that, for any $x\in X$ and $r\in(0,\infty)$,
$$
B_{\rho}\big(x,\kappa^{-1}r\big)\subseteq B_d(x,r)\subseteq B_{\rho}(x,\kappa r),
$$
from which it follows that $\tau_{\rho}=\tau_d$.

In contrast to what is true in the setting of metric spaces, 
balls with respect to a quasi-metric may not be open sets in the topology induced by the quasi-metric. Nevertheless, it is well-known that the topology induced by a quasi-metric is metrizable. The following result is a sharp quantitative version of this fact  which was established in \cite[Theorem 3.46]{MMMM13}, improving an earlier result with similar aims from \cite{MaSe79}.

\begin{lem}
\label{DST1}
Let $(X,d)$ be a quasi-metric space
and assume that  $C_d, \widetilde{C}_d\in[1,\infty)$ are as in
\eqref{C-d} and \eqref{C-d-tilde}, respectively.
Then there exists a symmetric quasi-metric
$\rho$ on $X$ that satisfies $C_\rho\leq C_d$,
$$
(C_d)^{-2}d(x,y)\leq\rho(x,y)\leq\widetilde{C}_d\,d(x,y)\quad\mbox{for all }\, x,\, y\in X,
$$
and which has the following property: for each fixed finite number $\beta\in(0,(\log_2C_d)^{-1}]$,
\begin{equation*}
[\rho(x,y)]^\beta\leq [\rho(x,z)]^\beta+[\rho(z,y)]^\beta
\quad\mbox{for all }\,x,y,z\in X.
\end{equation*}
Moreover, the function $\rho:(X,\tau_d)\times (X,\tau_d)\longrightarrow [0,\infty)$
is continuous and hence, all $\rho$-balls are open in the topology $\tau_\rho=\tau_d$.
\end{lem}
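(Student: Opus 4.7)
The plan is to follow a Mac\'ias--Segovia style metrization procedure, sharpened so as to track the precise dependence on $C_d$ and $\widetilde{C}_d$. I would first symmetrize by setting $d^\#(x,y):=\max\{d(x,y),d(y,x)\}$; a direct check from \eqref{C-d} and \eqref{C-d-tilde} yields $d(x,y)\le d^\#(x,y)\le \widetilde{C}_d\,d(x,y)$ and $C_{d^\#}\le C_d$, so $d^\#$ is a symmetric quasi-metric equivalent to $d$. When $C_d>1$ I would then fix $\beta_0:=(\log_2 C_d)^{-1}$ and define
\[
\rho(x,y):=\Bigl(\inf\sum_{i=1}^{N} d^\#(x_{i-1},x_i)^{\beta_0}\Bigr)^{1/\beta_0},
\]
where the infimum runs over all finite chains $x=x_0,\dots,x_N=y$. (When $C_d=1$, $d^\#$ is already an ultrametric and one takes $\rho:=d^\#$.) Symmetry is inherited from $d^\#$, and $\rho^{\beta_0}$ is subadditive by construction; for any $\beta\in(0,\beta_0]$, concavity of $t\mapsto t^{\beta/\beta_0}$ on $[0,\infty)$ forces subadditivity of this map, which then transfers the triangle inequality from $\rho^{\beta_0}$ to $\rho^{\beta}$.

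The main technical obstacle is the quantitative equivalence between $\rho$ and $d$. The upper bound $\rho(x,y)\le d^\#(x,y)\le \widetilde{C}_d\,d(x,y)$ is immediate from the trivial two-term chain. The lower bound requires the Frink-style chain estimate
\[
\sum_{i=1}^{N} d^\#(x_{i-1},x_i)^{\beta_0}\ge \tfrac{1}{4}\,d^\#(x,y)^{\beta_0}
\]
for every chain $x_0,\dots,x_N$. I would prove this by induction on $N$: select the index $k$ at which the running sum first exceeds half of the total, apply $d^\#(x,y)\le C_d\max\{d^\#(x,x_k),d^\#(x_k,y)\}$, invoke the inductive hypothesis on the two sub-chains, and balance the resulting terms by exploiting the identity $C_d^{\beta_0}=2$ built into the choice of $\beta_0$. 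Taking $\beta_0$-th roots and combining with $d^\#\ge d$ then yields $\rho(x,y)\ge (C_d)^{-2} d(x,y)$. This inductive chain estimate is the heart of the proof and the place where the precise value of $\beta_0$ is essential; all other steps reduce to routine bookkeeping once it is in hand.

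The remaining conclusions follow quickly. From the triangle inequality for $\rho^{\beta_0}$ one obtains $\rho(x,y)^{\beta_0}\le 2\max\{\rho(x,z),\rho(z,y)\}^{\beta_0}$, and hence
\[
\rho(x,y)\le 2^{1/\beta_0}\max\{\rho(x,z),\rho(z,y)\}=C_d\max\{\rho(x,z),\rho(z,y)\},
\]
which gives $C_\rho\le C_d$. Since $\rho^{\beta_0}$ is a genuine metric on $X$ equivalent to $d$, it is jointly continuous and its topology coincides with $\tau_d=\tau_\rho$; consequently $\rho$ itself is continuous on $(X,\tau_d)\times(X,\tau_d)$, and every $\rho$-ball (agreeing with a $\rho^{\beta_0}$-ball of suitably adjusted radius) is open in $\tau_d$.
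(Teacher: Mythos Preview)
The paper does not give its own proof of this lemma; it simply records the result as \cite[Theorem~3.46]{MMMM13} (a sharpening of the Mac\'ias--Segovia metrization theorem) and uses it as a black box. Your sketch is therefore not an alternative to the paper's argument but rather a reconstruction of the proof in the cited reference, and the outline you give---symmetrize via $d^\#:=\max\{d,d^{\rm t}\}$, set $\beta_0=(\log_2 C_d)^{-1}$, take the chain-infimum of $(d^\#)^{\beta_0}$, and invoke a Frink-type chain estimate---is indeed the strategy used in \cite{MMMM13}.

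One point deserves care. The chain inequality you need is $\sigma(x_0,x_N)\le 4\sum_i\sigma(x_{i-1},x_i)$ for $\sigma:=(d^\#)^{\beta_0}$, since $4^{-1/\beta_0}=C_d^{-2}$ is exactly the lower constant in the lemma. The most na\"ive induction you describe (split at the first index where the partial sum exceeds half, then apply the max-form quasi-triangle twice) produces the constant $8$, not $4$, and hence only $C_d^{-3}$ in the lower bound. To reach the sharp constant $4$ one has to organize the induction a bit more carefully---for instance by splitting at both $x_k$ and $x_{k+1}$ and treating the resulting cases separately, as is done in \cite{MMMM13} (see also the Aoki--Rolewicz/Frink arguments in the quasi-metric literature). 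This is a refinement rather than a different idea, but since the lemma records the specific constant $(C_d)^{-2}$ you should make sure your write-up actually attains it. The remaining assertions (subadditivity of $\rho^\beta$ for $\beta\le\beta_0$ via concavity, $C_\rho\le C_d$, continuity of $\rho$ and openness of $\rho$-balls) are handled correctly in your outline.
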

Although balls in quasi-metric spaces need not be open, it follows from Lemma~\ref{DST1} that for all $x\in X$ and $r\in(0,\infty)$ the ball $B_\rho(x,C_d^2r)$ is an open set satisfying $B_d(x,r)\subseteq B_\rho(x,C_d^2r)\subseteq B_d(x,C_d^2\widetilde{C}_dr)$. We slightly improve this observation in the following proposition.

\begin{prop}\label{otwarty}
Let $(X, d)$ be a quasi-metric space. Then for every $x\in X$ and $r\in(0,\infty)$, there exists a set $E(x,r)\subseteq X$ which is open in $\tau_d$ and satisfies
\begin{align*}
B_d(x,r) \subseteq E(x,r) \subseteq B_d(x,C_{d}r).
\end{align*}
\end{prop}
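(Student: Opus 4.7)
The plan is to exploit Lemma~\ref{DST1} pointwise rather than globally. A direct enclosure of $B_d(x,r)$ inside a single $\rho$-ball only gives $B_d(x,r)\subseteq B_\rho(x,C_d^2 r)\subseteq B_d(x,C_d^2\widetilde{C}_d r)$, which is strictly weaker than the asserted bound. To recover the sharper constant $C_d$, I would cover $B_d(x,r)$ by small $\rho$-balls centered at each of its points and then use the quasi-triangle inequality locally.

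Concretely, let $\rho$ be the symmetric quasi-metric furnished by Lemma~\ref{DST1}, so that every $\rho$-ball is open in $\tau_\rho=\tau_d$ and $d(u,v)\leq C_d^{2}\,\rho(u,v)$ for all $u,v\in X$. Define
\[
E(x,r):=\bigcup_{y\in B_d(x,r)} B_\rho\!\left(y,\tfrac{r}{2C_d^{2}}\right).
\]
Then $E(x,r)$ is open in $\tau_d$ as a union of open sets, and it clearly contains $B_d(x,r)$, since every $y\in B_d(x,r)$ lies in the corresponding $\rho$-ball centered at itself.

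It remains to verify $E(x,r)\subseteq B_d(x,C_d r)$. Let $z\in E(x,r)$ and choose $y\in B_d(x,r)$ with $\rho(y,z)<r/(2C_d^{2})$. From the quantitative equivalence in Lemma~\ref{DST1}, $d(y,z)\leq C_d^{2}\rho(y,z)<r/2<r$, while $d(x,y)<r$ by the choice of $y$. Hence $\max\{d(x,y),d(y,z)\}<r$, and applying the quasi-triangle inequality with the optimal constant $C_d$ yields $d(x,z)\leq C_d\max\{d(x,y),d(y,z)\}<C_d r$, as desired.

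The main obstacle I anticipate is the strict-inequality bookkeeping and recognizing that the naive route through Lemma~\ref{DST1} loses a factor of $C_d\widetilde{C}_d$; the trick is that by shrinking the local $\rho$-radius so that $d(y,z)$ stays under $r$, the pointwise application of the quasi-triangle inequality absorbs only a single factor of $C_d$, which is exactly the claimed constant.
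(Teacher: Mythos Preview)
Your proof is correct, but it takes a genuinely different route from the paper's own argument. The paper does \emph{not} invoke Lemma~\ref{DST1} at all; instead it builds $E(x,r)$ intrinsically from $d$-balls by an iterative ``fattening'' procedure: starting from $E_0:=B_d(x,r)$ and setting $E_{k+1}:=\bigcup_{y\in E_k}B_d(y,r/C_d^{k+1})$, then $E(x,r):=\bigcup_{k\ge 0}E_k$. Openness follows because every point of $E_k$ has a $d$-ball of radius $r/C_d^{k+1}$ contained in $E_{k+1}\subseteq E(x,r)$, and the inclusion $E_k\subseteq B_d(x,C_d r)$ is verified by a telescoping induction on the quasi-triangle inequality with the geometrically decaying radii.

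What each approach buys: the paper's construction is entirely self-contained, using only the quasi-metric $d$ and the definition of $\tau_d$, so the proposition stands independently of the (nontrivial) regularization result. Your argument is shorter and conceptually cleaner once Lemma~\ref{DST1} is on the table---a single layer of small $\rho$-balls suffices, and the sharp constant $C_d$ emerges from one application of the quasi-triangle inequality rather than an inductive chain. Since Lemma~\ref{DST1} precedes the proposition in the paper, your route is perfectly legitimate here; it simply trades self-containment for brevity.
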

\begin{proof}
Fix $x\in X$ and $r\in(0,\infty)$. We define $E_0 := B_d(x,r)$ and for $k\geq 0$
\begin{align*}
E_{k+1} := \bigcup_{y\in E_{k}} B_d(y,r_{k}),
\end{align*}
where $r_{k} := r/C_{d}^{k+1}$.
Let $E(x,r) := \bigcup_{k\geq 0} E_k$. Clearly $E(x,r)\subseteq X$ is an open set and $B_d(x,r) \subseteq E(x,r)$. 

We will now prove that for any $k \geq 0$ we have $E_k \subseteq B_d(x,C_{d}r)$. Since $C_d\geq1$ we immediately have that $E_0\subseteq B_d(x,C_{d}r)$. Assume that $k\geq 1$ and let $x_{k}\in E_{k}$. By definition of $E_{k}$ we can find $x_{k-1} \in E_{k-1}$ such that $d(x_{k-1},x_{k}) < r_{k-1}$.  We repeat the argument and obtain a sequence of points $\{x_j\}_{j=0}^{k-1}$ such that for any $j\in \{0,1,\dots,k-1 \}$ we have $x_j \in E_j$ and $d(x_{j},x_{j+1}) < r_{j}$. 

We claim that for any $i \in\{0,1,\dots,k-1 \}$,
\begin{align} \label{k-i}
d(x_{k-1-i},x_{k}) < C_{d} r_{k-1-i}.
\end{align}
For $i=0$, the above inequality is clear since $C_d\geq1$.  For $0<i\leq k-1$, by an induction argument we obtain
\begin{align*}
d(x_{k-1-i},x_{k})\leq C_{d} \max\{d(x_{k-1-i},x_{k-i}), d(x_{k - i},x_{k})  \} < C_{d} \max\{ r_{k-i-1},C_d r_{k-i}\} = C_{d} r_{k-1-i}.
\end{align*}
This finishes the proof of \eqref{k-i}.

Now, we apply \eqref{k-i} with $i = k-1$ and we get
\begin{align*}
d(x,x_{k}) \leq C_{d} \max\{d(x,x_{0}) , d(x_{0},x_{k}) \} < C_{d} r.
\end{align*}
Therefore, we proved for any $k\geq 0$ that $E_{k}\subseteq B_d(x,C_d r)$, which implies that $E(x,r) \subseteq B_d(x,C_{d}r)$. This finishes the proof of the proposition.
\end{proof}

Given a quasi-metric space $(X,d)$ and a set $E\subseteq X$, we denote by $\overline{E}$ the closure of $E$ in $\tau_d$. Note that for balls in $(X,d)$, the topological closure of $B_d(x,r)$ may not be equal to the closed ball $\overline{B}_d(x,r):=\{y\in X: d(x,y)\leq r\}$. We will estimate the closure of balls in Proposition~\ref{zero} and in its proof we will make use of the following result.
\begin{prop}\label{closure-prop}
Let $(X,d)$ be a quasi-metric space and $E\subseteq X$. Then $x\in \overline{E}$ if and only if for all $r\in(0,\infty)$, $B_d(x,r) \cap E \neq \emptyset$.
\end{prop}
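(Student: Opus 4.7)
The plan is to prove each direction separately, with the reverse direction being essentially a direct consequence of the definition of $\tau_d$ and the forward direction requiring a bit more care due to the fact that $d$-balls are not guaranteed to be open in $\tau_d$.

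For the reverse direction, I would assume that $B_d(x,r)\cap E\neq\emptyset$ for every $r\in(0,\infty)$ and let $U$ be any open neighborhood of $x$ in $\tau_d$. By the very definition of the topology $\tau_d$, there exists $r_0\in(0,\infty)$ such that $B_d(x,r_0)\subseteq U$. Since $B_d(x,r_0)\cap E\neq\emptyset$ by hypothesis, it follows that $U\cap E\neq\emptyset$. As $U$ was arbitrary, I would conclude that $x\in\overline{E}$.

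For the forward direction, I would suppose $x\in\overline{E}$ and fix $r\in(0,\infty)$. Since $d$-balls are not necessarily open, I cannot directly invoke the neighborhood characterization of $\overline{E}$ with $B_d(x,r)$. Instead, I would apply Lemma~\ref{DST1} to obtain a symmetric quasi-metric $\rho$ on $X$ with $\tau_\rho=\tau_d$ (whose $\rho$-balls are therefore open in $\tau_d$) and such that $(C_d)^{-2}d(y,z)\leq \rho(y,z)$ for all $y,z\in X$. Rearranging this inequality shows that $B_\rho(x,r/C_d^2)\subseteq B_d(x,r)$. Since $B_\rho(x,r/C_d^2)$ is an open neighborhood of $x$ in $\tau_d$ and $x\in\overline{E}$, it must intersect $E$ nontrivially, and therefore so does the larger set $B_d(x,r)$, as desired.

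The main (and only real) obstacle is the non-openness of $d$-balls in the forward direction; this is overcome cleanly by passing to the bi-Lipschitz equivalent symmetric quasi-metric $\rho$ supplied by Lemma~\ref{DST1}, which gives an open ball nested inside any given $d$-ball about $x$. Everything else is bookkeeping with the definitions of $\tau_d$ and of closure.
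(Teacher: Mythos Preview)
Your proof is correct. Both directions are handled properly, and your use of Lemma~\ref{DST1} in the forward direction is valid: the inclusion $B_\rho(x,r/C_d^2)\subseteq B_d(x,r)$ follows exactly as you say, and since $\rho$-balls are open in $\tau_\rho=\tau_d$, the closure characterization applies.

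The paper takes a different route in the forward direction. Rather than passing to the regularized quasi-metric $\rho$ of Lemma~\ref{DST1}, it uses its own Proposition~\ref{otwarty}, which produces, for any $d$-ball $B_d(x,r)$, a $\tau_d$-open set $E(x,r)$ with $B_d(x,r)\subseteq E(x,r)\subseteq B_d(x,C_dr)$; a short contradiction argument then finishes. Your approach is arguably cleaner, since it invokes a single already-stated lemma and avoids the auxiliary construction of Proposition~\ref{otwarty} (which, in the paper, is used only here). On the other hand, the paper's route is more elementary in the sense that Proposition~\ref{otwarty} is built by hand from the quasi-triangle inequality alone, whereas Lemma~\ref{DST1} is a deeper structural result imported from \cite{MMMM13}. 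Either way, the substantive obstacle---that $d$-balls need not be open---is resolved by exhibiting a genuine $\tau_d$-open neighborhood of $x$ inside the given $d$-ball, and both arguments accomplish this.
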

\begin{proof}
First, let $x\in \overline{E}$ and suppose, for sake of contradiction, that for some $r\in(0,\infty)$ we have 
\begin{align}\label{EcapE}
B_d(x,C_d  r) \cap E = \emptyset.
\end{align}
By Proposition~\ref{otwarty}, there is a set $E(x,r)\subseteq X$ which is open in $\tau_d$ and satisfies
\begin{align*}
B_d(x,r) \subseteq E(x, r) \subseteq B_d(x,C_dr).
\end{align*}
From this and \eqref{EcapE} we get  $E\subseteq X\setminus E(x,r).$ Since $X\setminus E(x,r)$ is closed in $\tau_d$, the set $F := (X\setminus E(x,r)) \cap \overline{E}$ is also closed in $\tau_d$. On the one hand, given that $E\subseteq F \subseteq \overline{E}$, the definition of the closure implies $\overline{E}=F$. On the other  $x\not\in F$ which contradicts assumption $x\in \overline{E}$.

Now, let us suppose that $x\not\in \overline{E}$, i.e., $x\in X\setminus \overline{E}$.
Since $X\setminus \overline{E}$ is open in $\tau_d$, we can find $r\in(0,\infty)$ such that
$B_d(x,r) \subseteq X\setminus \overline{E},$
and hence
\begin{align*}
B_d(x,r)\cap E \subseteq (X\setminus \overline{E}) \cap E = \emptyset.
\end{align*}
Therefore, $B_d(x,r)\cap E =\emptyset$ and the claim follows. This finishes the proof of the proposition.
\end{proof}

Given a quasi-metric space $(X,d)$, a nonempty set $E\subseteq X$, and a threshold $\delta\in(0,\infty)$ we define $(E)_\delta:=\{x\in X: {\rm dist}_d(x,E)<\delta\}$.

\begin{prop}\label{zero}
Let $(X,d)$ be a quasi-metric space. Then, for any $x\in X$ and any $\delta,r\in(0,\infty)$ satisfying $\widetilde{C}_d\delta\leq r$, one has $\overline{B_d(x,r)} \subseteq (B_d(x,r))_{\delta} \subseteq B_d(x,C_dr)$.
\end{prop}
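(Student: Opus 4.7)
The plan is to prove the two inclusions separately; the first is a straightforward reformulation of the characterization of closure in Proposition~\ref{closure-prop}, while the second is where the hypothesis $\widetilde{C}_d \delta \leq r$ will have to be used via the quasi-triangle inequality together with the constant $\widetilde{C}_d$ from \eqref{C-d-tilde}.

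For the inclusion $\overline{B_d(x,r)} \subseteq (B_d(x,r))_{\delta}$, I would fix $y\in\overline{B_d(x,r)}$ and apply Proposition~\ref{closure-prop} with the ball $B_d(y,\delta)$ to obtain a point $z\in B_d(y,\delta)\cap B_d(x,r)$. Since $d(y,z)<\delta$ and $z\in B_d(x,r)$, the definition of ${\rm dist}_d(y,B_d(x,r))$ as an infimum immediately yields ${\rm dist}_d(y,B_d(x,r))\leq d(y,z)<\delta$, hence $y\in(B_d(x,r))_{\delta}$. Note this half of the proposition does not require the hypothesis on $\delta$ and $r$.

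For the inclusion $(B_d(x,r))_{\delta}\subseteq B_d(x,C_dr)$, I would take $y\in(B_d(x,r))_{\delta}$ and use the definition of the infimum to produce $z\in B_d(x,r)$ with $d(y,z)<\delta$. The key maneuver is to control $d(z,y)$ (and not just $d(y,z)$) in order to feed it into the quasi-triangle inequality that bounds $d(x,y)$. By the definition of $\widetilde{C}_d$ in \eqref{C-d-tilde}, one has $d(z,y)\leq\widetilde{C}_d\,d(y,z)<\widetilde{C}_d\delta\leq r$, which together with $d(x,z)<r$ gives $\max\{d(x,z),d(z,y)\}<r$. The quasi-triangle inequality then delivers $d(x,y)\leq C_d\max\{d(x,z),d(z,y)\}<C_dr$, so $y\in B_d(x,C_dr)$.

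I do not anticipate any genuine obstacle here: the only subtlety is the asymmetry of $d$, which forces us to pay the factor $\widetilde{C}_d$ when converting between $d(y,z)$ and $d(z,y)$, and this is exactly compensated for by the standing assumption $\widetilde{C}_d\delta\leq r$.
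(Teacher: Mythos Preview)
Your proof is correct and follows essentially the same approach as the paper's: both use Proposition~\ref{closure-prop} for the first inclusion and then the quasi-triangle inequality combined with the estimate $d(z,y)\leq\widetilde{C}_d\,d(y,z)<\widetilde{C}_d\delta\leq r$ for the second. Your observation that the hypothesis $\widetilde{C}_d\delta\leq r$ is only needed for the second inclusion is also in line with the paper's argument.
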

\begin{proof}
Fix $x\in X$ and  $\delta,r\in(0,\infty)$ satisfying $\widetilde{C}_d\delta\leq r$. By Proposition~\ref{closure-prop}, if $y\in \overline{B_d(x,r)}$, then we have $B_d(y,\delta) \cap B_d(x,r) \neq \emptyset$ and it is easy to see from this that $y\in(B_d(x,r))_{\delta}$. Hence, $\overline{B_d(x,r)} \subseteq (B_d(x,r))_{\delta}$.

Now, let us take $y\in (B_d(x,r))_{\delta}$. Then there is some $z\in B_d(x,r)$ such that $d(y,z) < \delta\leq r/\widetilde{C}_d$. Then,
\begin{align*}
d(x,y) \leq C_{d} \max\{d(x,z), d(z,y) \}
\leq C_{d} \max\{d(x,z), \widetilde{C}_dd(y,z) \} < C_{d} r.
\end{align*}
Hence, $y\in B_d(x,C_dr)$ and the claim follows. This finishes the proof of the proposition.
\end{proof}

\subsection{Fractional Sobolev, Triebel--Lizorkin, and Besov spaces}
\label{subsect:fnctspaces}
In this section we will recall the definitions and some basic facts of function spaces considered in this work.

Let $(X, d, \mu)$ be a quasi-metric space equipped with a nonnegative Borel measure, and fix exponents $\alpha\in(0,\infty)$ and $p,q\in (0,\infty]$. A sequence of nonnegative measurable functions $\overrightarrow{g} := \{g_k\}_{k\in\mathbb{Z}}$, which are defined on $X$, is called an \emph{$\alpha$-fractional gradient} of a measurable function $u: X\to\mathbb{R}$ if there is a measurable set $E \subseteq X$  such that $\mu(E) = 0$ and
\begin{align}
\label{g-frac}
|u(x) - u(y)| \leq [d(x,y)]^{\alpha}\big(g_k(x) + g_k(y)\big),
\end{align}
for all $k\in \mathbb{Z}$ and $x,y\in X\setminus  E$ satisfying $2^{-k-1}\leq d(x,y)<2^{-k}$. The set of all fractional $\alpha$-gradients of $u$ (with respect to $(X, d, \mu)$) is denoted by $\mathbb{D}^{\alpha}_{d, \mu}(u)$. If we deal with the fixed measure $\mu$ on $(X, d)$ we shall write $\mathbb{D}^{\alpha}_{d}(u)$ instead of  $\mathbb{D}^{\alpha}_{d, \mu}(u)$.

Given a sequence  $\overrightarrow{g}:=\{g_k\}_{k\in\mathbb{Z}}$
of measurable functions defined on $X$, we set
\begin{equation*}
\Vert \overrightarrow{g}\Vert_{L^p(X;\ell^{q}(\mathbb{Z}))}:=
\left\Vert\, \Vert \{g_k\}_{k\in\mathbb{Z}}\Vert_{\ell^q} \right\Vert_{L^p(X,\mu)}
\end{equation*}
and
\begin{equation*}
\Vert \overrightarrow{g}\Vert_{\ell^q(\mathbb{Z};L^{p}(X,\mu))}:=\left\Vert \left\{\Vert g_k\Vert_{L^p(X,\mu)}\right\}_{k\in\mathbb{Z}} \right\Vert_{\ell^q},
\end{equation*}
where
\begin{equation*}
\Vert \{g_k\}_{k\in\mathbb{Z}}\Vert_{\ell^q}:=
\begin{cases}
 \displaystyle\left(\sum_{k\in\mathbb{Z}}\vert g_k\vert^q\right)^{1/q}& ~\text{if}~q\in(0,\infty),\\
 \displaystyle \sup_{k\in\mathbb{Z}}\vert g_k\vert& ~\text{if}~q=\infty.
\end{cases}
\end{equation*}

A measurable function $u: X\to\mathbb{R}$ is said to belong to the \emph{homogeneous Haj{\l}asz--Triebel--Lizorkin space} $\dot{M}^{\alpha}_{p,q}(X,d,\mu)$ if the following semi-norm
\begin{align*}
\|u\|_{\dot{M}^{\alpha}_{p,q}(X,d,\mu)} := \inf_{\overrightarrow{g} \in \mathbb{D}^{\alpha}_d(u)} \|\overrightarrow{g}\|_{L^p(X;\ell^{q}(\mathbb{Z}))}
\end{align*}
is finite. The \emph{inhomogeneous Haj{\l}asz--Triebel--Lizorkin space} $M^{\alpha}_{p,q}(X,d,\mu)$ consists of all measurable functions $u \in L^p(X,\mu) \cap\dot{M}^{\alpha}_{p,q}(X,d,\mu)$ endowed with the (quasi-)norm
\begin{align*}
\|u\|_{M^{\alpha}_{p,q}(X,d,\mu)} := \|u\|_{L^p(X,\mu)} + \|u\|_{\dot{M}^{\alpha}_{p,q}(X,d,\mu)}.
\end{align*}

A measurable function $u: X\to\mathbb{R}$ is said to belong to the \emph{homogeneous Haj{\l}asz--Besov space} $\dot{N}^{\alpha}_{p,q}(X,d,\mu)$ if the following semi-norm
\begin{align*}
\|u\|_{\dot{N}^{\alpha}_{p,q}(X,d,\mu)} := \inf_{\overrightarrow{g} \in \mathbb{D}^{\alpha}_d(u)} \|\overrightarrow{g}\|_{\ell^q(\mathbb{Z};L^{p}(X,\mu))}
\end{align*}
is finite. The \emph{inhomogeneous Haj{\l}asz--Besov space} $N^{\alpha}_{p,q}(X,d,\mu)$ consists of all measurable functions $u \in L^p(X,\mu) \cap\dot{N}^{\alpha}_{p,q}(X,d,\mu)$ endowed with the (quasi-)norm
\begin{align*}
\|u\|_{N^{\alpha}_{p,q}(X,d,\mu)} := \|u\|_{L^p(X,\mu)} + \|u\|_{\dot{N}^{\alpha}_{p,q}(X,d,\mu)}.
\end{align*}

The above Haj{\l}asz--Triebel--Lizorkin and Haj{\l}asz--Besov spaces were introduced in \cite{KYZ11}.
It is easy to see that when  $p\in[1,\infty)$ and $q\in[1,\infty]$, the spaces
$M^s_{p,q}(X,d,\mu)$ and $N^s_{p,q}(X,d,\mu)$ are Banach. 
Otherwise, they are quasi-Banach spaces. It was shown in \cite{KYZ11} that $M^s_{p,q}(\mathbb{R}^n)$ coincides with the classical Triebel--Lizorkin space $F^s_{p,q}(\mathbb{R}^n)$ for any $s\in(0,1)$, $p\in(\frac{n}{n+s},\infty)$, and $q\in(\frac{n}{n+s},\infty]$,  and $N^s_{p,q}(\mathbb{R}^n)$ coincides with the classical Besov space $B^s_{p,q}(\mathbb{R}^n)$ for any $s\in(0,1)$, $p\in(\frac{n}{n+s},\infty)$, and $q\in(0,\infty]$.
We also refer the reader to \cite{AWYY21,AYY22,AYY21,GKZ13,Karak1,Karak2} for more information on Triebel--Lizorkin and Besov spaces in general geometric settings.

The following result is from \cite[Proposition~2.1]{AYY22}.

\begin{lem}
\label{equivspaces}
Let $(X,d,\mu)$ be a quasi-metric space equipped with a nonnegative Borel measure, and let
$\alpha,p\in(0,\infty)$ and $q\in(0,\infty]$.
Suppose that $\rho$ is a quasi-metric on $X$ such that $\rho\approx d$ and for which all $\rho$-balls are measurable.
Then there exist constants $c_1, c_2\in(0,\infty)$ such that
$$
c_1\|u\|_{\dot{M}^{\alpha}_{p,q}(X,\rho,\mu)}\leq\|u\|_{\dot{M}^{\alpha}_{p,q}(X,d,\mu)}
\leq c_2\|u\|_{\dot{M}^{\alpha}_{p,q}(X,\rho,\mu)}\quad\mbox{for all } u\in\dot{M}^{\alpha}_{p,q}(X,d,\mu),
$$
and
$$
c_1\|u\|_{\dot{N}^{\alpha}_{p,q}(X,\rho,\mu)}\leq\|u\|_{\dot{N}^{\alpha}_{p,q}(X,d,\mu)}
\leq c_2\|u\|_{\dot{N}^{\alpha}_{p,q}(X,\rho,\mu)}
\quad\mbox{for all } u\in\dot{N}^{\alpha}_{p,q}(X,d,\mu).
$$
\end{lem}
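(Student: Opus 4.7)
The plan is to show that any $\alpha$-fractional gradient with respect to one of the quasi-metrics can be turned into an $\alpha$-fractional gradient with respect to the other by a bounded ``shift'' operation in the $k$-variable, absorbing the bi-Lipschitz constant into a multiplicative factor. Since $\rho\approx d$, there exists $\kappa\in[1,\infty)$ such that $\kappa^{-1}\rho(x,y)\leq d(x,y)\leq\kappa\,\rho(x,y)$ for all $x,y\in X$. Fix $N\in\mathbb{N}$ with $2^N\geq\kappa$. By symmetry (swapping the roles of $d$ and $\rho$) it suffices to prove one of the inequalities, say
\[
\|u\|_{\dot{M}^{\alpha}_{p,q}(X,\rho,\mu)}\leq c\,\|u\|_{\dot{M}^{\alpha}_{p,q}(X,d,\mu)}\quad\text{and}\quad
\|u\|_{\dot{N}^{\alpha}_{p,q}(X,\rho,\mu)}\leq c\,\|u\|_{\dot{N}^{\alpha}_{p,q}(X,d,\mu)}.
\]

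The first step is to construct the transferred gradient. Given any $\overrightarrow{g}=\{g_k\}_{k\in\mathbb{Z}}\in\mathbb{D}^{\alpha}_{d}(u)$ with associated null set $E$, define
\[
h_k(x):=\kappa^{\alpha}\max_{|j-k|\leq N+1}g_j(x),\qquad x\in X,\ k\in\mathbb{Z}.
\]
These functions are measurable (finite max of measurable functions). I then verify that $\overrightarrow{h}=\{h_k\}_{k\in\mathbb{Z}}\in\mathbb{D}^{\alpha}_{\rho}(u)$ using the same exceptional set $E$. Indeed, if $x,y\in X\setminus E$ satisfy $2^{-k-1}\leq\rho(x,y)<2^{-k}$, then $\kappa^{-1}2^{-k-1}\leq d(x,y)\leq\kappa\,2^{-k}$, so by the choice of $N$ there exists a unique integer $j$ with $|j-k|\leq N+1$ and $2^{-j-1}\leq d(x,y)<2^{-j}$. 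Applying \eqref{g-frac} for the sequence $\overrightarrow{g}$ and estimating $[d(x,y)]^\alpha\leq\kappa^\alpha[\rho(x,y)]^\alpha$ yields
\[
|u(x)-u(y)|\leq[d(x,y)]^\alpha(g_j(x)+g_j(y))\leq[\rho(x,y)]^\alpha(h_k(x)+h_k(y)),
\]
as required.

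The second step is to bound the relevant mixed norms of $\overrightarrow{h}$ by those of $\overrightarrow{g}$. Using $\max\leq\sum$ we have the pointwise bound $h_k\leq\kappa^\alpha\sum_{|m|\leq N+1}g_{k+m}$. For the Triebel--Lizorkin scale, apply the $\ell^q$ quasi-triangle inequality (ordinary triangle inequality when $q\geq1$, the $q$-subadditivity $\|\sum f_i\|_{\ell^q}^{\min(q,1)}\leq\sum\|f_i\|_{\ell^q}^{\min(q,1)}$ when $q<1$) to obtain $\|\{h_k(x)\}_k\|_{\ell^q}\leq C(q,N,\kappa,\alpha)\,\|\{g_k(x)\}_k\|_{\ell^q}$ pointwise in $x$, and integrate in $L^p(X,\mu)$. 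For the Besov scale, first take $L^p$ norms to get $\|h_k\|_{L^p}\leq\kappa^\alpha\sum_{|m|\leq N+1}\|g_{k+m}\|_{L^p}$ (using the $p$-quasi-triangle inequality in $L^p$ if $p<1$), and then take the $\ell^q$ norm in $k$, applying the same shift-invariance argument. In both cases the constant depends only on $\alpha$, $p$, $q$, and $\kappa$. Taking the infimum over $\overrightarrow{g}\in\mathbb{D}^{\alpha}_{d}(u)$ and swapping the roles of $d$ and $\rho$ yields the two-sided inequality.

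The hypothesis that $\rho$-balls are measurable plays no role in the argument beyond ensuring that the definitions of $\dot{M}^{\alpha}_{p,q}(X,\rho,\mu)$ and $\dot{N}^{\alpha}_{p,q}(X,\rho,\mu)$ make sense in the same Borel structure as their $d$-counterparts (recall $\tau_d=\tau_\rho$). The only genuine obstacle is treating the quasi-Banach regime $\min(p,q)<1$ correctly, which is handled by the appropriate $r$-subadditivity inequality in place of the triangle inequality; all other steps are a routine shift-and-max manipulation together with the pointwise comparison $[d(x,y)]^\alpha\approx[\rho(x,y)]^\alpha$.
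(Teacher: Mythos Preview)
Your argument is correct. The paper does not supply its own proof of this lemma; it simply records it as \cite[Proposition~2.1]{AYY22}. Your shift-and-max construction is the standard route to such equivalences and almost certainly coincides with what the cited reference does: once $2^N\geq\kappa$, the dyadic annulus $\{2^{-k-1}\leq\rho(x,y)<2^{-k}\}$ maps into a union of at most $2N+1$ consecutive $d$-annuli, so $h_k:=\kappa^\alpha\max_{|j-k|\leq N+1}g_j$ is a $\rho$-fractional $\alpha$-gradient, and the norm comparison reduces to the boundedness of a finite-window maximum (equivalently, a finite sum of shifts) on $\ell^q$ and $L^p$. Your handling of the quasi-Banach range $\min(p,q)<1$ via $r$-subadditivity is the correct adjustment.
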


Now, we recall the definition of the (fractional) Haj\l{}asz--Sobolev space. Let $(X, d, \mu)$ be a quasi-metric space equipped with a nonnegative Borel measure, and fix exponents $\alpha\in(0,\infty)$ and $p\in (0,\infty]$.
A nonnegative measurable function  $g$ defined on $X$  is called
an \textit{$\alpha$-gradient} of a measurable function $u: X\to\mathbb{R}$ if
there is a measurable set $E\subseteq X$ with $\mu(E)=0$ such that
\begin{equation}
\label{g}
\vert u(x)-u(y)\vert\leq [d(x,y)]^\alpha\big(g(x)+g(y)\big)\quad\mbox{for all}\,\,x,y\in X\setminus E.
\end{equation}
The collection of all the $\alpha$-gradients of $u$ (with respect to $(X, d, \mu)$) is denoted by $\mathcal{D}_d^\alpha(u)$.

A measurable function $u: X\to\mathbb{R}$ is said to belong to the \emph{homogeneous  Haj{\l}asz--Sobolev space} $\dot{M}^{\alpha,p}(X,d,\mu)$ if the following semi-norm
\begin{align*}
\|u\|_{\dot{M}^{\alpha,p}(X,d,\mu)} := \inf_{g \in \mathcal{D}_d^\alpha(u)} \|g\|_{L^p(X,\mu)}
\end{align*}
is finite. The \emph{inhomogeneous Haj{\l}asz--Sobolev space} $M^{\alpha,p}(X,d,\mu)$ consists of all measurable functions $u \in L^p(X,\mu) \cap\dot{M}^{\alpha,p}(X,d,\mu)$ endowed with the (quasi-)norm
\begin{align*}
\|u\|_{M^{\alpha,p}(X,d,\mu)} := \|u\|_{L^p(X,\mu)} + \|u\|_{\dot{M}^{\alpha,p}(X,d,\mu)}.
\end{align*}

The $M^{\alpha,p}$ Sobolev spaces were introduced by Haj\l{}asz in \cite{Hajlasz} for $\alpha=1$ and by Yang in \cite{Y03} for $\alpha\neq1$. It was shown in \cite{Hajlasz2} that $M^{1,p}(\Omega)$ coincides with the classical Sobolev space $W^{1,p}(\Omega)$ provided $p>1$ and either $\Omega=\mathbb{R}^n$ or $\Omega\subseteq\mathbb{R}^n$ is a domain with a sufficiently regular boundary.

The following proposition highlights a relationship among these function spaces depending on the values of their exponents.

\begin{prop} \label{embs}
Let $(X, d, \mu)$ be a quasi-metric space equipped with a nonnegative Borel measure. Then, for any $\alpha, \sigma, p, q, r\in(0,\infty)$, one has
\begin{enumerate}[i)]
\item $\dot{N}^{\alpha}_{p,q}(X,d,\mu)\hookrightarrow \dot{N}^{\alpha}_{p,\infty}(X,d,\mu)$,
\item $\dot{M}^{\alpha}_{p,q}(X,d,\mu)\hookrightarrow \dot{M}^{\alpha}_{p,\infty}(X,d,\mu)$,
\item $\dot{M}^{\alpha}_{p,\infty}(X,d,\mu) = \dot{M}^{\alpha,p}(X,d,\mu)$, as sets, with equal semi-norms,
\item $\dot{M}^{\alpha}_{p,p}(X,d,\mu) =\dot{N}^{\alpha}_{p,p}(X,d,\mu)$, as sets, with equal semi-norms,
\item $N^{\alpha+\sigma}_{p,\infty}(X,d,\mu)\hookrightarrow N^{\alpha}_{p,r}(X,d,\mu)$,
\item $M^{\alpha+\sigma}_{p,\infty}(X,d,\mu)\hookrightarrow M^{\alpha}_{p,r}(X,d,\mu)$.
\end{enumerate}
\end{prop}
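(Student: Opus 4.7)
The plan is to work directly with the defining (fractional) gradients for each of the six embeddings.

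For items (i) and (ii), the elementary bound $\sup_k a_k \leq \bigl(\sum_k a_k^q\bigr)^{1/q}$ for nonnegative sequences $\{a_k\}$ immediately yields $\|\overrightarrow{g}\|_{\ell^\infty(\mathbb{Z};L^p)} \leq \|\overrightarrow{g}\|_{\ell^q(\mathbb{Z};L^p)}$ and, pointwise in $x$ followed by an $L^p$-estimate, $\|\overrightarrow{g}\|_{L^p(X;\ell^\infty)} \leq \|\overrightarrow{g}\|_{L^p(X;\ell^q)}$. Taking infima over $\overrightarrow{g} \in \mathbb{D}^\alpha_d(u)$ delivers (i) and (ii). For (iii), given $g \in \mathcal{D}^\alpha_d(u)$ the constant sequence $g_k := g$ lies in $\mathbb{D}^\alpha_d(u)$ with $\|\{g_k\}\|_{L^p(\ell^\infty)} = \|g\|_{L^p}$; conversely, for $\overrightarrow{g} = \{g_k\} \in \mathbb{D}^\alpha_d(u)$ the function $g := \sup_{k\in\mathbb{Z}} g_k$ belongs to $\mathcal{D}^\alpha_d(u)$, since every pair $x,y$ with $d(x,y) > 0$ falls into exactly one dyadic annulus $\{2^{-k-1} \leq d < 2^{-k}\}$, and $\|g\|_{L^p} = \|\overrightarrow{g}\|_{L^p(\ell^\infty)}$; passing to infima yields equality of seminorms. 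For (iv), Tonelli's theorem applied to the nonnegative integrand $\sum_k g_k^p$ gives $\|\overrightarrow{g}\|_{L^p(\ell^p)}^p = \sum_k \|g_k\|_{L^p}^p = \|\overrightarrow{g}\|_{\ell^p(L^p)}^p$, so the two seminorms coincide on the common set $\mathbb{D}^\alpha_d(u)$.

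The substantive parts are (v) and (vi). Given $u \in N^{\alpha+\sigma}_{p,\infty}(X,d,\mu)$ and $\overrightarrow{g} = \{g_k\} \in \mathbb{D}^{\alpha+\sigma}_d(u)$, the key step is to construct a sequence $\overrightarrow{h} = \{h_k\}_{k\in\mathbb{Z}} \in \mathbb{D}^\alpha_d(u)$ by splitting into small-scale ($k \geq 1$) and large-scale ($k \leq 0$) regimes. For $k \geq 1$, the relevant annulus forces $d(x,y) < 2^{-k}$, giving
\[
|u(x) - u(y)| \leq d(x,y)^{\alpha+\sigma}\bigl(g_k(x) + g_k(y)\bigr) \leq 2^{-k\sigma}\, d(x,y)^\alpha\bigl(g_k(x) + g_k(y)\bigr),
\]
so one sets $h_k := 2^{-k\sigma} g_k$. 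For $k \leq 0$, the relevant annulus forces $d(x,y) \geq 2^{-k-1}$, hence $d(x,y)^{-\alpha} \leq 2^{(k+1)\alpha}$, and the trivial bound $|u(x) - u(y)| \leq |u(x)| + |u(y)|$ yields
\[
|u(x) - u(y)| \leq 2^{(k+1)\alpha}\, d(x,y)^\alpha\bigl(|u(x)| + |u(y)|\bigr),
\]
so one sets $h_k := 2^{(k+1)\alpha}|u|$. Then $\overrightarrow{h} \in \mathbb{D}^\alpha_d(u)$. For (v),
\[
\|\overrightarrow{h}\|_{\ell^r(\mathbb{Z};L^p)}^r = \sum_{k \geq 1} 2^{-k\sigma r}\|g_k\|_{L^p}^r + \sum_{k \leq 0} 2^{(k+1)\alpha r}\|u\|_{L^p}^r,
\]
with both geometric series convergent since $\sigma r, \alpha r > 0$; bounding $\|g_k\|_{L^p} \leq \|\overrightarrow{g}\|_{\ell^\infty(L^p)}$ produces $\|\overrightarrow{h}\|_{\ell^r(L^p)} \leq C\bigl(\|\overrightarrow{g}\|_{\ell^\infty(L^p)} + \|u\|_{L^p}\bigr)$. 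Taking the infimum over $\overrightarrow{g}$ and adding $\|u\|_{L^p}$ produces (v). For (vi), the same construction combined with a pointwise estimate $\bigl(\sum_k h_k^r\bigr)^{1/r} \leq C(\sup_k g_k + |u|)$, obtained by applying the same geometric argument pointwise, and a subsequent $L^p$-bound yields $\|\overrightarrow{h}\|_{L^p(\ell^r)} \leq C\bigl(\|\overrightarrow{g}\|_{L^p(\ell^\infty)} + \|u\|_{L^p}\bigr)$, from which (vi) follows.

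The main technical obstacle is the construction in (v) and (vi). The naive choice $h_k := 2^{-k\sigma} g_k$ for every $k \in \mathbb{Z}$ still defines a valid element of $\mathbb{D}^\alpha_d(u)$, but its $\ell^r$ sum diverges as $k \to -\infty$ because $d(x,y)^\sigma$ is large at large scales. The inhomogeneous hypothesis $u \in L^p$ is precisely what rescues the argument: it permits a uniform large-scale ``gradient'' built from $|u|$ itself with the summable prefactor $2^{(k+1)\alpha}$. This is also the structural reason why (v) and (vi) are phrased in terms of the inhomogeneous spaces.
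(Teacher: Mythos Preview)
Your proof is correct and follows essentially the same approach as the paper: items (i)--(iv) are handled identically, and for (v) and (vi) both you and the paper construct the $\alpha$-fractional gradient $\overrightarrow{h}$ by setting $h_k = 2^{-k\sigma}g_k$ at small scales and $h_k = 2^{(k+1)\alpha}|u|$ at large scales, then sum the resulting geometric series. The only cosmetic differences are that the paper splits at $k\geq 0$ versus $k<0$ (you split at $k\geq 1$ versus $k\leq 0$), and in (vi) the paper first invokes (iii) to pass to a single gradient $g\in\mathcal{D}^{\alpha+\sigma}_d(u)$ before constructing $\overrightarrow{h}$, whereas you keep the sequence and bound by $\sup_k g_k$ at the end---these are equivalent by (iii).
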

\begin{proof}
Fix  $\alpha, \sigma, p, q, r\in(0,\infty)$. As a preamble, observe that for any sequence $\{a_k\}_{k\in\mathbb{Z}}$ of numbers, we have
\begin{align} \label{linf<lq}
\|\{a_k\}_{k\in\mathbb{Z}}\|_{\ell^{\infty}(\mathbb{Z})} \leq \|\{a_k\}_{k\in\mathbb{Z}}\|_{\ell^{q}(\mathbb{Z})}.
\end{align}

To prove $i)$, fix $u\in \dot{N}^{\alpha}_{p,q}(X,d,\mu)$ and let $\overrightarrow{g} := \{g_k\}_{k\in\mathbb{Z}} \in \mathbb{D}^{\alpha}_d(u)$ be such that $\|\overrightarrow{g}\|_{\ell^{q}(\mathbb{Z};L^p(X,\mu))}<\infty$. Then taking $a_k := \|g_{k}\|_{L^p(X,\mu)}$ in \eqref{linf<lq} for all $k\in\mathbb{Z}$, we get
\begin{align*}
\|u\|_{\dot{N}^{\alpha}_{p,\infty}(X,d,\mu)} \leq \|\overrightarrow{g}\|_{\ell^{\infty}(\mathbb{Z};L^p(X,\mu))} \leq \|\overrightarrow{g}\|_{\ell^{q}(\mathbb{Z};L^p(X,\mu))}.
\end{align*}
Taking the infimum over all $\overrightarrow{g}\in \mathbb{D}^{\alpha}_d(u)$ yields
\begin{align*}
\|u\|_{\dot{N}^{\alpha}_{p,\infty}(X,d,\mu)} \leq \|u\|_{\dot{N}^{\alpha}_{p,q}(X,d,\mu)}.
\end{align*}

For $ii)$, fix $u\in\dot{M}^{\alpha}_{p,q}(X,d,\mu)$ and let $\overrightarrow{g} := \{g_k\}_{k\in\mathbb{Z}} \in \mathbb{D}^{\alpha}_d(u)$ be such that $\|\overrightarrow{g}\|_{L^p(X;\ell^{q}(\mathbb{Z}))}<\infty$. In this case, there is a measurable set $N\subseteq X$ such that $g_k(x)<\infty$ for all $k\in\mathbb{Z}$ and $x\in X\setminus N$. Therefore, if $k\in\mathbb{Z}$ and $x\in X\setminus N$ then taking $a_k := g_k(x)$ in \eqref{linf<lq}, we obtain
\begin{align*}
\|\{g_k(x)\}_{k\in\mathbb{Z}}\|^p_{\ell^{\infty}(\mathbb{Z})} \leq \|\{g_k(x)\}_{k\in\mathbb{Z}}\|^p_{\ell^{q}(\mathbb{Z})},
\end{align*}
from which it follows that
\begin{align*}
\|u\|_{\dot{M}^{\alpha}_{p,\infty}(X,d,\mu)} \leq \|\overrightarrow{g}\|_{L^p(X;\ell^{\infty}(\mathbb{Z}))}\leq \|\overrightarrow{g}\|_{L^p(X;\ell^{q}(\mathbb{Z}))}.
\end{align*}
Hence, we have
$
\|u\|_{\dot{M}^{\alpha}_{p,\infty}(X,d,\mu)} \leq \|u\|_{\dot{M}^{\alpha}_{p,q}(X,d,\mu)}.
$

Regarding $iii)$, let $u\in\dot{M}^{\alpha}_{p,\infty}(X,d,\mu)$ and consider $\overrightarrow{g} := \{g_k\}_{k\in\mathbb{Z}} \in \mathbb{D}^{\alpha}_d(u)$. 
We define $g:= \sup_{k\in \mathbb{Z}} g_{k}$. It is clear that $g$ is a nonnegative measurable function that satisfies inequality \eqref{g} and
\begin{align*}
 \|\overrightarrow{g}\|_{L^p(X;\ell^{\infty}(\mathbb{Z}))} = \|g\|_{L^p(X,\mu)} \geq \|u\|_{\dot{M}^{\alpha,p}(X,d,\mu)}.
\end{align*}
Taking the infimum over all $\overrightarrow{g}\in \mathbb{D}^{\alpha}_d(u)$ gives
$
\|u\|_{\dot{M}^{\alpha}_{p,\infty}(X,d,\mu)}\geq \|u\|_{\dot{M}^{\alpha,p}(X,d,\mu)}.
$

On the other hand, given any $u\in M^{\alpha,p}(X,d,\mu)$ and any $g\in \mathcal{D}_d^\alpha(u)$, we define $g_k: = g$ for all $k\in \mathbb{Z}$. It is easy to see that $\overrightarrow{g} := \{g_k\}_{k\in\mathbb{Z}} \in \mathbb{D}^{\alpha}_d(u)$ and
\begin{align*}
 \|g\|_{L^p(X,\mu)} = \|\overrightarrow{g}\|_{L^p(X;\ell^{\infty}(\mathbb{Z}))} \geq  \|u\|_{\dot{M}^{\alpha}_{p,\infty}(X,d,\mu)}.
\end{align*}
Thus, we finally get
$
\|u\|_{\dot{M}^{\alpha,p}(X,d,\mu)}\geq  \|u\|_{\dot{M}^{\alpha}_{p,\infty}(X,d,\mu)}.
$

Note that $iv)$ follows from the observation that for any sequence  $\overrightarrow{g}:=\{g_k\}_{k\in\mathbb{Z}}$
of measurable functions defined on $X$, there holds
\begin{align*}
\|\overrightarrow{g} \|^p_{L^p(X;l^p(\mathbb{Z}))} = \int\limits_{X} \sum_{k\in\mathbb{Z}} |g_k(x)|^p\, d\mu(x) =   \sum_{k\in\mathbb{Z}} \int\limits_{X} |g_k(x)|^p\,d\mu(x) = \|\overrightarrow{g} \|^p_{l^p(\mathbb{Z};L^p(X,\mu))}.
\end{align*}

Turning to $v)$, fix  $u\in N^{\alpha+\sigma}_{p,\infty}(X,d,\mu)$ and $\overrightarrow{g}\in \mathbb{D}^{\alpha+\sigma}_d(u)$. We define $\overrightarrow{h}:=\{h_k\}_{k\in\mathbb{Z}}$ by setting for all $k\in\mathbb{Z}$ and all $x\in X$,
\[h_{k}(x) := 
\begin{cases}
2^{-k\sigma} g_{k}(x) \mbox{ if } k\geq 0,\\
2^{k\alpha + \alpha} |u(x)| \mbox{ if } k< 0.
\end{cases}
\]
Then it is straightforward to check that $\overrightarrow{h}\in\mathbb{D}^{\alpha}_d(u)$. Moreover, we can estimate
\begin{align*}
\|\overrightarrow{h}\|^r_{l^r(\mathbb{Z};L^p(X,\mu))} &= \sum_{k\in\mathbb{Z}} \bigg(\int\limits_{X} [h_k(x)]^{p}\, d\mu(x) \bigg)^{r/p} \\
&= \sum_{k\geq 0} 2^{-kr\sigma} \bigg(\int\limits_{X} [g_k(x)]^{p}\, d\mu(x) \bigg)^{r/p} + 2^{r\alpha}\sum_{k<0} 2^{kr\alpha} \bigg(\int\limits_{X} |u(x)|^{p}\, d\mu(x) \bigg)^{r/p} \\
&\leq \dfrac{1}{1-2^{-r\sigma}}\|\overrightarrow{g}\|_{l^{\infty}(\mathbb{Z};L^p(X,\mu))}^{r}    +   \dfrac{1}{1-2^{-r\alpha}}  \|u\|_{L^p(X,\mu)}^{r},
\end{align*}
and the claim now follows.

Finally, as concerns $vi)$, the proof is similar  to the proof of $v)$. Let $u\in M^{\alpha+\sigma}_{p,\infty}(X,d,\mu)$. Since $M^{\alpha+\sigma}_{p,\infty}(X,d,\mu)= M^{\alpha + \sigma,p}(X,d,\mu)$ by part $iii)$ in this proposition, we can find a gradient $g \in \mathcal{D}^{\alpha + \sigma}_d(u)$.
We define  a new gradient $\overrightarrow{h}:=\{h_k\}_{k\in\mathbb{Z}}\in \mathbb{D}^{\alpha}_d(u)$  by setting for all $k\in\mathbb{Z}$ and all $x\in X$,
\[
h_k(x) := 
\begin{cases}
2^{-k\sigma} g(x) \mbox{ if } k\geq 0,\\
2^{k\alpha + \alpha} |u(x)| \mbox{ if } k< 0.
\end{cases}
\]
 Then,
\begin{align*}
\|\overrightarrow{h}\|^p_{L^p(X;l^{r}(\mathbb{Z}))} &= \int\limits_{X} \Big(\sum\limits_{k\in\mathbb{Z}}[h_k(x)]^{r}\Big)^{p/r}\, d\mu(x) = \int\limits_{X} \Big(\sum\limits_{k\geq 0}2^{-kr\sigma}[g(x)]^r + \sum\limits_{k<0} 2^{k\alpha r + \alpha r} |u(x)|^{r}\Big)^{p/r}\, d\mu(x)  \\
&= \int\limits_{X} \Big( \frac{1}{1-2^{-r\sigma}} [g(x)]^r + \frac{1}{1-2^{-r\alpha}}|u(x)|^{r}  \Big)^{p/r}\, d\mu(x) \\
&\leq 2^{p/r} \Big(\frac{1}{1-2^{-r\sigma}}\Big)^{p/r} \|g\|^{p}_{L^p(X,d,\mu)} + 2^{p/r}\Big(\frac{1}{1-2^{-r\alpha}}\Big)^{p/r}\|u\|^{p}_{L^p(X,d,\mu)},
\end{align*}
and the claim now follows. This finishes the proof of the proposition.
\end{proof}

\begin{lem}\label{Fakt}
Let $(X,d)$ be a quasi-metric space, $p,\tilde{p} \in (0,\infty)$ and $\mu$, $\nu$ be a nonnegative Borel measures such that the embedding
\begin{align} \label{LpwLfal}
L^p(X,\mu) \hookrightarrow L^{\tilde{p}}(X,\nu)
\end{align}
is continuous. Then, for any $\alpha \in (0,\infty)$, $q\in (0,\infty]$ one has 
\begin{align*}
\dot{M}^{\alpha}_{p,q}(X,d,\mu)&\hookrightarrow \dot{M}^{\alpha}_{\tilde{p},q}(X,d,\nu),\quad \dot{N}^{\alpha}_{p,q}(X,d,\mu)\hookrightarrow \dot{N}^{\alpha}_{\tilde{p},q}(X,d,\nu), \\
M^{\alpha}_{p,q}(X,d,\mu)&\hookrightarrow M^{\alpha}_{\tilde{p},q}(X,d,\nu),\quad N^{\alpha}_{p,q}(X,d,\mu)\hookrightarrow N^{\alpha}_{\tilde{p},q}(X,d,\nu).
\end{align*}
\end{lem}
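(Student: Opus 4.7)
The plan is to show that the hypothesis forces $\nu\ll\mu$, and then to transfer fractional gradients from the $\mu$-setting to the $\nu$-setting and apply the $L^p$--$L^{\tilde p}$ inequality inside the defining (semi-)norms of each scale.

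First, I would argue that continuity of $L^p(X,\mu)\hookrightarrow L^{\tilde p}(X,\nu)$ forces $\nu\ll\mu$. Indeed, for any Borel set $A\subseteq X$ with $\mu(A)=0$, the characteristic function $\chi_A$ satisfies $\|\chi_A\|_{L^p(X,\mu)}=0$, so by \eqref{LpwLfal} one has $\nu(A)^{1/\tilde p}=\|\chi_A\|_{L^{\tilde p}(X,\nu)}=0$. Consequently, any $\mu$-null exceptional set in the defining inequality \eqref{g-frac} (respectively \eqref{g}) is automatically $\nu$-null, yielding the inclusion $\mathbb{D}^{\alpha}_{d,\mu}(u)\subseteq\mathbb{D}^{\alpha}_{d,\nu}(u)$ for every measurable $u:X\to\mathbb{R}$ (and analogously for $\mathcal{D}^{\alpha}_d(u)$).

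For the Triebel--Lizorkin statement, given $u\in\dot{M}^{\alpha}_{p,q}(X,d,\mu)$ and any $\overrightarrow{g}=\{g_k\}_{k\in\mathbb{Z}}\in\mathbb{D}^{\alpha}_{d,\mu}(u)$ with finite $\|\overrightarrow{g}\|_{L^p(X;\ell^q(\mathbb{Z}))}$, I would apply \eqref{LpwLfal} to the nonnegative measurable function $x\mapsto\|\{g_k(x)\}_{k\in\mathbb{Z}}\|_{\ell^q}$, which belongs to $L^p(X,\mu)$. This yields a constant $C$ (independent of $u$ and $\overrightarrow{g}$) such that
\begin{equation*}
\|\overrightarrow{g}\|_{L^{\tilde p}(X;\ell^q(\mathbb{Z}))}\leq C\,\|\overrightarrow{g}\|_{L^p(X;\ell^q(\mathbb{Z}))},
\end{equation*}
where the left-hand side is computed against $\nu$ and the right against $\mu$. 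Since $\overrightarrow{g}\in\mathbb{D}^{\alpha}_{d,\nu}(u)$ by the previous paragraph, taking the infimum over $\overrightarrow{g}\in\mathbb{D}^{\alpha}_{d,\mu}(u)$ gives $\|u\|_{\dot{M}^{\alpha}_{\tilde p,q}(X,d,\nu)}\leq C\,\|u\|_{\dot{M}^{\alpha}_{p,q}(X,d,\mu)}$. For the Besov case, I would instead apply \eqref{LpwLfal} to each $g_k$ separately (so that $\|g_k\|_{L^{\tilde p}(X,\nu)}\leq C\|g_k\|_{L^p(X,\mu)}$) and then take the $\ell^q$-norm in $k$; this yields the analogous inequality between the $\dot{N}$ semi-norms.

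The inhomogeneous embeddings then follow by additionally applying \eqref{LpwLfal} to $u$ itself and summing the two resulting inequalities. I do not anticipate any serious obstacle here: the only point requiring care is the transfer $\mathbb{D}^{\alpha}_{d,\mu}(u)\subseteq\mathbb{D}^{\alpha}_{d,\nu}(u)$, which relies on the absolute continuity $\nu\ll\mu$ deduced from the mere continuity of the scalar $L^p$--$L^{\tilde p}$ embedding; once this is in hand, each of the four stated embeddings reduces to a single application of \eqref{LpwLfal} under the appropriate mixed norm followed by taking the infimum over admissible gradients.
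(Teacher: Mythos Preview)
Your proposal is correct and follows essentially the same approach as the paper: deduce $\nu\ll\mu$ from \eqref{LpwLfal} via characteristic functions, use this to transfer fractional gradients from $\mathbb{D}^{\alpha}_{d,\mu}(u)$ to $\mathbb{D}^{\alpha}_{d,\nu}(u)$, apply the $L^p$--$L^{\tilde p}$ inequality to the appropriate mixed-norm quantity, and take the infimum over admissible gradients. Your treatment is in fact slightly more explicit than the paper's in distinguishing the Triebel--Lizorkin case (apply \eqref{LpwLfal} to $x\mapsto\|\{g_k(x)\}\|_{\ell^q}$) from the Besov case (apply \eqref{LpwLfal} to each $g_k$ and then take $\ell^q$), but the underlying argument is the same.
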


\begin{proof}
It is enough consider only the homogeneous spaces. 
By \eqref{LpwLfal} there exists $C\in (0,\infty)$ such that for every $f\in L^p(X,\mu)$ we have
 \begin{align} \label{zholdera}
 \|f\|_{L^{\tilde{p}}(X,\nu)} \leq  C \|f\|_{L^{p}(X,\mu)}.
 \end{align}
Putting $f = \chi_{E}$ in the above inequality, where $E\subseteq X$ is an arbitrary measurable set such $\mu(E) = 0$, we get that $\nu$ is absolutely continuous with respect to $\mu$.
Fix $u\in L^p(X,\mu)$ and let $\overrightarrow{g} \in   \mathbb{D}^{\alpha}_{d, \mu}(u)$ be such that $\|\overrightarrow{g}\|_{L^p(X;\ell^{q}(\mathbb{Z}),\mu)} < \infty$ . Since $\nu<<\mu$, we have also $\overrightarrow{g} \in\mathbb{D}^{\alpha}_{d, \nu}(u)$. By \eqref{zholdera} we get
\begin{align*}
\|u\|_{\dot{M}^{\alpha}_{\tilde{p},q}(X,d,\nu)} \leq \|\overrightarrow{g}\|_{L^{\tilde{p}}(X;\ell^{q}(\mathbb{Z}),\nu)} \leq  C \|\overrightarrow{g}\|_{L^p(X;\ell^{q}(\mathbb{Z}),\mu)}. 
\end{align*}
Taking the infimum over $\overrightarrow{g} \in\mathbb{D}^{\alpha}_{d, \mu}(u)$ yields
\begin{align*}
\|u\|_{\dot{M}^{\alpha}_{\tilde{p},q}(X,d,\nu)} \leq C\|u\|_{\dot{M}^{\alpha}_{p,q}(X,d,\mu)}.
\end{align*}
In the same manner we get
\begin{align*}
\|u\|_{\dot{N}^{\alpha}_{\tilde{p},q}(X,d,\nu)} \leq C\|u\|_{\dot{N}^{\alpha}_{p,q}(X,d,\mu)}.
\end{align*}
\end{proof}

Let $(X,d)$ be a quasi-metric space and fix $\beta\in(0,\infty)$. Recall that the \emph{homogeneous H\"older space of order $\beta$},
$\dot{\mathcal{C}}^\beta(X,d)$ consists of all functions $f: X\to\mathbb{R}$ with the property that the following semi-norm
$$
{\lVert f\rVert}_{\dot{\mathcal{C}}^\beta(X,d)}
:=\sup_{\substack{x,\,y\in X,\, x\ne y}}
\frac{|f(x)-f(y)|}{[d(x,y)]^\beta}
$$
is finite. The \emph{inhomogeneous H\"older space of order $\beta$}, ${\mathcal{C}}^\beta(X,d)$
is defined as
$$
{\mathcal{C}}^\beta(X,d):=\big\{f\in\dot{\mathcal{C}}^\beta(X,d): \mbox{$f$ is a bounded function in $X$}\big\},
$$
and is equipped with the following norm
$$
{\lVert f\rVert}_{{\mathcal{C}}^\beta(X,d)}
:=\sup_{x\in X}|f(x)|+{\lVert f\rVert}_{\dot{\mathcal{C}}^\beta(X,d)}.
$$

\begin{lem}
\label{holdcpt}
Let $(X,d)$ be a quasi-metric space. Then $(X,d)$ is totally bounded if and only if the
embedding 
$$
\mathcal{C}^{\beta}(X,d)\hookrightarrow\mathcal{C}^{\alpha}(X,d)
$$
is well defined and compact for every $0<\alpha<\beta<\infty$.
\end{lem}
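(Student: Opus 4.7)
The plan is to prove both implications separately. For the forward direction, total boundedness of $(X,d)$ gives $D:=\operatorname{diam}_d(X)<\infty$, and the pointwise estimate $|f(x)-f(y)|/[d(x,y)]^\alpha\leq D^{\beta-\alpha}\,\|f\|_{\dot{\mathcal{C}}^\beta(X,d)}$ immediately shows that the embedding is well-defined and continuous. For compactness, I would take a sequence $\{f_n\}$ that is bounded in $\mathcal{C}^\beta(X,d)$ by some constant $M$ and proceed by an Arzel\`a--Ascoli argument: total boundedness supplies a countable dense subset $E\subseteq X$ (as the union of finite $2^{-n}$-nets, with density verified through Proposition~\ref{closure-prop} together with the asymmetry constant $\widetilde{C}_d$), and a diagonal extraction produces a subsequence $\{f_{n_k}\}$ converging pointwise on $E$. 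The uniform Hölder control then upgrades pointwise convergence on $E$ to uniform convergence on $X$ by the usual three-term estimate on a finite $\delta$-net, after choosing $\delta$ small enough that $2M\delta^\beta$ is negligible.

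With $f:=\lim_k f_{n_k}\in\mathcal{C}^\beta(X,d)$ and $g_k:=f_{n_k}-f$, convergence of $g_k\to 0$ in $\mathcal{C}^\alpha(X,d)$ is deduced from the interpolation inequality
$$
\|g_k\|_{\dot{\mathcal{C}}^\alpha(X,d)}\;\leq\;\frac{2\,\|g_k\|_{L^\infty(X)}}{r^\alpha}\;+\;r^{\beta-\alpha}\,\|g_k\|_{\dot{\mathcal{C}}^\beta(X,d)},\qquad r>0,
$$
obtained by splitting the defining supremum over the regions $d(x,y)\geq r$ and $d(x,y)<r$. Given $\varepsilon>0$, I would first choose $r$ small to dominate the second term by $\varepsilon/2$, and then $k$ large (using uniform convergence) to dominate the first term by $\varepsilon/2$.

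For the converse, the strategy is contrapositive via an explicit counterexample. Assuming $(X,d)$ is not totally bounded, one produces $\varepsilon>0$ and an infinite sequence $\{x_n\}$ with $d(x_i,x_j)\geq\varepsilon$ for all $i\neq j$. Invoking Lemma~\ref{DST1}, I pass to a symmetric equivalent quasi-metric $\rho$ for which $\rho^s$ is a genuine metric at some fixed finite $s\in(0,(\log_2 C_d)^{-1}]$; for $\delta>0$ sufficiently small (depending on $C_\rho$ and the $\rho$-separation of the $x_n$), the $\rho$-balls $\{B_\rho(x_n,\delta)\}$ are pairwise disjoint. Defining the bump functions $f_n(x):=\max\{0,\delta^s-[\rho(x,x_n)]^s\}$ and using that $\rho^s$ satisfies the triangle inequality, composition with the $1$-Lipschitz cutoff $t\mapsto\max\{0,\delta^s-t\}$ yields $|f_n(x)-f_n(y)|\leq[\rho(x,y)]^s$, so $\{f_n\}$ is uniformly bounded in $\mathcal{C}^s(X,\rho)$, and hence in $\mathcal{C}^s(X,d)$ since $d\approx\rho$. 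On the other hand, disjointness of supports and $f_n(x_n)=\delta^s$ give $\|f_n-f_m\|_{L^\infty(X)}\geq\delta^s$ for $n\neq m$, so no subsequence is Cauchy in $\mathcal{C}^\alpha(X,d)$ for any $\alpha\in(0,s)$. Setting $\beta:=s$ contradicts compactness of the embedding.

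The main obstacle is the bump construction in the converse direction: unlike in a metric space, the raw quasi-metric $d$ need not admit any useful Lipschitz cutoff, and one is forced to pass through Lemma~\ref{DST1} to obtain a symmetric $\rho$ whose $s$-th power is a genuine metric, which restricts the admissible Hölder exponent to $s\leq(\log_2 C_d)^{-1}$. Fortunately, only a single pair $0<\alpha<\beta$ is needed to contradict the hypothesis, so this restriction is harmless. The forward direction is essentially a routine Arzel\`a--Ascoli-plus-interpolation template once separability has been extracted from total boundedness.
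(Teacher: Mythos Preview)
Your proof is correct and self-contained, but the paper takes a much shorter route: it observes that the symmetrization $d_{\rm sym}(x,y):=\max\{d(x,y),d(y,x)\}$ is a symmetric quasi-metric equivalent to $d$, so that both total boundedness and the spaces $\mathcal{C}^\alpha$ are unchanged when $d$ is replaced by $d_{\rm sym}$, and then simply cites the symmetric case from \cite[Theorem~4.16]{MMMM13}. Your approach instead reproves the lemma from scratch---the forward direction by an Arzel\`a--Ascoli diagonal extraction plus the H\"older interpolation inequality, and the converse by building explicit disjointly-supported bumps through the regularized quasi-metric of Lemma~\ref{DST1}. The paper's reduction is terse but externalizes all of the content to the reference; your argument is what one would write if that reference were unavailable, and is presumably close in spirit to how \cite{MMMM13} handles the symmetric case. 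One minor efficiency: since you only need a single pair $0<\alpha<\beta$ to fail in the converse, invoking Lemma~\ref{DST1} (which already gives a symmetric $\rho\approx d$) makes the separate symmetrization step of the paper unnecessary---so in a sense your route and the paper's are two ways of dealing with the same asymmetry obstacle.
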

\begin{proof}
This result was established in \cite[Theorem~4.16]{MMMM13} under the assumption that the quasi-distance $d$ is symmetric (i.e., when $\widetilde{C}_d= 1$). The version of this result stated here may be proved either by proceeding along similar lines, or by observing that the result for symmetric quasi-distances in \cite{MMMM13} self-improves to the current version as follows. Consider the symmetrization, $d_{\rm sym}$, of $d$ which is defined by setting $d_{\rm sym}(x,y):=\max\{d(x,y),d(y,x)\}$ for all $x,y\in X$. It is easy to verify that $d_{\rm sym}$ is a symmetric quasi-metric on $X$ satisfying $d_{\rm sym}\approx d$. Hence, in particular, we have that $\mathcal{C}^{\alpha}(X,d)=\mathcal{C}^{\alpha}(X,d_{\rm sym})$ for all $\alpha\in(0,\infty)$, and that $(X,d)$ is totally bounded if and only if $(X,d_{\rm sym})$ is totally bounded. Granted these observations, this lemma now follows immediately from the corresponding result in \cite[Theorem~4.16]{MMMM13} for symmetric quasi-metrics.
\end{proof}

The next result is a Urysohn-type lemma which is a slight refinement of \cite[Lemma~4.4]{AYY21}. 

\begin{lem}
\label{GVa2} 
Let $(X, d, \mu)$ be a quasi-metric space equipped with a nonnegative Borel measure $\mu$, and fix parameters $\alpha,p\in(0,\infty)$ and $q\in(0,\infty]$. Let $C_d\in[1,\infty)$ be as in \eqref{C-d} and suppose that $\alpha\leq(\log_{2}C_d)^{-1}$, where $\alpha=(\log_{2}C_d)^{-1}$ is only permitted when $q=\infty$. Also assume that $E_0,E_1\subseteq X$ are two nonempty measurable sets such that ${\rm dist}_d(E_0,E_1)>0$. Then there exists a continuous (hence, measurable) function $\Phi: X\to[0,1]$ such that $\Phi\equiv 1$  on $E_1$ and $\Phi\equiv 0$ on $E_0$. Moreover, there exists  $C\in(0,\infty)$,  depending only on $\alpha$, $q$, and $d$, such that
\begin{equation}
\label{gradest1}
\Vert\Phi\Vert_{\dot{M}^\alpha_{p,q}(X,d,\mu)}\leq C[{\rm dist}_d(E_0,E_1)]^{-\alpha}[\mu(X\setminus E_0)]^{1/p}
\end{equation}
and
\begin{equation}
\label{gradest2}
\Vert\Phi\Vert_{\dot{N}^\alpha_{p,q}(X,d,\mu)}\leq C
[{\rm dist}_d(E_0,E_1)]^{-\alpha}[\mu(X\setminus E_0)]^{1/p}.
\end{equation}
\end{lem}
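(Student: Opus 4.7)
The plan is to reduce to a symmetric metric-like setting, construct the bump function from a distance function, and exhibit an explicit fractional $\alpha$-gradient supported on $X\setminus E_0$. First, I would apply Lemma~\ref{DST1} to obtain a symmetric quasi-metric $\rho\approx d$ with $C_\rho\le C_d$, so that $\rho^\beta$ satisfies the triangle inequality for every $\beta\in(0,(\log_2 C_d)^{-1}]$; in particular, $\rho^\alpha$ is a genuine metric. Since all $\rho$-balls are open in $\tau_d$ and hence Borel, Lemma~\ref{equivspaces} allows me to establish \eqref{gradest1} and \eqref{gradest2} using $\rho$ in place of $d$, absorbing the equivalence constants into the final $C$.

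With $\delta:=\mathrm{dist}_\rho(E_0,E_1)>0$ (comparable to $\mathrm{dist}_d(E_0,E_1)$), I would set $\Phi(x):=\min\bigl\{1,\rho(x,E_0)^\alpha/\delta^\alpha\bigr\}$. Because $\rho(\cdot,E_0)^\alpha$ is $1$-Lipschitz with respect to the metric $\rho^\alpha$, the function $\Phi$ is continuous, takes values in $[0,1]$, vanishes on $E_0$, and equals $1$ on $E_1$ (as $\rho(x,E_0)\ge\delta$ there), while satisfying $|\Phi(x)-\Phi(y)|\le\rho(x,y)^\alpha/\delta^\alpha$. For $q=\infty$---the only case in which $\alpha=(\log_2 C_d)^{-1}$ is permitted---the constant sequence $g_k\equiv\delta^{-\alpha}\chi_{X\setminus E_0}$ is immediately seen to be a fractional $\alpha$-gradient of $\Phi$ (split into the subcases $x,y\in E_0$, one of them outside $E_0$, and both outside $E_0$), and both the $L^p(X;\ell^\infty)$ and $\ell^\infty(L^p)$ norms collapse to $\delta^{-\alpha}[\mu(X\setminus E_0)]^{1/p}$, yielding \eqref{gradest1} and \eqref{gradest2} simultaneously.

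For $q<\infty$ the hypothesis forces $\alpha<(\log_2 C_d)^{-1}$, so I would fix some $\alpha'\in(\alpha,(\log_2 C_d)^{-1}]$ depending only on $\alpha$ and $d$, and replace $\alpha$ by $\alpha'$ in the exponent defining $\Phi$; the new bump is $\alpha'$-Hölder with constant $\lesssim\delta^{-\alpha'}$. Choosing $k_0\in\mathbb{Z}$ with $2^{-k_0}\approx\delta$, I would then set
\[
g_k:=\begin{cases}
C\,2^{k\alpha}\,\chi_{X\setminus E_0}, & k\le k_0,\\
C\,2^{-k(\alpha'-\alpha)}\delta^{-\alpha'}\,\chi_{X\setminus E_0}, & k>k_0,
\end{cases}
\]
for a sufficiently large $C=C(\alpha,\alpha',d)$. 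Checking the gradient inequality for $2^{-k-1}\le\rho(x,y)<2^{-k}$ splits into two regimes: for $k\le k_0$ the trivial bound $|\Phi(x)-\Phi(y)|\le 1$ is absorbed using $\rho(x,y)\gtrsim 2^{-k}$; for $k>k_0$ the $\alpha'$-Hölder estimate combined with $\rho(x,y)<2^{-k}$ supplies precisely the extra factor $2^{-k(\alpha'-\alpha)}$ needed to match $g_k$.

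To conclude, I would sum the two geometric series. At each $x\in X\setminus E_0$,
\[
\|\{g_k(x)\}_{k\in\mathbb{Z}}\|_{\ell^q}^q\lesssim 2^{k_0q\alpha}+\delta^{-q\alpha'}2^{-k_0q(\alpha'-\alpha)}\lesssim\delta^{-q\alpha},
\]
both summands being comparable after inserting $2^{-k_0}\approx\delta$; since $g_k$ vanishes on $E_0$, integration in $x$ yields \eqref{gradest1}, and an analogous computation with the order of norms swapped produces \eqref{gradest2}. I expect the main obstacle to be the construction for $q<\infty$: one genuinely needs strict room between $\alpha$ and $(\log_2 C_d)^{-1}$ in order to convert the scale bound $\rho(x,y)<2^{-k}$ into a geometrically convergent series $\sum_{k>k_0}2^{-kq(\alpha'-\alpha)}$, which is precisely why the boundary value $\alpha=(\log_2 C_d)^{-1}$ is ruled out whenever $q<\infty$.
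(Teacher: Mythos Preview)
Your argument is correct and complete. The paper's own proof is much terser: it simply picks $\beta\in[\alpha,(\log_2 C_d)^{-1}]$ (with $\beta>\alpha$ unless $q=\infty$), invokes a quantitative Urysohn lemma from \cite{AMM13} to obtain a $\beta$-H\"older bump $\Phi$ separating $E_0$ and $E_1$ with $\|\Phi\|_{\dot{\mathcal{C}}^\beta}\lesssim[\mathrm{dist}_d(E_0,E_1)]^{-\beta}$, and then applies \cite[Lemma~3.10]{AYY22} as a black box to convert this H\"older bound into the desired $\dot{M}^\alpha_{p,q}$ and $\dot{N}^\alpha_{p,q}$ estimates. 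Your route is essentially an explicit unpacking of both citations: you build $\Phi$ by hand from the regularized distance of Lemma~\ref{DST1}, and you write down the fractional gradient $\{g_k\}$ (constant for $q=\infty$, two-sided geometric for $q<\infty$) that the cited lemma would produce. The advantage of your approach is that it is self-contained within the paper and makes transparent why the endpoint $\alpha=(\log_2 C_d)^{-1}$ fails for $q<\infty$; the paper's approach is shorter but outsources exactly that computation.
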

\begin{proof}
Given our assumptions on $\alpha$, we can choose a finite number $\beta\in[\alpha,(\log_{2}C_d)^{-1}]$, where $\beta\neq\alpha$ unless $\alpha=(\log_{2}C_d)^{-1}$. By \cite[Theorem~4.1]{AMM13} (see, also \cite[Theorem~2.6]{AM15}), there exists a continuous function $\Phi\in\dot{\mathcal{C}}^\beta(X,d)$ such that $0\leq\Phi(x)\leq1$ for all $x\in X$, $\Phi\equiv 1$  on $E_1$, $\Phi\equiv 0$ on $E_0$, and for which there exists $C\in(0,\infty)$ satisfying
$$
{\lVert \Phi\rVert}_{\dot{\mathcal{C}}^\beta(X,d)}\leq C[{\rm dist}_d(E_0,E_1)]^{-\beta}.
$$
The estimates in \eqref{gradest1} and \eqref{gradest2} now follow from applying \cite[Lemma~3.10]{AYY22} with $f:=1$, $\vec{h}:=\vec{0}$, $V:=X\setminus E_0$, and $\Psi:=\Phi$.
\end{proof}

\section{Topological and measure-theoretic considerations}
\label{sect:meas}

We begin with a few definitions.
Given an arbitrary 
set $X$ and a topology $\tau$ on $X$, we denote by
${\it Borel}_{\tau}(X)$, the smallest sigma-algebra of 
$X$ containing $\tau$.  Recall that a measure $\mu:\mathfrak{M}\to[0,\infty]$, where  $\mathfrak{M}$ is a sigma-algebra of subsets of $X$, is said to be
a \emph{Borel measure (on $X$)} provided ${\it Borel}_{\tau}(X)\subseteq\mathfrak{M}$.

The triplet $(X,d,\mu)$ will be called  a \textit{quasi-metric-measure space} if $(X,d)$ is a quasi-metric space and $\mu$ is a nonnegative Borel measure on $X$ such that all balls (with respect to the quasi-metric $d$) are measurable and $\mu(B_d(x,r))\in(0,\infty)$ for any $x\in X$ and any $r\in(0,\infty)$.

\subsection{Separability and total boundedness in quasi-metric-measure spaces}

\begin{lem} {\bf (Ulam-like Theorem)} \label{Ulam}
Let $(X, d, \mu)$ be a separable quasi-metric space equipped with a nonnegative Borel measure and suppose that $\mu(X)<\infty$. 
Then, for any $\varepsilon\in(0,\infty)$ there exists closed and totally bounded set $G_{\varepsilon}\subseteq X$ such that $\mu(X\setminus G_{\varepsilon}) < \varepsilon$.
\end{lem}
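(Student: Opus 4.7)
The plan is to construct $G_\varepsilon$ as the closure of a countable intersection of suitably shrinking finite unions of open neighborhoods that together exhaust most of the measure of $X$. Since $(X,d)$ is separable and the topology $\tau_d$ is metrizable by Lemma~\ref{DST1} (via the $\beta$-th power of the symmetric quasi-metric $\rho$ for any finite $\beta\in(0,(\log_2 C_d)^{-1}]$), the space $(X,\tau_d)$ is second countable and hence Lindel\"of. The first step is to exploit this to produce, for each scale $k\in\mathbb{N}$, a countable cover of $X$ by the open sets supplied by Proposition~\ref{otwarty}. Specifically, applying the Lindel\"of property to the open cover $\{E(x,1/(2k))\}_{x\in X}$, where each $E(x,r)$ is open in $\tau_d$ and satisfies $B_d(x,r)\subseteq E(x,r)\subseteq B_d(x,C_d r)$, I extract points $\{x_{k,n}\}_{n\in\mathbb{N}}\subseteq X$ with $X=\bigcup_{n\in\mathbb{N}}E(x_{k,n},1/(2k))$.

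Next, I would invoke the finiteness of $\mu$ together with continuity from below: since the partial unions $\bigcup_{n=1}^{N}E(x_{k,n},1/(2k))$ increase to $X$ as $N\to\infty$, there exists $N_k\in\mathbb{N}$ such that $\mu(X\setminus U_k)<\varepsilon\,2^{-k-1}$, where the set $U_k:=\bigcup_{n=1}^{N_k}E(x_{k,n},1/(2k))$ is open in $\tau_d$ and contained in $\bigcup_{n=1}^{N_k}B_d(x_{k,n},C_d/(2k))$. Setting $V:=\bigcap_{k\in\mathbb{N}}U_k$ and $G_\varepsilon:=\overline{V}$ (closure taken in $\tau_d$), subadditivity yields
\[
\mu(X\setminus G_\varepsilon)\leq\mu(X\setminus V)\leq\sum_{k=1}^\infty\mu(X\setminus U_k)<\varepsilon\sum_{k=1}^\infty 2^{-k-1}=\varepsilon/2<\varepsilon,
\]
and $G_\varepsilon$ is closed by construction.

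Finally, to verify total boundedness of $G_\varepsilon$, I would appeal to Proposition~\ref{zero}, which upon sending the auxiliary parameter $\delta\to 0^{+}$ gives $\overline{B_d(x,r)}\subseteq B_d(x,C_d r)$ for all $x\in X$ and $r\in(0,\infty)$. Combined with the fact that the closure of a finite union equals the union of the closures, this yields, for every $k\in\mathbb{N}$, the inclusion $G_\varepsilon=\overline{V}\subseteq\bigcup_{n=1}^{N_k}B_d(x_{k,n},C_d^2/(2k))$. Given any $r\in(0,\infty)$, choosing $k$ with $C_d^2/(2k)<r$ exhibits $G_\varepsilon$ as covered by $N_k$ quasi-metric balls of radius strictly less than $r$, establishing total boundedness. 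The main obstacle is to correctly navigate the failure of quasi-metric balls to be open together with the fact that their closures may strictly enlarge them; this is precisely why Propositions~\ref{otwarty} and \ref{zero} are invoked, so as to pass to open sets (for the measure continuity step) and to controlled closures (for closedness and total boundedness of $G_\varepsilon$), respectively.
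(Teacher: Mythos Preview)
Your proof is correct and follows essentially the same strategy as the paper: for each scale, cover all but a geometrically small portion of the measure by finitely many ball-like sets, intersect over scales, and use Proposition~\ref{zero} to verify total boundedness. The paper works directly with a countable dense set and the closed sets $\bigcup_{j=1}^{k}\overline{B_d(x_j,1/n)}$ (so closedness is immediate and no explicit closure is needed at the end), whereas you route through metrizability/Lindel\"of and the open sets of Proposition~\ref{otwarty} before taking the closure; this is a minor packaging difference rather than a genuinely different argument.
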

In the setting of Polish metric spaces very similar result has been proven by Ulam.\footnote{The Ulam Theorem says, that each probability measure $\mu$ defined on Borel sets of 
complete metric space $X$ satisfies the following condition: for every  $\varepsilon\in(0,\infty)$ there exists a compact set $G_{\epsilon} \subseteq X$ such that $\mu (X \setminus G_{\epsilon})< \varepsilon$.}
\begin{proof}
Let $\{x_j\}_{j=1}^{\infty}$ be a dense subset in $(X,d)$. For $n,k \in \mathbb{N}$ we define 
\begin{align*} 
A_{n,k} := \bigcup_{j=1}^k \overline{ B_d(x_j, 1/n)}.
\end{align*}

For any given $m \in \mathbb{N}$, since $\{x_j\}_{j=1}^{\infty}$ is dense in $(X,d)$, we have $\bigcup_{k=1}^{\infty}A_{m,k}=X$ and since $A_{m,k}\subseteq A_{m,k+1}$ for every $k\in\mathbb{N}$, we have 
\[
 \lim_{k \rightarrow \infty} \mu(A_{m,k})=\mu(X).
\]
Therefore, we can find $k_m\in\mathbb{N}$ such that
\[
\mu(X\setminus A_{m,k_m}) = \mu(X) -\mu(A_{m,k_m}) \leq 1/2^m.
\]
Now, let $n\in\mathbb{N}$ be arbitrary and set 
\[
 G_n :=\bigcap_{m=n}^{\infty}A_{m,k_m}.
\]
Then, $G_n$ is a closed subset of $(X,d)$ and  
\[
\mu(X\setminus G_n) \leq\sum_{m=n}^\infty\mu(X\setminus A_{m,k_m})\leq 2^{1-n}.
\] 
Moreover, since Proposition~\ref{zero} implies that $A_{n,k} \subseteq \bigcup_{j=1}^k  B_d(x_j, C_d/n)$, for all $n,k \in \mathbb{N}$, we can see that $G_n$ is also totally bounded. Given that $n\in\mathbb{N}$ was arbitrary, this finishes the proof of the lemma.
\end{proof}

\begin{prop}\label{sepequiv}
A quasi-metric space $(X, d)$ is separable if and only if there exists a nonnegative Borel measure on $(X, d)$ such that all balls (with respect to $d$) are measurable and have positive and finite measure.
\end{prop}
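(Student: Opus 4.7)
The plan is to prove the two directions of the equivalence separately. For the forward direction, given separability I will construct an explicit probability measure as a weighted sum of Dirac masses at a dense sequence; the reverse direction is more involved and will require extracting a countable dense set from the measure via a maximal packing argument together with a measure-theoretic pigeonhole.

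For the $(\Rightarrow)$ direction, fix a countable dense subset $\{x_n\}_{n\in\mathbb{N}}\subseteq X$ and define $\mu:=\sum_{n=1}^{\infty}2^{-n}\delta_{x_n}$, which is a Borel probability measure defined on all of $2^X$, so every subset of $X$ (in particular every ball) is measurable with $\mu(\,\cdot\,)\leq 1<\infty$. By Proposition~\ref{closure-prop}, the density of $\{x_n\}$ in $(X,\tau_d)$ is equivalent to the assertion that every ball $B_d(x,r)$ meets $\{x_n\}$, and so $\mu(B_d(x,r))>0$ follows at once.

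For the $(\Leftarrow)$ direction, fix any basepoint $x_0\in X$; since $d$ takes finite values we have $X=\bigcup_{n\in\mathbb{N}}B_d(x_0,n)$. For each $n,k\in\mathbb{N}$, apply Zorn's lemma to select a maximal subset $F_{n,k}\subseteq B_d(x_0,n)$ enjoying the separation property $d(x,y)\geq 1/k$ for all distinct $x,y\in F_{n,k}$. Maximality gives that for every $y\in B_d(x_0,n)$ there exists $x\in F_{n,k}$ with $d(x,y)<1/k$, and combining this with $d(y,x)\leq\widetilde{C}_d\,d(x,y)$ (and Proposition~\ref{closure-prop}) yields density of $F:=\bigcup_{n,k}F_{n,k}$ in $(X,\tau_d)$. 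The key task is then to show each $F_{n,k}$ is countable. Setting $r:=(2C_d\widetilde{C}_d k)^{-1}$, the quasi-triangle inequality combined with the $1/k$-separation forces the family $\{B_d(x,r)\}_{x\in F_{n,k}}$ to be pairwise disjoint (if $z\in B_d(x,r)\cap B_d(y,r)$ then $d(x,y)\leq C_d\max\{d(x,z),\widetilde{C}_d\,d(y,z)\}<1/k$, a contradiction), and each such ball is contained in $B_d(x_0,C_dn)$, which has finite $\mu$-measure by hypothesis. Since each ball has positive measure and a finite measure can have at most countably many pairwise disjoint subsets of positive measure, $F_{n,k}$ must be countable, making $F$ a countable dense set.

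The main obstacle is the reverse direction, and specifically the passage from \emph{pointwise} (non-uniform) positivity of ball measures to \emph{countability} of a $1/k$-separated set. The resolution is the measure-theoretic pigeonhole principle that in a set of finite measure there can exist only countably many disjoint subsets of positive measure (stratify by $\mu(A_\alpha)\geq 1/m$ and invoke countable additivity). A secondary subtlety, simple but easy to overlook, arises from the asymmetry of $d$: since $d(x,y)$ and $d(y,x)$ need not agree, one must insert the factor $\widetilde{C}_d$ both when converting $d$-separation into disjointness of $d$-balls and when passing from $d(x_k,y)\to 0$ to $d(y,x_k)\to 0$ in the density verification.
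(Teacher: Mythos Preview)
Your proof is correct and follows essentially the same approach as the paper: both directions use identical ideas (weighted Dirac masses for $(\Rightarrow)$, maximal separated sets inside expanding balls combined with a disjoint-balls-in-finite-measure pigeonhole for $(\Leftarrow)$). The only cosmetic difference is that the paper makes the pigeonhole explicit by stratifying $A^k_n$ into finite sets $A^k_n(\ell)=\{x:\mu(B_d(x,r))\geq 1/\ell\}$, whereas you invoke the equivalent principle that a finite-measure set contains at most countably many pairwise disjoint subsets of positive measure.
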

In the setting of metric spaces the theorem has been proven in \cite{G}. Let us mention that \cite{DG} contains relation between this theorem and the axiom of choice.
\begin{proof}
Suppose first that $(X, d)$ is separable and let $\{x_j\}_{j=1}^{\infty}$ be a countable and dense subset of $(X, d)$. We define a measure $\mu$ by setting
\[
\mu:=\sum_{i=j}^{\infty}\frac{1}{2^j}\delta_{x_j},
\] 
where $\delta_{x_j}$ is the Dirac measure, i.e., for all sets $A\subseteq X$, $\delta_{x_j}(A)=1$ if $x_j\in A$ and $\delta_{x_j}(A)=0$ otherwise. Clearly, $\mu$ is a nonnegative Borel measure on $X$. Moreover, every ball $B\subseteq X$ with respect to $d$ is measurable and we have
\[
 \mu(B) \leq \mu(X)=1.
\]
On the other hand, since $\{x_j\}_{j=1}^{\infty}$ is dense, there exists $x_k \in B$ and therefore,
\[
\mu(B) \geq \frac{1}{2^k}\delta_{x_k}(B)=\frac{1}{2^k}>0.
\]

Now, let $\mu$ be a nonnegative Borel measure on $(X,d)$ such that all balls (with respect to $d$) are measurable and have positive and finite measure. Let $x_0 \in X$ and for each $k\in \mathbb{N}$, let $B_k:=B_d(x_0,k)$. For $n \in \mathbb{N}$ we 
denote by $A^k_n$ the maximal $1/n$-separated subset of $B_k$. Thus, we have
\[
B_k \subseteq \bigcup_{x \in A^k_n}B_d(x,1/n)
\]
and
\[
B_d\left(x,\frac{1}{n C_d\widetilde{C}_d }\right)\cap B_d\left(y,\frac{1}{n C_d\widetilde{C}_d }\right)=\emptyset\,\, \text{ for all}\,\, x, y \in A^k_n\text{ with}\,\, x\neq y.
\]
For $\ell \in \mathbb{N}$ we define
\[
A^k_n(\ell):=\left\{x \in A^k_n: \mu\left(B_d\left(x,\frac{1}{n C_d\widetilde{C}_d}\right)\right)\geq \frac{1}{\ell}\right\}.
\]
Since
\[
\bigcup_{x \in A^k_n (\ell)}B_d\left(x,\frac{1}{n C_d\widetilde{C}_d}\right) \subseteq B_d(x_0, C_d k)
\]
and $\mu( B_d(x_0, C_d k)) < \infty$, we have $\# A^k_n (\ell) < \infty$. Furthermore, since balls (with respect to $d$) have positive measure, we get

\[
	A^k_n= \bigcup_{l=1}^{\infty}A^k_n(l).
\] 
Finally, we get that the set
\[
C:=\bigcup_{k=1}^{\infty}\bigcup_{n=1}^{\infty}A^k_n 
\]
is countable and dense in $(X,d)$. This finishes the proof of the proposition.
\end{proof}

In a quasi-metric-measure space, totally boundedness of the entire space is equivalent to the measure possessing a ``non-collapsing" property. More specifically, we have the following proposition.
\begin{prop}\label{tot}
Let $(X,d,\mu)$ be a quasi-metric-measure space. Then $(X, d)$ is totally bounded if and only if $\mu(X)< \infty$ and for all $r\in(0,\infty)$,
\[
h(r):=\inf_{x\in X} \mu (B_d(x,r))>0.
\]
\end{prop}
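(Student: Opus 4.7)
I will prove both implications, relying in an essential way on the quasi-triangle inequality together with the constants $C_d$ and $\widetilde{C}_d$ from \eqref{C-d} and \eqref{C-d-tilde}.

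For the forward implication, assume $(X,d)$ is totally bounded. Covering $X$ by finitely many balls $B_d(x_i,1)$, $i=1,\ldots,N$, and invoking the fact that in a quasi-metric-measure space every ball has finite measure, one immediately concludes $\mu(X)\leq\sum_i\mu(B_d(x_i,1))<\infty$. To see that $h(r)>0$ for each $r\in(0,\infty)$, fix such an $r$, set $\delta:=r/(C_d\widetilde{C}_d)$, and use total boundedness to choose a finite cover $X=\bigcup_{i=1}^N B_d(y_i,\delta)$. Given any $x\in X$, pick an index $i$ with $x\in B_d(y_i,\delta)$ and check via the quasi-triangle inequality that $B_d(y_i,\delta)\subseteq B_d(x,r)$: indeed, for any $y\in B_d(y_i,\delta)$,
\[
d(x,y)\leq C_d\max\{d(x,y_i),d(y_i,y)\}\leq C_d\max\{\widetilde{C}_d\,d(y_i,x),\delta\}\leq C_d\widetilde{C}_d\delta=r.
\]
Hence $\mu(B_d(x,r))\geq\mu(B_d(y_i,\delta))\geq\min_{1\leq j\leq N}\mu(B_d(y_j,\delta))$, and taking the infimum over $x\in X$ yields $h(r)\geq\min_j\mu(B_d(y_j,\delta))>0$, since balls have positive measure in a quasi-metric-measure space.

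For the reverse implication, assume $\mu(X)<\infty$ and $h(r)>0$ for every $r\in(0,\infty)$, and argue by contradiction: if $(X,d)$ is not totally bounded, then there exists $\varepsilon\in(0,\infty)$ such that no finite family of $\varepsilon$-balls covers $X$. A standard greedy selection produces a sequence $\{x_n\}_{n=1}^\infty\subseteq X$ with $x_{n+1}\notin\bigcup_{i=1}^{n}B_d(x_i,\varepsilon)$, so that $d(x_i,x_j)\geq\varepsilon$ whenever $i<j$; then \eqref{C-d-tilde} gives $d(x_j,x_i)\geq\varepsilon/\widetilde{C}_d$ in the reverse direction as well, and hence $d(x_n,x_m)\geq\varepsilon/\widetilde{C}_d$ for all $n\neq m$.

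The key computation is to choose $\delta\in(0,\infty)$ small enough that the balls $\{B_d(x_n,\delta)\}_{n=1}^\infty$ are pairwise disjoint. If $y\in B_d(x_n,\delta)\cap B_d(x_m,\delta)$ with $n\neq m$, then
\[
d(x_n,x_m)\leq C_d\max\{d(x_n,y),d(y,x_m)\}\leq C_d\max\{\delta,\widetilde{C}_d\delta\}=C_d\widetilde{C}_d\delta,
\]
which together with $d(x_n,x_m)\geq\varepsilon/\widetilde{C}_d$ forces a contradiction as soon as $\delta<\varepsilon/(C_d\widetilde{C}_d^{\,2})$. Fixing such a $\delta$, pairwise disjointness combined with the hypothesis $h(\delta)>0$ then gives
\[
\mu(X)\geq\sum_{n=1}^{\infty}\mu(B_d(x_n,\delta))\geq\sum_{n=1}^{\infty}h(\delta)=\infty,
\]
contradicting $\mu(X)<\infty$. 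The main obstacle is precisely this quasi-metric bookkeeping with $C_d$ and $\widetilde{C}_d$, which would be trivial in the symmetric metric setting but requires some care here to extract both a cover $B_d(y_i,\delta)\subseteq B_d(x,r)$ in the forward direction and pairwise disjoint balls $B_d(x_n,\delta)$ in the reverse direction; once the correct constants are identified, each step is routine.
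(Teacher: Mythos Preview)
Your proof is correct and follows essentially the same approach as the paper's: both directions rely on the same quasi-metric bookkeeping with $C_d$ and $\widetilde{C}_d$ to produce a containment $B_d(y_i,\delta)\subseteq B_d(x,r)$ in the forward direction and pairwise disjoint balls around a separated set in the reverse direction. The only cosmetic differences are that the paper argues the reverse implication directly via a maximal $r$-separated set (obtaining $\#A\leq\mu(X)/h(r/(C_d\widetilde{C}_d))$) rather than by contradiction, and it uses the slightly larger ball $B_d(x_j,r/C_d)$ in the forward direction; neither affects the substance of the argument.
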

We remark that the conditions $\mu(X)< \infty$ and $\inf_{x\in X} \mu (B_d(x,r))>0$ for all $r\in(0,\infty)$ were assumed in \cite[Theorem~2]{K} to establish a compact embedding theorem for $M^{1,p}$ Sobolev spaces on metric spaces.
\begin{proof}
Since the proof of this result has been given in \cite{GS} in the setting of metric-measure spaces, we only provide a sketch of the argument here. Suppose that $(X, d)$ is totally bounded. Then $\mu(X)< \infty$ since we are assuming that all balls (with respect to $d$) have finite measure. Moreover, given $r\in(0,\infty)$, there exist $x_1,\dots,x_N \in X$, where $N\in\mathbb{N}$, such that
\[
X=\bigcup_{i=1}^N B_d(x_i, r/(C_d\widetilde{C}_d)).
\]
It follows that for all $x\in X$
there exists $j\in\{1,\dots,N\}$ such that $B_d(x_j,r/C_d)\subseteq B_d(x,r)$ and so we immediately have 
\[
\mu(B_d(x,r))\geq\inf_{i \in \{1,\dots,N\}}\mu(B_d(x_i,r/C_d))>0.
\]
Hence, $h(r)\geq\inf_{i \in \{1,\dots,N\}}\mu(B_d(x_i,r/C_d))>0$.

Next, we shall prove another implication. Let $r\in(0,\infty)$ and suppose that $A$ is a maximal $r$-separated set in $(X,d)$. Then $X= \bigcup_{x\in A} B_d(x,r)$. Since balls $\{B_d(x,r/(C_d\widetilde{C}_d))\}_{x\in A}$ are pairwise disjoint, we have
\begin{equation*}
 \#A \leq\frac{\mu (X)}{h\big(r/(C_d\widetilde{C}_d)\big)}<\infty.
\end{equation*}
Since $r\in(0,\infty)$ was arbitrary, it follows that $(X, d)$ is totally bounded, and the proof of the proposition is now complete.
\end{proof}

\subsection{Doubling-type conditions for measures}
\label{subsect:doubling measures}
Let $(X, d, \mu)$ be a quasi-metric-measure space. Recall that the measure $\mu$ is said to be \emph{doubling (with respect to $d$)} if there exists a constant $C_{\mu}\in[1,\infty)$ such that
\begin{equation}\label{doub}
\mu(B_d(x,2r))\leq C_{\mu}\mu(B_d(x,r))\quad\,\mbox{for all $x\in X$ and $r\in(0,\infty)$.}
\end{equation}
The smallest constant $C$ for which \eqref{doub} is satisfied will be denoted by $C_{\mu}$. 
The factor 2 in \eqref{doub} is immaterial and can be replaced by any $\theta\in(1,\infty)$ (with a possibly different choice of $C$).


\begin{lem}\label{doubbdd}
Let $(X,d,\mu)$ be a quasi-metric-measure space with doubling measure. Then $\mu(X)<\infty$ if and only if $\diam_d(X) <\infty$.
\end{lem}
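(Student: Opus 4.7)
The plan is to prove the two implications separately.

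The forward direction $\diam_d(X)<\infty\Rightarrow\mu(X)<\infty$ is essentially immediate: if $D:=\diam_d(X)<\infty$, pick any $x_0\in X$, so that $X\subseteq B_d(x_0,D+1)$ because $d(x_0,y)\le D<D+1$ for every $y\in X$, and hence $\mu(X)\le\mu(B_d(x_0,D+1))<\infty$ by the standing assumption that all balls have finite measure in a quasi-metric-measure space.

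For the converse, the strategy is to argue by contradiction. Suppose $M:=\mu(X)<\infty$ and $\diam_d(X)=\infty$. The idea is that balls sitting ``far away'' from a fixed base point $x_0$ must carry arbitrarily small mass (since $\mu(X\setminus B_d(x_0,R))\to 0$ as $R\to\infty$), yet doubling will force such balls to capture a fixed positive fraction of $\mu(B_d(x_0,1))$. Concretely, fix $x_0\in X$; since $X=\bigcup_{R>0}B_d(x_0,R)$, continuity of $\mu$ from below supplies, for each $n\in\mathbb{N}$, a radius $R_n\ge 1$ with $\mu(X\setminus B_d(x_0,R_n))\le 2^{-n}$. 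Because $\diam_d(X)=\infty$, one can pick $y_n\in X$ with $d(x_0,y_n)\ge C_d\widetilde{C}_d R_n$, and then set $\tilde R_n:=d(x_0,y_n)/(C_d\widetilde{C}_d)\ge R_n$.

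Two short applications of the quasi-triangle inequality then show that (i) $B_d(x_0,\tilde R_n)\cap B_d(y_n,\tilde R_n)=\emptyset$ and (ii) $B_d(x_0,1)\subseteq B_d(y_n,(C_d\widetilde{C}_d)^{2}\tilde R_n)$, where (i) uses the bound $d(z,y_n)\le\widetilde{C}_d\,d(y_n,z)$ and (ii) uses $d(y_n,x_0)\le\widetilde{C}_d\,d(x_0,y_n)$ together with $\tilde R_n\ge 1$. From (i) one immediately gets $\mu(B_d(y_n,\tilde R_n))\le 2^{-n}$, and iterating the doubling condition $k:=\lceil 2\log_2(C_d\widetilde{C}_d)\rceil$ times yields $\mu(B_d(y_n,(C_d\widetilde{C}_d)^{2}\tilde R_n))\le C_\mu^{k}2^{-n}$. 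Combining this estimate with (ii) produces $\mu(B_d(x_0,1))\le C_\mu^{k}2^{-n}$ for every $n\in\mathbb{N}$, and sending $n\to\infty$ contradicts $\mu(B_d(x_0,1))>0$.

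The main obstacle is keeping the number of doubling iterations bounded independently of $n$, despite the fact that the point $y_n$ supplied by $\diam_d(X)=\infty$ may have $d(x_0,y_n)$ arbitrarily large compared with $R_n$. Rescaling the reference radius from $R_n$ to $\tilde R_n:=d(x_0,y_n)/(C_d\widetilde{C}_d)$ absorbs this distance into the scale and forces the ratio of radii in the doubling chain to be a universal constant depending only on $C_d$ and $\widetilde{C}_d$; the non-symmetry of $d$ is then handled by carrying $\widetilde{C}_d$ through both applications of the quasi-triangle inequality, which is why the exponent $(C_d\widetilde{C}_d)^{2}$, rather than $C_d^{2}$, appears in inclusion (ii).
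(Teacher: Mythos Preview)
Your proof is correct and follows essentially the same strategy as the paper's: both fix a base point $x_0$, select points arbitrarily far away, and use doubling a bounded number of times to compare the measure of a ball around $x_0$ with that of a ball around the far point lying in the complement $X\setminus B_d(x_0,R)$, obtaining a contradiction from $\mu(X\setminus B_d(x_0,R))\to 0$. One small technical point: your inclusion (ii) as written only yields $d(y_n,w)\le (C_d\widetilde{C}_d)^{2}\tilde R_n$ rather than a strict inequality (equality can occur when the suprema defining $C_d$ and $\widetilde{C}_d$ are attained), so to be safe enlarge the target radius to, say, $2(C_d\widetilde{C}_d)^{2}\tilde R_n$ and increase $k$ by one---this does not affect the argument.
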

Lemma~\ref{doubbdd} is well known for symmetric quasi-metrics. The proof for nonsymmetric quasi-metrics is similar; however we include a short proof for the sake of completeness.
\begin{proof}
If $\diam_d(X) <\infty$ then obviously measure of $X$ is finite since we are assuming that balls with respect to $d$ have finite measure. 

Assume $\mu(X)<\infty$. Seeking a contradiction, let us suppose that $\diam_d(X)= \infty$. For fixed $x_0\in X$, we can find a sequence $\{x_n\}_{n\in\mathbb{N}}$ of points in $X\setminus \{x_0\}$ such that $r_n := d(x_n,x_0)\to \infty$ as $n\to\infty$. Observe that $B_d(x_0,r_n)\subseteq B_d(x_n, 2C_d r_n)$ for all $n\in\mathbb{N}$. Therefore, if we choose $k\in\mathbb{N}_0$ such that $2^{k}\geq C_d$,
we can appeal to the doubling condition in \eqref{doub} to estimate
\begin{align*}
\mu(B_d(x_0,r_n)) &\leq \mu(B_d(x_n,2^{k+1} r_n)) \leq C_{\mu}^{2k+1} \mu(B_d(x_n,r_n/2^k)
\leq C_{\mu}^{2k+1} \mu(B_d(x_n,r_n/C_d))\\
&\leq C_{\mu}^{2k+1} \mu(X\setminus  B_d(x_0,r_n/(C_d\widetilde{C}_d))) = C_{\mu}^{2k+1} [\mu(X) - \mu(B_d(x_0,r_n/(C_d\widetilde{C}_d)))].
\end{align*}
After passing with $n\to \infty$ we obtain
\begin{align*}
\mu(X) \leq 0,
\end{align*}
which is a clear contradiction to the fact that all balls have positive measure. This completes the proof of the lemma.
\end{proof}

Now, we introduce a weak doubling-type property where the constant in the doubling condition \eqref{doub} is allowed to depend on the radius of the ball.
\begin{defi}\label{deltadoubdef}
Let $(X, d, \mu)$ be a quasi-metric-measure space and let $c\in[1,\infty)$ and $\delta\in(0,\infty)$. The measure $\mu$ is said to be \emph{$(c,\delta)$-doubling (with respect to $d$)} if there exists $C(\delta)\in[1,\infty)$ such that 
\begin{equation}
\label{deltadoub}
\mu(B_d(x,c\delta))\leq C(\delta)\mu(B_d(x,\delta))\quad\,\mbox{for all $x\in X$.}
\end{equation}
The smallest constant $C(\delta)$ for which \eqref{deltadoub} is satisfied will be denoted by $\Delta_c(\delta)$. 
\end{defi}

The $(c,\delta)$-doubling condition will play a central role in the sequel when characterizing $M^\alpha_{p,q}$ and $N^{\alpha}_{p,q}$ embeddings and so we take a moment to study this condition. Note that similar doubling-type properties have been considered in \cite{BK,Hyt,K}.

Clearly every doubling measure is $(c,\delta)$-doubling for all $c\in[1,\infty)$ and $\delta\in(0,\infty)$. However, as the next examples illustrate, the notion of $(c,\delta)$-doubling is strictly weaker than that of doubling.

\begin{ex}\label{exdis}
Let $(\mathbb{N}, d, \mu)$, where, $d$ is the discrete metric, i.e., for each $n,m\in\mathbb{N}$, $d(n,m) :=0$ if $n=m$ and $d(n,m):=1$ otherwise, and $\mu$ is defined by setting $\mu(\{n\}):=1/2^n$ for each $n\in\mathbb{N}$. Then, for every $c\in(1,\infty)$, the measure $\mu$ is $(c,\delta)$-doubling with $\Delta_c(\delta)=1$ for every $\delta\in(0,1/c]$. Moreover, $\mu$ is not doubling and the ball $B_d(1,2)=\mathbb{N}$ is not totally bounded.
\end{ex}
\begin{ex} \label{exp0}
Let $(\mathbb{R}_{+}, d, \mu)$, where $d:=|\cdot-\cdot|$ is the standard Euclidean distance and $\mu$ is defined by $d\mu:=e^{x^{\beta}}dx$ for any fixed $\beta\in (0,1]$. Then $\mu(\mathbb{R}_{+})=\infty$ and $\mu$ is not doubling. Moreover, for every $c\in(1,\infty)$, the measure $\mu$ is $(c,\delta)$-doubling with $\Delta_c(\delta)\leq2c e^{(c\delta)^{\beta}}$ for every $\delta\in(0,\infty)$.
\end{ex} 
\begin{proof}
Clearly, $\mu(\mathbb{R}_{+})=\infty$ and $\mu$ is not doubling since 
\[
\lim_{x \rightarrow \infty} \frac{\mu(B_d(x,2x))}{\mu(B_d(x,x))}=\lim_{x \rightarrow \infty} \frac{\int\limits_{0}^{3x}e^{y^{\beta}}dy}{\int\limits_{0}^{2x}e^{y^{\beta}}dy}=
\lim_{x \rightarrow \infty} \frac{3e^{(3x)^{\beta}}}{2e^{(2x)^{\beta}}}=\infty.
\]
 If $c\in(1,\infty)$, then for every $x\in\mathbb{R}_{+}$ and $\delta\in(0,\infty)$, we can estimate
\[
\frac{\mu(B_d(x,c\delta))}{\mu(B_d(x,\delta))}= 
\frac{\int\limits_{\max\{0,x-c\delta\}}^{x+c\delta}e^{y^{\beta}}dy}{\int\limits_{\max\{0,x-\delta\}}^{x+\delta}e^{y^{\beta}}dy}
\leq\frac{\int\limits_{x-c\delta}^{x+c\delta}e^{y^{\beta}}dy}{\int\limits_{x}^{x+\delta}e^{y^{\beta}}dy}    
\leq \frac{2c\delta e^{(x+c\delta)^{\beta}}}{\delta e^{x^\beta}} \leq 2c e^{(c\delta)^{\beta}}.
\]
 Since $\beta$-H\"{o}lder continuity of function $f(t) = t^{\beta}$ gives us $(x+c\delta)^{\beta} \leq x^{\beta} + (c\delta)^{\beta}$, and the claim follows. 
\end{proof}

\begin{ex} \label{exp}
Let $(\mathbb{R}_{+}, d, \mu)$, where $d:=|\cdot-\cdot|$ is the standard Euclidean distance and $\mu$ is defined by $d\mu:=e^{-x^{\beta}}dx$ for any fixed $\beta\in (0,1]$. Then $\mu(\mathbb{R}_{+})<\infty$  and $\mu$ is not doubling. Moreover, for every $c\in(1,\infty)$, the measure $\mu$ is $(c,\delta)$-doubling with $\Delta_c(\delta)\leq2c e^{[(c+1)\delta]^{\beta}}$ for every $\delta\in(0,\infty)$.
\end{ex}
\begin{proof}
Clearly, $\mu(\mathbb{R}_{+})<\infty$ which, in view of Lemma~\ref{doubbdd}, immediately implies that $\mu$ is not doubling. Now, let $c\in(1,\infty)$, $x\in\mathbb{R}_{+}$ and $\delta\in(0,\infty)$. If $x>c\delta$, then we have
\begin{align*}
\dfrac{\mu(B_d(x,c\delta))}{\mu(B_d(x,\delta))} = \dfrac{ \int\limits_{x-c\delta}^{x+c\delta} e^{-y^{\beta}}dy}{\int\limits_{x-\delta}^{x+\delta} e^{-y^{\beta}}dy} 
\leq\dfrac{ \int\limits_{x-c\delta}^{x+c\delta} e^{-y^{\beta}}dy}{\int\limits_{x-\delta}^{x} e^{-y^{\beta}}dy}
\leq \dfrac{2c\delta e^{-(x-c\delta)^{\beta}}}{\delta e^{-x^{\beta}}} = 2c e^{-(x-c\delta)^{\beta} + x^{\beta}}.
\end{align*}
Since 
\begin{align*}
x^{\beta} - (c\delta)^{\beta}\leq (x-c\delta)^{\beta},
\end{align*}
we obtain
\begin{align*}
\dfrac{\mu(B_d(x,c\delta))}{\mu(B_d(x,\delta))} \leq 2c e^{(c\delta)^{\beta}}.
\end{align*}
On the other hand, if $x\leq c\delta$ then we have
\begin{align*}
\dfrac{\mu(B_d(x,c\delta))}{\mu(B_d(x,\delta))} = \frac{ \int\limits_{0}^{x+c\delta}e^{-y^{\beta}}dy}{\int\limits_{\max\{0,x-\delta\}}^{x+\delta}e^{-y^{\beta}}dy} \leq \dfrac{x+c\delta}{\int\limits_{x}^{x+\delta}e^{-y^{\beta}}dy} \leq \dfrac{2c\delta}{\delta e^{-(x+\delta)^{\beta}}} = 2c e^{(x+\delta)^{\beta}} \leq 2c e^{[(c+1)\delta]^{\beta}}.
\end{align*}
The claim now follows.
\end{proof}

In Examples~\ref{exdis}-\ref{exp}, the measures are $(c,\delta)$-doubling for all $\delta\in(0,\infty)$ but not doubling; however, since the constant $\Delta_c(\delta)$ in each example is bounded when $\delta$ is bounded, these measures are \textit{locally doubling} in the sense that there exists some $r_0\in(0,\infty)$ such that the doubling condition \eqref{doub} holds for all radii $r\in(0,r_0]$. The following example illustrates that $(c,\delta)$-doubling measures need not even be locally doubling.
\begin{ex} \label{nonlocaldoub}
Let $([0,\infty), d, \mu)$, where $d:=|\cdot-\cdot|$ is the standard Euclidean distance and $\mu$ is defined by $d\mu:=e^{-\frac{1}{x^\beta}}dx$ for any fixed $\beta\in (0,\infty)$. 
Then $\mu$ is not locally doubling. Moreover, for every $c\in(1,\infty)$, the measure $\mu$ is $(c,\delta)$-doubling with $\Delta_c(\delta)\leq 4c e^{(\frac{2}{\delta})^\beta}$ for every $\delta\in(0,\infty)$.
\end{ex} 
\begin{proof}
The measure $\mu$ is not locally doubling since 
\[
\lim_{x \rightarrow 0^+} \frac{\mu(B_d(0,2x))}{\mu(B_d(0,x))}=\lim_{x \rightarrow 0^+} \frac{\int\limits_{0}^{2x}e^{-\frac{1}{y^\beta}}dy}{\int\limits_{0}^{x}e^{-\frac{1}{y^\beta}}dy}=
\lim_{x \rightarrow 0^+} \frac{2e^{-\frac{1}{(2x)^\beta}}}{e^{-\frac{1}{x^\beta}}}=\infty.
\]
If $c\in(1,\infty)$, then for every $x\in[0,\infty)$ and $\delta\in(0,\infty)$, we can estimate
\[
\frac{\mu(B_d(x,c\delta))}{\mu(B_d(x,\delta))}= 
\frac{\int\limits_{\max\{0,x-c\delta\}}^{x+c\delta}e^{-\frac{1}{y^\beta}}dy}{\int\limits_{\max\{0,x-\delta\}}^{x+\delta}e^{-\frac{1}{y^\beta}}dy}
\leq\frac{\int\limits_{x-c\delta}^{x+c\delta}e^{-\frac{1}{y^\beta}}dy}{\int\limits_{x+\delta/2}^{x+\delta}e^{-\frac{1}{y^\beta}}dy}    
\leq \frac{2c\delta e^{-\frac{1}{(x+c\delta)^\beta}}}{\frac{\delta}{2} e^{-\frac{1}{(x+\delta/2)^\beta}}}
\leq 4ce^{\frac{1}{(x+\delta/2)^\beta}}
\leq 4ce^{(\frac{2}{\delta})^\beta}.
\]
The claim now follows. 
\end{proof}

In fact, the measure in Example~\ref{nonlocaldoub} fails to even satisfy the following weaker formulation of the locally doubling condition: For all $x\in X$, there exist $C_x,r_x\in(0,\infty)$ such that $\mu(B_d(x,2r))\leq C_x\mu(B_d(x,r))$ for all $r\in(0,r_x]$.
\begin{prop} \label{tot1}
Let $(X,d,\mu)$ be a quasi-metric-measure space and let $c\in(1,\infty)$. Then, $(X,d)$ is totally bounded if and only if $(X,d)$ is bounded and $\mu$ is $(c,\delta)$-doubling for all $\delta \in (0,\diam_d(X)]$. 
\end{prop}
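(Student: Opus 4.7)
The plan is to reduce both directions of the equivalence to Proposition~\ref{tot}, which characterizes total boundedness of $(X,d)$ by the conjunction of $\mu(X)<\infty$ and $h(r):=\inf_{x\in X}\mu(B_d(x,r))>0$ for every $r\in(0,\infty)$.

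For the forward direction, suppose $(X,d)$ is totally bounded. Boundedness of $(X,d)$ is immediate: cover $X$ by finitely many unit balls $\{B_d(x_i,1)\}_{i=1}^{N}$ and iterate the quasi-triangle inequality to bound $\diam_d(X)$ by a finite multiple of $\max_{i,j}d(x_i,x_j)$. To produce the $(c,\delta)$-doubling property for $\delta\in(0,\diam_d(X)]$, Proposition~\ref{tot} supplies $\mu(X)<\infty$ and $h(\delta)>0$, and then the crude estimate
\[
\mu(B_d(x,c\delta))\leq \mu(X)\leq \frac{\mu(X)}{h(\delta)}\,\mu(B_d(x,\delta))
\]
already shows that $\mu$ is $(c,\delta)$-doubling with $\Delta_c(\delta)\leq\mu(X)/h(\delta)$.

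The reverse direction is the substantive part. Assume $D:=\diam_d(X)<\infty$ and that $\mu$ is $(c,\delta)$-doubling for every $\delta\in(0,D]$. First, since the cardinality of $X$ is at least two so that $D>0$, fixing any $x_0\in X$ yields $X\subseteq B_d(x_0,2D)$, and hence $\mu(X)<\infty$ because all balls have finite measure. Next, I verify $h(r)>0$ for every $r\in(0,\infty)$. When $r>D$, every $y\in X$ satisfies $d(x,y)\leq D<r$, so $B_d(x,r)=X$ for all $x\in X$ and $h(r)=\mu(X)>0$. For $r\in(0,D]$, let $k\in\mathbb{N}$ be the smallest positive integer with $c^kr>D$; then $c^jr\leq c^{k-1}r\leq D$ for every $j\in\{0,\ldots,k-1\}$, so iterating the $(c,c^jr)$-doubling hypothesis gives
\[
\mu(X)=\mu(B_d(x,c^kr))\leq\Bigl(\prod_{j=0}^{k-1}\Delta_c(c^jr)\Bigr)\,\mu(B_d(x,r)),
\]
producing an $x$-independent positive lower bound on $\mu(B_d(x,r))$. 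Applying Proposition~\ref{tot} then delivers total boundedness of $(X,d)$.

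The only point needing care is the bookkeeping in the iteration: choosing $k$ so that every intermediate radius $c^jr$ remains in the range $(0,D]$ on which the hypothesis is available, while $c^kr$ strictly exceeds $D$ so the final ball absorbs all of $X$. This is precisely the reason the hypothesis is formulated on the full interval $(0,\diam_d(X)]$ rather than only for small $\delta$, and it is what guarantees that the iteration terminates in a controlled number of steps.
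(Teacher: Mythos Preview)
Your proof is correct. The forward direction matches the paper exactly. For the reverse direction, the paper argues directly: it fixes $\varepsilon\in(0,\diam_d(X)]$, takes a maximal $\varepsilon$-separated set $A$, notes that the balls $\{B_d(x,\varepsilon/(\widetilde{C}_dC_d))\}_{x\in A}$ are pairwise disjoint, and iterates the $(c,\delta)$-doubling hypothesis up to a radius exceeding $\widetilde{C}_dC_d\diam_d(X)$ to bound $\#A$. You instead route through Proposition~\ref{tot}, using the same iteration to show $h(r)>0$ and then invoking the existing characterization. The underlying computation is identical; your organization is a little cleaner because it avoids tracking the constants $C_d,\widetilde{C}_d$ needed for disjointness, at the cost of a second appeal to Proposition~\ref{tot} (whose proof contains precisely that disjoint-balls step).
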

In view of Example~\ref{exdis}, Proposition~\ref{tot1} can fail if $\mu$ is only $(c,\delta)$-doubling for  $\delta\leq\delta_0<\diam_d(X)$.

\begin{proof}
Suppose first that $(X,d)$ is bounded and that $\mu$ is $(c,\delta)$-doubling for all $\delta \in (0,\diam_d(X)]$. It is clear from \eqref{deltadoub} that $\mu$ is $(c,\delta)$-doubling for $\delta \in (0,\infty)$.
To show that $(X,d)$ is totally bounded, fix $\varepsilon \in (0,\diam_d(X)]$ and suppose that $A$ is a maximal $\varepsilon$-separated set in $X$. Since $A$ is $\varepsilon$-net in $X$, it is enough for us to show that $A$ is finite. To this end, consider the smallest $k\in \mathbb{N}$ such that $c^{k}  \varepsilon > \widetilde{C}_d C_d\diam_d(X)$. Given that the collection $\{B_d(x,\varepsilon/(\widetilde{C}_d C_d)) \}_{x\in A}$ of balls are pairwise disjoint, we have
\begin{align*}
\mu(X) &\geq \sum_{x\in A} \mu(B_d(x,\varepsilon/(\widetilde{C}_d C_d))) \geq [\Delta_c(\varepsilon/(\widetilde{C}_dC_d))]^{-1} \sum_{x\in A} \mu(B_d(x,c\varepsilon/(\widetilde{C}_dC_d))) \\
&\geq [\Delta_c(\varepsilon/(\widetilde{C}_dC_d))]^{-1} [\Delta_c(c\varepsilon/(\widetilde{C}_dC_d))]^{-1} \sum_{x\in A} \mu(B_d(x,c^2\varepsilon/(\widetilde{C}_dC_d))) \\ &\geq \dots \geq \prod_{\ell=0}^{k-1} [\Delta_c(c^\ell\varepsilon/(\widetilde{C}_dC_d))]^{-1} \sum_{x\in A} \mu(B_d(x,c^k\varepsilon/(\widetilde{C}_dC_d))) \\
&\geq \prod_{\ell=0}^{k-1} [\Delta_c(c^\ell\varepsilon/(\widetilde{C}_dC_d))]^{-1} \mu(X)\cdot \#A
\end{align*}
Hence $\#A \leq \prod_{\ell=0}^{k-1} \Delta_c(c^\ell\varepsilon/C_d)<\infty$, as wanted.

Now, assume that $(X,d)$ is totally bounded. Since we clearly have that $(X,d)$ is bounded, we focus on proving that $\mu$ is $(c,\delta)$-doubling for all $\delta \in (0,\diam_d(X)]$. First, note that $\mu(X)<\infty$ since $(X, d, \mu)$ is a bounded quasi-metric-measure space. Now, having Proposition~\ref{tot} in mind, let $\delta \in (0,\diam_d(X)]$ and take $h(\delta) := \inf_{y\in X} \mu(B_d(y,\delta))>0$. Then for $x\in X$, we have
\begin{align*}
\mu(B_d(x,c \delta)) \leq\frac{\mu(X)}{h(\delta)} \mu(B_d(x,\delta)).
\end{align*}
Hence $\mu$ satisfies \eqref{deltadoub} with $C(\delta):= \mu(X)/h(\delta)\in[1,\infty)$ and the proof of the proposition is now complete.
\end{proof}
Gathering Proposition~\ref{tot} with Proposition~\ref{tot1} we get the following theorem.
\begin{tw}\label{allin}
Let $(X,d,\mu)$ be a quasi-metric-measure space and let $c\in(1,\infty)$. Then, the following conditions are equivalent:
\begin{enumerate}
\item $(X, d)$ is totally bounded;
\item $\mu(X)< \infty$ and for all $r\in(0,\infty)$,  $\inf_{x\in X} \mu (B_d(x,r))>0$;
\item  $(X,d)$ is bounded and $\mu$ is $(c,\delta)$-doubling for all $\delta \in (0,\diam_d(X)]$.
\end{enumerate}
\end{tw}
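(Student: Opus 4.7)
My plan is to prove Theorem~\ref{allin} as a direct corollary of the two propositions that immediately precede it in the text. The three-way equivalence splits into two natural pieces: the equivalence (1)~$\Leftrightarrow$~(2) is precisely the content of Proposition~\ref{tot}, while the equivalence (1)~$\Leftrightarrow$~(3) is precisely the content of Proposition~\ref{tot1}. Chaining these two biconditionals through statement (1) yields (2)~$\Leftrightarrow$~(1)~$\Leftrightarrow$~(3), which is the assertion of the theorem. Hence no new argument is required beyond invoking these two results.

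To spell out the organization I would use: first I would cite Proposition~\ref{tot} to record that $(X,d)$ is totally bounded if and only if $\mu(X)<\infty$ and the ``non-collapsing'' lower bound $\inf_{x\in X}\mu(B_d(x,r))>0$ holds for every $r\in(0,\infty)$. Next I would cite Proposition~\ref{tot1} to record that $(X,d)$ is totally bounded if and only if $(X,d)$ is bounded and $\mu$ is $(c,\delta)$-doubling for all $\delta\in(0,\diam_d(X)]$. Combining these two statements through the common condition of total boundedness yields the full equivalence claimed.

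There is no genuine obstacle to overcome at this stage: whatever nontrivial work was needed has already been absorbed into the proofs of Propositions~\ref{tot} and \ref{tot1} — in particular the iterated $(c,\delta)$-doubling estimate on a maximal $\varepsilon$-separated set used to pass from (3) to (1), and the use of the finite-measure and positive-measure-of-balls standing assumptions to pass from (1) to (2). Consequently, Theorem~\ref{allin} follows by merely assembling the two equivalences, and this is exactly the strategy suggested by the lead-in sentence ``Gathering Proposition~\ref{tot} with Proposition~\ref{tot1} we get the following theorem.''
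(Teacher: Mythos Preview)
Your proposal is correct and matches the paper's approach exactly: the paper does not even supply a separate proof, merely stating ``Gathering Proposition~\ref{tot} with Proposition~\ref{tot1} we get the following theorem,'' which is precisely the chaining through condition (1) that you describe.
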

\begin{prop}\label{doub-diff}
Let $(X, d, \mu)$ be a quasi-metric-measure space and suppose that there exist $\delta_0\in(0,\infty]$ and $c\in(1,\infty)$ such that $\mu$ is $(c,\delta)$-doubling for every
finite $\delta\in(0,\delta_0]$. Then for each fixed $c'\in[1,\infty)$, the measure $\mu$ is $(c',\delta)$-doubling for every finite $\delta\in(0,c\delta_0/c']$.
\end{prop}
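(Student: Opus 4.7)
The plan is to iterate the hypothesis until the radius shrinks below $\delta$. Fix $c' \in [1,\infty)$ and $\delta \in (0, c\delta_0/c']$ with $\delta$ finite. If $c' = 1$ the conclusion is trivial with doubling constant $1$, so I would assume $c' > 1$.

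I would set $N := \lceil \log_c c' \rceil$, the smallest positive integer with $c^N \geq c'$, and introduce the geometric sequence of radii $r_j := c^{-j} c' \delta$ for $j = 0, 1, \ldots, N$. Then $r_0 = c'\delta$, the choice of $N$ forces $r_N \leq \delta$, and consecutive radii satisfy $r_j = c \cdot r_{j+1}$. Each step of the argument therefore rewrites $\mu(B_d(x, r_j)) = \mu(B_d(x, c \cdot r_{j+1}))$ and applies $(c, r_{j+1})$-doubling, which is legitimate as long as $r_{j+1} \leq \delta_0$. The key observation is that the assumption $\delta \leq c\delta_0/c'$ is precisely what ensures the largest of these intermediate radii, namely $r_1 = c'\delta/c$, satisfies $r_1 \leq \delta_0$; since $r_2, \ldots, r_N$ are strictly smaller than $r_1$, the hypothesis applies at every intermediate step. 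Telescoping the resulting bounds gives
\[
\mu(B_d(x, c'\delta)) \;\leq\; \prod_{j=1}^{N} \Delta_c(r_j) \cdot \mu(B_d(x, r_N)) \;\leq\; \prod_{j=1}^{N} \Delta_c(c^{-j} c' \delta) \cdot \mu(B_d(x, \delta)),
\]
where the last inequality uses monotonicity of $\mu$ together with $r_N \leq \delta$. This establishes $(c',\delta)$-doubling with the explicit bound $\Delta_{c'}(\delta) \leq \prod_{j=1}^{N} \Delta_c(c^{-j} c' \delta)$.

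There is no substantive obstacle; the proof is a routine bookkeeping exercise verifying that the constraint on $\delta$ does not degrade under iteration. The only mild subtlety is that when $c' \in (1, c]$ one has $N = 1$, and the permissible range $\delta \leq c\delta_0/c'$ may exceed $\delta_0$; this is not a problem because the hypothesis is invoked only at the smaller radius $c'\delta/c \leq \delta_0$, while the containment $B_d(x, c'\delta/c) \subseteq B_d(x, \delta)$ used in the last step follows from $c' \leq c$ regardless of whether $\delta$ itself lies in $(0, \delta_0]$.
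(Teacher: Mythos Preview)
Your proof is correct and follows essentially the same approach as the paper: choose the smallest integer $N$ with $c^N\geq c'$, iterate the $(c,\cdot)$-doubling hypothesis at the radii $c'\delta/c,\,c'\delta/c^2,\ldots,c'\delta/c^N$, and conclude with the monotonicity step $\mu(B_d(x,c'\delta/c^N))\leq\mu(B_d(x,\delta))$. Your writeup is in fact somewhat more careful than the paper's in spelling out why each intermediate radius lies in $(0,\delta_0]$ and in treating the case $c'\in(1,c]$ explicitly, but the argument and the resulting explicit bound $\Delta_{c'}(\delta)\leq\prod_{j=1}^{N}\Delta_c(c^{-j}c'\delta)$ are identical.
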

\begin{proof}
Let $c'\in[1,\infty)$ and suppose that $\delta\in(0,\infty)$ satisfies $\delta\leq c\delta_0/c'$. Consider the smallest $k\in \mathbb{N}$ such that $c^k\geq c'$. Then $c'\delta/c^\ell\leq\delta_0$ for every $\ell\in\mathbb{N}$ and so we can iterate the $(c,\delta)$-doubling property to estimate for every $x\in X$, 
\begin{align*}
\mu(B_d(x,c'\delta))&=\mu(B_d(x,c(c'\delta/c)))\leq \Delta_c(c'\delta/c) \mu(B_d(x,c'\delta/c)) \\
&\leq \Delta_c(c'\delta/c)\Delta_c(c'\delta/c^2) \mu(B_d(x,c'\delta/c^2)) \\ 
&\leq \dots \leq \prod_{\ell=1}^{k} \Delta_c(c'\delta/c^\ell) \mu(B_d(x,c'\delta/c^k)) \\
&\leq \prod_{\ell=1}^{k} \Delta_c(c'\delta/c^\ell) \mu(B_d(x,\delta)).
\end{align*}
Hence, $\mu$ is $(c',\delta)$-doubling for every finite $\delta\in(0,c\delta_0/c']$ with
$\Delta_{c'}(\delta)\leq\prod_{\ell=1}^{k} \Delta_c(c'\delta/c^\ell)$, and the proof of the proposition is complete.
\end{proof}

The following doubling-type condition will be used later to show that there are quasi-metric-measure spaces where $M^{\alpha}_{p,q}$ and $N^{\alpha}_{p,q}$ never embed compactly into $L^p$ (see Theorem~\ref{thm-doubinf}).

\begin{defi}\label{doub-inf}
Let $(X,d,\mu)$ be an unbounded quasi-metric-measure space such that $\mu(X) <\infty$. With $C_d\in[1,\infty)$ as in \eqref{C-d}, we say that $(X,d,\mu)$ is \emph{doubling at infinity} if there is a point $x_{0}\in X$ and constant $C\in(0,\infty)$ such that 
\begin{align*}
\liminf_{R \to \infty}\dfrac{\mu(X\setminus  B_d(x_0,R))}{\mu(X\setminus  B_d(x_0,C_d R))} < \infty.
\end{align*} 
\end{defi}

\begin{ex}
Let $([0, \infty), d, \mu)$,  where $d:=|\cdot-\cdot|$ is the standard Euclidean distance and $\mu$ is defined by $d\mu := f(x) dx$, where $f \equiv 1$ on $[0,1)$ and for $k\geq 0$ 
\begin{align*}
f(x) := a_{k} \exp(2^{k}x - 2\cdot 4^{k}) \textit{ if } x\in [2^{k},2^{k+1}),
\end{align*}
where $a_k = \frac{\exp(4^{k})}{\exp(4^k)-1}$. Then, $\mu$ is doubling at infinity, but not $(2,\delta)$-doubling for any $\delta\in(0,\infty)$. 
\end{ex}
\begin{proof}
Note that $C_d=2$. Let  $N\in \mathbb{N}$ and $k\in \mathbb{N}$ be such that $2^{k} \leq N < 2^{k+1}$. Then 
\begin{align*}
\dfrac{\mu(X\setminus  B_d(0,N))}{\mu(X\setminus  B_d(0,2N))} \leq \dfrac{\mu(X\setminus  B_d(0,2^{k}))}{\mu(X\setminus  B_d(0,2^{k+2}))} = \dfrac{\sum_{l=k}^{\infty}2^{-l}}{\sum_{l=k+2}^{\infty}2^{-l}} = 4,
\end{align*} 
and hence, $\mu$ is doubling at infinity.

Next, we will shall show that $\mu$ is not $(2,\delta)$-doubling for any $\delta\in(0,\infty)$. 
Let $\delta\in(0,\infty)$  and consider a sufficiently large $k\in \mathbb{N}\cap [2, \infty)$ for which $\delta < 2^{k-2}$. Since $2^{k} + 2\delta < \frac{3}{2}2^{k}$, we have
\begin{align*}
\mu(B_d(2^{k} + \delta,\delta)) \leq a_{k} \int\limits_{(2^{k}, \frac{3}{2}2^{k})} \exp(2^{k}x - 2\cdot 4^{k})dx = a_k 2^{-k} \Big(\exp\big(-\frac{1}{2}4^k\big) - \exp\big(-4^k\big)\Big).
\end{align*}
On the other hand, we have 
\begin{align*}
\mu(B_d(2^{k} + \delta,2\delta)) \geq \mu((2^{k}-\delta,2^{k})) = a_{k-1}\int\limits_{(2^{k}-\delta,2^{k})} \exp(2^{k-1}x - 2\cdot 4^{k-1}) dx = 2^{-k + 1} a_{k-1} (1 - \exp(-\delta 2^{k-1})).
\end{align*}
Combining  the above inequalities we get
\begin{align*}
\dfrac{\mu(B_d(2^{k} + \delta,2\delta))}{\mu(B_d(2^{k} + \delta,\delta))} \geq \dfrac{2a_{k-1}}{a_k} \dfrac{1 - \exp(-\delta 2^{k-1})}{\exp\big(-\frac{1}{2}4^k\big) - \exp\big(-4^k\big)} \to \infty,
\end{align*}
when $k\to \infty$. Hence,  $\mu$ is not $(2,\delta)$-doubling, as wanted.
\end{proof}

\subsection{Integrable measures}
\label{subsect: integrable measures}

We now introduce an integrability condition for a measure which, as we will see in Subsection~\ref{subsect:p}, turns out to be closely related to totally boundedness and compact embeddings of the spaces $M^\alpha_{p,q}$ and $N^\alpha_{p,q}$ into $L^p$.
\begin{defi}
Let $(X, d, \mu)$ be a quasi-metric-measure space. The measure $\mu$ is said to be \emph{integrable} if there exists a quasi-metric $\rho$ on $X$ which is equivalent to $d$ and satisfies the following condition: for each fixed $r\in(0,\infty)$, the mapping $x\mapsto\mu(B_\rho(x,r))$ is well defined, measurable, and
\begin{eqnarray}\label{warunek}
\int_X \frac{1}{\mu(B_\rho(x,r))}\, d\mu (x)< \infty.
\end{eqnarray}
\end{defi}
If $d$ is a genuine \textit{metric} then the mapping $x\mapsto\mu(B_d(x,r))$ is always well defined and measurable. For a general quasi-metric, this mapping may not be measurable; however, it follows from Lemma~\ref{DST1} that there is always a quasi-metric $\rho$ on $X$ which is equivalent to $d$ and is continuous on $X\times X$. Hence, the mapping $x\mapsto\mu(B_\rho(x,r))$ is well defined and measurable\footnote{It is straightforward to show that this mapping is lower semicontinuous using the continuity of $\rho$.} for each fixed $r\in(0,\infty)$.
\begin{ex}\label{exint1}
Let $(\mathbb{R}_{+}, d, \mu)$, where $d:=|\cdot-\cdot|$ is the standard Euclidean distance and $\mu$ is defined by $d\mu:=e^{-x^{\beta}}dx$ for any fixed $\beta\in (1,\infty)$. Then  $\mu (\mathbb{R}_{+})<\infty$ and $\mu$ is integrable.
\end{ex}
\begin{proof}
Fix $r\in(0,\infty)$. Then
\begin{align*}
\int\limits_{\mathbb{R}_{+}} \dfrac{1}{\mu(B_d(x,r))}\, d\mu(x) &= \int\limits_{(r,\infty)}\dfrac{e^{-x^{\beta}}}{\int\limits_{x-r}^{x+r} e^{-y^{\beta}}dy} dx + \int\limits_{(0,r)} \dfrac{e^{-x^{\beta}}}{\int\limits_{0}^{x+r} e^{-y^{\beta}}dy} dx \\
&\leq \int\limits_{(r,\infty)}\dfrac{e^{-x^{\beta}}}{\int\limits_{x-r}^{x-r/2} e^{-y^{\beta}}dy} dx +   \int\limits_{(0,r)} \dfrac{e^{-x^{\beta}}}{(x+r)e^{-(x+r)^{\beta}}} dx \\
&\leq \int\limits_{(r,\infty)} \dfrac{e^{-x^{\beta}}}{\frac{r}{2}e^{-(x-\frac{r}{2})^{\beta}}} dx + \frac{1}{r}\int\limits_{(0,r)} e^{-x^{\beta} + (x+r)^{\beta}} dx \\
&\leq \frac{2}{r}\int\limits_{(r,\infty)} e^{-x^{\beta}+(x-\frac{r}{2})^{\beta}}dx + \frac{1}{r}\int\limits_{(0,r)} e^{(2r)^{\beta}} dx \\
&\leq \frac{2}{r}\int\limits_{(r,\infty)} e^{-\beta \frac{r}{2}(x-\frac{r}{2})^{\beta-1}}dx   +e^{(2r)^{\beta}} <\infty,
\end{align*}
where we have made use of the  mean value theorem in obtaining the fourth inequality. Therefore, $\mu$ is integrable.
\end{proof}

\begin{ex} \label{expbeta}
Let $(\mathbb{R}_{+}, d, \mu)$, where $d:=|\cdot-\cdot|$ is the standard Euclidean distance and $\mu$ is defined by $d\mu:=e^{x^{\beta}}dx$ for any fixed $\beta\in (1,\infty)$. Then $\mu (\mathbb{R}_{+})=\infty$ and $\mu$ is integrable.
\end{ex}
\begin{proof}
Clearly, $\mu(\mathbb{R}_{+})=\infty$. Observe that for all $x\in\mathbb{R}_{+}$ and $r\in(0,\infty)$, we have
\[
	\mu(B_d(x,r))=\int_{\max\{0,x-r\}}^{x+r}e^{y^{\beta}}dy \geq \int_{x+r/2}^{x+r}e^{y^{\beta}}dy\geq \frac{r}{2}e^{(x+r/2)^{\beta}}.
\]
Combining this with the mean value theorem (applied to the function $f(t)=t^\beta$), gives
\begin{align*}
\int\limits_{\mathbb{R}_{+}} \dfrac{1}{\mu(B_d(x,r))}\, d\mu(x) =\int_{\mathbb{R}_{+}}  \frac{e^{x^{\beta}}}{\mu(B_d(x,r))}dx \leq \frac{2}{r} \int_{\mathbb{R}_{+}}  \frac{dx}{e^{(x+r/2)^{\beta}-x^{\beta}}}
\leq \frac{2}{r} \int_{\mathbb{R}_{+}}  \frac{dx}{e^{\frac{\beta r}{2} x^{\beta-1}}}<\infty.
\end{align*}
Hence, $\mu$ is integrable.
\end{proof}
\begin{prop}\label{TB-int}
Let $(X,d,\mu)$ be a quasi-metric-measure space. If $(X, d)$ is totally bounded, then $\mu$ is integrable.
\end{prop}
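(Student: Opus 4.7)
The plan is to exploit the equivalent continuous quasi-metric guaranteed by Lemma~\ref{DST1} together with Proposition~\ref{tot}. First I would apply Lemma~\ref{DST1} to produce a symmetric quasi-metric $\rho$ on $X$ equivalent to $d$ that is continuous on $(X,\tau_d)\times(X,\tau_d)$. Because $\rho$ is continuous and $d\approx\rho$ implies $\tau_\rho=\tau_d$, for each fixed $r\in(0,\infty)$ the function $x\mapsto\chi_{B_\rho(x,r)}$ has the lower semicontinuity features needed to conclude that $x\mapsto\mu(B_\rho(x,r))$ is lower semicontinuous (by Fatou's lemma applied to $y\mapsto\chi_{\{\rho(x_n,y)<r\}}$ as $x_n\to x$), and hence Borel measurable. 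This settles the measurability requirement in the definition of integrability with respect to the chosen $\rho$.

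Next, since $(X,d)$ is totally bounded and $\rho\approx d$, one checks directly from the definition that $(X,\rho)$ is also totally bounded: any finite $(\kappa^{-1}\varepsilon)$-net for $d$ is an $\varepsilon$-net for $\rho$, where $\kappa$ is the equivalence constant. Applying Proposition~\ref{tot} to the quasi-metric-measure space $(X,\rho,\mu)$ then yields two facts: $\mu(X)<\infty$, and for every $r\in(0,\infty)$,
\begin{equation*}
h_\rho(r):=\inf_{x\in X}\mu(B_\rho(x,r))>0.
\end{equation*}

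With these two facts in hand, the integrability condition is immediate: for each fixed $r\in(0,\infty)$,
\begin{equation*}
\int_X\frac{1}{\mu(B_\rho(x,r))}\,d\mu(x)\leq\frac{1}{h_\rho(r)}\int_X d\mu(x)=\frac{\mu(X)}{h_\rho(r)}<\infty,
\end{equation*}
which shows that $\mu$ is integrable (relative to $\rho$, as allowed by the definition).

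I do not anticipate a serious obstacle here; the only mildly delicate point is the measurability of $x\mapsto\mu(B_\rho(x,r))$, which is precisely the reason for invoking Lemma~\ref{DST1} in order to replace $d$ by a topologically equivalent, continuous (symmetric) quasi-metric $\rho$ whose open balls vary in a lower semicontinuous manner. Once that is in place, the bound $\mu(X)/h_\rho(r)$ provided by Proposition~\ref{tot} finishes the argument in one line.
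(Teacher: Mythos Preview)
Your proposal is correct and follows essentially the same approach as the paper: invoke Lemma~\ref{DST1} to pass to an equivalent continuous quasi-metric $\rho$ (ensuring measurability of $x\mapsto\mu(B_\rho(x,r))$), observe that $(X,\rho)$ remains totally bounded, apply Proposition~\ref{tot} to obtain $\mu(X)<\infty$ and $h_\rho(r)>0$, and then bound the integral by $\mu(X)/h_\rho(r)$. The only difference is cosmetic: you spell out the lower semicontinuity argument via Fatou's lemma, whereas the paper simply asserts measurability (with the lower semicontinuity justification relegated to a footnote following the definition of integrability).
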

Before proving Proposition~\ref{TB-int}, we make a few remarks. First, in light of Lemma~\ref{doubbdd}, Proposition~\ref{TB-int}, and Theorem~\ref{allin}, on a quasi-metric-measure spaces of finite measure, every doubling measure and every locally lower-Ahlfors regular\footnote{see \eqref{llAr} in Subsection~\ref{subsect:p-greater} for the definition of a locally lower-Ahlfors regular measure.} measure is integrable. Moreover, one can see from Examples~\ref{exint1}, \ref{expbeta}, and \ref{infinitecomb} that there are integrable measures on metric-measure spaces which are not totally bounded. Under certain assumptions, integrability of the measure fully characterizes totally bounded quasi-metric-measure spaces (see Theorems~\ref{piate} and \ref{piate-ult}). Finally, the total boundedness assumption in Proposition~\ref{TB-int} cannot be relaxed to simply boundedness, in general (see, for instance, Example~\ref{exdis}).
\begin{proof}[Proof of Proposition~\ref{TB-int}]
Suppose $(X, d)$ is totally bounded. By Lemma~\ref{DST1}, there exists a symmetric quasi-metric $\rho$ on $X$ which is equivalent to $d$ and for which the mapping $x\mapsto\mu(B_\rho(x,r))$ is well defined and measurable for each fixed $r\in(0,\infty)$. Since $\rho\approx d$, we have that the quasi-metric space $(X,\rho)$ is totally bounded. In particular, by Proposition~\ref{tot} we have that $h(r):=\inf_{x\in X} \mu (B_\rho(x,r))>0$ and therefore,
\[
	\int_X \frac{1}{\mu(B_\rho(x,r))}\, d\mu (x) \leq\frac{\mu(X)}{h(r)} < \infty.
\]
Hence, $\mu$ is integrable.
\end{proof}
\begin{prop}
Let $(X,d,\mu)$ be an unbounded quasi-metric-measure space with the property that $\mu$ is $(C_d,\delta)$-doubling for some fixed $\delta\in(0,\infty)$, where $C_d\in[1,\infty)$ is as in \eqref{C-d}. Suppose that the mapping $x\mapsto \mu(B_d(x,\delta))$ is measurable. Then $\mu$ is not integrable.
\end{prop}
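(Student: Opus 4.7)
The plan is a proof by contradiction. Assume $\mu$ were integrable, witnessed by some quasi-metric $\rho\approx d$ with equivalence constant $\kappa\in[1,\infty)$, so that $B_{\rho}(x,\delta/\kappa)\subseteq B_d(x,\delta)$ for every $x\in X$. Then, applying the integrability of $\mu$ at radius $r:=\delta/\kappa$ yields
\begin{equation*}
\int_X \frac{1}{\mu(B_d(x,\delta))}\,d\mu(x)\;\leq\;\int_X \frac{1}{\mu(B_\rho(x,r))}\,d\mu(x)\;<\;\infty,
\end{equation*}
so it suffices to show that the left-hand integral is in fact $+\infty$. The measurability assumption on $x\mapsto \mu(B_d(x,\delta))$ guarantees that this integral is well defined in $[0,\infty]$.

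The next step is to construct an infinite sequence $\{y_i\}_{i=1}^{\infty}\subseteq X$ such that the balls $\{B_d(y_i,\delta)\}_{i=1}^{\infty}$ are pairwise disjoint. Since $(X,d)$ is unbounded it fails to be totally bounded, so a greedy construction applied to the symmetrization $\widetilde{d}(x,y):=\max\{d(x,y),d(y,x)\}$ (which is equivalent to $d$ and keeps $X$ unbounded) produces an infinite $R$-separated sequence for $R:=C_d\widetilde{C}_d\delta$: at each stage, if no new point could be added, then $X$ would equal a finite union of $\widetilde{d}$-balls of radius $R$, hence be bounded. A short computation with the quasi-triangle inequality \eqref{TR-ineq.2} and the constant $\widetilde{C}_d$ then shows that $\widetilde{d}(y_i,y_j)\geq R$ for $i\neq j$ is already enough to force $B_d(y_i,\delta)\cap B_d(y_j,\delta)=\emptyset$.

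The heart of the argument is a pointwise domination supplied by the doubling assumption: for every $i\geq 1$ and every $x\in B_d(y_i,\delta)$, the quasi-triangle inequality gives $B_d(x,\delta)\subseteq B_d(y_i,C_d\delta)$, and the $(C_d,\delta)$-doubling hypothesis then yields
\begin{equation*}
\mu(B_d(x,\delta))\;\leq\;\mu(B_d(y_i,C_d\delta))\;\leq\;\Delta_{C_d}(\delta)\,\mu(B_d(y_i,\delta)).
\end{equation*}
Taking reciprocals and integrating this pointwise lower bound over $B_d(y_i,\delta)$ contributes at least $1/\Delta_{C_d}(\delta)$, and since the $B_d(y_i,\delta)$ are pairwise disjoint, summing over $i$ gives
\begin{equation*}
\int_X \frac{1}{\mu(B_d(x,\delta))}\,d\mu(x)\;\geq\;\sum_{i=1}^{\infty}\frac{1}{\Delta_{C_d}(\delta)}\;=\;\infty,
\end{equation*}
which contradicts the finiteness established in the first step.

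The main obstacle is the careful bookkeeping of the non-symmetry and quasi-triangle constants $C_d$ and $\widetilde{C}_d$: the separation parameter $R$ has to be picked large enough that the balls $B_d(y_i,\delta)$ are \emph{genuinely} disjoint (a task complicated by the fact that $d$ need not be symmetric), yet the greedy construction must still continue forever. Once the right $R$ is identified, the rest of the argument reduces to a single application of the $(C_d,\delta)$-doubling hypothesis together with a countable disjoint-union estimate.
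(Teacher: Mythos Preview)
Your proof is correct and follows essentially the same approach as the paper: both arguments construct an infinite $C_d\widetilde{C}_d\delta$-separated set (you via a greedy procedure on the symmetrization $\widetilde{d}$, the paper via a maximal separated set), use the inclusion $B_d(x,\delta)\subseteq B_d(y_i,C_d\delta)$ together with the $(C_d,\delta)$-doubling hypothesis to bound $\int_{B_d(y_i,\delta)}\mu(B_d(x,\delta))^{-1}\,d\mu(x)\geq 1/\Delta_{C_d}(\delta)$, sum over the disjoint balls to get $\infty$, and then pass to an arbitrary equivalent $\rho$ via the inclusion $B_\rho(x,\delta/\kappa)\subseteq B_d(x,\delta)$. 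The only cosmetic difference is that the paper proves the integral with respect to $d$ diverges first and then transfers to $\rho$, whereas you set up the contradiction with $\rho$ first and then reduce to the $d$-integral; the content is identical.
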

\begin{proof}
Let $A \subseteq X$ be a maximal $C_d\widetilde{C}_d \delta$-separated set, where $\widetilde{C}_d \in[1,\infty)$  is as in \eqref{C-d-tilde}. Since $X$ is unbounded, we have $\#A =\infty$. Moreover, for any $x, y \in A$ with $x\neq y$ we have $B_d(x,\delta)\cap B_d(y,\delta)=\emptyset$. Hence,
\begin{align*}
\int_X \frac{1}{\mu(B_d(x,\delta))}\, d\mu (x) &\geq \sum_{y\in A} \int_{B_d(y,\delta)} \frac{1}{\mu(B_d(x,\delta))}\, d\mu (x) \\
& \geq \sum_{y\in A} \int_{B_d(y,\delta)} \frac{1}{\mu(B_d(y,C_d \delta))}\, d\mu (x)\\
& = \sum_{y\in A}  \frac{\mu(B_d(y,\delta))}{\mu(B_d(y,C_d\delta))}\geq \sum_{y\in A}  \frac{1}{\Delta_{C_d}(\delta)}=\infty.
\end{align*}
Now suppose that $\rho$ is any quasi-metric on $X$ which is equivalent to $d$ and has the property that the mapping $x\mapsto\mu(B_\rho(x,r))$ is well defined and measurable for each fixed $r\in(0,\infty)$. Since $\rho\approx d$, we
can find a positive constant $\kappa$ such that $B_\rho(x,\kappa^{-1}\delta)\subseteq B_d(x, \delta)$ for any $x\in X$. Therefore,
\[\infty=\int_X \frac{1}{\mu(B_d(x,\delta))}\, d\mu (x)\leq\int_X \frac{1}{\mu(B_\rho(x, \kappa^{-1}\delta))}\, d\mu (x),
\]
from which it follows that $\mu$ is not integrable.
\end{proof}

\section{Borel regularity and Lusin's theorem}
\label{sect:lusin}
In this section we record a version of Lusin's theorem (Theorem~\ref{lusin}) for topological spaces equipped with a  finite Borel measure under minimal assumptions.
We begin by recording a few measure theoretic notions.
Given an arbitrary 
set $X$ and a topology $\tau$ on $X$, recall that
${\it Borel}_{\tau}(X)$ denotes the Borel sigma-algebra of 
$X$, and that $\mu:\mathfrak{M}\to[0,\infty]$ is a Borel measure if the sigma-algebra, $\mathfrak{M}$, of subsets of $X$ satisfies ${\it Borel}_{\tau}(X)\subseteq\mathfrak{M}$. In this context, $\mu$ is said to be \emph{Borel-regular} if $\mu$ is a Borel measure on $X$ with the property that every measurable set is contained in a Borel set of equal measure.
The reader 
is alerted to the fact that for a Borel measure, we merely
demand that $\mathfrak{M}$ contains ${\textit Borel}_{\tau}(X)$
and not necessarily that 
$\mathfrak{M}={\textit Borel}_{\tau}(X)$. In the latter
case the measure $\mu$ would automatically be Borel-regular.

\begin{lem}
\label{Borel-in-out-reg}
Let $(X, \tau)$ be a topological space and $\mu:\mathfrak{M}\to[0,\infty]$ be a Borel measure on $X$ such that $\mu(X) < \infty$. Then, the following conditions are equivalent:
\begin{enumerate}[i)]
\item for every open set $U\subseteq X$ there exist an $F_{\sigma}$-set $F$ and a null set $Z$ such that $U=F \cup Z$,
\item for  every Borel set $B\subseteq X$ we have 
\[
\mu(B) =\mathop{\sup}_{\substack{C- \text{closed},\\ C\subseteq B}} \mu(C).
\]
\end{enumerate}
\end{lem}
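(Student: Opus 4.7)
The plan is to establish both implications, leveraging $\mu(X)<\infty$ to move between closed-inner and open-outer approximations via complementation. The easy direction is $(ii)\Rightarrow(i)$: given an open set $U$, for each $n\in\mathbb{N}$ apply (ii) to choose a closed $C_n\subseteq U$ with $\mu(U\setminus C_n)<1/n$, set $F:=\bigcup_{n\ge 1}C_n$, which is an $F_\sigma$-set contained in $U$, and observe that $\mu(U\setminus F)\le\inf_n\mu(U\setminus C_n)=0$. Thus $Z:=U\setminus F$ is a null set and $U=F\cup Z$.

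The substantive direction is $(i)\Rightarrow(ii)$, which I would prove by a $\sigma$-algebra argument. Let $\mathcal{A}$ denote the class of all $B\in\mathfrak{M}$ for which, given any $\varepsilon>0$, one can find a closed set $C$ and an open set $V$ with $C\subseteq B\subseteq V$ and $\mu(V\setminus C)<\varepsilon$. Since $\mu(X)<\infty$, taking complements turns a sandwich $C\subseteq B\subseteq V$ into the sandwich $X\setminus V\subseteq X\setminus B\subseteq X\setminus C$ of equal error, so $\mathcal{A}$ is automatically closed under complementation; in particular every $B\in\mathcal{A}$ is simultaneously inner regular by closed sets and outer regular by open sets. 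That $\mathcal{A}$ contains every open set $U$ is an immediate consequence of (i): writing $U=\bigcup_{n\ge 1}F_n\cup Z$ with each $F_n$ closed and $\mu(Z)=0$, continuity of measure gives $\mu\bigl(U\setminus\bigcup_{n=1}^N F_n\bigr)\to 0$, so a sufficiently large finite union serves as $C$, while $V:=U$ itself provides the outer approximation.

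The main technical step is closure of $\mathcal{A}$ under countable unions. Given $\{B_n\}_{n\ge 1}\subseteq\mathcal{A}$, $B:=\bigcup_n B_n$, and $\varepsilon>0$, choose closed $C_n\subseteq B_n$ and open $V_n\supseteq B_n$ with $\mu(V_n\setminus C_n)<\varepsilon/2^{n+1}$. The open approximation is immediate: $V:=\bigcup_n V_n$ is open, $V\supseteq B$, and $\mu(V\setminus B)\le\sum_n\mu(V_n\setminus B_n)<\varepsilon/2$. For the closed approximation, $\bigcup_n C_n$ need not be closed, so I would use continuity of $\mu$ from below, noting $\mu\bigl(B\setminus\bigcup_{n=1}^N C_n\bigr)\searrow \mu\bigl(B\setminus\bigcup_{n\ge 1}C_n\bigr)\le\sum_n\mu(B_n\setminus C_n)<\varepsilon/2$, so that the truncation $C:=\bigcup_{n=1}^N C_n$ (closed, as a finite union) satisfies $\mu(B\setminus C)<\varepsilon$ for all sufficiently large $N$. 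This is precisely the step that requires $\mu(X)<\infty$, and I expect it to be the main (though routine) obstacle once the correct $\mathcal{A}$ is in place. Since $\mathcal{A}$ is then a $\sigma$-algebra containing the open sets, it contains the Borel $\sigma$-algebra, and (ii) follows.
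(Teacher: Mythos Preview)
Your proof is correct and follows essentially the same approach as the paper. Both arguments run a good-sets principle: the paper first defines the class $\mathcal{F}$ of sets that are inner regular by closed sets, then symmetrizes to $\widetilde{\mathcal{F}}=\{A\in\mathcal{F}:X\setminus A\in\mathcal{F}\}$, which (since $\mu(X)<\infty$) is exactly your class $\mathcal{A}$ of sets admitting a two-sided closed/open sandwich; the closure under countable unions is handled by the same finite-truncation step you spell out.
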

\begin{rem}
From the proof of Lemma~\ref{Borel-in-out-reg}, it is easy to see that \textit{ii)} implies the following stronger version of \textit{i)}: for every Borel set $B\subseteq X$ there exist an $F_{\sigma}$-set $F$ and a null set $Z$ such that $B=F \cup Z$.
\end{rem}

\begin{proof}
Let us suppose that $i)$ holds. We define the set
\[
	\mathcal{F}:=\{A \in \mathfrak{M}: \forall_{\varepsilon >0} \, \text{there exists } \, C\subseteq A \, \text{closed set such that}\, \mu(A\setminus C) <\varepsilon \}.
\]
Let us note that all closed sets and null sets belong to $\mathcal{F}$. Moreover, by arguing as in \cite[Lemma~3.11]{AM15} we have that if $\{A_i\}_{i=1}^{\infty} \subseteq \mathcal{F}$, then the sets $\bigcup_{i=1}^{\infty}A_i, \bigcap_{i=1}^{\infty}A_i \in \mathcal{F}$. Therefore, by assumption, all open sets in $X$ belong to $\mathcal{F}$. Granted this, it is straightforward to check that the set
\[
	\widetilde{\mathcal{F}}:=\{A\in\mathcal{F}:X\setminus A \in \mathcal{F}\}
\]
is a $\sigma$-algebra containing open sets of $X$, and therefore $\widetilde{\mathcal{F}}$ contains $Borel_{\tau}(X)$. Thus, in particular, every Borel set $B\subseteq X$ belongs to $\mathcal{F}$ and this finishes the proof of $ii)$.

Next, let us suppose that $ii)$ holds and let $U\subseteq X$ be an open set. Then, for every $n\in\mathbb{N}$ there exists a closed set $C_n\subseteq X$ such that $C_n \subseteq U$ and $\mu(U \setminus C_n)<1/n$. We can assume that $C_n \subseteq C_{n+1}$ for all $n\in\mathbb{N}$. Let $C:=\bigcup_{n=1}^{\infty} C_n$. Then $C$ is an $F_{\sigma}$-set and we can write $U=C\cup(U\setminus C)$, where $\mu(U\setminus C)=0$. This finishes the proof of $i)$ and hence, the proof of the lemma.
\end{proof}
\begin{tw}\label{in-out-reg}
Let $(X, \tau)$ be a topological space and $\mu:\mathfrak{M}\to[0,\infty]$ be a Borel measure on $X$ such that $\mu(X) < \infty$. Then, the following conditions are equivalent:
\begin{enumerate}[i)]
\item $\mu$ is Borel regular and for every open set $U\subseteq X$ there exist a $F_{\sigma}$-set $F$ and a null set $Z$ such that $U=F \cup Z$,
\item for  every $E \in \mathfrak{M}$ and every $\varepsilon\in(0,\infty)$, there exist an open set $U_{\varepsilon}\subseteq X$ and a closed set $C_{\varepsilon}\subseteq X$  such that $C_{\varepsilon} \subseteq E \subseteq U_{\varepsilon}$ and 
\[
		\mu(U_{\varepsilon} \setminus E),\,\, \mu(E \setminus C_{\varepsilon}) < \varepsilon.
\]
\end{enumerate}
\end{tw}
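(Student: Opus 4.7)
The plan is to prove the two implications separately, leveraging the finiteness $\mu(X)<\infty$ to freely complement sets and Lemma~\ref{Borel-in-out-reg} to bridge the $F_\sigma$ decomposition of open sets with inner approximation by closed sets for every Borel set.

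For the direction $(ii)\Rightarrow(i)$, I would deduce Borel-regularity of $\mu$ by a standard $G_\delta$-hull construction: given $E\in\mathfrak{M}$, apply $(ii)$ with $\varepsilon=1/n$ for each $n\in\mathbb{N}$ to produce an open $U_n\supseteq E$ with $\mu(U_n\setminus E)<1/n$, and set $B:=\bigcap_{n=1}^{\infty}U_n$. Then $B$ is Borel, $E\subseteq B$, and $\mu(B\setminus E)\leq\mu(U_n\setminus E)<1/n$ for every $n$, hence $\mu(B)=\mu(E)$. For the $F_\sigma$-decomposition of open sets, apply $(ii)$ to a given open $U\in\mathfrak{M}$ to obtain closed sets $C_n\subseteq U$ with $\mu(U\setminus C_n)<1/n$; then $F:=\bigcup_{n=1}^{\infty}C_n$ is an $F_\sigma$ subset of $U$ with $Z:=U\setminus F$ satisfying $\mu(Z)=0$, and the decomposition $U=F\cup Z$ is as required.

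For the reverse direction $(i)\Rightarrow(ii)$, I would first invoke Lemma~\ref{Borel-in-out-reg} to obtain inner approximation by closed sets for every Borel set. The key observation is that, because $\mu(X)<\infty$, this inner regularity automatically upgrades to outer regularity by open sets for Borel sets via complementation: given a Borel $B$ and $\varepsilon>0$, applying inner regularity to the Borel set $X\setminus B$ yields a closed set $C\subseteq X\setminus B$ with $\mu((X\setminus B)\setminus C)<\varepsilon$; then $U:=X\setminus C$ is open, $B\subseteq U$, and $\mu(U\setminus B)=\mu((X\setminus B)\setminus C)<\varepsilon$. It remains to lift both statements from Borel sets to arbitrary $E\in\mathfrak{M}$ by exploiting Borel-regularity of $\mu$. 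For the outer approximation, pick a Borel $B\supseteq E$ with $\mu(B\setminus E)=0$ and apply outer regularity to $B$ to find an open $U_\varepsilon\supseteq B\supseteq E$ with $\mu(U_\varepsilon\setminus E)=\mu(U_\varepsilon\setminus B)+\mu(B\setminus E)<\varepsilon$. For the inner approximation, apply Borel-regularity instead to $X\setminus E$, obtaining a Borel $B'\supseteq X\setminus E$ with $\mu(B'\setminus(X\setminus E))=0$; then $\widetilde{B}:=X\setminus B'$ is a Borel subset of $E$ with $\mu(E\setminus\widetilde{B})=0$, and applying inner regularity to $\widetilde{B}$ produces a closed $C_\varepsilon\subseteq\widetilde{B}\subseteq E$ with $\mu(E\setminus C_\varepsilon)=\mu(E\setminus\widetilde{B})+\mu(\widetilde{B}\setminus C_\varepsilon)<\varepsilon$.

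The main obstacle is precisely this last step: Borel-regularity delivers outer Borel hulls for measurable sets, but what is needed for inner approximation is an inner Borel kernel. The trick---which relies essentially on $\mu(X)<\infty$---is to take complements, converting an outer Borel hull of $X\setminus E$ into an inner Borel kernel of $E$, at which point Lemma~\ref{Borel-in-out-reg} applies.
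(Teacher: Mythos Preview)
Your proof is correct and follows essentially the same strategy as the paper: both directions rely on Lemma~\ref{Borel-in-out-reg}, Borel hulls, and complementation using $\mu(X)<\infty$. The only noteworthy difference is in the inner-approximation step of $(i)\Rightarrow(ii)$: the paper works with a single Borel hull $B\supseteq E$, approximates $B$ from inside by a closed set $C$, covers the residual $B\setminus E$ by a small open set $O$ (via the already-established outer regularity), and takes $C_\varepsilon:=C\setminus O$; you instead dualize by taking a Borel hull of $X\setminus E$ to produce an inner Borel kernel $\widetilde{B}\subseteq E$ and then approximate $\widetilde{B}$ directly. Your variant is slightly more symmetric, while the paper's avoids a second appeal to Borel regularity; both are equally valid.
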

\begin{proof}
Let us suppose that $i)$ holds and let us fix $E \in \mathfrak{M}$, $\varepsilon\in(0,\infty)$. Then there exists a Borel set $B$ such that $E\subseteq B$ and $\mu(E)=\mu(B)$. By Lemma~\ref{Borel-in-out-reg} there exists a closed set $D_{\varepsilon}\subseteq X$ such that $D_{\varepsilon}\subseteq X \setminus B$ and $\mu((X\setminus B)\setminus D_{\varepsilon}) < \varepsilon$. Therefore, $U_{\varepsilon}:=X\setminus D_{\varepsilon}$ is an open set in $\tau$ satisfying $E\subseteq B \subseteq U_{\varepsilon}$, and since $\mu(B)=\mu(E)<\infty$, we can estimate
\[
	\mu(U_{\varepsilon} \setminus E) = \mu(U_{\varepsilon} \setminus B) = \mu((X\setminus B)\setminus D_{\varepsilon}) < \varepsilon.
\] 
Now, we shall prove inner-regularity. From the above considerations there exists an open set $O\subseteq X$ such that $B\setminus E \subseteq O$ and
\[
	\mu(O) < \mu(B \setminus E) + \varepsilon /2
\]
and by Lemma~\ref{Borel-in-out-reg} there exists a closed set $C \subseteq B$ such that 
\[
	\mu(B)<\mu(C) +\varepsilon /2.
\]
Therefore, $C_{\varepsilon}:=C \setminus O$ is a closed set contained in $E$ and 
\[
	\mu(E\setminus C_{\varepsilon}) \leq \mu(B \setminus C)+  \mu(O \setminus (B \setminus E) )< \varepsilon.
\] 

Next, let us assume that $ii)$ holds. Let $E \in \mathfrak{M}$, then for every $n\in\mathbb{N}$ there exists an open set $U_n\subseteq X$ such that $E \subseteq U_n$ and $\mu (U_n \setminus E) < 1/n$. We can assume that $U_{n+1} \subseteq U_n$ for all $n\in\mathbb{N}$. Then $B:=\bigcap_{n=1}^{\infty} U_n$ is a Borel set such that $E \subseteq B$ and $\mu(B)=\mu(E)$.  Moreover, by Lemma~\ref{Borel-in-out-reg} we get that every open set is a sum of a null set and $F_{\sigma}$-set and this completes the proof of the theorem.
\end{proof}

\begin{tw}[Lusin Theorem]
\label{lusin}
Let $(X,\tau,\mu)$ be a topological space equipped with a nonnegative Borel regular measure $\mu$ such that $\mu(X) < \infty$, and suppose that every open set $U\subseteq X$ can be written as $U=F \cup Z$, where $F$ is an $F_{\sigma}$-set and  $Z$ is a  null set. Then, for any measurable, finite a.e. function $u:X\to\overline{\mathbb{R}}$ and any $\varepsilon\in(0,\infty)$ there exists closed set $G_{\varepsilon}\subseteq X$ such that 
$\mu(X\setminus G_{\varepsilon}) < \varepsilon$ and restriction of $u$ to $G_{\varepsilon}$ is continuous.
\end{tw}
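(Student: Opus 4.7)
The plan is to prove Theorem~\ref{lusin} following the classical scheme: discretize $u$ at each scale $1/n$, approximate each level set from inside by a closed subset (using the inner regularity supplied by Theorem~\ref{in-out-reg}), and take a countable intersection of the resulting finite unions of closed sets.

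The hypotheses of Theorem~\ref{in-out-reg} are exactly those assumed here, so for every $E \in \mathfrak{M}$ and every $\eta \in (0,\infty)$ there is a closed set $C \subseteq E$ with $\mu(E \setminus C) < \eta$. For each $n \in \mathbb{N}$, I would partition $\mathbb{R}$ into the half-open intervals $I_{n,k} := [k/n,(k+1)/n)$, $k \in \mathbb{Z}$, and set $E_{n,k} := u^{-1}(I_{n,k})$. Each $E_{n,k}$ is measurable, the $E_{n,k}$ are pairwise disjoint, and $\bigcup_{k\in\mathbb{Z}} E_{n,k} = u^{-1}(\mathbb{R}) = X \setminus N$ where $N := u^{-1}(\{\pm\infty\})$ has $\mu(N)=0$. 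Since $\mu(X) < \infty$, the tails $\sum_{|k|>K}\mu(E_{n,k}) \to 0$ as $K \to \infty$, so I can choose $K_n \in \mathbb{N}$ with
\begin{equation*}
\mu\Big(X \setminus \bigcup_{|k|\leq K_n} E_{n,k}\Big) < \frac{\varepsilon}{2^{n+2}}.
\end{equation*}
For each $|k|\leq K_n$, Theorem~\ref{in-out-reg} provides a closed $C_{n,k} \subseteq E_{n,k}$ with $\mu(E_{n,k}\setminus C_{n,k}) < \varepsilon/[(2K_n+1)\cdot 2^{n+2}]$. Setting $F_n := \bigcup_{|k|\leq K_n} C_{n,k}$ yields a \emph{finite} union of pairwise disjoint closed sets (the disjointness being inherited from the $E_{n,k}$); hence $F_n$ is closed and $\mu(X\setminus F_n) < \varepsilon/2^{n+1}$.

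Define $G_\varepsilon := \bigcap_{n\in\mathbb{N}} F_n$, which is closed as an intersection of closed sets and satisfies $\mu(X\setminus G_\varepsilon) \leq \sum_{n\in\mathbb{N}}\varepsilon/2^{n+1} < \varepsilon$. Note that $G_\varepsilon \subseteq F_1 \subseteq u^{-1}(\mathbb{R})$, so $u$ is real-valued on $G_\varepsilon$. To verify continuity of $u|_{G_\varepsilon}$ at an arbitrary $x \in G_\varepsilon$, fix $\eta \in (0,\infty)$ and choose $n \in \mathbb{N}$ with $1/n < \eta$. Then $x$ lies in a unique $C_{n,k_0}$ with $|k_0|\leq K_n$, and the open set $V := X \setminus \bigcup_{|k|\leq K_n,\; k\neq k_0} C_{n,k}$ (the complement of a finite union of closed sets) contains $x$. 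For any $y \in V \cap G_\varepsilon$, one has $y \in F_n$ and $y \notin C_{n,k}$ for $k \neq k_0$, so pairwise disjointness of the $C_{n,k}$ forces $y \in C_{n,k_0} \subseteq u^{-1}(I_{n,k_0})$; thus $u(x),u(y) \in I_{n,k_0}$ and $|u(x)-u(y)| < 1/n < \eta$.

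The main obstacle, though essentially bookkeeping, is ensuring each $F_n$ is closed, which forces us to truncate the countable partition $\{E_{n,k}\}_{k\in\mathbb{Z}}$ to finitely many indices at each scale $n$; the finite-measure hypothesis $\mu(X)<\infty$ together with the fact that $\mu(N)=0$ is what allows this truncation at uniformly small cost. The substantive topological/measure-theoretic input enters exclusively through the inner-regularity half of Theorem~\ref{in-out-reg}, which is what upgrades each measurable level set $E_{n,k}$ to a closed approximant $C_{n,k}$ — this is indispensable for the construction above.
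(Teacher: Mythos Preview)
Your proof is correct, but it follows a different classical route than the paper's. The paper fixes once and for all a countable basis $\{V_n\}$ of the Euclidean topology on $\mathbb{R}$, sets $E_n := u^{-1}(V_n)$, and uses \emph{both} halves of Theorem~\ref{in-out-reg} to sandwich each $E_n$ between a closed $C_n$ and an open $U_n$ with $\mu(U_n\setminus C_n)$ small; the bad set is then $F_\varepsilon := \bigcup_n (U_n\setminus C_n) \cup U_\infty$, and continuity of $u|_{G_\varepsilon}$ follows by checking directly that $g^{-1}(V_n) = U_n \cap G_\varepsilon$ is relatively open. Your argument instead discretizes $u$ at each scale $1/n$ via disjoint level sets, truncates to finitely many of them, and uses only the \emph{inner}-regularity half of Theorem~\ref{in-out-reg}; continuity comes from an oscillation bound rather than from identifying preimages of basic open sets. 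The paper's approach avoids the truncation bookkeeping and gives a cleaner continuity verification, while yours has the virtue of invoking only inner approximation by closed sets. Both are standard and equally valid realizations of Lusin's scheme.
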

It was shown in \cite{AGS-reg} that Borel regularity is actually necessary for the conclusion of Theorem~\ref{lusin} to hold, even without the assumptions that $\mu(X) < \infty$ and every open set $U\subseteq X$ can be written as $U=F \cup Z$, where $F$ is an $F_{\sigma}$-set and  $Z$ is a  null set.
\begin{proof}
Let $\{V_{n}\}_{n=1}^{\infty}$ be a countable basis of  Euclidean topology of $\mathbb{R}$ and let $\varepsilon\in(0,\infty)$. For each $n\in\mathbb{N}$ let $E_{n} := u^{-1}(V_{n})$, and set $E_{\infty} := u^{-1}(\{-\infty, \infty\})$. Granted our current assumptions on $(X,\tau,\mu)$, by Theorem~\ref{in-out-reg}, we can find open set $U_n\subseteq X$ and closed set $C_{n}\subseteq X$ such that $C_{n}\subseteq E_{n} \subseteq U_{n}$ and
\begin{align*}
\mu(U_{n}\setminus C_{n}) < \varepsilon/2^{n+1}.
\end{align*}
Moreover, since $\mu(E_{\infty})=0$, there exists an open set $U_{\infty}$ such that $E_{\infty} \subseteq U_{\infty}$ and $\mu(U_{\infty}) <\varepsilon/2$.
Furthermore, we define 
\begin{align*}
F_{\varepsilon} := \left(\bigcup_{n=1}^{\infty} U_{n}\setminus C_{n}\right) \cup U_{\infty} \quad \text{and} \quad G_{\varepsilon} := X\setminus F_{\varepsilon}.
\end{align*}
It is easy to see that $F_{\varepsilon}$ is an open set satisfying $\mu(F_{\varepsilon}) < \varepsilon$. We shall now show that the function $g := u\vert_{G_{\varepsilon}}$ is continuous. Observe that for any $n\in \mathbb{N}$ we have 
\begin{align*}
U_{n}\cap G_{\varepsilon} =  E_{n}\cap G_{\varepsilon} = g^{-1}(V_n).
\end{align*}
Thus, $g^{-1}(V_n) = U_{n}\cap G_{\varepsilon}$ is an open set in the subspace topology on $G_{\varepsilon}$. Finally, since $\{V_{n} \}_{n=1}^{\infty}$ is the basis of $\mathbb{R}$, it follows that $g$ is continuous, and the proof of the theorem is now complete.
\end{proof}

\section{Total boundedness in $L^p$ spaces with $p\geq0$}
\label{sect:totalbdd}
In this section we study totally bounded sets in $L^p$ spaces with $p\geq0$ which will be useful in Section~\ref{sect:cptembedd} when establishing compactness of certain embeddings of the spaces $M^\alpha_{p,q}$ and $N^\alpha_{p,q}$. 

We begin by introducing the space of measurable functions: Let $(X,\mu)$ be a measure space such that $\mu(X)<\infty$. We denote by $L^0(X,\mu)$ the space of all (equivalence classes of) measurable functions on $X$ which are finite almost everywhere. The space $L^0(X,\mu)$ is a complete metric space with respect to the metric
\[
d_{L^0}(f,g):=\int_X\frac{|f(x)-g(x)|}{1+|f(x)-g(x)|}\,d\mu(x).
\]
It is well known that convergence in this metric is equivalent to convergence in measure. Moreover, we clearly have $L^p(X,\mu)\subseteq L^0(X,\mu)$ for all $p\in(0,\infty)$.

The first result of this section is a Lebesgue--Vitali compactness-type theorem from \cite[Theorem~1]{Krotov} (see also \cite[Theorem~2.1]{BG} for version of this statement in the variable exponent case).

\begin{tw}\label{compLp}
Let $(X,\mu)$ be a measure space such that $\mu(X)<\infty$ and $0<p<\infty$. Then, a subset $\mathcal{F}$ of $L^p(X,\mu)$ is totally bounded if and only if the following conditions are satisfied:
\begin{enumerate}[i)]
\item $\mathcal{F}$ is totally bounded in $L^{0}(X,\mu)$, 
\item $\mathcal{F}$ is $p$-equi-integrable, i.e.,
\begin{equation*}
\forall_{\varepsilon >0} \exists_{\delta>0} \forall_{A\subseteq X}\,\,\,\,\mu(A) < \delta \implies \sup_{f\in \mathcal{F}} \int_{A} |f(x)|^p \, d\mu(x) < \varepsilon.
\end{equation*}
\end{enumerate}
\end{tw}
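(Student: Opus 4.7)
The plan is to prove the two implications separately. The forward direction uses continuity of the inclusion $L^p\hookrightarrow L^0$ together with absolute continuity of the integral on centers of a finite net. The reverse direction is a Chebyshev-truncation argument that converts closeness in $L^0$ to closeness in $L^p$, with the exceptional part of the integral absorbed by the equi-integrability hypothesis.

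For the forward implication, the Chebyshev bound $\mu(\{|f-g|>\eta\})\le\eta^{-p}\|f-g\|_{L^p}^p$ makes the inclusion $L^p\hookrightarrow L^0$ uniformly continuous, which immediately yields $i)$. For $ii)$, set $C_p:=\max\{1,2^{p-1}\}$, fix $\varepsilon>0$, and pick a finite $L^p$-net $\{f_1,\dots,f_N\}\subseteq\mathcal{F}$ with radius chosen so small that $C_p\|f-f_i\|_{L^p}^p<\varepsilon/2$ for every $f\in\mathcal{F}$ and a suitable $i$. Each $|f_i|^p$ is integrable, hence absolutely continuous with respect to $\mu$; take $\delta>0$ common to all $i$ such that $\mu(A)<\delta$ forces $\int_A|f_i|^p\,d\mu<\varepsilon/(2C_p)$. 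Then $|f|^p\le C_p(|f-f_i|^p+|f_i|^p)$ yields $\int_A|f|^p\,d\mu<\varepsilon$.

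For the converse, fix $\varepsilon>0$. Using $ii)$, choose $\delta>0$ so that $\mu(A)<\delta$ implies $\sup_{f\in\mathcal{F}}\int_A|f|^p\,d\mu<\varepsilon/(4C_p)$, and pick $\eta>0$ with $\eta^p\mu(X)<\varepsilon/2$. Integrating the pointwise inequality $\chi_{\{|f-g|>\eta\}}\le\tfrac{1+\eta}{\eta}\cdot\tfrac{|f-g|}{1+|f-g|}$ gives the key Chebyshev-type bound
\[
\mu(\{|f-g|>\eta\})\le\tfrac{1+\eta}{\eta}\,d_{L^0}(f,g).
\]
By $i)$, select a finite $\tfrac{\delta\eta}{1+\eta}$-net $\{f_1,\dots,f_N\}\subseteq\mathcal{F}$ for $d_{L^0}$ (such a net may always be chosen inside $\mathcal{F}$ at the cost of doubling the radius). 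For $f\in\mathcal{F}$, pick $f_i$ within this $L^0$-distance; then $\mu(E_i)<\delta$ where $E_i:=\{|f-f_i|>\eta\}$, and
\begin{align*}
\|f-f_i\|_{L^p}^p
&\le\int_{E_i}|f-f_i|^p\,d\mu+\int_{X\setminus E_i}|f-f_i|^p\,d\mu\\
&\le C_p\int_{E_i}(|f|^p+|f_i|^p)\,d\mu+\eta^p\mu(X)<\varepsilon.
\end{align*}
This produces a finite $L^p$-net for $\mathcal{F}$.

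The main obstacle is the delicate calibration of three small parameters in the reverse direction: $\eta$ must be small enough that $\eta^p\mu(X)$ is negligible, $\delta$ small enough that the equi-integrability hypothesis provides the desired bound, and the $L^0$-net radius small enough relative to $\delta$ and $\eta$ so that each exceptional set $E_i$ falls within the equi-integrability threshold. All remaining steps are elementary, relying only on Chebyshev's inequality and the inequality $|a+b|^p\le C_p(|a|^p+|b|^p)$.
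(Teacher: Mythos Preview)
Your proof is correct. The paper does not actually prove this theorem: it states the result and attributes it to Krotov \cite[Theorem~1]{Krotov} (see also \cite[Theorem~2.1]{BG}), so there is no ``paper's own proof'' to compare against. Your argument is the standard one: for the forward direction you use uniform continuity of the inclusion $L^p\hookrightarrow L^0$ via Chebyshev, plus absolute continuity of the integral on a finite $L^p$-net; for the reverse direction you truncate at level $\eta$, control the small-value part by $\eta^p\mu(X)$, and control the large-value part on the set $\{|f-f_i|>\eta\}$ using the equi-integrability hypothesis together with the $L^0$-Chebyshev bound $\mu(\{|f-g|>\eta\})\le\tfrac{1+\eta}{\eta}\,d_{L^0}(f,g)$. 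The calibration of $\eta$, $\delta$, and the net radius is done correctly, and the observation that the net can be taken inside $\mathcal{F}$ (at the cost of doubling the radius) is exactly what is needed so that the equi-integrability bound applies to both $f$ and $f_i$.
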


The following Fr\'echet-type criterion  provides a means of verifying condition $i)$ in Theorem~\ref{compLp}.

\begin{tw} \label{Frechet}
Let $(X,\mu)$ be a measure space such that $\mu(X)<\infty$. Then, a subset $\mathcal{F}$ of $L^0(X,\mu)$ is totally bounded if and only if for any $\varepsilon\in(0,\infty)$ there exist $\lambda\in(0,\infty)$ and a family of measurable, pairwise disjoint subsets $X_1,X_2,\dots,X_n$, of $X$   such that $X = \bigcup_{i=1}^{n} X_i $ and which have the property that for any $u\in \mathcal{F}$, there exists a measurable set $E(u) \subseteq X$, such that
\begin{enumerate}[i)]
\item $\mu(E(u)) < \varepsilon$,
\item $|u(x) - u(y)| < \varepsilon$ whenever $x,y \in X_{i}\setminus  E(u)$,
\item $|u(x)| \leq \lambda $ for $x\in X \setminus  E(u)$.
\end{enumerate}
\end{tw}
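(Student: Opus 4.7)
\textbf{Proof proposal for Theorem~\ref{Frechet}.} The plan is to prove the two implications separately by exploiting the standard fact that, for the metric $d_{L^0}$, one has the two-sided estimate
\[
\frac{t}{1+t}\mu(\{|f-g|\geq t\})\leq d_{L^0}(f,g)\leq \frac{t}{1+t}\mu(X)+\mu(\{|f-g|\geq t\}),\quad t\in(0,\infty),
\]
so that total boundedness in $L^0(X,\mu)$ is equivalent to the existence, for each $\eta\in(0,\infty)$, of a finite family $\{v_1,\dots,v_m\}\subseteq L^0(X,\mu)$ such that every $u\in\mathcal{F}$ satisfies $\mu(\{|u-v_k|\geq\eta\})<\eta$ for some $k$.

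\emph{Sufficiency.} Assuming $i)$--$iii)$, given $\eta\in(0,\infty)$ I would apply the hypothesis with a sufficiently small $\varepsilon\in(0,\eta/(\mu(X)+1))$ to obtain a partition $X_1,\dots,X_n$ of $X$ and a constant $\lambda\in(0,\infty)$. Consider the finite family $\mathcal{S}$ of simple functions $\tilde v=\sum_{i=1}^n c_i\chi_{X_i}$ whose values $c_i$ range over the finite grid $\{k\varepsilon:|k\varepsilon|\leq\lambda+\varepsilon,\ k\in\mathbb{Z}\}$. For each $u\in\mathcal{F}$, I fix a point $x_i\in X_i\setminus E(u)$ whenever this set is nonempty and pick $c_i$ within $\varepsilon$ of $u(x_i)$ (with $c_i=0$ otherwise); by $ii)$ and $iii)$ the resulting $\tilde u\in\mathcal{S}$ obeys $|u-\tilde u|\leq 2\varepsilon$ on $X\setminus E(u)$. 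The two-sided estimate above then yields $d_{L^0}(u,\tilde u)\leq 2\varepsilon\mu(X)+\mu(E(u))<\eta$, so $\mathcal{S}$ is a finite $\eta$-net.

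\emph{Necessity.} Assume $\mathcal{F}$ is totally bounded in $L^0(X,\mu)$. Given $\varepsilon\in(0,\infty)$, I would first choose $\delta$ so small that the Chebyshev-type lower bound forces $\mu(\{|u-u_k|\geq\varepsilon/3\})<\varepsilon/(2m)$ whenever $d_{L^0}(u,u_k)<\delta$, and then take a finite $\delta$-net $\{u_1,\dots,u_m\}$ for $\mathcal{F}$. Since every $u_k$ is finite a.e., a single $\lambda\in(0,\infty)$ can be chosen with $\mu(\{|u_k|>\lambda\})<\varepsilon/(2m)$ for all $k$. Partition $[-\lambda,\lambda]$ into subintervals $I_1,\dots,I_N$ of length less than $\varepsilon/3$, set $X_j^{(k)}:=u_k^{-1}(I_j)$ for $j\geq 1$ and $X_0^{(k)}:=\{|u_k|>\lambda\}\cup\{|u_k|=\infty\}$, and let $\{Y_\alpha\}$ denote the common refinement of these $m$ partitions, of which there are only finitely many. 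For each $u\in\mathcal{F}$ pick $u_k$ with $d_{L^0}(u,u_k)<\delta$ and define
\[
E(u):=\{|u-u_k|\geq\varepsilon/3\}\cup\bigcup_{j=1}^{m}X_0^{(j)},
\]
so that $\mu(E(u))<\varepsilon$. On any ``good'' piece $Y_\alpha$ (those not meeting any $X_0^{(j)}$) the function $u_k$ oscillates by less than $\varepsilon/3$, so the triangle inequality gives $|u(x)-u(y)|<\varepsilon$ for $x,y\in Y_\alpha\setminus E(u)$; the ``bad'' pieces lie inside $E(u)$ by construction. Finally, $|u|\leq\lambda+\varepsilon/3$ on $X\setminus E(u)$, which delivers $iii)$ after enlarging $\lambda$ by $\varepsilon/3$.

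The main obstacle I expect is the bookkeeping in the necessity direction: one must simultaneously (a) force the oscillation of \emph{every} net member $u_k$ to be small on each cell by passing to a common refinement, (b) pass this control to an arbitrary $u\in\mathcal{F}$ through the $L^0$-approximation, and (c) ensure the exceptional set $E(u)$ absorbs both the unboundedness of the $u_k$'s and the discrepancy $\{|u-u_k|\geq\varepsilon/3\}$ while keeping $\mu(E(u))<\varepsilon$. The rest of the argument is a careful tracking of constants.
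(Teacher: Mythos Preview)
Your argument follows the same strategy as the paper's: in the sufficiency direction you build a finite net of simple functions constant on the cells $X_i$ with values on an $\varepsilon$-grid, and in the necessity direction you take a finite $L^0$-net, form the common refinement of the net members' level sets, and absorb the large-value sets and the approximation error into $E(u)$.

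There is one genuine slip in the necessity direction. You write that you ``first choose $\delta$ so small that $\mu(\{|u-u_k|\geq\varepsilon/3\})<\varepsilon/(2m)$ whenever $d_{L^0}(u,u_k)<\delta$, and then take a finite $\delta$-net $\{u_1,\dots,u_m\}$.'' This is circular: $m$ is the cardinality of the $\delta$-net and hence depends on $\delta$, so you cannot choose $\delta$ in terms of $m$. The fix is straightforward and is exactly what the paper does: pick $\delta$ depending only on $\varepsilon$ so that the Chebyshev bound gives $\mu(\{|u-u_k|\geq\varepsilon/3\})<\varepsilon/2$ (this requires only $\delta<\tfrac{\varepsilon}{3+\varepsilon}\cdot\tfrac{\varepsilon}{2}$), then fix the $\delta$-net $\{u_1,\dots,u_m\}$, and only afterwards choose $\lambda$ so large that the \emph{single} set $\bigcup_{j=1}^m\{|u_j|>\lambda\}$ has measure below $\varepsilon/2$. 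With this order of quantifiers your construction goes through unchanged. (There is also a harmless factor-of-$2$ slip in the sufficiency bound: $d_{L^0}(u,\tilde u)\leq 2\varepsilon\,\mu(X)+\mu(E(u))$ forces $\varepsilon<\eta/(2\mu(X)+1)$ rather than $\eta/(\mu(X)+1)$.)
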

For a sake of completeness we include the proof.
\begin{proof}
Let $\mathcal{F}$ be a subset of $L^0(X,\mu)$.

``$\Leftarrow$" We fix $\varepsilon\in(0,\infty)$ and let $\delta := \varepsilon/(1+2\mu(X))$. By assumption, we can find $\lambda\in(0,\infty)$ and a family of measurable, pairwise disjoint subsets $X_1,X_2,\dots,X_n$, of $X$  such that $X = \bigcup_{i=1}^{n} X_i $
and which have the property that  for any $u\in \mathcal{F}$, there exists a measurable set $E(u) \subseteq X$, such that
$\mu(E(u)) < \delta$, $|u(x)| \leq \lambda $ for $x\in X \setminus  E(u)$, and for $x,y\in X_{i} \setminus E(u)$ we have $|u(x) - u(y)| < \delta$.

Let $M$ be the least integer satisfying $M\geq \lambda/\delta$ and note that $M\in\mathbb{N}$. Consider the family of functions
\begin{align*}
S:=\bigg\{\sum_{i=1}^n \delta m_{i} \chi_{X_{i}}:  m_{i} \in \{-M,\dots,M\}  \text{ for } i \in \{1,2,\dots,n \}  \bigg\}\subseteq L^0(X,\mu).
\end{align*}
We claim that $S$ is an $\varepsilon$-net of $\mathcal{F}$. To see this, fix $u\in \mathcal{F}$, and for integers satisfying $-M\leq m \leq M$, let
\begin{align*}
A_{m} :=  u^{-1}\big([\delta m,\delta (m+1))\big).
\end{align*}
Since $\lambda \leq \delta M$, we have 
\begin{align} \label{zawieranie}
X\setminus E(u) \subseteq \bigcup_{m=-M}^{M} A_m.
\end{align}
Let us consider a function of the form
\[
\tilde{u}:=\sum_{i=1}^n \delta m_{i} \chi_{X_{i}},
\]
where for $i \in \{1, 2,\dots,n\}$ we choose $m_i\in \{-M,\dots,M\}$ as follows: If $X_{i} \setminus E(u) = \emptyset$, then we set $m_{i}=0$. If $X_{i} \setminus E(u) \neq \emptyset$, then we take any $x_i\in X_{i} \setminus E(u)$ and let $m_i$ be any integer in $\{-M,\dots,M\}$ satisfying $x_i \in A_{m_i}$. Note that such a choice of $m_i$ is guaranteed by \eqref{zawieranie}. Then $\tilde{u} \in S$, and  for  $y\in X\setminus E(u)$ we have
\[
|u(y) - \tilde{u}(y)| < 2\delta.
\]
Indeed, since $X = \bigcup_{i=1}^{n} X_i $ and $X_i$ are pairwise disjoint, there exists an unique $i\in \{1, 2,\dots,n\}$ such $y\in X_{i} \setminus E(u)$. Therefore, since $x_i \in A_{m_i}$ implies that $0\leq u(x_i)-\delta m_i < \delta$, we can estimate
\begin{align*}
|u(y)-\tilde{u}(y)|= |u(y) - \delta m_{i}| \leq |u(y) - u(x_i)| + |u(x_i) - \delta m_{i}| < 2\delta,
\end{align*}
where we have used the fact that $x_i,y\in X_i\setminus E(u)$ in obtaining the last inequality.
 Finally,  since 
\begin{align*}
d_{L^0}(u,\tilde{u}) &= \int_{E(u)} \dfrac{|u(x)-\tilde{u}(x)|}{1+ |u(x)-\tilde{u}(x)|}\, d\mu(x) + \int_{X\setminus E(u)} \dfrac{|u(x)-\tilde{u}(x)|}{1+ |u(x)-\tilde{u}(x)|}\, d\mu(x) \\
&< \delta + 2\delta\mu(X) =\varepsilon,
\end{align*}
we have that $S$ is $\varepsilon$-net of $\mathcal{F}$, as wanted. Since $\varepsilon\in(0,\infty)$ was arbitrary, it follows that $\mathcal{F}$ is totally bounded in $L^0(X,\mu)$.

$``\Rightarrow"$ Suppose that $\mathcal{F}$ is totally bounded in $L^0(X,\mu)$ and fix $\varepsilon\in(0,\infty)$. Let $\delta := \varepsilon/3 $ and suppose that $\{u_{i}\}_{i=1}^{N} \subseteq L^0(X,\mu)$ is a ${2\delta^2}/{(1+\delta)}$-net of $\mathcal{F}$. Since functions in $L^0(X,\mu)$ are finite a.e. and $\mu(X)< \infty$, we can find $\lambda\in(\delta,\infty)$ such that
\begin{align*}
\mu\Big(\big\{x\in X: \max_{i\in\{1,\dots,N\}}|u_i(x)| > \lambda - \delta \big\}\Big) \leq \delta.
\end{align*} 
Let $Y := \{x\in X: \max_{i\in\{1,\dots,N\}}|u_i(x)| \leq  \lambda - \delta \}$ and let $M$ be the least integer satisfying $M\geq (\lambda-\delta)/\delta$. Note that $M\in\mathbb{N}$ and $\mu(X\setminus Y)\leq \delta$. For $j\in \{1,2,\dots,N \}$ and $m\in \{-M, \dots, M \}$ we consider the following sets
\begin{align*}
A^{j}_{m} := \{x\in Y: u_{j}(x) \in [\delta m,\delta (m+1)) \}.
\end{align*}
For any $m_{1},\dots, m_{N} \in \{-M, \dots, M \}$ we define
\begin{align*}
X_{m_{1},m_{2},\dots m_{N}} := \bigcap_{j=1}^{N} A^{j}_{m_j}.
\end{align*}
The family $\{X_{m_{1},m_{2},\dots m_{N}}\}_{m_1,\dots,m_N \in \{-M,\dots,M\}}$ consists of pairwise disjoint sets. 
Since $\lambda-\delta \leq \delta M$, we have 
\[
	Y=\bigcup_{m_{1}= - M }^{ M } \dots \bigcup_{m_{N}= - M }^{ M } X_{m_{1},m_{2},\dots m_{N}}.
\]
Hence,
\begin{align*}
X = (X\setminus Y) \cup  \bigcup_{m_{1}= - M }^{ M } \dots \bigcup_{m_{N}= - M }^{ M } X_{m_{1},m_{2},\dots m_{N}}.
\end{align*}
Let $u\in \mathcal{F}$ and $j\in \{1,2,\dots,N \}$ be such that $d_{L^0}(u,u_{j}) < 2\delta^{2}/(1+\delta)$ and let
$E(u) := (X\setminus Y) \cup Z$, where $Z:=\big\{x\in X: |u(x) - u_{j}(x)| > \delta \big\}$. Observe that,
\[
\mu(Z)\leq\frac{1+\delta}{\delta}\int_Z\frac{|u(x)-u_j(x)|}{1+|u(x)-u_j(x)|}\,d\mu(x)
\leq\frac{1+\delta}{\delta}\,d_{L^0}(u,u_{j})<2\delta.
\] 
Therefore,
\begin{align*}
\mu(E(u))\leq\mu(X\setminus Y)+\mu(Z) < 3\delta = \varepsilon,
\end{align*}
and for any $x,y\in X_{m_{1},m_{2},\dots m_{N}} \setminus E(u)$ we have
\begin{align*}
|u(x) - u(y)| \leq |u(x) - u_{j}(x)| +  |u_{j}(x) - u_{j}(y)| + |u_{j}(y) - u(y)| < 3\delta = \varepsilon.
\end{align*}
Moreover, for $x\in X\setminus  E(u)$ we have
\begin{align*}
|u(x)| \leq |u(x) -u_{j}(x)| + |u_{j}(x)| \leq \lambda.
\end{align*}
This completes the proof of the theorem.
\end{proof}

The last result in this section is a version of Theorem~\ref{Frechet} for separable quasi-metric spaces.
\begin{tw}\label{compL0}
Let $(X,d)$ be a separable quasi-metric space and $\mu$ be a Borel measure such that $\mu(X)<\infty$. Then, a subset $\mathcal{F}$ of $L^0(X,\mu)$ is totally bounded if for any $\varepsilon\in(0,\infty)$ there exist $\delta\in(0,\infty)$ and  $\lambda\in(0,\infty)$ such that for any $u \in \mathcal{F}$ there is a set $E(u)\subseteq X$, satisfying the following conditions:
\begin{enumerate}[i)]
\item $\mu(E(u)) < \varepsilon$,
\item $|u(x) - u(y)| < \varepsilon$ for any $x,y \in X\setminus  E(u)$, $d(x,y)<\delta$,
\item $|u(x)| \leq \lambda $ for $x\in X \setminus  E(u)$.
\end{enumerate}
On the  other hand, if $\mu$ is a Borel regular measure and every open set $U\subseteq X$ can be written as $U=F \cup Z$, where $F$ is an $F_{\sigma}$-set and  $Z$ is a  null set, then the above condition is also necessary.
\end{tw}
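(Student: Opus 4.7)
The plan is to prove both directions by reduction to the Fr\'echet-type criterion in Theorem~\ref{Frechet}: the sufficient direction by feeding Theorem~\ref{Frechet} a partition whose pieces have small $d$-diameter, and the necessary direction by refining the partition produced by Theorem~\ref{Frechet} into one that respects the quasi-metric. For sufficiency, I would fix $\varepsilon>0$, apply the hypothesis to obtain $\delta,\lambda>0$ and sets $E(u)$, and invoke Lemma~\ref{Ulam} to extract a closed totally bounded set $G\subseteq X$ with $\mu(X\setminus G)<\varepsilon$. After passing to the continuous symmetric quasi-metric $\rho\approx d$ of Lemma~\ref{DST1} (whose balls are $\tau_d$-open and hence Borel), a total-boundedness argument in $(G,\rho)$ produces a finite cover of $G$ by $\rho$-balls whose radius, expressed in terms of $C_d$ and $\widetilde{C}_d$, is small enough that any two points of a common ball are $d$-close within $\delta$. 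Disjointifying this cover and adjoining $X\setminus G$ as the final piece yields a measurable partition $X_1,\dots,X_n$ of $X$ with respect to which the sets $\widetilde{E}(u):=E(u)\cup(X\setminus G)$ satisfy the three conditions of Theorem~\ref{Frechet}, so total boundedness of $\mathcal{F}$ in $L^0$ follows.

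For necessity, assume $\mathcal{F}$ is totally bounded in $L^0$ and fix $\varepsilon>0$. Theorem~\ref{Frechet} supplies $\lambda>0$, a finite measurable partition $X_1,\dots,X_n$ of $X$, and, for every $u\in\mathcal{F}$, a set $E_1(u)$ with $\mu(E_1(u))<\varepsilon/5$, $|u|\leq\lambda$ off $E_1(u)$, and $|u(x)-u(y)|<\varepsilon/5$ whenever $x,y\in X_i\setminus E_1(u)$. I would then apply Lusin's theorem (Theorem~\ref{lusin}), whose hypotheses are guaranteed by the extra assumptions on $\mu$, to the integer-valued function $f:=\sum_{i=1}^n i\,\chi_{X_i}$, and intersect the resulting closed set with the totally bounded closed set supplied by Lemma~\ref{Ulam}. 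This produces a closed, totally bounded set $K\subseteq X$ with $\mu(X\setminus K)<2\varepsilon/5$ on which $f|_K$ is continuous and---being integer-valued and continuous---locally constant.

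The crux is producing a single $\delta>0$ that serves every $u\in\mathcal{F}$. I plan to replace $d$ by the continuous symmetric quasi-metric $\rho\approx d$ of Lemma~\ref{DST1} and, for $x\in K$ and $t>0$, define
\[
\omega(x,t):=\sup\bigl\{|f(x)-f(y)|:y\in K,\;\rho(x,y)<t\bigr\}.
\]
Using a countable $\rho$-dense subset of $K$ together with the local constancy of $f|_K$ and the continuity of $\rho$, the function $\omega(\,\cdot\,,t)$ is measurable on $K$ for every $t>0$, and $\omega(x,t)\downarrow 0$ as $t\downarrow 0$ pointwise in $x$. Egorov's theorem applied on the finite-measure space $(K,\mu|_K)$ then furnishes a measurable $E_2\subseteq K$ with $\mu(E_2)<\varepsilon/5$ and an integer $m_0$ such that $\omega(x,1/m_0)<1/2$ uniformly on $K\setminus E_2$; integer-valuedness of $f$ forces $f(x)=f(y)$, hence $x$ and $y$ to share an $X_i$, whenever $x\in K\setminus E_2$, $y\in K$, and $\rho(x,y)<1/m_0$. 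Using $\rho\leq\widetilde{C}_d\,d$, set $\delta:=1/(m_0\widetilde{C}_d)$ and $E(u):=E_1(u)\cup(X\setminus K)\cup E_2$; one verifies $\mu(E(u))<\varepsilon$, $|u|\leq\lambda$ off $E(u)$, and $|u(x)-u(y)|<\varepsilon$ for $x,y\in X\setminus E(u)$ with $d(x,y)<\delta$.

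The hard part will be precisely this last step: since $(X,d)$ is not assumed complete, the closed totally bounded set $K$ need not be compact, so the clopen decomposition of $K$ into the pieces $K\cap X_i$ carries no a priori uniform positive separation, and no na\"ive compactness argument would deliver a working $\delta$. The Egorov argument sketched above resolves this by trading uniform continuity on all of $K$ for uniform control on a subset of nearly full measure---exactly the latitude afforded by the exceptional set $E(u)$ permitted in the conclusion---and it hinges crucially on the measurability of $\omega(\,\cdot\,,t)$, itself a consequence of the separability of $(X,d)$ together with the continuity of $\rho$ from Lemma~\ref{DST1}.
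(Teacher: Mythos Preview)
Your sufficiency argument is essentially the paper's: apply the hypothesis with $\varepsilon/2$, use Lemma~\ref{Ulam} to get a totally bounded piece $G$ with small complement, cover $G$ by $\rho$-balls of radius comparable to $\delta/C_d^3$ using the regularized quasi-metric of Lemma~\ref{DST1}, disjointify, adjoin $X\setminus G$ as the last piece, and verify Theorem~\ref{Frechet}. No issues here.

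Your necessity argument is correct but follows a genuinely different route from the paper. The paper does \emph{not} pass through Theorem~\ref{Frechet}; instead it takes a finite $L^0$-net $\{u_i\}_{i=1}^N$ of $\mathcal{F}$, applies Lusin's theorem directly to each $u_i$ to obtain a common closed set $E$ on which all $u_i$ are continuous, and then builds the analogue of your Egorov step by hand: it defines
\[
A_k:=\bigl\{x\in E:\ \forall z\in D,\ \rho(x,z)<1/k\ \Rightarrow\ \max_i|u_i(x)-u_i(z)|<\varepsilon/4\bigr\}
\]
(with $D$ countable dense in $E$), chooses $K$ large so that $\mu(E\setminus A_K)$ is small, and sets $\delta=1/(C_d\widetilde{C}_dK)$. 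For a given $u\in\mathcal{F}$ one then picks the nearest $u_j$ and estimates $|u(x)-u(y)|$ by a four-term triangle inequality through $u_j(x)$, $u_j(z)$, $u_j(y)$. By contrast, you first invoke Theorem~\ref{Frechet} to obtain the partition and exceptional sets $E_1(u)$, then apply Lusin to the single integer-valued function $f=\sum_i i\,\chi_{X_i}$, and finally use Egorov on $\omega(\cdot,t)$ to produce the uniform $\delta$. What your route buys is a cleaner final estimate (once $x,y$ share an $X_i$, Fr\'echet gives $|u(x)-u(y)|<\varepsilon$ in one stroke) at the cost of the extra detour through Theorem~\ref{Frechet}; the paper's route is more self-contained but needs the four-term chain. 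The underlying analytic idea---replace uniform continuity on all of a possibly non-compact set by uniform control off a set of small measure via an Egorov/monotone-limit argument---is the same in both. One minor point: your intersection with the Ulam set is harmless but unnecessary in the necessity direction; separability of $K$ (inherited from $X$) already gives the countable dense subset you need for the measurability of $\omega(\cdot,t)$, and nothing else in your argument uses total boundedness of $K$.
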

Let us mention that the sufficiency part of the above theorem has been proven in the case of bounded metric spaces equipped with a doubling measure \cite[Theorem~2]{Krotov} and in the case of totally bounded metric-measure spaces \cite[Theorem~1.3]{BG}.

\begin{proof}
Let $\mathcal{F}$ be a subset of $L^0(X,\mu)$. 
\smallskip

{\tt Sufficiency:} Let us  fix $\varepsilon\in(0,\infty)$. Then, there exists $\delta,\lambda\in(0,\infty)$ which satisfy the assumptions of the theorem with $\varepsilon/2$. As such, we have that for any $u\in \mathcal{F}$ there is a set $\hat{E}(u)\subseteq X$, such that $\mu(\hat{E}(u)) < \varepsilon/2$, $|u(x)| \leq \lambda$ for $x\in X\setminus \hat{E}(u)$, and  $|u(x) - u(y)| < \varepsilon/2$, for $x,y \in  X\setminus \hat{E}(u)$ with $d(x,y)<\delta$. 

Since we are assuming that $(X,d)$ is separable, we can appeal to Lemma~\ref{Ulam} to write $X = G_{\varepsilon} \cup B_{\varepsilon}$, where $B_\varepsilon$ is a measurable set satisfying $\mu(B_{\varepsilon}) < \varepsilon/2$ and $G_{\varepsilon}$ is totally bounded.  Let $\rho$ be the regularized quasi-metric given by Lemma~\ref{DST1} and let $\{x_{i} \}_{i=1}^{n}$, be $\delta/C_d^3$-net of $G_{\varepsilon}$ with respect to this new quasi-metric $\rho$, where $C_d\in[1,\infty)$ is as in \eqref{C-d}. Now, we define
\begin{align*}
X_{1} &:= B_\rho\big(x_1,\delta/C_d^3\big), \\
X_{i} &:= B_\rho\big(x_i,\delta/C_d^3\big) \setminus \bigcup_{j=1}^{i-1} B_\rho\big(x_j,\delta/C_d^3\big)\,\, \text{ for }\, i\in \{2,3,\dots,n \},\\
X_{n+1} &:= B_{\varepsilon} \setminus \bigcup_{j=1}^{n} B_\rho\big(x_j,\delta/C_d^3\big).
\end{align*}
By Lemma~\ref{DST1}, every ball with respect to $\rho$ is open (in the quasi-metric topology $\tau_d$) and so it is easy to see from this construction that the above sets are measurable, pairwise disjoint, and $X = \bigcup_{i=1}^{n+1} X_i $.

We shall check that assumptions of Theorem~\ref{Frechet} are satisfied. To this end, suppose $u\in \mathcal{F}$ and let $\hat{E}(u)\subseteq X$ be the set given as above. We define $E(u) := \hat{E}(u) \cup X_{n+1}$. Then we easily conclude that $\mu(E(u))<\varepsilon$ and  $|u(x)|\leq \lambda$ on $X\setminus E(u)$. Next, fix $i=1,2,\dots,n$ and suppose that $x,y\in X_{i}\setminus E(u)$. By Lemma~\ref{DST1}, we can estimate (keeping in mind that $\rho$ is symmetric and $C_\rho\leq C_d$)
\[
d(x,y)\leq C_d^2\rho(x,y)\leq C_d^2C_\rho\max\{\rho(x,x_i),\rho(x_i,y)\}
<C_d^3\max\Big\{\delta/C_d^3,\delta/C_d^3\Big\}=\delta.
\]  
Thus, $d(x,y)<\delta$ and so we have $|u(x) - u(y)| <\varepsilon$. Finally, this also holds for $i=n+1$, since $X_{n+1} \setminus E(u)$ is an empty set. Therefore, the assumptions of Theorem~\ref{Frechet} are satisfied and it follows that  $\mathcal{F}$ is totally bounded in $L^0(X,\mu)$.
\smallskip

{\tt Necessity:} For this, we will employ an appropriate version of the Lusin Theorem (Theorem~\ref{lusin}). To proceed, suppose that $\mathcal{F}$ is totally bounded in $L^{0}(X,\mu)$. For a fixed $\varepsilon\in(0,\infty)$, let $\{u_i\}_{i=1}^{N}$ be a finite {$\varepsilon^2/(12+3\varepsilon)$}-net of $\mathcal{F}$.
By the Lusin Theorem (Theorem~\ref{lusin}), for each $i=1,\dots,N$, there exists a closed set $E_{i} \subseteq X$ such that {$\mu(X\setminus  E_{i})<\varepsilon/(6N)$} and $u_{i}\vert_{E_{i}}$ is continuous. Let $E := \bigcap_{i=1}^{N} E_i$, then  $\mu(X\setminus E) < { \varepsilon/6}$.  For $k\in \mathbb{N}$, we define the set

\begin{align*}
A_{k} := \big\{x\in E: \forall z\in D  \mbox{ if } \rho(x,z)<1/k \implies \forall 1\leq i\leq N, \,\, \, |u_{i}(x) - u_{i}(z)| < { \varepsilon/4} \big\},
\end{align*}
where $\rho$ is the regularized quasi-metric given by Lemma~\ref{DST1} and   $D$ is a countable  dense subset of $E$.\footnote{Since $A_{k} = \bigcap_{y\in D} \{x\in E: \chi_{B_{\rho}(y,1/k)}(x) \leq \min_{i=1,\dots,N} \chi_{u^{-1}_{i}((u_i(y) - \varepsilon/4,u_i(y) + \varepsilon/4))} (x)\}$, we see that $A_k$ is a measurable set.} Since each $u_{i}$ is continuous on $E$ and $A_k \subseteq A_{k+1}$, we can find a large enough $K\in\mathbb{N}$ so that $\mu(E\setminus  A_{K})< {\varepsilon/6}$. Let $\delta := 1/(C_{d}\widetilde{C}_dK)$, where $C_d,\widetilde{C}_d\in[1,\infty)$ are as in \eqref{C-d}-\eqref{C-d-tilde}, and let us take $\lambda\in(0,\infty)$ such that the measure of the set
\[
	F_{\varepsilon} := \Big\{x\in X: \max\limits_{i=1,\dots,N} |u_i(x)| > \lambda - \varepsilon/4\Big\}
\]
is at most $\varepsilon/3$. Note that such a choice of $\lambda$ is possible since functions in $L^0(X,\mu)$ are finite almost everywhere.
Now, let $u\in \mathcal{F}$. Then we can find $j\in \{1,2,\dots,N \}$ such that $d_{L^0}(u,u_j) < \varepsilon^2/(12+3\varepsilon)$. For this $j$, consider the set
\[
	E(u) := (X\setminus A_{K}) \cup F_{\varepsilon} \cup Y,
\]
where $Y:=\{x\in X: |u(x) - u_j(x)| > \varepsilon/4 \}$. 

We shall now show that the set $E(u)$ satisfies conditions $i)$-$iii)$ in the statement of this theorem. First, note that since the map $t\mapsto t/(1+t)$ is increasing, we have
\begin{align*}
\mu(Y)\leq \frac{4 + \varepsilon}{\varepsilon} \int_{Y} \frac{|u(x)-u_j(x)|}{1+|u(x)-u_j(x)|}\,d\mu(x) \leq \frac{4 + \varepsilon}{\varepsilon} d_{L^0}(u,u_{j}) < \varepsilon/3. 
\end{align*}
From this, and the construction of $E(u)$, we have that statements $i)$ and $iii)$ in this theorem are fulfilled. Now, we check the condition $ii)$. Let $x,y\in X \setminus E(u)$ satisfy $d(x,y)<\delta$. By density of $D$  we can find $z\in D$ such that $d(x,z) < \delta$ and therefore (keeping in mind the properties of $\rho$ as given by Lemma~\ref{DST1})
\[
\rho(y,z)\leq C_{\rho}\max\big\{\rho(y,x),\rho(x,z)\big\} \leq C_{\rho}\max\big\{\widetilde{C}_d d(x,y),\widetilde{C}_d d(x,z)\big\}
<C_{\rho} \widetilde{C}_d \delta\leq C_{d} \widetilde{C}_d \delta=1/K.
\]  
Granted this, it follows from the definition of $E(u)$ that
\begin{align*}
|u(x) - u(y)| \leq |u(x) - u_{j}(x)| + |u_{j}(x) - u_{j}(z)| + |u_{j}(z) - u_{j}(y)| + |u_{j}(y) - u(y)| < \varepsilon.
\end{align*}
This finishes the verification of condition $ii)$ and this completes the proof of the theorem.
\end{proof}

\section{Compact embeddings of $M^{\alpha}_{p,q}$ and $N^{\alpha}_{p,q}$ into $L^{\tilde{p}}$ spaces}
\label{sect:cptembedd}

Throughout this section, we will use the following observation which is an immediate consequence of Proposition~\ref{embs}: Let $(X, d, \mu)$ be a quasi-metric space equipped with a nonnegative Borel measure. Then, for any $\alpha, p\in(0,\infty)$ and $q\in(0,\infty]$
\begin{equation}
\label{Haj-incl}
M^{\alpha}_{p,q}(X,d,\mu)\hookrightarrow M^{\alpha, p}(X,d,\mu)\quad\mbox{and}\quad
N^{\alpha}_{p,q}(X,d,\mu)\hookrightarrow M^{\alpha/2, p}(X,d,\mu).
\end{equation}
Hence, in order to study compact embeddings of $M^{\alpha}_{p,q}$ and $N^{\alpha}_{p,q}$ it is enough to show compactness of embeddings for $M^{\alpha,p}$-spaces.

\subsection{Embeddings into $L^0$}
\label{subsect:0}

We have the following theorem about compact inclusion in $L^0$. 
\begin{tw} \label{emb_in_L0}
Let $(X,d,\mu)$ be a separable  quasi-metric space and $\nu$ be a  measure satisfying $\nu(X)<\infty$ and $\nu << \mu$. Then, for any $\alpha,p\in(0,\infty)$ and $q\in(0,\infty]$ the embeddings
\[
M^{\alpha}_{p,q}(X,d,\mu)\hookrightarrow L^0(X,\nu)\quad\mbox{and}\quad N^{\alpha}_{p,q}(X,d,\mu)\hookrightarrow L^0(X,\nu)
\] 
are compact.
\end{tw}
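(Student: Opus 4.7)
My plan is to reduce to the Haj\l{}asz--Sobolev case and then invoke the Fr\'echet-type criterion from Theorem~\ref{compL0} with the measure $\nu$ in place of $\mu$. By the two embeddings recorded in \eqref{Haj-incl}, every bounded set in $M^{\alpha}_{p,q}(X,d,\mu)$ or $N^{\alpha}_{p,q}(X,d,\mu)$ is bounded in $M^{\beta,p}(X,d,\mu)$ for a suitable $\beta\in\{\alpha,\alpha/2\}$, so it suffices to show that every bounded family $\mathcal{F}\subseteq M^{\alpha,p}(X,d,\mu)$ is totally bounded in $L^0(X,\nu)$ for any $\alpha,p\in(0,\infty)$, and then compactness for the $M^{\alpha}_{p,q}$ and $N^{\alpha}_{p,q}$ scales follows by composition.

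As a preliminary, I would first establish the Vitali-type absolute continuity implied by $\nu\ll\mu$ together with $\nu(X)<\infty$: for every $\eta\in(0,\infty)$ there exists $\delta_0=\delta_0(\eta)\in(0,\infty)$ such that $\mu(A)<\delta_0$ implies $\nu(A)<\eta$ for any measurable $A\subseteq X$. This is standard and follows by a Borel--Cantelli argument using continuity from above for the finite measure $\nu$; without it there is no bridge between the $\mu$-smallness one controls via Chebyshev on the $M^{\alpha,p}$-norm and the $\nu$-smallness required by Theorem~\ref{compL0}.

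Next, given a uniform norm bound $K\in(0,\infty)$ on $\mathcal{F}$ and $\varepsilon\in(0,\infty)$, I would take $\delta_0$ corresponding to $\eta:=\varepsilon/3$, select $\lambda,t\in(0,\infty)$ so that $(K/\lambda)^p<\delta_0$ and $(2K/t)^p<\delta_0$, and for each $u\in\mathcal{F}$ pick a gradient $g_u\in\mathcal{D}_d^{\alpha}(u)$ with $\|g_u\|_{L^p(X,\mu)}\leq 2K$ along with its $\mu$-null exceptional set $N_u$ from \eqref{g}. Defining
\[
E(u):=\{x\in X:|u(x)|>\lambda\}\cup\{x\in X:g_u(x)>t\}\cup N_u
\]
and $\delta:=(\varepsilon/(2t))^{1/\alpha}$, Chebyshev's inequality combined with the preliminary absolute continuity gives $\nu(E(u))<\varepsilon$, while \eqref{g} yields $|u(x)-u(y)|\leq [d(x,y)]^{\alpha}(g_u(x)+g_u(y))<2t\delta^{\alpha}=\varepsilon$ whenever $x,y\in X\setminus E(u)$ satisfy $d(x,y)<\delta$, and of course $|u|\leq\lambda$ on $X\setminus E(u)$. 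All three conditions of Theorem~\ref{compL0} are therefore met uniformly in $u\in\mathcal{F}$, giving total boundedness of $\mathcal{F}$ in $L^0(X,\nu)$; continuity of the embedding then follows from the same Vitali-type property applied to convergence in $\mu$-measure.

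The main obstacle I anticipate is purely one of coordination: organizing the Vitali-type absolute continuity with the gradient-based Chebyshev estimates so that the $\mu$-controlled truncations of $u$ and $g_u$ yield a single $\nu$-controlled exceptional set $E(u)$ uniformly in $u\in\mathcal{F}$. Beyond that transfer between $\mu$ and $\nu$, the argument reduces to the pointwise inequality \eqref{g} defining a Haj\l{}asz $\alpha$-gradient together with a direct application of Theorem~\ref{compL0}.
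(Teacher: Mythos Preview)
Your proposal is correct and essentially identical to the paper's proof: both reduce to $M^{\alpha,p}$ via \eqref{Haj-incl}, use the Vitali-type absolute continuity of $\nu$ with respect to $\mu$ to transfer Chebyshev-controlled $\mu$-smallness of the exceptional set to $\nu$-smallness, and then verify the three conditions of Theorem~\ref{compL0}. The only cosmetic differences are that the paper uses a single threshold $\lambda$ for both $|u|$ and the gradient $g$ while you use two, and your bound $\|g_u\|_{L^p}\leq 2K$ is slightly more careful than the paper's $\|g\|_{L^p}\leq\|u\|_{M^{\alpha,p}}$ (which tacitly assumes the infimum is attained).
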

Let us remark that in the case of quasi-metric-measure space separability follows from Proposition~\ref{sepequiv} and the fact that, by definition, all balls in quasi-metric-measure spaces are assumed to have positive and finite measure. 
\begin{proof}
Let $\alpha, p\in(0,\infty)$. We will show that the embedding $M^{\alpha,p}(X,d,\mu)\hookrightarrow L^0(X,\nu)$ is compact. Let $\mathcal{F} \subseteq M^{\alpha,p}(X,d,\mu)$ be a nonempty bounded set and let $M := \sup_{u\in \mathcal{F}} \|u\|_{M^{\alpha,p}(X,d,\mu)}$. Note that we can assume $M>0$.  It is enough to check that conditions in Theorem~\ref{compL0} hold (with $\nu$ in place of $\mu$). Now, let us fix $\varepsilon \in(0,\infty)$. Note that since $\nu(X)<\infty$ and $\nu << \mu$, we can find $\eta\in(0,\infty)$ such that, for any measurable set $E\subseteq X$, we have
\begin{align*}
\mu(E)<\eta \implies \nu(E)<\varepsilon.
\end{align*}

Let us define
\[
	\lambda :=  M2^{2+1/p}/\eta^{1/p}\quad\mbox{and}\quad\delta :=\big(\varepsilon/(2 \lambda)\big)^{1/\alpha},
\]
and fix $u \in \mathcal{F}$. Then there exist a measurable set $A_u \subseteq X$, with $\mu(A_u)=0$, and a Haj\l{}asz--gradient $g\in\mathcal{D}^\alpha_d(u)$ satisfying $\|g\|_{L^p(X, \mu)}\leq \|u\|_{{M}^{\alpha,p}(X,d,\mu)}$, and
\[
	|u(x)-u(y)|\leq [d(x,y)]^\alpha\big(g(x)+g(y)\big)\quad\mbox{for all}\,\, x, y \in X \setminus A_u.
\]
Next, we define the following set 
\begin{equation*}
E(u) := \big\{x \in X : \max\big\{|u(x)|,g(x)\big\} > \lambda \big\}\cup A_u.
\end{equation*}
Note that $\nu(A_u)=0$. Given our choice of $\lambda$, an application of Chebyshev's inequality implies that $\mu(E(u)) < \eta$ and so, $\nu(E(u))<\varepsilon$.  Moreover, condition $iii)$ in Theorem~\ref{compL0} is obviously satisfied. Therefore, we only need to check if $ii)$ holds.
Let $x,y \in X\setminus  E(u)$ such that $d(x,y) <\delta$. Then,
\begin{align*}
|u(x)-u(y)|\leq [d(x,y)]^\alpha\big(g(x)+g(y)\big) \leq [d(x,y)]^{\alpha} 2 \lambda < \varepsilon,
\end{align*}
and we get that condition $ii)$ in Theorem~\ref{compL0} is fulfilled. Thus, the embedding $M^{\alpha,p}(X,d,\mu)\hookrightarrow L^0(X,\nu)$ is compact and, given that $\alpha, p\in(0,\infty)$ were arbitrary, the conclusion of this theorem now follows from the observation in \eqref{Haj-incl}. 
\end{proof}

\subsection{Embeddings into $L^p$}
\label{subsect:p}
In this subsection we investigate necessary and sufficient conditions guaranteeing that  $M^{\alpha}_{p,q}$ and $N^{\alpha}_{p,q}$ compactly embed  into $L^{p}$. 

\subsubsection{Sufficient Conditions}
\label{subsubsect:suff}
The following theorem is the first main result of this subsection and it generalizes and extends \cite[Theorem~4.1]{GS}, \cite[Proposition~1.2]{BK}, and \cite[Theorem~2]{K} by relaxing the assumptions on the underlying space and measure (see also Corollary~\ref{remarkoint} in this regard).
\begin{tw} \label{drugie}
Let $(X,d,\mu)$ be a quasi-metric-measure space. Let $\nu$ be a measure such that $\nu\leq C \mu$ for some constant $C\in(0,\infty)$. Assume $\rho$ is a symmetric quasi-metric  on $X$ which is equivalent to $d$ and continuous on $X \times X$.\footnote{If $d$ is a metric then $\rho$ could be taken to be $d$.} Suppose that there exist $r_0\in(0,\infty)$ and $x_0 \in X$ such that for all $r\in(0,r_0]$, the following conditions are satisfied:
\smallskip
\begin{enumerate}[i)]
\item the family $\left\{\displaystyle\frac{\chi_{B_\rho(y,r)}(\cdot)}{\mu(B_\rho(\cdot,r))} \right\}_{y\in X}$ is equi-integrable with respect to measure $\nu$, and
\smallskip
\item $\sup\limits_{y \in X\setminus B_\rho(x_0,R)}\displaystyle \int_{B_\rho(y,r)} \frac{1}{\mu(B_\rho(x,r))}\, d\nu(x) \to 0$ with $R\to \infty$.
\end{enumerate}
Then for any $\alpha, p\in(0,\infty)$ and $q\in(0,\infty]$, the embeddings 
\[
	M^{\alpha}_{p,q}(X,d,\mu)\hookrightarrow L^p(X,\nu)\quad\mbox{and}\quad	N^{\alpha}_{p,q}(X,d,\mu)\hookrightarrow L^p(X,\nu)
\]
are compact.
\end{tw}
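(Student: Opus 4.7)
By the embeddings in \eqref{Haj-incl} it suffices to prove the $M^{\alpha,p}$-statement for every $\alpha,p\in(0,\infty)$, and by Lemma~\ref{equivspaces} I may replace $d$ by the symmetric quasi-metric $\rho$ in the definition of the Haj\l{}asz--Sobolev space without loss of generality. Throughout I write $B(\cdot,\cdot):=B_\rho(\cdot,\cdot)$. Fix a bounded family $\mathcal{F}\subseteq M^{\alpha,p}(X,\rho,\mu)$ with $\|u\|_{M^{\alpha,p}}\le M$ for all $u\in\mathcal{F}$, and choose for each such $u$ a gradient $g_u\in\mathcal{D}^\alpha_\rho(u)$ with $\|g_u\|_{L^p(X,\mu)}\le 2M$. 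The plan is to approximate $u$ by its $\mu$-average
\[
u_r(x):=\frac{1}{\mu(B(x,r))}\int_{B(x,r)}u\,d\mu,\qquad r\in(0,r_0],
\]
establish tightness of $\mathcal{F}$ in $L^p(X,\nu)$, and then apply Theorem~\ref{compLp} on a sufficiently large ball around $x_0$.

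Two Fubini-type estimates drive the argument. First, integrating the pointwise bound $|u(x)-u(y)|\le[\rho(x,y)]^\alpha(g_u(x)+g_u(y))$ over $y\in B(x,r)$ and using the symmetry $y\in B(x,r)\Leftrightarrow x\in B(y,r)$ together with the uniform bound $K:=\sup_{y\in X}\int_{B(y,r)}\mu(B(\cdot,r))^{-1}\,d\nu<\infty$ supplied by condition (i), one obtains, uniformly in $u\in\mathcal{F}$,
\[
\|u-u_r\|_{L^p(X,\nu)}\le Cr^\alpha M.
\]
For $p\ge 1$ this is immediate from Jensen applied to the averaged gradient; for $p<1$ one avoids Jensen by retaining the pointwise gradient bound and exploiting the subadditivity of $t\mapsto t^p$. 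Second, the geometric observation that $x\notin B(x_0,R)$ together with $y\in B(x,r)$ and $R>C_\rho r$ forces $y\notin B(x_0,R/C_\rho)$; combined with Fubini this yields
\[
\int_{X\setminus B(x_0,R)}|u_r|^p\,d\nu\le\Bigl(\sup_{y\in X\setminus B(x_0,R/C_\rho)}\int_{B(y,r)}\frac{d\nu(x)}{\mu(B(x,r))}\Bigr)\|u\|_{L^p(X,\mu)}^p,
\]
whose right-hand side tends to zero as $R\to\infty$ by condition (ii). Combining the two estimates produces tightness: for any $\varepsilon>0$, first pick $r$ small and then $R$ large so that $\sup_{u\in\mathcal{F}}\int_{X\setminus B(x_0,R)}|u|^p\,d\nu<\varepsilon$.

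It then remains to show that $\mathcal{F}\vert_{B(x_0,R)}$ is totally bounded in $L^p(B(x_0,R),\nu)$ for each fixed $R$. Since $\nu\le C\mu$ and $\rho$-balls have finite $\mu$-measure, the restriction $\nu\vert_{B(x_0,R)}$ is a finite Borel measure that is absolutely continuous with respect to $\mu$; and $(X,\rho)$ is separable by Proposition~\ref{sepequiv}. Thus Theorem~\ref{emb_in_L0} delivers total boundedness of $\mathcal{F}$ in $L^0(X,\nu\vert_{B(x_0,R)})$. For the $p$-equi-integrability hypothesis of Theorem~\ref{compLp}, I decompose once more $u=(u-u_r)+u_r$: the first piece is controlled uniformly by $Cr^\alpha M$ in $L^p(\nu)$ by the first estimate above, and for a measurable $A\subseteq B(x_0,R)$ the Fubini computation bounds $\int_A|u_r|^p\,d\nu$ by a constant multiple of $M^p\sup_{y}\int_{A\cap B(y,r)}\mu(B(\cdot,r))^{-1}\,d\nu$, which tends to zero as $\nu(A)\to 0$ by the uniform absolute continuity portion of condition (i). Theorem~\ref{compLp} then yields total boundedness of $\mathcal{F}$ on $B(x_0,R)$, and combining with tightness gives total boundedness in $L^p(X,\nu)$; compactness of the embedding follows since $L^p(X,\nu)$ is complete. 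The most delicate point I anticipate is the approximation estimate in the regime $p<1$, where Jensen's inequality is not available in the direction used above; the workaround I sketched--keeping the pointwise $\alpha$-gradient inequality and invoking only the subadditivity of $t\mapsto t^p$--should suffice but will require some care in bookkeeping.
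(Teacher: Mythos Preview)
Your overall architecture---tightness at infinity via condition~(ii), $p$-equi-integrability via condition~(i), $L^0$-compactness on a large ball via Theorem~\ref{emb_in_L0}, then Theorem~\ref{compLp}---is exactly the paper's. The difference is in how the key estimate is obtained, and your realization via the averaging operator $u_r$ has two genuine problems.

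First, the assertion that $K=\sup_{y}\int_{B(y,r)}\mu(B(\cdot,r))^{-1}\,d\nu<\infty$ is ``supplied by condition~(i)'' is not justified: in this paper equi-integrability means only the uniform absolute continuity displayed in Theorem~\ref{compLp}, which does not by itself force uniform boundedness of the integrals. The paper never needs $K$ finite; it only needs $\sup_y\int_{D\cap B(y,r)}\mu(B(\cdot,r))^{-1}\,d\nu$ to be small for the particular sets $D$ under consideration (small $\nu$-measure, or the complement of a large ball).

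Second, and more seriously, the $u_r$ route does break down for $p<1$. Your Fubini bound on $\int_{X\setminus B(x_0,R)}|u_r|^p\,d\nu$ rests on $|u_r(x)|^p\le \mu(B(x,r))^{-1}\int_{B(x,r)}|u|^p\,d\mu$, which is Jensen in the convex direction and is false for $p<1$; the same obstruction blocks the approximation estimate for $|u-u_r|^p$. The workaround you sketch (``retain the pointwise gradient bound and use subadditivity of $t\mapsto t^p$'') does not repair the $u_r$ decomposition---it leads instead to the paper's device: take the $p$-th power \emph{before} averaging. Concretely, from $|u(x)|^p\le 2^p\bigl(|u(x)-u(y)|^p+|u(y)|^p\bigr)$ and $|u(x)-u(y)|^p\le 2^p[\rho(x,y)]^{\alpha p}(g(x)^p+g(y)^p)$, average over $y\in B(x,r)$, integrate over $D$ in $d\nu$, and apply Fubini. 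This yields, for \emph{every} $p\in(0,\infty)$ and every measurable $D$,
\[
\int_D|u|^p\,d\nu\ \le\ 4^p r^{\alpha p}\|g\|_{L^p(\nu)}^p
+\int_X\bigl(4^p r^{\alpha p}g(y)^p+2^p|u(y)|^p\bigr)\int_{D}\frac{\chi_{B(y,r)}(x)}{\mu(B(x,r))}\,d\nu(x)\,d\mu(y),
\]
which is precisely Lemma~\ref{ew-4}. With this single inequality you get tightness (take $D=X\setminus B(x_0,R)$ and use (ii)) and $p$-equi-integrability (take $\nu(D)$ small and use (i)) without ever invoking Jensen or the finiteness of $K$, and the rest of your outline goes through unchanged.
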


\begin{rem}
If $\nu(X)<\infty$, then  condition ii) follows from i).
\end{rem}
\begin{rem}
If there exist $c\in(1,\infty)$ and $\delta_0\in(0,\infty]$ such that the measure $\mu$ is $(c,\delta)$-doubling for every finite $\delta\in(0,\delta_0]$, then we can relax the assumptions that $\rho$ is a symmetric and continuous on $X \times X$ to only assuming that for each fixed $r\in(0,r_0]$, the mapping $x\mapsto\mu(B_\rho(x,r))$ is well defined and measurable. Indeed, in this case $\rho$ can be replaced by the equivalent regularized quasi-metric given by Lemma~\ref{DST1}.
\end{rem}
\begin{rem}
A typical example of measures $\nu$ and $\mu$ satisfying the condition ``$\nu\leq C\mu$" is as follows: Given a measure space $(X,\mathfrak{M},\mu)$ and a set $\Omega\in\mathfrak{M}$, the measure $\nu:=\mu\!\!\measurerestr\!\Omega\leq\mu$, where $\mu\!\!\measurerestr\!\Omega$ denotes the measure which is given by $(\mu\!\!\measurerestr\!\Omega)(A):=\mu(A\cap\Omega)$ for all $A\in\mathfrak{M}$. One could also consider the measure $\nu$ which is given by $d\nu:=f\,d\mu$, where $f\in L^\infty(X,\mu)$.
\end{rem}
\begin{proof}[Proof of Theorem~\ref{drugie}]
First of all we shall prove the two lemmas.
%
%
%

%

\begin{lem}
\label{ew-4}
Let $(X,d,\mu)$ be a quasi-metric-measure space and let $\nu$ be a measure such that $\nu<<\mu$. Assume $\rho$ is a symmetric quasi-metric  on $X$ which is equivalent to $d$ and continuous on $X \times X$. Then, there exists $\kappa\in(0,\infty)$ (depending only on $d$ and $\rho$)  such that for any $\alpha,p, r \in(0,\infty)$, $u \in M^{\alpha,p}(X,d,\mu)$, $g\in \mathcal{D}^{\alpha}_d(u)$, and any measurable set $D\subseteq X$, one has
\begin{align} \label{key_ineq}
\int_{D}|u(x)|^p \, d\nu(x) &\leq  4^p\kappa^{\alpha p} r^{\alpha p } \|g\|_{L^p(X,\nu)}^p + \int_{X}\big(4^p\kappa^{\alpha p} r^{\alpha p} g^p(y) + 2^p |u(y)|^p\big)\int_{D} \frac{\chi_{B_\rho(y,r)}(x)}{\mu(B_\rho(x,r))}\,  d\nu(x) d\mu(y).  
\end{align}

\end{lem}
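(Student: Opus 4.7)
The plan is to derive a pointwise bound for $|u(x)|^p$, average it over a $\rho$-ball against $\mu$, and then integrate over $D$ against $\nu$, invoking Fubini--Tonelli at the end to swap the two integrations. Concretely, fix $\alpha, p, r\in(0,\infty)$, $u\in M^{\alpha,p}(X,d,\mu)$, and $g\in\mathcal{D}^\alpha_d(u)$. Since $\rho\approx d$, there exists a constant $\kappa\in[1,\infty)$ (depending only on $d$ and $\rho$) such that $d(x,y)\leq \kappa\rho(x,y)$ and $B_d(x,r/\kappa)\subseteq B_\rho(x,r)\subseteq B_d(x,\kappa r)$ for all $x\in X$; in particular $0<\mu(B_\rho(x,r))<\infty$. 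Let $A_u\subseteq X$ be the $\mu$-null exceptional set for $g$; since $\nu\ll\mu$, the set $A_u$ is also $\nu$-null.

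For any $x\in X\setminus A_u$ and any $y\in B_\rho(x,r)\setminus A_u$, the triangle inequality, the Haj{\l}asz gradient estimate, and the bound $d(x,y)\leq \kappa r$ combine to give $|u(x)|\leq (\kappa r)^{\alpha}(g(x)+g(y))+|u(y)|$. Applying $(a+b)^p\leq 2^p(a^p+b^p)$ twice yields the pointwise bound
\begin{equation*}
|u(x)|^p\leq 4^p(\kappa r)^{\alpha p}\bigl(g(x)^p+g(y)^p\bigr)+2^p|u(y)|^p.
\end{equation*}
Averaging this inequality in $y$ over $B_\rho(x,r)$ against $\mu$ (and absorbing the null set $A_u$) gives, for $\nu$-a.e.\ $x\in X$,
\begin{equation*}
|u(x)|^p\leq 4^p(\kappa r)^{\alpha p}g(x)^p+\int_X\!\bigl[4^p(\kappa r)^{\alpha p}g(y)^p+2^p|u(y)|^p\bigr]\frac{\chi_{B_\rho(x,r)}(y)}{\mu(B_\rho(x,r))}\,d\mu(y).
\end{equation*}

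Integrating this inequality over $x\in D$ against $\nu$ yields an obvious first term bounded by $4^p\kappa^{\alpha p}r^{\alpha p}\|g\|_{L^p(X,\nu)}^p$ (using $\int_D g^p\,d\nu\le\|g\|^p_{L^p(X,\nu)}$) plus a double integral. I will then invoke Tonelli's theorem to swap the order of integration in the double integral; the nonnegative integrand is jointly $(\nu\otimes\mu)$-measurable because $\rho\colon X\times X\to[0,\infty)$ is continuous, so $\{(x,y):\rho(x,y)<r\}$ is open in the product topology and hence Borel, while $x\mapsto\mu(B_\rho(x,r))$ is lower semicontinuous (hence measurable), and $u$, $g$ are measurable. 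After swapping, the symmetry of $\rho$ turns $\chi_{B_\rho(x,r)}(y)$ into $\chi_{B_\rho(y,r)}(x)$, producing precisely the right-hand side of \eqref{key_ineq}.

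The only nontrivial step is the Fubini--Tonelli application, which is where the continuity and symmetry of $\rho$ (guaranteed by Lemma~\ref{DST1}, or by $d$ itself when $d$ is a metric) are essential: the symmetry converts the $\rho$-ball indicator after swapping orders, and the continuity ensures joint measurability of the kernel $\chi_{B_\rho(x,r)}(y)/\mu(B_\rho(x,r))$. Everything else is a direct combination of the Haj{\l}asz gradient estimate with elementary inequalities, so once measurability is settled the proof reduces to bookkeeping of the constants $4^p$ and $2^p$ arising from the two applications of $(a+b)^p\leq 2^p(a^p+b^p)$.
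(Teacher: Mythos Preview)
Your proof is correct and follows essentially the same route as the paper: a pointwise Haj{\l}asz gradient bound, averaging over a $\rho$-ball, integrating over $D$ against $\nu$, and then swapping the order of integration via Tonelli using the symmetry and continuity of $\rho$. You actually spell out the joint-measurability justification for Tonelli more carefully than the paper does, which only records in a remark afterward that this is where symmetry and continuity are used.
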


\begin{proof}
Let $\alpha,p, r\in(0,\infty)$, $u \in M^{\alpha,p}(X,d,\mu)$, $g\in \mathcal{D}^{\alpha}_d(u)$, and $D\subseteq X$ be a measurable set. Since $\rho$ is equivalent to $d$, there $\kappa\in[1,\infty)$ such that, for all $x,y\in X$,
$$
\kappa^{-1}\rho(x,y)\leq d(x,y)\leq \kappa\rho(x,y).
$$
Therefore, for almost every $x,y\in X$, we have
\begin{align*}\label{new-ineq}
|u(x)|^p &\leq 2^p\Big(|u(x) - u(y)|^p + |u(y)|^p\Big)\\
&\leq 2^p \Big([d(x,y)]^{\alpha p}\big(g(x)+g(y)\big)^p+ |u(y)|^p\Big)\\
&
\leq 2^p \Big(\kappa^{\alpha p}[\rho(x,y)]^{\alpha p}\big(g(x)+g(y)\big)^p+ |u(y)|^p\Big).
\end{align*}
Averaging the above inequality with respect to $y \in B_\rho(x,r)$, we get
\begin{align*}
|u(x)|^p 
&\leq 2^p\bigg( \kappa^{\alpha p}2^p \dashint_{B_\rho(x,r)} r^{\alpha p }\big(g^p(x)+g^p(y)\big)\, d\mu(y) + \dashint_{B_\rho(x,r)} |u(y)|^p \, d\mu(y) \bigg) \\
&= 4^p\kappa^{\alpha p} r^{\alpha p } g^p(x) + \dashint_{B_\rho(x,r)}\big( 4^p\kappa^{\alpha p} r^{\alpha p} g^p(y) + 2^p |u(y)|^p \big)\,d\mu(y).
\end{align*}
We integrate the above inequality over $x\in D$ with respect to measure $\nu$ and apply the Fubini Theorem to write
\begin{align*}
\int_{D}|u(x)|^p \, d\nu(x) &\leq  4^p\kappa^{\alpha p} r^{\alpha p } \|g\|_{L^p(X,\nu)}^p + \int_{D}\int_{X} \frac{4^p\kappa^{\alpha p} r^{\alpha p} g^p(y) + 2^p |u(y)|^p}{\mu(B_{\rho}(x,r))} \chi_{B_\rho(x,r)}(y)\,  d\mu(y) d\nu(x) \\
&\leq 4^p\kappa^{\alpha p} r^{\alpha p } \|g\|_{L^p(X,\nu)}^p  + \int_{X} (4^p\kappa^{\alpha p} r^{\alpha p} g^p(y) + 2^p |u(y)|^p) \int_{D} \frac{\chi_{B_\rho(y,r)}(x)}{\mu(B_\rho(x,r))}\,d\nu(x)   d\mu(y), 
\end{align*}
and this finishes the proof of Lemma~\ref{ew-4}.
\end{proof}

\begin{rem}
The symmetry and continuity of the quasi-metric $\rho$ is used in the proof of Lemma~\ref{ew-4} to ensure that our use of the Fubini Theorem is valid and that $\chi_{B_\rho(x,r)}(y)=\chi_{B_\rho(y,r)}(x)$. This is the only place where we use these assumptions.
\end{rem}

\begin{lem}\label{cor_i)}
Let $(X,d,\mu)$ be a quasi-metric-measure space. Let $\nu$ be a measure such that $\nu\leq C \mu$ for some constant $C\in(0,\infty)$. Assume $\rho$ is a symmetric quasi-metric  on $X$ which is equivalent to $d$ and continuous on $X \times X$. Suppose that there exist $r_0\in(0,\infty)$ such that for all $r\in(0,r_0]$, the  family $\Big\{ {\chi_{B_\rho(y,r)}(\cdot)}/{\mu(B_\rho(\cdot,r))} \Big\}_{y\in X}$ is equi-integrable with respect to  $\nu$. Then, for any $\alpha,p\in(0,\infty)$, bounded subsets of $M^{\alpha,p}(X,d,\mu)$ are $p$-equi-integrable with respect to $\nu$.
\end{lem}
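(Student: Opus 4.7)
The plan is to apply Lemma~\ref{ew-4} directly and then exploit the equi-integrability hypothesis through a two-step parameter chase. Let $\mathcal{F}\subseteq M^{\alpha,p}(X,d,\mu)$ be bounded, set $M:=\sup_{u\in\mathcal{F}}\|u\|_{M^{\alpha,p}(X,d,\mu)}$, and for each $u\in\mathcal{F}$ fix a gradient $g_u\in\mathcal{D}^{\alpha}_{d}(u)$ satisfying $\|g_u\|_{L^p(X,\mu)}\leq M+1$, which is possible by the definition of $\|\cdot\|_{\dot M^{\alpha,p}(X,d,\mu)}$. Since $\nu\leq C\mu$, the uniform bounds $\|u\|_{L^p(X,\mu)}\leq M$, $\|g_u\|_{L^p(X,\mu)}\leq M+1$, and $\|g_u\|_{L^p(X,\nu)}^{p}\leq C(M+1)^p$ hold for every $u\in\mathcal{F}$.

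Now fix $\varepsilon>0$. The first step will be to shrink the radius: I would choose $r\in(0,r_0]$ small enough that
\[
4^p\kappa^{\alpha p}r^{\alpha p}\cdot C(M+1)^p<\varepsilon/2,
\]
which, inserted into the first term on the right-hand side of \eqref{key_ineq}, controls it by $\varepsilon/2$ uniformly in $u\in\mathcal{F}$ and in the test set $D$. Crucially, this choice of $r$ depends only on $\alpha$, $p$, $\kappa$, $C$, $M$, and $\varepsilon$, and not on any particular $u$ or $D$.

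With $r$ frozen, the second step is to invoke the hypothesized equi-integrability of $\{\chi_{B_\rho(y,r)}(\cdot)/\mu(B_\rho(\cdot,r))\}_{y\in X}$ with respect to $\nu$: this produces $\delta>0$ such that whenever $\nu(D)<\delta$,
\[
\sup_{y\in X}\int_{D}\frac{\chi_{B_\rho(y,r)}(x)}{\mu(B_\rho(x,r))}\,d\nu(x)\leq\frac{\varepsilon}{2\bigl(4^p\kappa^{\alpha p}r^{\alpha p}(M+1)^p+2^p M^p\bigr)}.
\]
Pulling this supremum through the outer $\mu$-integral in \eqref{key_ineq} and using the uniform bounds on $\|g_u\|_{L^p(X,\mu)}$ and $\|u\|_{L^p(X,\mu)}$ then caps the second term by $\varepsilon/2$. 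Combining the two estimates yields $\int_{D}|u|^p\,d\nu<\varepsilon$ uniformly in $u\in\mathcal{F}$, which is exactly $p$-equi-integrability of $\mathcal{F}$ with respect to $\nu$.

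I do not anticipate any serious obstacle: Lemma~\ref{ew-4} already packages the essential pointwise-to-integral estimate, and the only subtlety is the order of selection (radius $r$ before threshold $\delta$), since the equi-integrability modulus depends on the chosen $r$. Once this ordering is respected, the argument reduces to bookkeeping of constants. The proof will then extend automatically to $M^{\alpha}_{p,q}$ and $N^{\alpha}_{p,q}$ via the embeddings in \eqref{Haj-incl}, although that extension is only needed for the larger Theorem~\ref{drugie}, not for the present lemma.
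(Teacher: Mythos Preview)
Your proposal is correct and follows essentially the same approach as the paper: apply Lemma~\ref{ew-4}, first choose $r\in(0,r_0]$ small so the standalone gradient term is below $\varepsilon/2$, then invoke the equi-integrability hypothesis at that fixed $r$ to find $\delta$ controlling the remaining integral term via the uniform $L^p(\mu)$-bounds on $u$ and $g_u$. The only cosmetic differences are that the paper passes to $\rho$-gradients via Lemma~\ref{equivspaces} (unnecessary, since Lemma~\ref{ew-4} already accepts $d$-gradients) and uses $r_0$ rather than $r$ in the bound for the combined integrand, neither of which affects the argument.
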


\begin{proof}
We fix $\varepsilon,\alpha,p\in(0,\infty)$ and let $\mathcal{F}\subseteq M^{\alpha,p}(X,d,\mu)$ be a nonempty bounded set. By Lemma~\ref{equivspaces}, $\mathcal{F}$ is also a bounded subset of $M^{\alpha,p}(X,\rho,\mu)$. We define $M := \sup_{u\in \mathcal{F}} \|u\|_{M^{\alpha,p}(X,\rho,\mu)}$ and let $u\in \mathcal{F}$. 
Then we can choose $g\in \mathcal{D}^{\alpha}_\rho(u)$ such that $\|g\|_{L^p(X,\mu)} \leq \|u\|_{M^{\alpha,p}(X,\rho,\mu)}$. Let 
\[
v:= (\kappa^{\alpha p}4^p r^{\alpha p}_{0} g^p + 2^p |u|^p)^{1/p}.
\]
Since $\|g\|_{L^p(X,\nu)}^p \leq C \|g\|_{L^p(X,\mu)}^p$, where $C$ is as in the statement of the lemma, \eqref{key_ineq} implies
\begin{align}\label{int_D}
\int_{D}|u(x)|^p \, d\nu(x) &\leq C 4^p \kappa^{\alpha p} r^{\alpha p } M^p + \int_{X}|v(y)|^p\int_{D} \frac{\chi_{B_\rho(y,r)}(x)}{\mu(B_\rho(x,r))} \, d\nu(x) d\mu(y)  
\end{align}
for any measurable set $D\subseteq X$ and $r\leq r_0$.

Let us observe that
\begin{equation}\label{we}
\begin{split}
\int_X |v|^p\, d\mu & \leq  4^p \max\big\{1,\kappa^{\alpha p} r_0^{\alpha p}\big\}\left[\|g\|^p_{L^p(X,\mu)} +\|u\|^p_{L^p(X,\mu)}\right]\\
& \leq  2 (4M)^p \max\big\{1,\kappa^{\alpha p} r_0^{\alpha p}\big\}.
\end{split}
\end{equation}
We choose $r\in(0,r_0]$ such that 
\begin{align*}
C 4^p \kappa^{\alpha p} r^{\alpha p } M^p < \varepsilon/2.
\end{align*}
Note that we can assume $M>0$.  Then, under the current assumptions, there is $\delta\in(0,\infty)$, such that if $D\subseteq X$ is measurable set satisfying $\nu(D)<\delta$, then
\begin{align*}
\int_{D} \frac{\chi_{B_\rho(y,r)}(x)}{\mu(B_\rho(x,r))}\, d\nu(x) < \frac{\varepsilon}{4(4 M)^p\max\big\{1,\kappa^{\alpha p} r_0^{\alpha p}\big\}}.
\end{align*}
Let $D\subseteq X$ be any measurable set such that $\nu(D)<\delta$. Then \eqref{int_D} with \eqref{we} yield
\begin{align*}
 \int_{D}|u(x)|^p \, d\nu(x) &< \varepsilon/2+ \int_{X}|v(y)|^p\int_{D} \frac{\chi_{B_\rho(y,r)}(x)}{\mu(B_\rho(x,r))}  \,d\nu(x)  d\mu(y) <\varepsilon.
\end{align*}
This completes the proof of Lemma~\ref{cor_i)}.
\end{proof}
Now, we can prove Theorem~\ref{drugie}. Fix $\alpha,p\in(0,\infty)$. We will show that the embedding $M^{\alpha,p}(X,d,\mu)\hookrightarrow L^p(X,\nu)$ is compact. To this end, let $\mathcal{F}\subseteq M^{\alpha,p}(X,d,\mu)$ be a nonempty bounded set. By Lemma~\ref{equivspaces}, $\mathcal{F}$ is also a bounded subset of $M^{\alpha,p}(X,\rho,\mu)$. Define $M := \sup_{u\in \mathcal{F}} \|u\|_{M^{\alpha,p}(X,\rho,\mu)}$ and note that we can assume $M>0$. For $\varepsilon \in(0,\infty)$ we choose $r\in(0,r_0]$ such that 
\begin{align}\label{we-2}
C 4^p \kappa^{\alpha p} r^{\alpha p } M^p < \varepsilon/4.
\end{align}
From condition $ii)$ in the statement of Theorem~\ref{drugie}, we can find $R\in(C_\rho r,\infty)$ such that 
\begin{align}\label{we-3}
\sup\limits_{y \in X\setminus B_\rho(x_0,R/C_\rho - r)} \int_{B_\rho(y,r)} \frac{1}{\mu(B_\rho(x,r))}\, d\nu(x) < \frac{\varepsilon}{8(4 M)^p\max\big\{1,\kappa^{\alpha p}r_0^{\alpha p}\big\}}.
\end{align}
For any $y \in B_\rho(x_0,R/C_\rho - r)$ we have $[X \setminus B_\rho(x_0,R)] \cap B_\rho(y,r) = \emptyset$. Therefore, if we apply \eqref{int_D} with $D := X\setminus B_\rho(x_0,R)$ and $u\in \mathcal{F}$ then by \eqref{we}, \eqref{we-2}, and \eqref{we-3}, we get
\begin{align*}
\int_{X\setminus B_\rho(x_0,R)}|u(x)|^p \, d\nu(x) &< \varepsilon/4 + \int_{X}|v(y)|^p\int_{ [X\setminus B_\rho(x_0,R)] \cap B_\rho(y,r)} \frac{1}{\mu(B_\rho(x,r))}\,  d\nu(x)d\mu(y) \\
&\leq  \varepsilon/4 + \int_{X}|v(y)|^p  \sup_{y\in X\setminus B_\rho(x_0,R/C_\rho - r)}\int_{B_\rho(y,r)} \frac{1}{\mu(B_\rho(x,r))}  \,d\nu(x) d\mu(y) \\
&<\varepsilon/2.
\end{align*}
Next, we define $\tilde{\nu} :=\nu\!\!\measurerestr\!B_\rho(x_0,R)$. Then $\tilde{\nu}$ is a finite Borel measure  satisfying $\tilde{\nu} \leq \nu \leq C \mu$.  
From assumption $i)$ in the statement of Theorem~\ref{drugie} and Lemma~\ref{cor_i)} the family $\mathcal{F}$ is $p$-equi-integrable with respect to $\nu$ and hence, with respect to $\tilde{\nu}$ as well. We apply Theorem~\ref{emb_in_L0} with $\tilde{\nu}$ in place of $\nu$ and $\mu$, and Theorem~\ref{compLp} with $\tilde{\nu}$ in place of $\mu$ (keeping in mind that $M^\alpha_{p,\infty}=M^{\alpha,p}$ by Proposition~\ref{embs}) to conclude that $\mathcal{F}$ is totally bounded in $L^p(X,\tilde{\nu})$. Thus, there is a finite set $\{w_{i}\}_{i=1}^{N}$ such that for any $u\in \mathcal{F}$ we can find $j\in \{1,\dots,N\}$ for which
\begin{align*}
\|u-w_{j} \|_{L^p(X,\tilde{\nu})}^p  < \varepsilon/2.
\end{align*}
Let $u_i := w_{i} \chi_{B_\rho(x_0,R)}$ for $i\in \{1,\dots,N \}$. Then, 
\begin{align*}
\|u - u_{j} \|_{L^p(X,\nu)}^p &= \int_{B_\rho(x_0,R)} |u(x)-u_j(x)|^p\, d\nu(x) + \int_{X\setminus B_\rho(x_0,R)}|u(x)|^p\, d\nu(x) \\
 &<\|u-w_{j} \|_{L^p(X,\tilde{\nu})}^p + \varepsilon/2 \\
 &<\varepsilon.
\end{align*}
Therefore,  $\mathcal{F}$ is totally bounded in $L^{p}(X,\nu)$ and so, the embedding $M^{\alpha,p}(X,d,\mu)\hookrightarrow L^p(X,\nu)$ is compact. Given that $\alpha, p\in(0,\infty)$ were arbitrary, the conclusion of this theorem now follows from the observation in \eqref{Haj-incl}. This completes the proof of Theorem~\ref{drugie}.
\end{proof}

\begin{corollary}\label{remarkoint}
Let $(X,d,\mu)$ be a quasi-metric-measure space. Let $\nu$ be a measure such that $\nu\leq C \mu$ for some constant $C\in(0,\infty)$. If $\mu$ is $\nu$-integrable, i.e., if there exists a quasi-metric $\varrho$ on $X$ which is equivalent to $d$ and satisfies the following condition: for each fixed $r\in(0,\infty)$, the mapping $x\mapsto\mu(B_\varrho(x,r))$ is well defined, measurable, and
\begin{align*}
\int_{X} \frac{1}{\mu(B_{\varrho}(x,r))} \, d\nu(x)<\infty,
\end{align*}
then, for any $\alpha,p\in(0,\infty)$ and $q\in(0,\infty]$, the embeddings 
\begin{align*}
	M^{\alpha}_{p,q}(X,d,\mu)\hookrightarrow L^p(X,\nu)\quad\mbox{and}\quad
	N^{\alpha}_{p,q}(X,d,\mu)\hookrightarrow L^p(X,\nu)
\end{align*}
are compact. In particular, these embeddings are compact if $(X,d)$ is totally bounded.
\end{corollary}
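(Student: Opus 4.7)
The plan is to deduce this corollary directly from Theorem~\ref{drugie} by verifying its two standing hypotheses from the $\nu$-integrability assumption, after replacing $\varrho$ with a more regular equivalent quasi-metric. To this end, I would first invoke Lemma~\ref{DST1} applied to $d$ to obtain a symmetric quasi-metric $\rho$ on $X$ which is equivalent to $d$ and satisfies $\rho:(X,\tau_d)\times(X,\tau_d)\to[0,\infty)$ continuous; in particular, for every fixed $r\in(0,\infty)$, the function $x\mapsto\mu(B_\rho(x,r))$ is lower semicontinuous and hence measurable. The point of this substitution is that $\rho$ (unlike a generic $\varrho$) automatically satisfies the symmetry and continuity requirements imposed in Theorem~\ref{drugie}.

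Next, I would transfer the integrability of $\mu$ from $\varrho$ to $\rho$. Since $\rho\approx d\approx \varrho$, one can pick $\kappa\in[1,\infty)$ with $B_\varrho(x,r/\kappa)\subseteq B_\rho(x,r)$ for all $x\in X$ and $r\in(0,\infty)$, whence $1/\mu(B_\rho(x,r))\leq 1/\mu(B_\varrho(x,r/\kappa))$; integrating this pointwise inequality against $\nu$ and using the hypothesis yields
\[
f_r(x):=\frac{1}{\mu(B_\rho(x,r))}\in L^1(X,\nu)\quad\text{for every }r\in(0,\infty).
\]
With $f_r$ in hand, I would verify condition i) of Theorem~\ref{drugie}: the pointwise domination $\chi_{B_\rho(y,r)}(x)/\mu(B_\rho(x,r))\leq f_r(x)$, valid for all $y\in X$, together with the classical absolute continuity of the integral for a single $L^1$-function, furnishes the required equi-integrability uniformly in $y$. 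Condition ii) is then verified as follows: fix $x_0\in X$ and any $r\in(0,\infty)$. For $R>C_\rho r$ and $y\in X\setminus B_\rho(x_0,R)$, the quasi-triangle inequality $\rho(x_0,y)\leq C_\rho\max\{\rho(x_0,x),\rho(x,y)\}$ shows that any $x\in B_\rho(y,r)$ satisfies $\rho(x_0,x)\geq R/C_\rho$, so
\[
\int_{B_\rho(y,r)} f_r(x)\,d\nu(x)\leq \int_{X\setminus B_\rho(x_0,R/C_\rho)} f_r(x)\,d\nu(x).
\]
Since $f_r\cdot\chi_{X\setminus B_\rho(x_0,R/C_\rho)}\to 0$ pointwise as $R\to\infty$ and is dominated by $f_r\in L^1(X,\nu)$, the Lebesgue dominated convergence theorem gives the required uniform decay.

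Having established both hypotheses of Theorem~\ref{drugie}, an application of that theorem with $(\rho,r_0)$ (for any $r_0\in(0,\infty)$) immediately yields the compactness of $M^\alpha_{p,q}(X,d,\mu)\hookrightarrow L^p(X,\nu)$ and $N^\alpha_{p,q}(X,d,\mu)\hookrightarrow L^p(X,\nu)$ for all $\alpha,p\in(0,\infty)$ and $q\in(0,\infty]$. For the ``in particular'' clause, suppose $(X,d)$ is totally bounded. Proposition~\ref{TB-int} then shows that $\mu$ is integrable, i.e., there is $\varrho\approx d$ with $\int_X 1/\mu(B_\varrho(x,r))\,d\mu(x)<\infty$; multiplying by $C$ and using $\nu\leq C\mu$ gives $\int_X 1/\mu(B_\varrho(x,r))\,d\nu(x)<\infty$, so $\mu$ is $\nu$-integrable and the first part of the corollary applies.

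The main technical obstacle is essentially bookkeeping: ensuring that the integrability hypothesis survives the passage from the abstract equivalent quasi-metric $\varrho$ provided by the assumption to the symmetric, continuous regularization $\rho$ required by Theorem~\ref{drugie}. Once this is handled via Lemma~\ref{DST1} and the elementary comparison of balls for equivalent quasi-metrics, conditions i) and ii) both reduce to standard $L^1$ absolute continuity/dominated convergence arguments with dominating function $f_r$, and no further input is needed.
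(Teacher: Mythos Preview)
Your proposal is correct and follows essentially the same approach as the paper: regularize via Lemma~\ref{DST1} to obtain a symmetric continuous $\rho\approx d$, compare $\rho$-balls with $\varrho$-balls to dominate $\chi_{B_\rho(y,r)}(x)/\mu(B_\rho(x,r))$ by an $L^1(\nu)$ function independent of $y$, and then read off conditions i) and ii) of Theorem~\ref{drugie}. The paper compresses the verification of i) and ii) into a single line, whereas you spell out the absolute continuity and dominated convergence arguments explicitly, but the substance is identical; your handling of the totally bounded case via Proposition~\ref{TB-int} and $\nu\leq C\mu$ likewise matches the paper's.
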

\begin{proof}
By Lemma~\ref{DST1}, there exists a symmetric quasi-metric $\rho$ on $X$ which is equivalent to $d$ and continuous on $X\times X$. Since $\rho\approx d$, we also have $\rho\approx \varrho$ and so we can find a constant $\kappa\in[1,\infty)$ such that $B_{\varrho}(x,r/\kappa)\subseteq B_{\rho}(x,r)$, for any $x\in X$ and $r\in(0,\infty)$. Therefore, for any measurable set $D\subseteq X$, $y\in X$ and $r\in(0,\infty)$, we have
\begin{align*}
\int_{D}\dfrac{\chi_{B_{\rho}(y,r)}(x)}{\mu(B_{\rho}(x,r))}\,  d\nu(x) \leq  \int_{D} \frac{1}{\mu(B_{\varrho}(x,r/\kappa))}\, d\nu(x).
\end{align*}
Hence, $\nu$-integrability of $\mu$ immediately implies that $\rho$ satisfies the conditions $i)$ and $ii)$ in the statement of Theorem~\ref{drugie} and so, the desired conclusion follows. Finally, note that if $(X,d)$ is totally bounded then $\mu$ is $\nu$-integrable by Proposition~\ref{tot} (see proof of Proposition~\ref{TB-int}), and this completes the proof the corollary.
\end{proof}
\begin{ex}
Let $(\mathbb{R}^n, |\cdot-\cdot|, \lambda_n)$, where $|\cdot-\cdot|$ is the standard Euclidean distance and $\lambda_n$ is the $n$-dimensional Lebesgue measure. Suppose that $S\subseteq\mathbb{R}^n$ is any finite measurable set and consider the measure $\nu :=\lambda_n\!\!\measurerestr\!S\leq\lambda_n$. Using the Ahlfors-regularity of $\lambda_n$, it is easy to see that  $\lambda_n$ is $\nu$-integrable and so, by Corollary~\ref{remarkoint}, we have that the embeddings
\begin{align*}
	M^{\alpha}_{p,q}(\mathbb{R}^n, |\cdot-\cdot|, \lambda_n)\hookrightarrow L^p(\mathbb{R}^n,\lambda_n\!\!\measurerestr\!S)\quad\mbox{and}\quad
	N^{\alpha}_{p,q}(\mathbb{R}^n, |\cdot-\cdot|, \lambda_n)\hookrightarrow L^p(\mathbb{R}^n,\lambda_n\!\!\measurerestr\!S)
\end{align*}
are compact for all  $\alpha,p\in(0,\infty)$ and $q\in(0,\infty]$. Consequently, these embeddings hold for the classical Besov and Triebel--Lizorkin spaces (and so, in particular, for the classical Sobolev and fractional Sobolev spaces) for certain ranges of parameters; see Subsection~\ref{subsect:fnctspaces} for a relationship between these function spaces.
\end{ex}

\subsubsection{Necessary Conditions}
\label{subsubsect:nec}
Now, we identify necessary conditions for compactness of the embeddings 	$M^{\alpha}_{p,q}(X,d,\mu)\hookrightarrow L^p(X,\mu)$ and $N^{\alpha}_{p,q}(X,d,\mu)\hookrightarrow L^p(X,\mu)$. We begin with the following lemma.
\begin{lem} \label{trzecie}
Let $(X,d,\mu)$ be a quasi-metric-measure space such that $\mu$ is $(C_d,\delta)$-doubling for some fixed $\delta\in(0,\infty)$, where $C_d\in[1,\infty)$ is as in \eqref{C-d}. Fix $\alpha,p\in(0,\infty)$ and $q\in(0,\infty]$, and  suppose that $\alpha\leq(\log_{2}C_d)^{-1}$, where $\alpha=(\log_{2}C_d)^{-1}$ is only permitted when $q=\infty$. If either of the embeddings
\begin{equation}
\label{ni4-24}
M^{\alpha}_{p,q}(X,d,\mu)\hookrightarrow L^p(X,\mu)\quad\mbox{or}\quad
N^{\alpha}_{p,q}(X,d,\mu)\hookrightarrow L^p(X,\mu)
\end{equation}
is compact, then every $\widetilde{C}_dC_d^2\delta$-separated set in $X$ (with respect to $d$) is finite. Here, $\widetilde{C}_d\in[1,\infty)$ is as in \eqref{C-d-tilde}.
\end{lem}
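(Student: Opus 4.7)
The plan is to prove the contrapositive: assuming $X$ contains an infinite $\widetilde{C}_d C_d^2 \delta$-separated set $\{x_n\}_{n\in\mathbb{N}}$, I will construct a sequence $\{u_n\}_{n\in\mathbb{N}}$ that is bounded in $M^{\alpha}_{p,q}(X,d,\mu)$ (and in $N^{\alpha}_{p,q}(X,d,\mu)$) but admits no convergent subsequence in $L^p(X,\mu)$, contradicting compactness of either embedding in \eqref{ni4-24}. The bumps will be localized near the $x_n$'s using the Urysohn-type Lemma~\ref{GVa2}, whose applicability is ensured by the precise assumption on $\alpha$ and $q$.

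For each $n$, apply Lemma~\ref{GVa2} to the pair
\[
E_0^{(n)}:=X\setminus B_d(x_n,C_d\delta),\qquad E_1^{(n)}:=\overline{B_d(x_n,\delta)}.
\]
The key geometric fact I would verify first is that $\mathrm{dist}_d(E_0^{(n)},E_1^{(n)})\geq \delta/C_d$. Indeed, given $y\in E_1^{(n)}$ and $w\in E_0^{(n)}$, Proposition~\ref{closure-prop} furnishes a point $z\in B_d(x_n,\delta)$ with $d(y,z)$ arbitrarily small; applying the quasi-triangle inequality to the relation $d(x_n,w)\geq C_d\delta$, together with $d(x_n,z)<\delta$, first forces $d(z,w)\geq\delta$, and a second application (choosing $d(y,z)<\delta/(C_d\widetilde{C}_d)$) then forces $d(y,w)\geq d(z,w)/C_d\geq\delta/C_d$. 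Lemma~\ref{GVa2} now produces continuous functions $\Phi_n:X\to[0,1]$ such that $\Phi_n\equiv 1$ on $E_1^{(n)}$, $\Phi_n\equiv 0$ on $E_0^{(n)}$, and
\[
\|\Phi_n\|_{\dot{M}^{\alpha}_{p,q}(X,d,\mu)}+\|\Phi_n\|_{\dot{N}^{\alpha}_{p,q}(X,d,\mu)}\leq C\delta^{-\alpha}[\mu(B_d(x_n,C_d\delta))]^{1/p},
\]
with $C$ depending only on $\alpha$, $q$, and $d$.

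Next, the separation hypothesis forces the balls $\{B_d(x_n,C_d\delta)\}_{n\in\mathbb{N}}$, and hence the supports of the $\Phi_n$, to be pairwise disjoint: if $y\in B_d(x_n,C_d\delta)\cap B_d(x_m,C_d\delta)$ with $n\neq m$, then $d(x_n,x_m)\leq C_d\max\{d(x_n,y),\widetilde{C}_d d(x_m,y)\}<\widetilde{C}_d C_d^2\delta$, contradicting the separation. Since $\Phi_n\equiv 1$ on $B_d(x_n,\delta)\subseteq E_1^{(n)}$ and $|\Phi_n|\leq 1$ is supported in $B_d(x_n,C_d\delta)$, the $(C_d,\delta)$-doubling hypothesis gives the two-sided estimate
\[
\mu(B_d(x_n,\delta))\leq\|\Phi_n\|_{L^p(X,\mu)}^p\leq\mu(B_d(x_n,C_d\delta))\leq\Delta_{C_d}(\delta)\mu(B_d(x_n,\delta)).
\]
Normalizing $u_n:=\Phi_n/\|\Phi_n\|_{L^p(X,\mu)}$, the above inequality combined with the norm estimate on $\Phi_n$ yields a bound on $\|u_n\|_{M^{\alpha}_{p,q}(X,d,\mu)}$ and $\|u_n\|_{N^{\alpha}_{p,q}(X,d,\mu)}$ depending only on $\alpha,p,q,d,\delta$, and $\Delta_{C_d}(\delta)$. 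The disjointness of supports then gives $\|u_n-u_m\|_{L^p(X,\mu)}^p=\|u_n\|_{L^p}^p+\|u_m\|_{L^p}^p=2$ for all $n\neq m$, so $\{u_n\}$ has no Cauchy subsequence in $L^p(X,\mu)$, yielding the required contradiction.

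The main technical hurdle is the lower bound on $\mathrm{dist}_d(E_0^{(n)},E_1^{(n)})$: since balls in a quasi-metric space need not be open and $d$ need not be symmetric, one cannot simply read off this separation from the containment $\overline{B_d(x_n,\delta)}\subseteq B_d(x_n,C_d\delta)$ provided by Proposition~\ref{zero}, and one is obliged to argue via the characterization of the closure in Proposition~\ref{closure-prop} combined with two careful applications of the quasi-triangle inequality. The outer radius $C_d\delta$ in $E_0^{(n)}$ is dictated by the single-scale $(C_d,\delta)$-doubling hypothesis, which is precisely what permits the two-sided comparison of $\|\Phi_n\|_{L^p}^p$ with $\mu(B_d(x_n,\delta))$, and the separating constant $\widetilde{C}_d C_d^2\delta$ is precisely what is needed so that the supports $B_d(x_n,C_d\delta)$ are pairwise disjoint in the nonsymmetric setting, which in turn explains its appearance in the statement.
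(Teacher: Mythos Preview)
Your proposal is correct and follows essentially the same approach as the paper's own proof: bump functions from Lemma~\ref{GVa2} supported in $B_d(x_n,C_d\delta)$ and equal to $1$ on $B_d(x_n,\delta)$, normalized so as to be bounded in $M^{\alpha}_{p,q}$ and $N^{\alpha}_{p,q}$ via the $(C_d,\delta)$-doubling condition, and separated in $L^p$ by the disjointness of the supports. The only notable difference is that you take $E_1^{(n)}=\overline{B_d(x_n,\delta)}$ and then invoke Proposition~\ref{closure-prop} to estimate the separation, whereas the paper simply takes $E_1^{(n)}=B_d(x_n,\delta)$ (balls are measurable by the standing definition of a quasi-metric-measure space), obtaining ${\rm dist}_d(E_0^{(n)},E_1^{(n)})\geq\delta/\widetilde{C}_d$ directly; your detour through the closure is harmless but unnecessary.
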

\begin{proof}
Suppose to the contrary that $\{x_{j}\}_{j\in\mathbb{N}}$ is an infinite $\widetilde{C}_dC_d^2\delta$-separated set in $X$ with respect to $d$. Then, for any $k,\ell\in\mathbb{N}$ with $k\neq \ell$,  $B_d(x_{k},\delta) \cap B_d(x_{\ell},C_d\delta) = \emptyset$. Indeed, if $y\in B_d(x_k,\delta)\cap B_d(x_\ell,C_d\delta)$ for some $k,\ell\in\mathbb{N}$ with $k\neq \ell$, then we have
\begin{align*}
\widetilde{C}_dC_d^2\delta\leq d(x_k,x_\ell)&\leq C_d\max\big\{d(x_k,y),\widetilde{C}_dd(x_\ell,y)\big\}
\\
&<C_d\max\big\{\delta,\widetilde{C}_dC_d\delta\big\}=\widetilde{C}_dC_d^2 \delta,
\end{align*}
which is a clear contradiction and so, the claim follows. 

Since ${\rm dist}_d (X\setminus B_d(x_j,C_d\delta),B_d(x_j,\delta) ) \geq \delta/\widetilde{C}_d>0$ for each $j\in\mathbb{N}$, we can appeal to Lemma~\ref{GVa2} to obtain a measurable function $\Phi_j: X\to[0,1]$ such that $\Phi_j\equiv 1$  on $B_d(x_j,\delta)$ and $\Phi_j\equiv 0$ on $X\setminus B_d(x_j,C_d\delta)$, and we define a sequence $\{f_j\}_{j\in\mathbb{N}}$ of functions by setting
\[
f_j(x) := \frac{1}{[\mu(B_d(x_j,\delta)]^{1/p}}\,\Phi_j(x)\quad\mbox{for all $j\in\mathbb{N}$ and $x\in X$.}
\]
Note that we have $f_j\in M^{\alpha}_{p,q}(X,d,\mu)\bigcap N^{\alpha}_{p,q}(X,d,\mu)$ for every $j\in\mathbb{N}$. In fact, since each $f_j$ vanishes pointwise outside of $B_d(x_j,C_d\delta)$, it follows from the $(C_d,\delta)$-doubling condition, the observation that ${\rm dist}_d (X\setminus B_d(x_j,C_d\delta),B_d(x_j,\delta) ) \geq \delta/\widetilde{C}_d$, and the  estimates in \eqref{gradest1}-\eqref{gradest2} that each $f_j$ is bounded in $M^s_{p,q}(X,d,\mu)$ and $N^s_{p,q}(X,d,\mu)$ independent of $j$.
However, $\{f_j\}_{j\in\mathbb{N}}$ has no convergent subsequence in $L^p(X,\mu)$ since for any $k,\ell\in\mathbb{N}$ with $k\neq \ell$ we have 
\begin{align*}
\|f_k - f_\ell \|_{L^p(X,\mu)}^p \geq  \int\limits_{B_d(x_{k},\delta)\cup B_d(x_{\ell},\delta)} |f_k(x) - f_\ell(x)|^p\, d\mu(x) = 2.
\end{align*}
Hence, both of the embeddings in \eqref{ni4-24} are not compact which is a contradiction, and this completes the proof of the lemma.
\end{proof}
Before stating our next main result, we introduce the following notional convention.

\begin{conv}\label{convention}
Given a quasi-metric space $(X,d)$ and fixed numbers $\alpha\in(0,\infty)$ and $q\in(0,\infty]$, we will understand by $\alpha\preceq_q{\rm ind}(X,d)$ that $\alpha\leq{\rm ind}(X,d)$ and that the value $\alpha={\rm ind}(X,d)$ is only permissible when $q=\infty$ and the supremum defining ${\rm ind}(X,d)$ in \eqref{index} is attained.
\end{conv}

\begin{tw} \label{czwarte}
Let $(X,d,\mu)$ be a quasi-metric-measure space and suppose there exists $\delta_0\in(0,\infty)$ such that
$\mu$ is $(C_d,\delta)$-doubling for every $\delta\in(0,\delta_0]$, where $C_d\in[1,\infty)$ is as in \eqref{C-d}.
Fix $\alpha,p\in(0,\infty)$ and $q\in(0,\infty]$, and suppose that $\alpha\preceq_q{\rm ind}(X,d)$. If either of the embeddings
\[
	M^{\alpha}_{p,q}(X,d,\mu)\hookrightarrow L^p(X,\mu)\quad\mbox{or}\quad
	N^{\alpha}_{p,q}(X,d,\mu)\hookrightarrow L^p(X,\mu)
\]
is compact, then $(X,d)$ is totally bounded.
\end{tw}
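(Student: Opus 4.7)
The plan is to reduce the statement to Lemma~\ref{trzecie} after passing to a carefully chosen equivalent quasi-metric on $X$. By Convention~\ref{convention} and the definition of ${\rm ind}(X,d)$, I can pick a quasi-metric $\rho\approx d$ with $\alpha\leq(\log_2 C_\rho)^{-1}$, where the value $\alpha=(\log_2 C_\rho)^{-1}$ is only permitted when $q=\infty$. Using Lemma~\ref{DST1} I may further assume without loss of generality that $\rho$ is symmetric (so $\widetilde{C}_\rho=1$) and continuous, hence every $\rho$-ball is open and in particular measurable; since this regularization can only decrease $C_\rho$, the inequality $\alpha\leq(\log_2 C_\rho)^{-1}$ is preserved, and the edge case $\alpha={\rm ind}(X,d)$ still forces equality (with $q=\infty$) because $(\log_2 C_\rho)^{-1}\leq{\rm ind}(X,d)=\alpha$. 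Thus $(X,\rho,\mu)$ is a quasi-metric-measure space, and Lemma~\ref{equivspaces} translates the assumed compactness of $M^{\alpha}_{p,q}(X,d,\mu)\hookrightarrow L^p(X,\mu)$ or $N^{\alpha}_{p,q}(X,d,\mu)\hookrightarrow L^p(X,\mu)$ into the corresponding compactness statement for $(X,\rho,\mu)$.

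Next I transfer the $(C_d,\delta)$-doubling hypothesis from $d$ to $\rho$. Choose $\kappa\geq 1$ with $\kappa^{-1}\rho\leq d\leq\kappa\rho$ on $X\times X$, so that
\[
B_d(x,r/\kappa)\subseteq B_\rho(x,r)\subseteq B_d(x,\kappa r)\quad\text{for all }x\in X,\ r\in(0,\infty).
\]
By Proposition~\ref{doub-diff}, the assumption that $\mu$ is $(C_d,\delta)$-doubling with respect to $d$ for every $\delta\in(0,\delta_0]$ upgrades to $\mu$ being $(\kappa^2 C_\rho,\delta)$-doubling with respect to $d$ for every sufficiently small $\delta>0$. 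Combining this with the ball inclusions, for all sufficiently small $\delta>0$ and all $x\in X$ I obtain
\[
\mu(B_\rho(x,C_\rho\delta))\leq\mu(B_d(x,\kappa C_\rho\delta))\leq C(\delta)\,\mu(B_d(x,\delta/\kappa))\leq C(\delta)\,\mu(B_\rho(x,\delta)),
\]
so that $\mu$ is $(C_\rho,\delta)$-doubling with respect to $\rho$ for every $\delta$ in some interval $(0,\delta_1]$.

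All hypotheses of Lemma~\ref{trzecie} are now in force for $(X,\rho,\mu)$ for each fixed $\delta\in(0,\delta_1]$. It follows that every $\widetilde{C}_\rho C_\rho^2\delta=C_\rho^2\delta$-separated subset of $X$ (with respect to $\rho$) is finite. To conclude, fix an arbitrary $\varepsilon>0$, pick $\delta\in(0,\delta_1]$ with $C_\rho^2\delta<\varepsilon$, and let $A\subseteq X$ be a maximal $C_\rho^2\delta$-separated set with respect to $\rho$. Then $A$ is finite, and maximality forces $X=\bigcup_{x\in A}B_\rho(x,C_\rho^2\delta)\subseteq\bigcup_{x\in A}B_\rho(x,\varepsilon)$, so $A$ is a finite $\varepsilon$-net in $(X,\rho)$. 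Hence $(X,\rho)$, and therefore $(X,d)$ (since $\rho\approx d$), is totally bounded.

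The main subtlety is orchestrating the change of quasi-metric so that all three ingredients survive simultaneously: the smoothness requirement $\alpha\leq(\log_2 C_\rho)^{-1}$ coming from Lemma~\ref{trzecie} (and ultimately from the Urysohn-type bumps in Lemma~\ref{GVa2}) forces $\rho$ to be ``as flat as possible,'' which is precisely why the lower smoothness index ${\rm ind}(X,d)$ governs the optimal range for $\alpha$, while at the same time the doubling condition must be transported across the bi-Lipschitz change from $d$ to $\rho$, which is made possible by the flexibility afforded by Proposition~\ref{doub-diff}.
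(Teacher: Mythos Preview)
Your argument is correct and mirrors the paper's proof essentially step for step: pass to an equivalent quasi-metric $\rho$ (via the index condition and Lemma~\ref{DST1}) so that $\rho$-balls are measurable and $\alpha\leq(\log_2 C_\rho)^{-1}$, transport the $(C_d,\delta)$-doubling hypothesis to a $(C_\rho,\delta)$-doubling condition with respect to $\rho$ through Proposition~\ref{doub-diff} and the ball inclusions, and then invoke Lemma~\ref{trzecie} at every small scale to see that maximal separated sets are finite. One small technicality you should add: Proposition~\ref{doub-diff} requires the input constant $c=C_d$ to be strictly greater than $1$, so when $C_d=1$ the paper bypasses this step entirely by taking $\rho:=d$ (then $C_\rho=1$, the $(1,\delta)$-doubling condition is vacuous, and $\alpha\leq(\log_2 1)^{-1}=\infty$ holds automatically); your write-up should likewise single out this trivial case, or else specify from the start that when $C_d=1$ you apply Lemma~\ref{DST1} to $d$ itself so as to guarantee $C_\rho=1$.
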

Let us remark that in view of Proposition~\ref{doub-diff}, the conclusion of Theorem~\ref{czwarte} holds whenever there exist $c\in(1,\infty)$ and $\delta_0\in(0,\infty)$ such that $\mu$ is $(c,\delta)$-doubling for every $\delta\in(0,\delta_0]$.
\begin{proof} 
First, we show that there exist $\delta_1\in(0,\infty)$ and a quasi-metric $\rho$ on $X$ which is equivalent to $d$ and has the following properties: All balls with respect to $\rho$ are measurable, $\alpha\leq(\log_{2}C_\rho)^{-1}$, where $\alpha=(\log_{2}C_\rho)^{-1}$ can only occur when $q=\infty$, and $\mu$ is $(C_\rho,\delta)$-doubling with respect to $\rho$ for all $\delta\in(0,\delta_1]$. To this end, if $C_d=1$ then we can simply take $\rho:=d$ and $\delta_1:=\delta_0$. Otherwise, given that $\alpha\preceq_q{\rm ind}(X,d)$, we can choose a quasi-metric $d'$ on $X$ which is equivalent to $d$ and satisfies $\alpha\leq(\log_{2}C_{d'})^{-1}$, where $\alpha=(\log_{2}C_{d'})^{-1}$ can only occur when $q=\infty$. Since balls with respect to $d'$ may not be measurable, we appeal to Lemma~\ref{DST1} to obtain a quasi-metric $\rho$ on $X$ which is equivalent to $d'$ (hence, is also equivalent to $d$) and has the following properties: All balls with respect to $\rho$ are measurable and $C_\rho\leq C_{d'}$. Note that this latter fact implies $\alpha\leq(\log_{2}C_{\rho})^{-1}$, where $\alpha=(\log_{2}C_{\rho})^{-1}$ can only occur when $q=\infty$. It remains to show that there exists $\delta_1\in(0,\infty)$ such that $\mu$ is $(C_\rho,\delta)$-doubling with respect to $\rho$ for all $\delta\in(0,\delta_1]$. Since $\rho$ is equivalent to $d$, there exists a constant $\kappa\in[1,\infty)$ such that $B_{d}(x,r/\kappa)\subseteq B_\rho(x,r)\subseteq B_{d}(x,\kappa r)$, for any $x\in X$ and $r\in(0,\infty)$. By Proposition~\ref{doub-diff}, we have that $\mu$ is $(\kappa^2C_\rho,\delta)$-doubling for every $\delta\in(0,C_d\delta_0/(\kappa^2C_\rho)]$. As such, if we let $\delta_1:=C_d\delta_0/(\kappa C_\rho)\in(0,\infty)$, then for every $\delta\in(0,\delta_1]$, we have
\begin{align*}
\mu(B_\rho(x,C_\rho\delta))&\leq\mu(B_d(x,\kappa C_\rho\delta))
=\mu(B_d(x,\kappa^2 C_\rho\delta/\kappa)) \\ 
&\leq\Delta_{\kappa^2 C_\rho}(\delta/\kappa) \mu(B_d(x,\delta/\kappa))
\leq \Delta_{\kappa^2 C_\rho}(\delta/\kappa) \mu(B_\rho(x,\delta)).
\end{align*}
Hence, $\mu$ is $(C_\rho,\delta)$-doubling with respect to $\rho$ for all $\delta\in(0,\delta_1]$. 

Moving on, to show $(X,d)$ is totally bounded, we fix $\varepsilon\in(0,\widetilde{C}_\rho C_\rho^2\delta_1]$ and let $A$ be any maximal $\varepsilon$-separated set with respect to $\rho$. Then $A$ is an $\varepsilon$-net for $X$ (with respect to $\rho$). Recall that Lemma~\ref{equivspaces} implies $M^{\alpha}_{p,q}(X,\rho,\mu)=M^{\alpha}_{p,q}(X,d,\mu)$ and $N^{\alpha}_{p,q}(X,\rho,\mu)=N^{\alpha}_{p,q}(X,d,\mu)$. 
Moreover, we note that $A$ is a $\widetilde{C}_\rho C_\rho^2\delta$-separated set (with respect to $\rho$) for some $\delta\in(0,\delta_1]$. Combining these observations with the fact that $\mu$ is $(C_\rho,\delta)$-doubling, it follows from Lemma~\ref{trzecie} (applied to the quasi-metric-measure space $(X,\rho,\mu)$) that the set $A$ is finite, and the claim now follows granted that $\rho$ is equivalent to $d$.
\end{proof}

As the following example illustrates, Theorem~\ref{czwarte} does not hold without some additional assumption on the quasi-metric-measure space $(X,d,\mu)$.

\begin{ex}{\bf (Infinite Comb)}\label{infinitecomb}
There is a bounded metric-measure space $(G,d,\mu)$ such that every ball is not totally bounded (in particular, $(G,d)$ is not totally bounded or geometrically doubling), and for any $\alpha,p \in (0,\infty)$ and $q\in(0,\infty]$, the embeddings $M^{\alpha}_{p,q}(G,d,\mu) \hookrightarrow L^p(G,\mu)$ and $N^{\alpha}_{p,q}(G,d,\mu) \hookrightarrow L^p(G,\mu)$  are compact.
\end{ex}

\begin{figure}[h]
\centering
\includegraphics[width=3.3in]{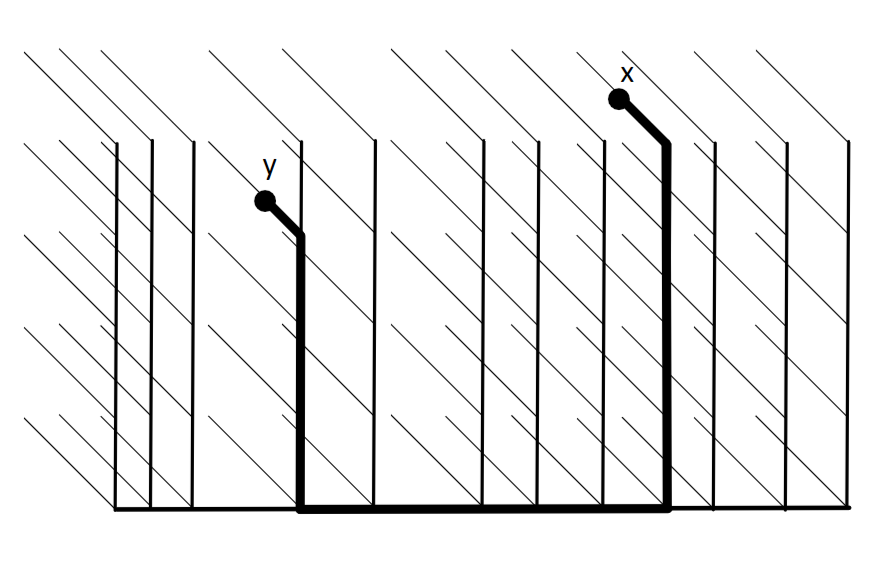}
\vspace*{-8mm}
\caption{The distance between two points in the infinite comb}
\label{fig:comb}
\end{figure}
\begin{proof}
Let $D $ be a dense and countable subset of the interval $(0,1/4)$. We construct $G\subseteq (0,1)^{\mathbb{N}}$ in the following way. Let $I = \{(t,0,0,\dots): t\in  (0,1/4)\}$. For each $n\in\mathbb{N}$ and each $\vv{d} := (d_{1},d_{2},\dots,d_{n}) \in D^{n}$, consider the set
\begin{align*}
I_{\vv{d}} := \big\{(d_{1},d_{2}/2,\dots,d_{n}/2^{n-1},t,0,0,\dots ): t\in (0,2^{-n-2}) \big\},
\end{align*}
and define $G\subseteq(0,1)^\mathbb{N}$ by setting
\begin{align*}
G := I \cup \bigcup_{n=1}^{\infty} \bigcup_{\vv{d} \in D^{n}} I_{\vv{d}}
\end{align*}

We endow $G$ with the following metric: Let $x,y \in G$, $x = (x_{n})_{n=1}^{\infty}, y = (y_{n})_{n=1}^{\infty}$. If $x=y$ then we set $d(x,y) := 0$. When $x\neq y$, we define  $\kappa := \kappa(x,y) := \min\{n\in\mathbb{N}: x_{n} \neq y_n \}$ and set

\begin{align*}
d(x,y) := |x_{\kappa} - y_{\kappa}| + \sum_{n = \kappa + 1}^{\infty} (x_{n} + y_{n}). 
\end{align*}
It is straightforward to check that $(G,d)$ is a bounded metric space and no ball is totally bounded.

In order to equip $G$ with an appropriate measure we need to properly enumerate the elements of the set $\bigcup_{n=1}^{\infty} D^{n}$. 

Let $\phi:\mathbb{P} \to D$ be any bijection between $D$ and set of prime numbers $\mathbb{P}$. We have $D = \{\phi(p): p\in \mathbb{P} \}$. Moreover, for any $n\in\mathbb{N}$ and any $\vv{p}  = (p_{1},\dots,p_{n})\in  \mathbb{P}^{n}$ we define $\Phi(\vv{p}) := (\phi(p_{1}),\dots, \phi(p_{n}))$. Note that $\Phi$ is bijection between $\bigcup_{n=1}^{\infty} \mathbb{P}^{n}$ and $\bigcup_{n=1}^{\infty} D^{n}$.

Now we will define bijection $\Psi$ between $\bigcup_{n=1}^{\infty} \mathbb{P}^{n}$ and set of positive integers by induction. We will walk through the few first steps to emphasize the idea of construction.
For 2 and 3 we take $\Psi(2) = 1,\Psi(3) = 2$. Since $4 = 2\cdot 2$ we define $\Psi((2,2)) = 3$. 5 is prime so we can take $\Psi(5) = 4$. Now since $6 = 2\cdot 3 = 3 \cdot 2$ we define $\Psi((2,3))$ and $\Psi((3,2))$ using 5 and 6.

For $m\in\mathbb{N}$ with $m\geq 2$, consider 
\begin{align*}
G_{m} := \big\{(p_{1},p_{2},\dots,p_{N_{k}}) \in \mathbb{P}^{N_{k}}: \Pi_{i=1}^{N_{k}} p_{i} = k \text{ for } 2\leq k \leq m \big\}.
\end{align*}

Suppose that for some $m\geq 2$ we have defined a bijection $\Psi:G_{m} \to \{1,2,\dots,\#G_{m} \}$ such that for any $2\leq \ell < k \leq m$ if $\vv{p} = (p_{1},\dots,p_{N_{\ell}})$ and $\vv{q} = (q_{1},\dots,q_{N_{k}})$  are decompositions of $\ell$ and $k$ respectively, then $\Psi(\vv{p}) < \Psi(\vv{q})$.

Let $\tilde{\Psi}$ be any bijection between $G_{m+1} \setminus G_{m}$ and $\{n\in \mathbb{N}: \#G_{m} +1 \leq n \leq \#G_{m+1}  \}$.

We extend $\Psi$ onto $G_{m+1}$ by $\Psi\restriction _{G_{m+1}\setminus G_{m} }=\tilde{\Psi}$.

Let $b_{k} :=\Phi \circ \Psi^{-1} (k)$. We define $J_0 = I$ and for $k\geq 1$  $J_{k} = I_{b_k}$. Now $G = \bigcup_{k=0}^{\infty} J_{k}$. Let $a_{k} := \diam_d(J_{k})$.

For $k\in\mathbb{N}$, consider $f_{k}: (0,a_{k}) \to J_{k}$ given by $f_{k}(t) := (d_{1},d_{2}/2,\dots,d_{n}/2^{n-1},t,0,0,\dots)$, where $(d_{1},d_{2},\dots,d_{n})=b_k$,  
and $f_0: (0,1/4) \to J_0$ $f_0(t)=(t,0,0,\dots)$. We denote by $L_{k}$ the $\sigma$-algebra of Lebesgue measurable subsets of interval $(0,a_{k})$. Let us define a function $g_{k}: (0,a_{k}) \to \mathbb{R}_{+} $ by setting $g_k(y) := \exp( -(y+k)^{2})$, for every $y\in(0,a_{k})$.

On $L_{k}$ we consider measures $\nu_{k}$ given by $d\nu_{k} := g_{k} dl_1$. We pushforward $(L_{k},\nu_{k})$ with $f_{k}$ and obtain sequence of $\sigma$-algebras $\mathfrak{M}_{k} := f_{k} \# L_{k}$ 
and sequence of measures defined on $\mathfrak{M}_{k}$  $\mu_{k} := f_{k}\# \nu_{k}$.
Finally, we define 
\begin{align*}
\mathfrak{M} := \Big\{A \subseteq G: A = \bigcup_{k=0}^{\infty} A_{k} \text{ for some } A_{k} \in \mathfrak{M}_{k} \Big\} 
\end{align*}
and set
\begin{align*}
\mu(A) := \sum_{k=0}^{\infty} \mu_{k}(A_{k})\quad\mbox{ for all $A\in\mathfrak{M}$.}
\end{align*}

We will show that $\mu$ is integrable. We can assume that $0<r<1/4$.
Let $x := (t,0,\dots) \in J_{0}$. Then 

\begin{align*}
B_d(x,r) \cap J_0 = \big\{(y,0,\dots): \max\{0,t-r\} < y < \min\{1/4,t+r\} \big\}.
\end{align*}
Since 
\begin{align*}
\min\{1/4,t+r\} - \max\{0,t-r\} \geq r 
\end{align*}
we have
\begin{align*}
\mu(B_d(x,r)\cap J_0) = \int\limits_{\max\{0,t-r\}}^{\min\{1/4,t+r\}} \exp(-y^2)dy \geq r \exp(-1/16).
\end{align*}
Hence,
\begin{align}
\label{qq-33}
\int_{J_0} \frac{1}{\mu(B_d(x,r))}\, d\mu(x) \leq \frac{e^{1/16}}{r}.
\end{align}

Let $k\geq 1$ and $x\in J_{k} = I_{b_{k}}$, $b_{k}= (d_{1},\dots,d_{n})$. 
We write  $x = (x_{i})_{i=1}^{\infty}$, $x_{i} = d_{i}/2^{i-1}$ for $i=1,\dots,n$, $x_{n+1} = t$ for  $t\in (0,a_{k})$, $a_{k} = 2^{-n-2}$  and for $i \geq n+2$ we have $x_{i} = 0$.

There is a finite, increasing sequence of nonnegative integers $i_0,i_1, \dots , i_{n-1},i_{n}$ such that $i_0 =0$, $i_n = k$ and for any $1\leq j \leq n$ $(d_{1},\dots,d_{j}) = b_{i_{j}}$.

If $t\geq r/2$ then
\begin{align*}
\mu(B_d(x,r) \cap J_{k}) \geq \int_{t-r/2}^{t-r/4} \exp(-(y+k)^{2})\, dy \geq r/4 \exp(-(t-r/4 + k)^2).
\end{align*}
In case $t<r/2$ we consider
\begin{align*}
L = \min \{j \in \{0,1,\dots,n\}: B_d(x,r) \cap J_{i_j} \neq \emptyset \}.
\end{align*}
Clearly $L<n$.
Let $\tilde{x} = (x_{1},\dots,x_{L+1},0,0,\dots)$. We now consider two cases.
\smallskip

{\tt 1st case:} Assume that $d(x,\tilde{x}) \geq r/2$. Since 
\begin{align*}
d(x,\tilde{x}) = \sum_{j=L+1}^{n} x_{j+1}
\end{align*}
we have
\begin{align*}
\mu(B_d(x,r) \cap \bigcup_{j=L+1}^{n} J_{i_{j}}) &\geq  \sum_{j=L+1}^{n}\int_{0}^{x_{j+1}} \exp(-(y+ i_{j})^2) \, dy \\
&=  \sum_{j=L+1}^{n}\int_{i_{j}}^{i_{j}+x_{j+1}} \exp(-y^2) \, dy \\
&\geq \int_{k - \sum_{j=L+1}^{n-1} x_{j+1}}^{k+t} \exp(-y^{2}) dy \\
&\geq \int_{t - r/2}^{t- r/4} \exp(-(y + k)^2) dy .
\end{align*}
\smallskip

{\tt 2nd case:}  We assume that $d(x,\tilde{x}) < r/2$. From the triangle inequality, we have that 
\begin{align*}
B_d(\tilde{x},r/2) \subseteq B_d(x,r).
\end{align*}
Moreover, due to the minimality we have
\begin{align*}
\mu(B_d(x,r))&\geq \mu( B_d(\tilde{x},r/2) \cap J_{i_{L}}) \geq \min \left( \int_{x_{L+1} - r/2}^{x_{L+1}} \exp(-(y+i_{L})^2) \,dy , \int_{x_{L+1}}^{x_{L+1}+r/2} \exp(-(y+i_{L})^2) \,dy\right) \\
&= \min \left(\int_{t-r3/4}^{t-r/4} \exp(-(z+x_{L+1} - t + r/4 + i_{L})^2) dz, \int_{t-r/2}^{t} \exp(-(z+x_{L+1} - t + r/2 + i_{L})^2) dz \right) \\
&\geq \int_{t-r/2}^{t - r/4} \exp(-(z+k)^2) dz
\end{align*}
since $i_L \leq k - 1$.

Finally note that for $k\geq 1$ we always have the inequality

\begin{align*}
\mu(B_d(x,r)) \geq \int_{t-r/2}^{t-r/4} \exp(-(y+k)^{2})\, dy \geq r/4 \exp(-(t-r/4 + k)^2).
\end{align*}
Now
\begin{align*}
\sum_{k=1}^{\infty} \int_{J_{k}} \frac{1}{\mu(B_d(x,r))}\, d\mu(x) &\leq 4/r \sum_{k=1}^{\infty}\int_{0}^{a_k}  \exp((t-r/4 + k)^2) \cdot \exp(-(t+k)^2) dt \\
&\leq 4/r \sum_{k=1}^{\infty} \int_{k}^{k+a_{k}} \exp(-rt/2 + r^2/16) dt < \infty.
\end{align*}
By combining this with \eqref{qq-33} and the fact that $G = \bigcup_{k=0}^{\infty} J_{k}$, we conclude that $\mu$ is integrable. Therefore, the remaining claim now follows from Corollary~\ref{remarkoint}.
\end{proof}

\subsubsection{Main Characterization}
\label{subsubsect:charac}
Collecting Theorem~\ref{czwarte}, Corollary~\ref{remarkoint} with $C=1$, $\mu=\nu$, and Proposition~\ref{TB-int}, we get the following characterization for the compactness of the embeddings $M^{\alpha}_{p,q}\hookrightarrow L^{p}$ and $N^{\alpha}_{p,q}\hookrightarrow  L^{p}$. The reader is reminded that the notation ``$\alpha\preceq_q{\rm ind}(X,d)$" introduced in Convention~\ref{convention}.

\begin{tw} \label{piate}
Let $(X,d,\mu)$ be a quasi-metric-measure space and suppose there exists $\delta_0\in(0,\infty)$ such that
$\mu$ is $(C_d,\delta)$-doubling for every $\delta\in(0,\delta_0]$, where $C_d\in[1,\infty)$ is as in \eqref{C-d}. Then the following statements are equivalent.
\begin{enumerate}
\item $(X,d)$ is totally bounded.
\item $\mu$ is integrable.
\item For all exponents $\alpha,p\in(0,\infty)$ and $q\in(0,\infty]$ with $\alpha\preceq_q{\rm ind}(X,d)$, the embedding $M^{\alpha}_{p,q}(X,d,\mu)\hookrightarrow L^p(X,\mu)$ is compact.
\item For all exponents $\alpha,p\in(0,\infty)$ and $q\in(0,\infty]$ with $\alpha\preceq_q{\rm ind}(X,d)$,  the embedding $N^{\alpha}_{p,q}(X,d,\mu)\hookrightarrow L^p(X,\mu)$ is compact.
\item There exist exponents $\alpha,p\in(0,\infty)$ and $q\in(0,\infty]$ with $\alpha\preceq_q{\rm ind}(X,d)$ such that the embedding $M^{\alpha}_{p,q}(X,d,\mu)\hookrightarrow L^p(X,\mu)$ is compact.
\item There exist exponents $\alpha,p\in(0,\infty)$ and $q\in(0,\infty]$ with $\alpha\preceq_q{\rm ind}(X,d)$ such that the embedding $N^{\alpha}_{p,q}(X,d,\mu)\hookrightarrow L^p(X,\mu)$ is compact.
\end{enumerate}
\end{tw}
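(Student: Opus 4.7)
The plan is to close a short cycle of implications, splitting naturally into a Sobolev-side cycle $(1)\Rightarrow(2)\Rightarrow(3)\Rightarrow(5)\Rightarrow(1)$ and a parallel Besov-side cycle $(2)\Rightarrow(4)\Rightarrow(6)\Rightarrow(1)$. First I would invoke Proposition~\ref{TB-int} to get $(1)\Rightarrow(2)$; this is immediate and requires no adjustment, since the hypotheses there are just total boundedness of $(X,d)$, which is precisely what item (1) asserts.

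Next, to pass from integrability of $\mu$ to compactness of the embeddings, that is $(2)\Rightarrow(3)$ and $(2)\Rightarrow(4)$, I would apply Corollary~\ref{remarkoint} with the particular choice $\nu:=\mu$ and $C:=1$. The hypothesis ``$\mu$ is $\nu$-integrable'' then collapses to plain integrability of $\mu$, and the conclusion gives compactness of $M^{\alpha}_{p,q}(X,d,\mu)\hookrightarrow L^p(X,\mu)$ and $N^{\alpha}_{p,q}(X,d,\mu)\hookrightarrow L^p(X,\mu)$ for \emph{every} $\alpha,p\in(0,\infty)$ and $q\in(0,\infty]$. Since this is a strictly larger universal statement than the one restricted by $\alpha\preceq_q{\rm ind}(X,d)$ in items (3) and (4), those items follow at once.

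The implications $(3)\Rightarrow(5)$ and $(4)\Rightarrow(6)$ are formal, provided that the set of admissible triples $(\alpha,p,q)$ is nonempty. Since ${\rm ind}(X,d)\in(0,\infty]$, I can always pick some $\alpha$ strictly less than ${\rm ind}(X,d)$ (which, by Convention~\ref{convention}, satisfies $\alpha\preceq_q{\rm ind}(X,d)$ for every $q\in(0,\infty]$), together with any $p\in(0,\infty)$ and any $q\in(0,\infty]$; so existence is not in question. To finish each cycle, I would invoke Theorem~\ref{czwarte} for $(5)\Rightarrow(1)$ and $(6)\Rightarrow(1)$, whose hypothesis on $(C_d,\delta)$-doubling for all small $\delta$ is precisely our standing assumption.

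I do not expect any genuine obstacle, since the substantive work has been done in Proposition~\ref{TB-int}, Corollary~\ref{remarkoint}, and Theorem~\ref{czwarte}. The only point requiring care is bookkeeping around the convention $\alpha\preceq_q{\rm ind}(X,d)$, so that the compactness output of Corollary~\ref{remarkoint} (which holds for \emph{all} $\alpha,p,q$) is properly restricted to feed into items (3)--(6), and so that the compactness input of Theorem~\ref{czwarte} (which needs \emph{some} $\alpha,p,q$ with $\alpha\preceq_q{\rm ind}(X,d)$) is correctly extracted from items (5) and (6).
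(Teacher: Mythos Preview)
Your proposal is correct and matches the paper's approach exactly: the paper states that Theorem~\ref{piate} follows by ``collecting Theorem~\ref{czwarte}, Corollary~\ref{remarkoint} with $C=1$, $\mu=\nu$, and Proposition~\ref{TB-int},'' which is precisely the chain of implications you outline. Your additional remark verifying nonemptiness of the admissible set of triples $(\alpha,p,q)$ is a minor but welcome clarification.
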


If $(X,d,\mu)$ is an ultrametric-measure space then $C_d=1$ and so, ${\rm ind}(X,d)=\infty$ and  $\mu$ is trivially $(C_d,\delta)$-doubling for all $\delta\in(0,\infty)$. From these observations we have the following immediate consequence of Theorem~\ref{piate}.

\begin{tw} \label{piate-ult}
Let $(X,d,\mu)$ be an ultrametric-measure space. Then the following statements are equivalent.
\begin{enumerate}
\item $(X,d)$ is totally bounded.
\item $\mu$ is integrable.
\item For all  $\alpha,p\in(0,\infty)$ and $q\in(0,\infty]$, the embedding $M^{\alpha}_{p,q}(X,d,\mu)\hookrightarrow L^p(X,\mu)$ is compact.
\item For all  $\alpha,p\in(0,\infty)$ and $q\in(0,\infty]$,  the embedding $N^{\alpha}_{p,q}(X,d,\mu)\hookrightarrow L^p(X,\mu)$ is compact.
\item There exist  $\alpha,p\in(0,\infty)$ and $q\in(0,\infty]$ such that the embedding $M^{\alpha}_{p,q}(X,d,\mu)\hookrightarrow L^p(X,\mu)$ is compact.
\item There exist  $\alpha,p\in(0,\infty)$ and $q\in(0,\infty]$ such that the embedding $N^{\alpha}_{p,q}(X,d,\mu)\hookrightarrow L^p(X,\mu)$ is compact.
\end{enumerate}
\end{tw}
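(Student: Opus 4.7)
The plan is to deduce Theorem~\ref{piate-ult} as an essentially immediate corollary of Theorem~\ref{piate}, by verifying that in the ultrametric setting all the quantitative hypotheses appearing in Theorem~\ref{piate} are automatically satisfied and place no restriction on the parameters $\alpha$, $p$, $q$.

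First, since $(X,d)$ is an ultrametric space, by definition $C_d=1$ (and $\widetilde{C}_d=1$). I would then read off from the list of properties of the lower smoothness index recorded in Subsection~\ref{subsect:QMS} that $\operatorname{ind}(X,d)=\infty$; indeed, $d$ itself is an ultrametric on $X$ equivalent to $d$, and the corresponding term $(\log_2 C_d)^{-1}=1/0:=\infty$ contributes to the supremum in \eqref{index}. Consequently, for every $\alpha\in(0,\infty)$ one has $\alpha<\operatorname{ind}(X,d)$, so the relation $\alpha\preceq_q\operatorname{ind}(X,d)$ from Convention~\ref{convention} holds for every $q\in(0,\infty]$ without the requirement that the supremum defining $\operatorname{ind}(X,d)$ be attained (because strict inequality already suffices).

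Next, I would verify the $(C_d,\delta)$-doubling hypothesis of Theorem~\ref{piate}. With $C_d=1$, the inequality \eqref{deltadoub} reduces to $\mu(B_d(x,\delta))\le C(\delta)\,\mu(B_d(x,\delta))$ for all $x\in X$, which holds with constant $C(\delta)=1$. Hence $\mu$ is $(C_d,\delta)$-doubling for every $\delta\in(0,\infty)$, so in particular for every $\delta\in(0,\delta_0]$ with any $\delta_0\in(0,\infty)$ one cares to fix. All the structural assumptions of Theorem~\ref{piate} are therefore fulfilled.

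Granted these observations, I would invoke Theorem~\ref{piate} directly: the equivalence of the six statements in Theorem~\ref{piate-ult} is exactly the specialization of the six statements of Theorem~\ref{piate} to the situation where ``$\alpha\preceq_q\operatorname{ind}(X,d)$'' imposes no constraint on $(\alpha,q)$. There is no serious obstacle here—the work has already been done in Theorem~\ref{piate}; the only point requiring a brief justification is the observation that $C_d=1$ makes both the doubling hypothesis and the range of $\alpha$ trivial, which is handled in the two previous paragraphs.
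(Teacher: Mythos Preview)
Your proposal is correct and follows essentially the same approach as the paper: the paper also observes that for an ultrametric $C_d=1$, hence $\operatorname{ind}(X,d)=\infty$ and $\mu$ is trivially $(C_d,\delta)$-doubling for all $\delta\in(0,\infty)$, and then records Theorem~\ref{piate-ult} as an immediate consequence of Theorem~\ref{piate}. Your write-up is slightly more detailed in spelling out why $\alpha\preceq_q\operatorname{ind}(X,d)$ imposes no constraint, but the argument is the same.
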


In the following example we see that it is also possible for $M^{\alpha}_{p,q}$ and $N^{\alpha}_{p,q}$ to embed compactly into $L^p$ in unbounded quasi-metric-measure spaces.
\begin{ex}
Let $(\mathbb{R}_{+}, d, \mu)$, where $d:=|\cdot-\cdot|$ is the standard Euclidean distance and $\mu$ is defined by
$d\mu:=e^{x^{\beta}}dx$ with $\beta\in(0,\infty)$. If $\alpha\in(0,1]$, $p\in(0,\infty)$, and $q\in(0,\infty]$, where the value $\alpha=1$ is only permitted when $q=\infty$, then the embeddings
\[
	M^{\alpha}_{p,q}(\mathbb{R}_{+}, d, \mu) \hookrightarrow L^p(\mathbb{R}_{+},\mu)\quad\mbox{and}\quad	N^{\alpha}_{p,q}(\mathbb{R}_{+}, d, \mu)\hookrightarrow L^p(\mathbb{R}_{+},\mu)
\]
are compact if and only if $\beta\in(1,\infty)$.
\end{ex}
\begin{proof}
If $\beta > 1$, then by Example~\ref{expbeta} in Section~\ref{sect:meas} we have that $\mu$ is integrable. Thus Corollary~\ref{remarkoint} yields compactness of the embeddings. On the other hand, let $\beta\in (0,1]$ and suppose that 
$M^{\alpha}_{p,q}(\mathbb{R}_{+}, d, \mu) \hookrightarrow L^p(X,\mu)$ or $ N^{\alpha}_{p,q}(\mathbb{R}_{+}, d, \mu)\hookrightarrow L^p(X,\mu)$ is compact. By virtue of Example~\ref{exp0} in Section~\ref{sect:meas} we have that $\mu$ is $(2,\delta)$-doubling for every $\delta\in(0,\infty)$. Thus Theorem~\ref{czwarte} implies that $\mathbb{R}_{+}$ is totally bounded which is a clear contradiction.
\end{proof}

In the next theorem, we  show that under certain assumptions on the measure $\mu$, the spaces $M^{\alpha}_{p,q}$ and $N^{\alpha}_{p,q}$ never embed compactly into $L^p$.
\begin{tw}\label{thm-doubinf}
Let $(X,d,\mu)$ be an unbounded quasi-metric-measure space with $\mu(X) <\infty$ and assume that $(X,d,\mu)$ is doubling at infinity (in the sense of Definition~\ref{doub-inf}). Fix exponents $\alpha,p\in(0,\infty)$ and $q\in(0,\infty]$ such that  $\alpha\leq(\log_{2}C_d)^{-1}$, where $\alpha=(\log_{2}C_d)^{-1}$  is only permitted when $q=\infty$. Then the embeddings
\begin{equation}
\label{ni4-24-2}
	M^{\alpha}_{p,q}(X,d,\mu)\hookrightarrow L^p(X,\mu)\quad\mbox{and}\quad
	N^{\alpha}_{p,q}(X,d,\mu)\hookrightarrow L^p(X,\mu)
\end{equation}
are not compact.
\end{tw}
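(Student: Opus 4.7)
The plan is to produce a sequence $\{f_k\}_{k\in\mathbb{N}}$ of normalized ``bump'' functions, constructed via Lemma~\ref{GVa2}, whose supports march off to infinity. The doubling-at-infinity hypothesis will ensure the sequence stays bounded in $M^\alpha_{p,q}$ and in $N^\alpha_{p,q}$, while the a.e.\ pointwise vanishing at infinity will prevent $L^p$-convergence of any subsequence.

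First, using the doubling-at-infinity assumption, fix $x_0\in X$ and $C\in(0,\infty)$ such that $\liminf_{R\to\infty}\mu(X\setminus B_d(x_0,R))/\mu(X\setminus B_d(x_0,C_d R))<\infty$, and extract an increasing sequence $R_k\nearrow\infty$ together with a constant $M\in(0,\infty)$ satisfying
\begin{equation*}
\mu(X\setminus B_d(x_0,R_k))\le M\,\mu(X\setminus B_d(x_0,C_dR_k))\quad\text{for every } k\in\mathbb{N}.
\end{equation*}
For each $k$, set $E_0^{(k)}:=B_d(x_0,R_k)$ and $E_1^{(k)}:=X\setminus B_d(x_0,C_dR_k)$, both nonempty since $X$ is unbounded. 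I will verify the lower bound $\mathrm{dist}_d(E_0^{(k)},E_1^{(k)})\ge R_k$: if $z\in E_0^{(k)}$ and $y\in E_1^{(k)}$, then $d(x_0,y)\le C_d\max\{d(x_0,z),d(z,y)\}$, and since $d(x_0,z)<R_k$ while $d(x_0,y)\ge C_dR_k$, the maximum must equal $d(z,y)$, whence $d(z,y)\ge d(x_0,y)/C_d\ge R_k$.

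Given the standing hypothesis $\alpha\le(\log_2 C_d)^{-1}$ (with equality permitted only when $q=\infty$), Lemma~\ref{GVa2} furnishes continuous functions $\Phi_k:X\to[0,1]$ that equal $1$ on $E_1^{(k)}$, vanish on $E_0^{(k)}$, and obey
\begin{equation*}
\|\Phi_k\|_{\dot{M}^\alpha_{p,q}(X,d,\mu)}+\|\Phi_k\|_{\dot{N}^\alpha_{p,q}(X,d,\mu)}\le C\,R_k^{-\alpha}\bigl[\mu(X\setminus B_d(x_0,R_k))\bigr]^{1/p}.
\end{equation*}
Define $f_k:=\Phi_k/[\mu(E_1^{(k)})]^{1/p}$. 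The lower bound $\|f_k\|_{L^p(X,\mu)}^p\ge\mu(E_1^{(k)})/\mu(E_1^{(k)})=1$ is immediate, and combining the previous estimate with the choice of $R_k$ yields
\begin{equation*}
\|f_k\|_{L^p(X,\mu)}^p\le\frac{\mu(X\setminus B_d(x_0,R_k))}{\mu(X\setminus B_d(x_0,C_dR_k))}\le M
\quad\text{and}\quad
\|f_k\|_{\dot{M}^\alpha_{p,q}(X,d,\mu)}\le C\,M^{1/p}R_k^{-\alpha}\xrightarrow[k\to\infty]{}0,
\end{equation*}
with the analogous bound for $\|f_k\|_{\dot{N}^\alpha_{p,q}(X,d,\mu)}$. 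Thus $\{f_k\}$ is uniformly bounded in both $M^\alpha_{p,q}(X,d,\mu)$ and $N^\alpha_{p,q}(X,d,\mu)$.

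Finally, since $R_k\to\infty$ and every point $x\in X$ satisfies $d(x_0,x)<\infty$, for sufficiently large $k$ we have $x\in E_0^{(k)}$, so $f_k(x)=0$. Hence $f_k\to 0$ pointwise on $X$. If some subsequence $\{f_{k_j}\}$ converged in $L^p(X,\mu)$ to a limit $f$, a further subsequence would converge to $f$ almost everywhere, forcing $f\equiv 0$ almost everywhere; but then $\|f_{k_j}\|_{L^p(X,\mu)}\to 0$, contradicting $\|f_k\|_{L^p(X,\mu)}\ge 1$. Neither embedding in \eqref{ni4-24-2} can therefore be compact. The only nonroutine step in this argument is the quasi-metric annulus separation estimate $\mathrm{dist}_d(E_0^{(k)},E_1^{(k)})\ge R_k$, which is what allows the gradient factor $R_k^{-\alpha}$ in Lemma~\ref{GVa2} to absorb the mass factor produced by the doubling-at-infinity hypothesis.
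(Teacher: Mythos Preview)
Your proof is correct and follows essentially the same construction as the paper: the same bump functions $\Phi_k$ from Lemma~\ref{GVa2} supported in the annuli $B_d(x_0,C_dR_k)\setminus B_d(x_0,R_k)$, normalized the same way, with boundedness coming from the doubling-at-infinity ratio. The only difference is in the endgame: the paper derives a contradiction from the $p$-equi-integrability condition in the Lebesgue--Vitali criterion (Theorem~\ref{compLp}), whereas you use the more elementary observation that $f_k\to 0$ pointwise forces any $L^p$-limit to vanish, contradicting $\|f_k\|_{L^p}\ge 1$.
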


\begin{proof}
Let $x_0\in X$ be as in the definition of doubling at infinity (Definition~\ref{doub-inf}) and let $\{ R_{N} \}_{N \in \mathbb{N}}$ be an increasing sequence such that $R_N \geq 1$, $R_{N} \to \infty$ and 
\begin{align*}
\lim_{N \to \infty}\dfrac{\mu(X\setminus  B_d(x_0,R_{N}))}{\mu(X\setminus  B_d(x_0,C_d R_{N}))} =  \liminf_{R \to \infty}\dfrac{\mu(X\setminus  B_d(x_0,R))}{\mu(X\setminus  B_d(x_0,C_d R))} < \infty.
\end{align*}
Observe that ${\rm dist}_d(B_d(x_0,R_N),X\setminus B_d(x_0,C_d R_N))\geq R_N\geq1$ and therefore, we can define a sequence of functions by setting
\[
f_N(x):= \dfrac{1}{[\mu(X\setminus  B_d(x_0,C_dR_N))]^{1/p}}\,\Phi_N(x)\quad\mbox{for all $N\in\mathbb{N}$ and $x\in X$,}
\] 
where $\Phi_N: X\to[0,1]$ is the measurable function given by Lemma~\ref{GVa2} that satisfies $\Phi_{N} \equiv 0$ on $B_d(x_0,R_N)$ and $\Phi_{N}\equiv 1$ outside of $B_d(x_0,C_dR_N)$. As an immediate consequence of the doubling at infinity condition, the estimates in \eqref{gradest1}-\eqref{gradest2}, and that fact that each $f_N$ vanishes pointwise in $B_d(x_0,R_N)$, we have (keeping in mind ${\rm dist}_d(B_d(x_0,R_N),X\setminus B_d(x_0,C_d R_N))\geq R_N\geq1$) that the sequence $\{f_N\}_{N\in\mathbb{N}}$ is bounded in $M^\alpha_{p,q}(X,d,\mu)$ and $N^\alpha_{p,q}(X,d,\mu)$. Now, suppose to the contrary that $\{f_N\}_{N\in\mathbb{N}}$ is precompact in $L^p(X,\mu)$. Then, by the $p$-equi-integrability condition in the Lebesgue--Vitali theorem (Theorem~\ref{compLp}) we have
\begin{align*}
\int\limits_{X\setminus  B_d(x_0,R)} |f_{N}|^p\, d\mu \to 0\quad \textrm{ uniformly in $N$ as } R\to \infty.
\end{align*}
However, this is impossible since
\begin{align*}
\int\limits_{X\setminus  B_d(x_0,C_d R_N)} |f_N|^p\, d\mu = 1\quad \textrm{ for all } N\in\mathbb{N}.
\end{align*} 
Therefore the embeddings in \eqref{ni4-24-2} cannot be compact and the proof of the theorem is now complete.
\end{proof}

\subsection{Embeddings into $L^{\tilde{p}}$ with $\tilde{p}<p$}
\label{subsect:p-less}
In this subsection we investigate necessary and sufficient conditions guaranteeing that  $M^{\alpha}_{p,q}$ and $N^{\alpha}_{p,q}$ compactly embed into $L^{\tilde{p}}$ with $\tilde{p}<p$.
More specifically, we will establish the following result (the reader is reminded that the notation ``$\alpha\preceq_q{\rm ind}(X,d)$" was introduced in  Convention~\ref{convention}).

\begin{tw} \label{drugie_plus}
Let $(X,d,\mu)$ be a quasi-metric-measure space. If $\mu(X) <\infty$, then for any $\alpha, p\in (0,\infty)$, $q\in (0,\infty]$, and  $\tilde{p}\in(0,p)$, the embeddings
\begin{align} \label{emb_in_lower}
M^{\alpha}_{p,q}(X,d,\mu)\hookrightarrow L^{\tilde{p}}(X,\mu)\quad\mbox{and}\quad	N^{\alpha}_{p,q}(X,d,\mu)\hookrightarrow L^{\tilde{p}}(X,\mu)
\end{align}
are compact. On the other hand, if either one of the embeddings in \eqref{emb_in_lower} is continuous for some $p\in (0,\infty)$, $\tilde{p}\in(0,p)$, $q\in (0,\infty]$, and  $\alpha\in (0,\infty)$ with $\alpha\preceq_q{\rm ind}(X,d)$, then  $\mu(X)<\infty$.
\end{tw}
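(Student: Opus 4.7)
The plan is to prove the two implications separately.

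For sufficiency, the inclusion \eqref{Haj-incl} reduces matters to compactness of $M^{\alpha,p}(X,d,\mu)\hookrightarrow L^{\tilde{p}}(X,\mu)$. Given a bounded family $\mathcal{F}\subseteq M^{\alpha,p}(X,d,\mu)$, Proposition~\ref{sepequiv} gives separability of $(X,d)$, and Theorem~\ref{emb_in_L0} applied with $\nu:=\mu$ (admissible since $\mu(X)<\infty$) delivers total boundedness of $\mathcal{F}$ in $L^0(X,\mu)$. The $\tilde{p}$-equi-integrability hypothesis of Theorem~\ref{compLp} follows from the inclusion $M^{\alpha,p}\hookrightarrow L^p$ and the H\"older estimate
\[
\int_A|u|^{\tilde{p}}\,d\mu\leq\|u\|_{L^p(X,\mu)}^{\tilde{p}}\,\mu(A)^{1-\tilde{p}/p}
\]
(using $1-\tilde{p}/p>0$, since $\tilde{p}<p$), and Theorem~\ref{compLp} then yields total boundedness of $\mathcal{F}$ in $L^{\tilde{p}}(X,\mu)$.

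For necessity, I argue by contradiction, assuming continuity of the embedding but $\mu(X)=\infty$. Since balls have finite measure, $(X,d)$ is unbounded. Using Lemma~\ref{DST1} together with $\alpha\preceq_q{\rm ind}(X,d)$, and invoking Lemma~\ref{equivspaces} to identify the function spaces, I pass to an equivalent quasi-metric $\rho$ at which Lemma~\ref{GVa2} is applicable at exponent $\alpha$. Since $\mu(X)=\infty$, any maximal $\rho$-separated set at a sufficiently large fixed scale must be infinite, for otherwise $X$ would be covered by finitely many $\rho$-balls of finite measure. I extract from it a sequence $\{x_k\}_{k\in\mathbb{N}}$ with pairwise disjoint balls $B_\rho(x_k,r)$ uniformly separated at a positive distance $\delta_0$. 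Lemma~\ref{GVa2} then supplies bumps $\Phi_k$ with $\Phi_k\equiv 1$ on $B_\rho(x_k,r/(2C_\rho))$, $\Phi_k\equiv 0$ outside $B_\rho(x_k,r)$, and fractional $\alpha$-gradients $\overrightarrow{g_k}$ with norms in both $L^p(\ell^q)$ and $\ell^q(L^p)$ bounded by $Cv_k^{1/p}$, where $v_k:=\mu(B_\rho(x_k,r))$. Setting $w_k:=\mu(B_\rho(x_k,r/(2C_\rho)))$ and applying a joint dyadic pigeonhole on the pair $(v_k,w_k)$, I pass to an infinite subsequence along which $v_k$ and $w_k$ are each comparable to fixed constants $V,W\in(0,\infty)$.

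Choose weights $a_k:=k^{-\beta}$ for some $\beta\in(1/p,1/\tilde{p}]$ (a nonempty range since $\tilde{p}<p$) and set $u:=\sum_k a_k\Phi_k$. The disjointness of supports yields
\[
\|u\|_{L^p(X,\mu)}^p\leq 2V\sum_k k^{-\beta p}<\infty\quad\text{and}\quad\|u\|_{L^{\tilde{p}}(X,\mu)}^{\tilde{p}}\geq \frac{W}{2}\sum_k k^{-\beta\tilde{p}}=\infty.
\]
A fractional $\alpha$-gradient of $u$ admissible for both $M^\alpha_{p,q}$ and $N^\alpha_{p,q}$ is produced by combining the (pointwise disjoint) sum $\sum_k a_k\overrightarrow{g_k}$ with a correction term $\delta_0^{-\alpha}|u|$, inserted at the finitely many scales that can carry pairs $x,y$ from different supports (exploiting $d(x,y)\geq\delta_0$ and the trivial bound $|u(x)-u(y)|\leq|u(x)|+|u(y)|$); disjointness then controls both its $L^p(\ell^q)$ and $\ell^q(L^p)$ norms by a constant times $\sum_k a_k^pv_k<\infty$. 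Hence $u$ lies in both $M^\alpha_{p,q}(X,d,\mu)$ and $N^\alpha_{p,q}(X,d,\mu)$ but not in $L^{\tilde{p}}(X,\mu)$, contradicting continuity. The main obstacle is the dyadic pigeonhole step: in the absence of doubling-type assumptions, the measures of fixed-radius balls centered at well-separated points can degenerate at arbitrary rates with $k$, so isolating an infinite subsequence on which $v_k$ and $w_k$ are simultaneously bounded above and bounded away from zero is essential and must precede the choice of the weights $a_k$.
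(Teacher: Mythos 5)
Your sufficiency argument is correct and is essentially the paper's own route: the paper deduces this half from Corollary~\ref{Idontknow} (with $\nu=\mu$), whose proof is precisely your combination of the H\"older bound $\int_A|u|^{\tilde{p}}\,d\mu\leq\|u\|_{L^p(X,\mu)}^{\tilde{p}}\mu(A)^{1-\tilde{p}/p}$ for $\tilde{p}$-equi-integrability, compactness into $L^0$ from Theorem~\ref{emb_in_L0}, and the Lebesgue--Vitali criterion of Theorem~\ref{compLp}.

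The necessity half, however, has a genuine gap, and it is exactly the step you flag yourself: the ``joint dyadic pigeonhole'' producing an infinite subsequence with $v_k\asymp V$ and $w_k\asymp W$ is not merely unproven but false in general. A pigeonhole over the countably many dyadic classes $[2^j,2^{j+1})$, $j\in\mathbb{Z}$, need not have an infinite class. Concretely, take $X=\mathbb{R}$ with the Euclidean metric and $d\mu:=e^x\,dx$, so $\mu(X)=\infty$; every infinite $r$-separated set in $\mathbb{R}$ is unbounded (bounded subsets of $\mathbb{R}$ are totally bounded), hence along any infinite subsequence $x_k\to\pm\infty$ and $v_k=\mu(B(x_k,r))=e^{x_k}(e^r-e^{-r})$ tends to $\infty$ or to $0$ --- no infinite subsequence with $v_k$ comparable to a fixed constant exists. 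Nor can the scheme be repaired by choosing the weights $a_k$ adaptively: without a doubling-type hypothesis the ratio $w_k/v_k$ can also degenerate arbitrarily fast, and weights with $\sum_k a_k^p v_k<\infty$ yet $\sum_k a_k^{\tilde{p}}w_k=\infty$ exist only when $\sum_k\bigl(w_k^{p}/v_k^{\tilde{p}}\bigr)^{1/(p-\tilde{p})}$ diverges, which need not hold. The paper sidesteps separated points entirely: it fixes a single center $z$, builds \emph{nested concentric} balls $B_k=B_\rho(z,r_k)$ whose radii all lie in $[r,3^{1/\beta}r)$ but whose consecutive $\rho^\beta$-gaps are $2^{-k}r^\beta$, applies the Urysohn-type Lemma~\ref{GVa2} on each gap to get bumps $\phi_k$ with $\|\phi_k\|\leq 2C2^{k\alpha/\beta}[\mu(B_{k+1})]^{1/p}$ and $\|\phi_k\|_{L^{\tilde{p}}}\geq[\mu(B_k)]^{1/\tilde{p}}$, and then feeds the resulting recursive inequality into the iteration lemma \cite[Lemma~16]{agh20} applied to the bounded sequence $a_k:=1/\mu(B_k)$. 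The strict exponent gap $\tilde{p}<p$ makes that iteration close and yields $[\mu(B_\rho(z,r))]^{1-\tilde{p}/p}\leq(2C\widetilde{C})^{\tilde{p}}2^{p\tilde{p}/(p-\tilde{p})}$ uniformly in $r$, whence $\mu(X)<\infty$ on letting $r\to\infty$ --- no comparability or doubling of ball measures is ever needed. If you want to salvage your argument, you should replace the disjoint-bump construction with this one-point telescoping scheme.
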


\begin{rem}
Given a sigma-finite measure space\footnote{Every  quasi-metric-measure space is sigma-finite.}  $(X, \mu)$, if $L^p(X,\mu) \hookrightarrow L^{\tilde{p}}(X,\mu)$ for some fixed $\tilde{p}\in(0,p)$, then $\mu(X)<\infty$. Indeed, let $A_j \subseteq X$ be such that $X=\bigcup_{j=1}^{\infty}A_j$, $A_j \subseteq A_{j+1}$, and $\mu(A_j) <\infty$.  We can suppose that $\mu(X) >0$.  In this case for some $C\in(0,\infty)$ and for all sufficiently large $j \in \mathbb{N}$, we have $\mu(A_j)>0$ and 
\begin{align*}
[\mu(A_j)]^{1/\tilde{p}} = \|\chi_{A_j}\|_{L^{\tilde{p}}(X,\mu)} \leq C \|\chi_{A_j}\|_{L^p(X,\mu)} =  C[\mu(A_j)]^{1/p},
\end{align*}
which turns into
\begin{align*}
[\mu(A_j)]^{1/\tilde{p} - 1/p} \leq C.
\end{align*}
After passing to the limit as $j\to\infty$, we obtain that $\mu(X)<\infty$. 
\end{rem}

Before proving Theorem~\ref{drugie_plus}, we shall establish the following result.

\begin{tw} \label{comp_p/theta}
Let $(X,d,\mu)$ be a quasi-metric-measure space. Let $\nu$ be a measure such that $\nu<<\mu$ and $\frac{d\nu}{d\mu} \in L^{\theta'}(X,\mu)$ for some $\theta\in(1,\infty)$, where $\theta':=\frac{\theta}{\theta-1}$. Then for any $\alpha,p \in (0,\infty)$ and $q\in (0,\infty]$  the embeddings
\begin{align} \label{emb_in_p/theta}
M^{\alpha}_{p,q}(X,d,\mu)\hookrightarrow L^{p/\theta}(X,\nu)\quad\mbox{and}\quad	N^{\alpha}_{p,q}(X,d,\mu)\hookrightarrow L^{p/\theta}(X,\nu)
\end{align}
are compact.
\end{tw}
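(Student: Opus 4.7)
My plan is to combine a Hölder-type tail bound with the already-established compactness of $M^{\alpha,p}(X,d,\mu)\hookrightarrow L^0(X,\widetilde{\nu})$ from Theorem~\ref{emb_in_L0}, joined together by the Lebesgue--Vitali criterion of Theorem~\ref{compLp}. Set $h:=d\nu/d\mu\in L^{\theta'}(X,\mu)$. Hölder's inequality with conjugate exponents $\theta,\theta'$ gives, for any $f\in L^p(X,\mu)$,
\begin{equation*}
\|f\|_{L^{p/\theta}(X,\nu)}^{p/\theta}=\int_X |f|^{p/\theta}\,h\,d\mu\leq \|f\|_{L^p(X,\mu)}^{p/\theta}\,\|h\|_{L^{\theta'}(X,\mu)},
\end{equation*}
which already yields the continuous embedding $M^{\alpha,p}(X,d,\mu)\hookrightarrow L^{p/\theta}(X,\nu)$. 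For compactness, fix a bounded $\mathcal{F}\subseteq M^{\alpha,p}(X,d,\mu)$ with $M:=\sup_{u\in\mathcal{F}}\|u\|_{M^{\alpha,p}(X,d,\mu)}$, and for each $n\in\mathbb{N}$ introduce the sets $A_n:=\{x\in X:1/n\leq h(x)\leq n\}$.

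My first step is to control the ``tail'' on $X\setminus A_n$. The same Hölder inequality with $h\chi_{X\setminus A_n}$ in place of $h$ gives, uniformly for $u\in\mathcal{F}$,
\begin{equation*}
\int_{X\setminus A_n}|u|^{p/\theta}\,d\nu\leq M^{p/\theta}\,\|h\chi_{X\setminus A_n}\|_{L^{\theta'}(X,\mu)}.
\end{equation*}
Since $X\setminus A_n=\{h<1/n\}\cup\{h>n\}$ decreases (modulo $\mu$-null sets) to the set where $h\in\{0,\infty\}$, on which $h^{\theta'}$ vanishes $\mu$-a.e., the dominated convergence theorem ensures $\|h\chi_{X\setminus A_n}\|_{L^{\theta'}}\to 0$, so the above tail can be made arbitrarily small uniformly in $\mathcal{F}$.

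The second step handles the ``bulk'' on $A_n$. The restriction $\nu_n:=\nu\!\!\measurerestr\!A_n$ satisfies $\nu_n\ll\mu$, and since $\mu(A_n)\leq n^{\theta'}\|h\|_{L^{\theta'}}^{\theta'}$ we have $\nu_n(X)\leq n\mu(A_n)<\infty$. Theorem~\ref{emb_in_L0} (applicable by separability via Proposition~\ref{sepequiv}) thus gives that $\mathcal{F}$ is totally bounded in $L^0(X,\nu_n)$. I then verify $(p/\theta)$-equi-integrability of $\mathcal{F}$ with respect to $\nu_n$: if $D\subseteq X$ satisfies $\nu_n(D)<\delta$, then $\mu(A_n\cap D)\leq n\nu_n(D)<n\delta$, and absolute continuity of the integral of $h^{\theta'}$ makes $\|h\chi_{A_n\cap D}\|_{L^{\theta'}}$ arbitrarily small; Hölder then gives
\begin{equation*}
\int_D|u|^{p/\theta}\,d\nu_n=\int_{A_n\cap D}|u|^{p/\theta}h\,d\mu\leq M^{p/\theta}\,\|h\chi_{A_n\cap D}\|_{L^{\theta'}(X,\mu)}
\end{equation*}
uniformly for $u\in\mathcal{F}$. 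Theorem~\ref{compLp} (applied to $\nu_n$) then yields that $\mathcal{F}$ is totally bounded in $L^{p/\theta}(X,\nu_n)=L^{p/\theta}(A_n,\nu)$.

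Finally I patch the two pieces using the identity $\|u-v\|_{L^{p/\theta}(X,\nu)}^{p/\theta}=\|u-v\|_{L^{p/\theta}(A_n,\nu)}^{p/\theta}+\|u-v\|_{L^{p/\theta}(X\setminus A_n,\nu)}^{p/\theta}$ together with the quasi-subadditivity $\|u-v\|_{L^{p/\theta}(X\setminus A_n,\nu)}^{p/\theta}\leq C(\|u\|^{p/\theta}+\|v\|^{p/\theta})$: given $\varepsilon>0$, first choose $n$ so that the tail contribution is less than $\varepsilon^{p/\theta}/(4C)$ uniformly on $\mathcal{F}$, then pick a finite $\varepsilon^{p/\theta}/2$-net $\{v_1,\dots,v_N\}$ for $\mathcal{F}$ in $L^{p/\theta}(A_n,\nu)$; the functions $v_j\chi_{A_n}$ form an $\varepsilon$-net in $L^{p/\theta}(X,\nu)$. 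This shows total boundedness of $\mathcal{F}$ in $L^{p/\theta}(X,\nu)$, hence compactness of the embedding $M^{\alpha,p}(X,d,\mu)\hookrightarrow L^{p/\theta}(X,\nu)$; the Haj{\l}asz--Triebel--Lizorkin and Haj{\l}asz--Besov cases then follow from the inclusions \eqref{Haj-incl}. The main delicate point will be verifying absolute continuity of the integral of $h^{\theta'}$ in the form needed for equi-integrability (which is a routine consequence of $h^{\theta'}\in L^1(X,\mu)$) and tracking the quasi-triangle constant $C$ when $p/\theta<1$, neither of which poses a fundamental difficulty.
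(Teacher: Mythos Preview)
Your proof is correct and follows essentially the same approach as the paper's: a H\"older estimate, truncation to a finite-measure piece, then Theorem~\ref{emb_in_L0} plus the Lebesgue--Vitali criterion (Theorem~\ref{compLp}), and finally a patching argument. The only difference is cosmetic: the paper truncates using a ball $B_d(x_0,R)$ together with the upper level set $\{h>\lambda\}$, whereas you use the two-sided level set $A_n=\{1/n\le h\le n\}$, which has the mild advantage of dispensing with the spatial cutoff (since $h\ge 1/n$ on $A_n$ already forces $\mu(A_n)<\infty$).
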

\begin{proof}
As was the case in the proofs of Theorems~\ref{emb_in_L0} and \ref{drugie}, it is enough to to show that the embedding $M^{\alpha,p}(X,d,\mu)\hookrightarrow L^{p/\theta}(X,\nu)$ is compact for every $\alpha,p \in (0,\infty)$. Fix $\alpha,p \in (0,\infty)$ and let $\mathcal{F}\subseteq M^{\alpha,p}(X,d,\mu)$ be a nonempty bounded set. In view of the H\"older inequality, note that for any $u\in \mathcal{F}$ and any measurable set $D\subseteq X$, we have
\begin{align}
\label{p/theta-est}
\int_{D} |u(x)|^{p/\theta}\, d\nu(x) &= \int_{D} |u(x)|^{p/\theta} \frac{d\nu}{d\mu} (x) \, d\mu(x) \nonumber \\
&\leq \|u\|_{L^p(X,\mu)}^{p/\theta} \bigg(\int_{D}\Big| \frac{d\nu}{d\mu}\Big|^{\theta'}(x)\, d\mu(x)\bigg)^{1/\theta'}.
\end{align} 
Let $x_0 \in X$ and fix $\varepsilon\in(0,\infty)$. Since $\frac{d\nu}{d\mu} \in L^{\theta'}(X,\mu)$ we can use \eqref{p/theta-est} to find sufficiently large $\lambda,R\in(0,\infty)$ such that, for any $u\in \mathcal{F}$,
\begin{align}
\label{Aqp-3}
\int_{X\setminus B_{d}(x_0,R)} |u(x)|^{p/\theta}\, d\nu(x) < \frac{\varepsilon^{p/\theta} }{3}\quad \mbox{ and } \quad \int_{E_{\lambda}} |u(x)|^{p/\theta}\, d\nu(x) < \frac{\varepsilon^{p/\theta} }{3},
\end{align}
where $E_{\lambda} := \big\{x\in X: \frac{d\nu}{d\mu}(x) > \lambda\big\}$. We define measure $\tilde{\nu} :=\nu\!\!\measurerestr\!\big(B_d(x_0,R)\cap (X\setminus E_{\lambda})\big)$. 
Let us observe that from  \eqref{p/theta-est} for any measurable set $D\subseteq X$ we have
\begin{align*}
\int_{D} |u(x)|^{p/\theta}\, d\tilde{\nu}(x) \leq \lambda^{1/\theta} \|u\|_{L^p(X,\mu)}^{p/\theta}\big[\tilde{\nu}(D)\big]^{1/\theta'}.
\end{align*} 
Thus, we get $p/\theta$-equi integrability of $\mathcal{F}$ with respect to $\tilde{\nu}$. Therefore, since 
\begin{align*}
\tilde{\nu}(X) = \nu(B_d(x_0,R)\cap (X\setminus E_{\lambda})) = \int_{B_d(x_0,R)\cap (X\setminus E_{\lambda})} \frac{d\nu}{d\mu}(x)\, d\mu(x) \leq \lambda \mu(B_{d}(x_0,R)) <\infty,
\end{align*}
and  $\tilde{\nu}<<\mu$, we use Theorem~\ref{emb_in_L0} and the Lebesgue--Vitali Theorem (Theorem~\ref{compLp}) to obtain \eqref{emb_in_p/theta} for $\tilde{\nu}$ in place of $\nu$.

To prove \eqref{emb_in_p/theta} as stated, let $\{\tilde{v}_{i} \}_{i=1}^{N}$ be a finite $\frac{\varepsilon}{3^{\theta/p}}$-net of $\mathcal{F}$ in $L^{p/\theta}(X,\tilde{\nu})$. Then, for any $u\in \mathcal{F}$ we can find $j\in \{1,\dots,N\}$ for which
\begin{align*}
\|u-\tilde{v}_{j} \|_{L^{p/\theta}(X,\tilde{\nu})} < \frac{\varepsilon}{3^{\theta/p}}.
\end{align*}
For $i=1,\dots,N$ let $v_i := \tilde{v}_{i} \chi_{B_d(x_0,R)\cap (X\setminus E_{\lambda})}$. Then this, along with \eqref{Aqp-3}, yields
\begin{align*}
\|u-v_{j} \|_{L^{p/\theta}(X,\nu)}^{p/\theta} = \int_{B_d(x_0,R)\cap (X\setminus E_{\lambda})} |u(x)-v_{j}(x)|^{p/\theta}\, d\nu(x)  + \int_{(X\setminus B_d(x_0,R))\cup E_{\lambda}} |u(x)|^{p/\theta}\, d\nu(x) <\varepsilon^{p/\theta}.
\end{align*}
Hence, $\{v_{i} \}_{i=1}^{N}$ is a finite $\varepsilon$-net of $\mathcal{F}$ in $L^{p/\theta}(X,\nu)$, and this completes the proof of the theorem.
\end{proof}
For finite target measures we can strengthen the conclusion of Theorem~\ref{comp_p/theta} in the following manner.
\begin{cor} \label{Idontknow}
Let $(X,d,\mu)$ be a quasi-metric-measure space. Let $\nu$ be a finite  measure such that $\nu<<\mu$ and $\frac{d\nu}{d\mu} \in L^{\theta'}(X,\mu)$ for some $\theta\in(1,\infty)$, where $\theta':=\frac{\theta}{\theta-1}$. Then for any $\alpha,p \in (0,\infty)$, $q\in (0,\infty]$, and $ \tilde{p} \in (0, p/\theta]$, the embeddings
\begin{align} \label{emb_below_p/theta}
M^{\alpha}_{p,q}(X,d,\mu)\hookrightarrow L^{\tilde{p}}(X,\nu)\quad\mbox{and}\quad	N^{\alpha}_{p,q}(X,d,\mu)\hookrightarrow L^{\tilde{p}}(X,\nu)
\end{align}
are compact. Moreover, if $\nu \leq C \mu$  for some $C\in (0,\infty)$, then the embeddings above are compact for any $\tilde{p} \in (0, p)$.
\end{cor}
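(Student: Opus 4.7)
The plan is to obtain both assertions by composing the conclusion of Theorem~\ref{comp_p/theta} with elementary continuous inclusions between $L^{\tilde{p}}(X,\nu)$ spaces. Throughout, since $N^{\alpha}_{p,q}(X,d,\mu)$ and $M^{\alpha}_{p,q}(X,d,\mu)$ are treated identically, I will describe only the Triebel--Lizorkin case.

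For the first assertion, fix $\alpha, p\in(0,\infty)$, $q\in(0,\infty]$, and $\tilde{p}\in(0,p/\theta]$. Theorem~\ref{comp_p/theta} delivers the compact embedding
\begin{equation*}
M^{\alpha}_{p,q}(X,d,\mu)\hookrightarrow L^{p/\theta}(X,\nu).
\end{equation*}
Because $\nu$ is finite and $\tilde{p}\leq p/\theta$, H\"older's inequality yields the continuous inclusion $L^{p/\theta}(X,\nu)\hookrightarrow L^{\tilde{p}}(X,\nu)$ with operator norm controlled by a power of $\nu(X)$. Composing a compact operator with a continuous one is compact, which gives the desired compact embedding $M^{\alpha}_{p,q}(X,d,\mu)\hookrightarrow L^{\tilde{p}}(X,\nu)$.

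For the \emph{moreover} statement, suppose in addition that $\nu\leq C\mu$ for some $C\in(0,\infty)$, and set $f:=\tfrac{d\nu}{d\mu}$, which satisfies $0\leq f\leq C$ $\mu$-a.e. For every $\sigma'\in[1,\infty)$ we then have
\begin{equation*}
\int_X f^{\sigma'}\,d\mu \leq C^{\sigma'-1}\int_X f\,d\mu = C^{\sigma'-1}\nu(X)<\infty,
\end{equation*}
so $\tfrac{d\nu}{d\mu}\in L^{\sigma'}(X,\mu)$ for every $\sigma'\in[1,\infty)$. Given $\tilde{p}\in(0,p)$ arbitrary, pick $\sigma\in(1,p/\tilde{p}]$, which is possible because $p/\tilde{p}>1$, and apply the first part of the corollary with $\theta:=\sigma$ (whose conjugate exponent $\sigma'=\sigma/(\sigma-1)$ lies in $(1,\infty)$, and for which the integrability hypothesis was just verified). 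Since $\tilde{p}\leq p/\sigma$, this yields compact embeddings into $L^{\tilde{p}}(X,\nu)$.

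There is no serious obstacle in this argument; the only step requiring a brief verification is the $L^{\sigma'}(X,\mu)$-integrability of $\tfrac{d\nu}{d\mu}$ in the \emph{moreover} case, which follows immediately from $\nu\leq C\mu$ together with the finiteness of $\nu$ (note that it is \emph{not} claimed that $\mu(X)<\infty$, so the integrability must be extracted from $\nu(X)<\infty$ as above rather than from a crude $L^{\infty}$-bound against $\mu$).
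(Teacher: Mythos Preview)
Your argument is correct and matches the paper's own proof essentially line for line: the first part is Theorem~\ref{comp_p/theta} composed with the H\"older inclusion $L^{p/\theta}(X,\nu)\hookrightarrow L^{\tilde{p}}(X,\nu)$ (using $\nu(X)<\infty$), and the second part follows by noting that $\frac{d\nu}{d\mu}\in L^{1}(X,\mu)\cap L^{\infty}(X,\mu)$, hence $\frac{d\nu}{d\mu}\in L^{\theta'}(X,\mu)$ for $\theta:=p/\tilde{p}$, and then invoking the first part. Your explicit verification that $\int_X f^{\sigma'}\,d\mu\leq C^{\sigma'-1}\nu(X)$ is exactly the interpolation the paper leaves implicit.
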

\begin{proof}
The first part is an immediate consequence of Theorem~\ref{comp_p/theta} and the H\"older inequality, where as the second part follows from the first one, since $\frac{d\nu}{d\mu} \in L^{1}(X,\mu)\cap L^{\infty}(X,\mu)$ implies $\frac{d\nu}{d\mu} \in L^{1}(X,\mu)\cap L^{\theta'}(X,\mu)$ with $\theta := p/ \tilde{p}$.
\end{proof}

We are now ready to  prove Theorem~\ref{drugie_plus}.

\begin{proof}[Proof of Theorem~\ref{drugie_plus}]
The first claim in the statement of Theorem~\ref{drugie_plus} follows from Corollary~\ref{Idontknow} with $C=1$ and $\nu=\mu$.

Next, assume that either one of the embeddings in \eqref{emb_in_lower} is continuous. Since $\alpha\preceq_q{\rm ind}(X,d)$, we can find a quasi-metric $d'$ on $X$ which is equivalent to $d$ and satisfies $\alpha\leq(\log_{2}C_{d'})^{-1}$, where the value $\alpha=(\log_{2}C_{d'})^{-1}$ can only occur when $q=\infty$. Fix finite numbers $r\geq1$ and $\beta\in[\alpha,(\log_{2}C_{d'})^{-1}]$, where $\beta\neq \alpha$
unless $\alpha=(\log_{2}C_{d'})^{-1}$.  Set $r_0:=r$ and for $k\in\mathbb{N}_0$ 
let $r_{k+1}:=\big[(r_k)^\beta+2^{-k}r^\beta\big]^{1/\beta}$. Observe that $(r_{k+1})^\beta-(r_k)^\beta=2^{-k}r^\beta$, from which it is easy to see that 
$r\leq r_k<r_{k+1}$ for all $k\in\mathbb{N}_0$.
%
%
Moreover, we have
$$
r_{k+1}=\bigg[r^\beta+\sum_{j=0}^{k}2^{-j}r^\beta\bigg]^{1/\beta}
<\bigg[r^\beta+\sum_{j=0}^{\infty}2^{-j}r^\beta\bigg]^{1/\beta}
=3^{1/\beta}r\quad\mbox{for any $k\in\mathbb{N}_0$.}
$$
In light of Lemma~\ref{DST1}, we can find a quasi-metric $\rho$ on $X$ which is equivalent to ${d'}$ (hence, is also equivalent to $d$) and such that all $\rho$-balls are open, $C_\rho\leq C_{d'}$, and $\rho^\beta$ is a genuine metric. Fix any point $z\in X$ and let $B_k:=B_{\rho}(z,r_k)$ for every $k\in\mathbb{N}_0$. We can assume that $X\setminus B_{k}\neq\emptyset$ for all $k\in\mathbb{N}_0$; otherwise, $X$ is bounded and we immediately have that $\mu(X)<\infty$. Since $\rho^\beta$ is a genuine metric, it follows from the triangle inequality that
$$
{\rm dist}_\rho(B_k,X\setminus B_{k+1})
\geq\big[(r_{k+1})^\beta-(r_k)^\beta\big]^{1/\beta}
=2^{-k/\beta}r\geq2^{-k/\beta}>0.
$$ 
Therefore, we can apply Lemma~\ref{GVa2} (for the quasi-metric-measure space $(X,\rho,\mu)$) to obtain sequence of  functions $\phi_{k}: X\to[0,1]$ such that $\phi_{k}= 1$ on $B_k$ and $\phi_{k} = 0$ on $X \setminus B_{k+1}$, and for which the following estimates hold for some constant $C\in(0,\infty)$ which is independent of $k$ and $r$: 
$$
\Vert \phi_{k}\Vert_{{M}^\alpha_{p,q}(X,d,\mu)}\leq C\Vert \phi_{k}\Vert_{{M}^\alpha_{p,q}(X,\rho,\mu)} \leq C\big(1+2^{k\alpha/\beta}\big)
[\mu(B_{k+1})]^{1/p}\leq 2C2^{k\alpha/\beta}
[\mu(B_{k+1})]^{1/p},
$$
and
$$
\Vert \phi_{k}\Vert_{{N}^\alpha_{p,q}(X,d,\mu)}\leq C\Vert \phi_{k}\Vert_{{N}^\alpha_{p,q}(X,\rho,\mu)}\leq C\big(1+2^{k\alpha/\beta}\big)
[\mu(B_{k+1})]^{1/p}\leq 2 C 2^{k\alpha/\beta}
[\mu(B_{k+1})]^{1/p}.
$$
Note that we have made use Lemma~\ref{equivspaces} in obtaining the first two inequalities in each of the estimates above. On the other hand, we have 
\begin{align*}
\|\phi_{k}\|_{L^{\tilde{p}}(X,\mu)}\geq [\mu(B_{k})]^{1/\tilde{p}}.
\end{align*}
Since $\alpha\leq\beta$ implies $2^{k\alpha/\beta}\leq2^{k}$, by combining these estimates with the embedding \eqref{emb_in_lower}, we obtain an inequality of the form
\begin{align*}
\frac{1}{[\mu(B_{k+1})]^{1/p}}  \leq 2C\widetilde{C} 2^{k} \frac{1}{[\mu(B_{k})]^{1/\tilde{p}}},
\end{align*}
where $\widetilde{C}\in(0,\infty)$ is the constant from the embeddings in \eqref{emb_in_lower}.
Employing \cite[Lemma 16]{agh20} with the bounded sequence $a_{k} := 1/\mu(B_{k})$ gives
\begin{align*}
\bigg(\frac{1}{\mu(B_0)}\bigg)^{1 - \tilde{p}/p} \big(2C\widetilde{C}\big)^{\tilde{p}} 2^{\frac{p\tilde{p}}{p-\tilde{p}}} \geq 1
\end{align*}
which leads to
\begin{align*}
[\mu(B_\rho(z,r))]^{1 - \tilde{p}/p} \leq \big(2C\widetilde{C}\big)^{\tilde{p}} 2^{\frac{p\tilde{p}}{p-\tilde{p}}}.
\end{align*}
Passing to the limit as $r\to\infty$ completes the proof of Theorem~\ref{drugie_plus}.
\end{proof}

Theorem~\ref{drugie_plus} concerns embeddings of $M^{\alpha}_{p,q}$ and $N^{\alpha}_{p,q}$ into 
$L^{\tilde{p}}$ spaces with the \textit{same} measure when $\tilde{p}<p$. We conclude this subsection with the following example which shows that if we have two different measures then such embedding may be compact even though both of the measures are infinite.
\begin{ex}
Fix $n\in \mathbb{N}$ and let $\nu$ be the measure given by $d\nu := \frac{dx}{1+|x|^n}$. Then $\nu(\mathbb{R}^n)=\infty$ and for any $p\in[1,\infty)$ and  $\tilde{p}\in(0,p)$, the embedding $W^{1,p}(\mathbb{R}^n) \hookrightarrow L^{\tilde{p}}(\mathbb{R}^n,\nu)$ is compact, where $W^{1,p}(\mathbb{R}^n)$ denotes the classical Sobolev space in $\mathbb{R}^n$ (equipped with the standard Euclidean distance and Lebesgue measure). Consequently, $M^{1,p}(\mathbb{R}^n) \hookrightarrow L^{\tilde{p}}(\mathbb{R}^n,\nu)$ is also compact.
\end{ex}
\begin{proof}
From the H\"{o}lder inequality, for any measurable set $E\subseteq\mathbb{R}^n$ we have 

\begin{align} \label{weight_Hol}
\int_{E} |f(x)|^{\tilde{p}}\, d\nu(x) \leq \bigg(\int_{\mathbb{R}^{n}} |f(x)|^{p}\, dx\bigg)^{\tilde{p}/p} \bigg( \int_{E} \Big( {1+|x|^n}\Big)^{-p/(p-\tilde{p})}\, dx\bigg)^{1 - \tilde{p}/p}.
\end{align}
Since for any $R\in(0,\infty)$ the embedding $W^{1,p}(\mathbb{R}^n) \hookrightarrow L^{p}(B(0,R))$ is compact and the embedding  $L^{p}(B(0,R)) \hookrightarrow L^{\tilde{p}}(B(0,R), \nu)$ is continuous, we have compactness of the embedding $W^{1,p}(\mathbb{R}^n) \hookrightarrow L^{\tilde{p}}(B(0,R), \nu)$. Furthermore, by  \eqref{weight_Hol} for any bounded family $\mathcal{F} \subseteq W^{1,p}(\mathbb{R}^n)$ we have
\begin{align*}
\lim_{R\to \infty} \sup_{f\in \mathcal{F}} \int_{\mathbb{R}^n \setminus B(0,R)} |f|^{\tilde{p}}\, d\nu = 0.
\end{align*}
Therefore, by standard considerations we get that $W^{1,p}(\mathbb{R}^n) \hookrightarrow L^{\tilde{p}}(\mathbb{R}^n,\nu)$ is compact. 

Finally, since $M^{1,p}(\mathbb{R}^n) \hookrightarrow W^{1,p}(\mathbb{R}^n)$ (see, e.g., \cite[Theorem~7]{H}), we have that $M^{1,p}(\mathbb{R}^n) \hookrightarrow L^{\tilde{p}}(\mathbb{R}^n,\nu)$ is also compact.
\end{proof}

\subsection{Embeddings into $L^{\tilde{p}}$ with $\tilde{p}>p$ and the Rellich--Kondrachov compactness theorem}
\label{subsect:p-greater}
In this subsection we prove a Rellich--Kondrachov-type compactness theorem (Theorem~\ref{snr-3}).

We being by introducing some terminology.
Let $(X,d,\mu)$ be a quasi-metric-measure space and fix $s\in(0,\infty)$. Recall that the measure $\mu$ is said to be \textit{locally lower Ahlfors $s$-regular} if there exists a constant $b\in(0,\infty)$ such that
\begin{equation}
\label{llAr}
\mu(B_d(x,r))\geq br^s\,\mbox{ for all $x\in X$ and $r\in(0,1]$.}
\end{equation}
Note that by adjusting the constant $b$, the upper bound 1 for the radii $r$ in \eqref{llAr} can be replaced by any strictly positive and finite number.

\begin{tw}\label{snr-3}
Let $(X,d,\mu)$ be a quasi-metric-measure space such that $\mu(X)<\infty$ and the measure $\mu$ is locally lower $s$-Ahlfors-regular for some $s\in(0,\infty)$. Then the following statements are valid for all $\alpha,p\in(0,\infty)$ and $q\in(0,\infty]$:
\begin{enumerate}
\item If $\alpha p<s$, then for each fixed ${\tilde{p}}\in(0,sp/(s-\alpha p))$ the embeddings $M^{\alpha}_{p,q}(X,d,\mu)\hookrightarrow L^{\tilde{p}}(X,\mu)$ and $N^{\alpha}_{p,q}(X,d,\mu)\hookrightarrow L^{\tilde{p}}(X,\mu)$ are compact.
\item If $\alpha p>s$, then for each fixed $\gamma\in(0,\alpha-s/p)$ the embeddings $M^{\alpha}_{p,q}(X,d,\mu)\hookrightarrow \mathcal{C}^{\gamma}(X, d)$ and $N^{\alpha}_{p,q}(X,d,\mu)\hookrightarrow \mathcal{C}^{\gamma}(X, d)$ are compact. In particular, the embeddings $M^{\alpha}_{p,q}(X,d,\mu)\hookrightarrow L^{\tilde{p}}(X,\mu)$ and $N^{\alpha}_{p,q}(X,d,\mu)\hookrightarrow L^{\tilde{p}}(X,\mu)$ are compact for all $\tilde{p}\in(0,\infty)$.
\end{enumerate}
\end{tw}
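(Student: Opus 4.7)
The plan is to invoke the continuous Sobolev-type and H\"older-type embeddings established in \cite{AYY21} under local lower $s$-Ahlfors regularity, and then boost them to compact embeddings using, respectively, H\"older interpolation combined with the lower-exponent compactness of Theorem~\ref{drugie_plus}, and the compact self-improvement of H\"older inclusions from Lemma~\ref{holdcpt}.

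First I would observe that the hypotheses force $(X,d)$ to be totally bounded. Indeed, $\mu(X)<\infty$ by assumption, and for any $r\in(0,\infty)$ the local lower $s$-Ahlfors regularity gives $\mu(B_d(x,r))\geq b\min\{r,1\}^s>0$ uniformly in $x\in X$ (using $B_d(x,r)\supseteq B_d(x,1)$ when $r>1$), so $h(r):=\inf_{x\in X}\mu(B_d(x,r))>0$ for every $r\in(0,\infty)$. Proposition~\ref{tot} then yields that $(X,d)$ is totally bounded; this fact is what will unlock Lemma~\ref{holdcpt} in part~(2).

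For part~(1), fix $\alpha,p\in(0,\infty)$ with $\alpha p<s$ and set $p^{*}:=sp/(s-\alpha p)$. The Sobolev embedding results from \cite{AYY21} under the local lower $s$-Ahlfors assumption yield the continuous embeddings
\[
M^{\alpha}_{p,q}(X,d,\mu)\hookrightarrow L^{p^{*}}(X,\mu)\quad\text{and}\quad N^{\alpha}_{p,q}(X,d,\mu)\hookrightarrow L^{p^{*}}(X,\mu).
\]
Given $\tilde{p}\in(0,p^{*})$, if $\tilde{p}<p$ then Theorem~\ref{drugie_plus} finishes the job directly. Otherwise $p\leq \tilde{p}<p^{*}$, and I would select $r\in(0,p)$, $\tilde{p}^{*}\in(\tilde{p},p^{*})$, and $\theta\in(0,1)$ satisfying $1/\tilde{p}=\theta/r+(1-\theta)/\tilde{p}^{*}$. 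Let $\{u_n\}$ be a bounded sequence in $M^{\alpha}_{p,q}(X,d,\mu)$ (the argument for $N^{\alpha}_{p,q}$ is identical). By the continuous embedding, $\{u_n\}$ is bounded in $L^{\tilde{p}^{*}}(X,\mu)$, while Theorem~\ref{drugie_plus} produces a subsequence $\{u_{n_k}\}$ converging in $L^{r}(X,\mu)$. The standard log-convexity estimate
\[
\|u_{n_k}-u_{n_j}\|_{L^{\tilde{p}}(X,\mu)}\leq \|u_{n_k}-u_{n_j}\|_{L^{r}(X,\mu)}^{\theta}\,\|u_{n_k}-u_{n_j}\|_{L^{\tilde{p}^{*}}(X,\mu)}^{1-\theta}
\]
then shows $\{u_{n_k}\}$ is Cauchy in $L^{\tilde{p}}(X,\mu)$, proving compactness.

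For part~(2), fix $\alpha,p\in(0,\infty)$ with $\alpha p>s$. The H\"older embedding results from \cite{AYY21} under local lower $s$-Ahlfors regularity give continuous embeddings
\[
M^{\alpha}_{p,q}(X,d,\mu)\hookrightarrow \mathcal{C}^{\alpha-s/p}(X,d)\quad\text{and}\quad N^{\alpha}_{p,q}(X,d,\mu)\hookrightarrow \mathcal{C}^{\alpha-s/p}(X,d).
\]
Since $(X,d)$ is totally bounded, Lemma~\ref{holdcpt} yields that $\mathcal{C}^{\alpha-s/p}(X,d)\hookrightarrow\mathcal{C}^{\gamma}(X,d)$ is compact for every $\gamma\in(0,\alpha-s/p)$, and composition gives the desired H\"older compactness. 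For the final $L^{\tilde{p}}$ assertion, every $u\in\mathcal{C}^{\gamma}(X,d)$ is bounded with $\|u\|_{L^{\infty}(X,\mu)}\leq\|u\|_{\mathcal{C}^{\gamma}(X,d)}$, and since $\mu(X)<\infty$ the inclusion $\mathcal{C}^{\gamma}(X,d)\hookrightarrow L^{\tilde{p}}(X,\mu)$ is continuous for every $\tilde{p}\in(0,\infty)$; in fact uniform convergence (which is implied by $\mathcal{C}^{\gamma}$-convergence) combined with $\mu(X)<\infty$ promotes this to a compact embedding, so the $L^{\tilde{p}}$ compactness for every $\tilde{p}\in(0,\infty)$ follows by composition with the already established H\"older compactness.

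The main obstacle will be confirming that the continuous embeddings of \cite{AYY21} genuinely apply to both $M^{\alpha}_{p,q}$ and $N^{\alpha}_{p,q}$ under the present hypotheses (local lower $s$-Ahlfors regularity, with no restriction linking $\alpha$ to ${\rm ind}(X,d)$). Once those are in hand, the Hölder interpolation in part~(1) and the compact inclusion between H\"older spaces in part~(2) are the only nonroutine ingredients, and both are immediate consequences of Theorem~\ref{drugie_plus} and Lemma~\ref{holdcpt} respectively.
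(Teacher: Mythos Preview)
Your approach is essentially the same as the paper's: continuous embeddings from \cite{AYY21}, then interpolation for part~(1) and Lemma~\ref{holdcpt} for part~(2), with total boundedness supplied by Proposition~\ref{tot}. Two small points of divergence are worth flagging.

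First, in part~(1) you use Theorem~\ref{drugie_plus} (compactness into $L^r$ for $r<p$) as the anchor for interpolation, while the paper instead invokes Corollary~\ref{remarkoint} (compactness into $L^p$, available because $(X,d)$ is totally bounded) and interpolates directly between $L^p$ and $L^{p^*}$. Both routes are valid; yours avoids using total boundedness for part~(1) at all, which is a mild simplification.

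Second, your claim that $N^{\alpha}_{p,q}(X,d,\mu)\hookrightarrow L^{p^*}(X,\mu)$ continuously is not quite what \cite{AYY21} delivers: in the Besov scale the cited result (Theorem~3.9 and Remark~3.10 there) gives only $N^{\alpha}_{p,q}\hookrightarrow L^{p^*_\varepsilon}$ with $p^*_\varepsilon=sp/(s-\varepsilon p)$ for each $\varepsilon\in(0,\alpha)$, not the endpoint $p^*$. The paper handles this by first fixing $\tilde{p}<p^*$ and then choosing $\varepsilon$ close enough to $\alpha$ that $\tilde{p}<p^*_\varepsilon$. Your interpolation already introduces an intermediate $\tilde{p}^*\in(\tilde{p},p^*)$, so the fix is immediate: just take $\tilde{p}^*=p^*_\varepsilon$ for a suitable $\varepsilon$. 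This is exactly the ``main obstacle'' you anticipated, and it resolves in the way the paper indicates.
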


\begin{rem}
By \cite[Theorem~4.13]{AYY21}, a local lower Ahlfors regular condition on the measure is necessary for the embeddings $M^{\alpha}_{p,q}(X,d,\mu)\hookrightarrow L^{\tilde{p}}(X,\mu)$ and $N^{\alpha}_{p,q}(X,d,\mu)\hookrightarrow L^{\tilde{p}}(X,\mu)$ to be continuous when $\tilde{p}>p$.
\end{rem}
\begin{proof}
As a preamble to the proof, observe that since $\mu$ is locally lower $s$-Ahlfors-regular, the assumption $\mu(X)<\infty$ ensures that $(X,d)$ is totally bounded, granted Proposition~\ref{tot}. 

Moving on, we first prove \textit{(1)} for ${M}^\alpha_{p,q}$-spaces. Assume $\alpha p<s$ and fix ${\tilde{p}}\in(0,sp/(s-\alpha p))$. Since $X$ is a bounded set, we can appeal to \cite[Theorem~3.5]{AYY21} to conclude that there exists a constant $C\in(0,\infty)$ such that
\begin{equation}
\label{rjq-23}
\Vert u\Vert_{L^{p^\ast}(X,\mu)}\leq C \Vert u\Vert_{{M}^\alpha_{p,q}(X,d,\mu)},
\end{equation}
whenever $u\in {M}^\alpha_{p,q}(X,d,\mu)$. Here $p^\ast:=sp/(s-\alpha p)\in(p,\infty)$. Given that $\mu(X)<\infty$ and $\tilde{p}<p^\ast$, it is easy to see from
\eqref{rjq-23} that ${M}^\alpha_{p,q}(X,d,\mu)\hookrightarrow L^{\tilde{p}}(X,\mu)$. We will now show that this continuous embedding is compact. To this end, suppose that $\mathcal{F}$ is a nonempty bounded subset of ${M}^\alpha_{p,q}(X,d,\mu)$ and let $M:=\sup_{f\in\mathcal{F}}\Vert f\Vert_{{M}^\alpha_{p,q}(X,d,\mu)}$. In light of Corollary~\ref{remarkoint}, there exists a sequence $\{f_n\}_{n\in\mathbb{N}}$ of functions in $\mathcal{F}$ which converges in $L^p(X,\mu)$. As such, the proof of \textit{(1)} will be complete once we show that $\{f_n\}_{n\in\mathbb{N}}$ converges in $L^{\tilde{p}}(X,\mu)$. If $\tilde{p}\in(p,p^\ast)$ then by choosing $\theta\in(0,1)$ such that
$1/\tilde{p}=\theta/p+(1-\theta)/p^\ast$, it follows from \eqref{rjq-23} and the interpolation inequality for the $L^r$-norm that for every $n,m\in\mathbb{N}$,
\begin{align}
\Vert f_n-f_m\Vert_{L^{\tilde{p}}(X,\mu)}
&\leq\Vert f_n-f_m\Vert_{L^{{p}}(X,\mu)}^\theta\Vert f_n-f_m\Vert_{L^{p^\ast}(X,\mu)}^{1-\theta}\\
&\leq\big(2^{1+1/p^\ast}CM\big)^{1-\theta}\Vert f_n-f_m\Vert_{L^{{p}}(X,\mu)}^\theta.
\end{align}
On the other hand, if $\tilde{p}\in(0,p]$ then by the H\"older inequality, we have
$$
\Vert f_n-f_m\Vert_{L^{\tilde{p}}(X,\mu)}\leq[\mu(X)]^{1/\tilde{p}-1/p}\Vert f_n-f_m\Vert_{L^{{p}}(X,\mu)}.
$$
Therefore, in either case we immediately obtain that the sequence $\{f_n\}_{n\in\mathbb{N}}$ is Cauchy, hence convergent, in $L^{\tilde{p}}(X,\mu)$.

We now prove \textit{(2)}  for ${M}^\alpha_{p,q}$-spaces. Assume $\alpha p>s$ and fix $\gamma\in(0,\alpha-s/p)$. By \cite[Theorem~3.5 and Remark~3.6]{AYY21} we have ${M}^\alpha_{p,q}(X,d,\mu)\hookrightarrow\mathcal{C}^{\alpha-s/p}(X,d)$. Since $(X,d)$ is totally bounded, it follows from Lemma~\ref{holdcpt} that the embedding $\mathcal{C}^{\alpha-s/p}(X,d)\hookrightarrow\mathcal{C}^{\gamma}(X,d)$ is well defined and compact. By combining these two embeddings, we obtain that  ${M}^\alpha_{p,q}(X,d,\mu)\hookrightarrow\mathcal{C}^{\gamma}(X,d)$ is compact.

The proof of \textit{(1)} and \textit{(2)} for ${N}^\alpha_{p,q}$-spaces is similar; however, we need to slightly modify the argument. We will point out the important changes. To prove \textit{(1)}, assume $\alpha p<s$ and ${\tilde{p}}\in(0,sp/(s-\alpha p))$, and choose $\varepsilon\in(0,\alpha)$ so that ${\tilde{p}}<sp/(s-\varepsilon p)$. Then by \cite[Theorem~3.9 and Remark~3.10]{AYY21}, we have that
\begin{equation}
\label{rjq-23-2}
\Vert u\Vert_{L^{p^\ast_\varepsilon}(X,\mu)}\leq C \Vert u\Vert_{{N}^\alpha_{p,q}(X,d,\mu)},
\end{equation}
whenever $u\in {N}^\alpha_{p,q}(X,d,\mu)$. Here $p^\ast_\varepsilon:=sp/(s-\varepsilon p)\in(p,\infty)$. At this stage, we can proceed as in the proof of \textit{(1)} for ${M}^\alpha_{p,q}$-spaces where we use \eqref{rjq-23-2} in place of \eqref{rjq-23}.

To prove \textit{(2)}, assume $\alpha p>s$ and $\gamma\in(0,\alpha-s/p)$. Then \cite[Theorem~3.9 and Remark~3.10]{AYY21} ensure that  ${N}^\alpha_{p,q}(X,d,\mu)\hookrightarrow\mathcal{C}^{\alpha-s/p}(X,d)$. The rest of the proof is now as in the proof of \textit{(2)} for ${M}^\alpha_{p,q}$-spaces.
\end{proof}

\subsection{Self-improvement of certain compact embeddings}
\label{subsect:selfimprove}
In this subsection we prove that certain compact embeddings of the spaces $M^{\alpha}_{p,q}$ and $N^{\alpha}_{p,q}$ self-improve in quasi-metric-measure spaces where the measure is doubling (as in \eqref{doub}). 

Note that if $(X,d,\mu)$ is a quasi-metric-measure space and $\mu$ is a doubling measure then by iterating the doubling condition \eqref{doub} (in a manner similar to the proof of \cite[Lemma~4.7]{Hajlasz2}) we have that 
\begin{equation}
\label{doubdim}
\mu(B_d(x,\lambda r))\leq 4^s\lambda^s\mu(B_d(x,r))
\end{equation} 
for all $x\in X$, $r\in(0,\infty)$, $\lambda\in[1,\infty)$, and $s\in[\log_2C_\mu,\infty)$, where $C_\mu$ is the doubling constant in \eqref{doub}. The exponent $s$ in \eqref{doubdim} typically plays the role of ``dimension" in this setting. With this in mind, we state and prove the main result in this section (keeping in mind Convention~\ref{convention}).

\begin{tw}\label{improve}
Let $(X,d,\mu)$ be a quasi-metric-measure space where the measure $\mu$ is doubling. Fix $\alpha,p\in(0,\infty)$, $\tilde{p}\in(0,p]$, and $q\in(0,\infty]$ with $\alpha\preceq_q{\rm ind}(X,d)$. Suppose that either of the embeddings
\[
	M^{\alpha}_{p,q}(X,d,\mu)\hookrightarrow L^{\tilde{p}}(X,\mu)\quad\mbox{or}\quad
	N^{\alpha}_{p,q}(X,d,\mu)\hookrightarrow L^{\tilde{p}}(X,\mu)
\]
is compact. Then, the following statements are valid for all $\beta,u\in(0,\infty)$, and $v\in(0,\infty]$ with $s\in(0,\infty)$ as in \eqref{doubdim}:
 
\begin{enumerate}
\item If $\beta u<s$, then for each fixed ${\tilde{q}}\in(0,su/(s-\beta u))$ the embeddings $M^{\beta}_{u,v}(X,d,\mu)\hookrightarrow L^{\tilde{q}}(X,\mu)$ and $N^{\beta}_{u,v}(X,d,\mu)\hookrightarrow L^{\tilde{q}}(X,\mu)$ are compact.
\item If $\beta u>s$, then for each fixed $\gamma\in(0,\beta-s/u)$ the embeddings $M^{\beta}_{u,v}(X,d,\mu)\hookrightarrow \mathcal{C}^{\gamma}(X, d)$ and $N^{\beta}_{u,v}(X,d,\mu)\hookrightarrow \mathcal{C}^{\gamma}(X, d)$ are compact. Moreover, the embeddings $M^{\beta}_{u,v}(X,d,\mu)\hookrightarrow L^{\tilde{q}}(X,\mu)$ and $N^{\beta}_{u,v}(X,d,\mu)\hookrightarrow L^{\tilde{q}}(X,\mu)$ are compact for all $\tilde{q}\in(0,\infty)$.
\end{enumerate}
\end{tw}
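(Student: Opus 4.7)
The plan is to reduce Theorem~\ref{improve} to a direct application of the Rellich--Kondrachov-type theorem, Theorem~\ref{snr-3}, by showing that the given compactness hypothesis, together with the doubling property of $\mu$, forces both $\mu(X)<\infty$ and the local lower $s$-Ahlfors regular condition \eqref{llAr} (for the same $s$ appearing in \eqref{doubdim}). Once these two properties are verified, Theorem~\ref{snr-3} applied with the exponents $\beta,u,v$ yields conclusions (1) and (2) simultaneously for both $M^\beta_{u,v}$ and $N^\beta_{u,v}$, with no further work.

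First, I would show that $(X,d)$ is totally bounded. A doubling measure is trivially $(C_d,\delta)$-doubling for every $\delta\in(0,\infty)$, so Theorems~\ref{piate} and \ref{czwarte} are applicable under the hypothesis $\alpha\preceq_q{\rm ind}(X,d)$. If $\tilde{p}=p$, compactness of $M^\alpha_{p,q}(X,d,\mu)\hookrightarrow L^p(X,\mu)$ (or the corresponding $N$-space embedding) gives total boundedness directly from Theorem~\ref{piate}. If $\tilde{p}<p$, compactness implies continuity, whence Theorem~\ref{drugie_plus} yields $\mu(X)<\infty$; then Lemma~\ref{doubbdd} gives $\diam_d(X)<\infty$, and Theorem~\ref{allin} (bounded plus $(c,\delta)$-doubling for all $\delta\in(0,\diam_d(X)]$ implies totally bounded) finishes the upgrade. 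In either case, $\mu(X)<\infty$ and $(X,d)$ is bounded.

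Second, I would derive the local lower $s$-Ahlfors regularity from doubling and boundedness. Fix $x_0\in X$ and choose $R\in[1,\infty)$ with $X\subseteq B_d(x_0,R)$, possible since $\diam_d(X)<\infty$. For any $x\in X$ and $y\in B_d(x_0,R)$, the quasi-triangle inequality gives
\[
d(x,y)\leq C_d\max\bigl\{\widetilde{C}_d d(x_0,x),\,d(x_0,y)\bigr\}\leq C_d\widetilde{C}_d R=:R',
\]
so $X\subseteq B_d(x_0,R)\subseteq B_d(x,R')$. Applying \eqref{doubdim} with center $x$ and dilation factor $\lambda:=R'/r\geq 1$ (valid for $r\in(0,1]$ since $R'\geq 1$) yields
\[
\mu(X)\leq \mu(B_d(x,R'))\leq 4^s(R'/r)^s\mu(B_d(x,r)),
\]
and hence $\mu(B_d(x,r))\geq b\,r^s$ with $b:=\mu(X)\,(4^sR'^s)^{-1}>0$. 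This is exactly \eqref{llAr}.

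With $\mu(X)<\infty$ and $\mu$ locally lower $s$-Ahlfors regular, the hypotheses of Theorem~\ref{snr-3} are met, so statements (1) and (2) follow for the spaces $M^\beta_{u,v}(X)$ and $N^\beta_{u,v}(X)$ verbatim. The only mildly subtle point is the $\tilde p<p$ branch of the first step: Theorem~\ref{drugie_plus} only provides $\mu(X)<\infty$, not total boundedness, so the combination with Lemma~\ref{doubbdd} and Theorem~\ref{allin} is essential. Aside from this minor bookkeeping in the quasi-metric framework (in particular keeping track of the constants $C_d,\widetilde{C}_d$ when absorbing $X$ into a ball centered at an arbitrary point), the argument is essentially routine.
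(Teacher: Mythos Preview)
Your proposal is correct and follows essentially the same route as the paper's proof: show total boundedness (splitting into the cases $\tilde p=p$ via Theorem~\ref{czwarte} and $\tilde p<p$ via Theorem~\ref{drugie_plus} combined with Lemma~\ref{doubbdd} and the doubling characterization of total boundedness), then deduce local lower $s$-Ahlfors regularity from \eqref{doubdim}, and finally invoke Theorem~\ref{snr-3}. The only cosmetic difference is that the paper obtains the lower Ahlfors bound via Proposition~\ref{tot} (using $h(1)=\inf_x\mu(B_d(x,1))>0$ and $\lambda=1/r$), whereas you absorb $X$ into a single ball $B_d(x,R')$ and take $\lambda=R'/r$; both are straightforward applications of \eqref{doubdim}.
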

Let us remark that the doubling condition in Theorem~\ref{improve} cannot be relaxed to a locally doubling condition, in general. Indeed, for the metric-measure space $(\mathbb{N},d,\mu)$ in Example~\ref{exdis}, we have ${\rm ind}(\mathbb{N},d)=\infty$ (since $d$ is an ultrametric) and so, Theorems~\ref{drugie_plus} and \ref{czwarte}, imply that $M^{\alpha,p}(\mathbb{N},d,\mu)$ embeds compactly into $L^{\tilde{p}}(\mathbb{N},\mu)$ for all $\alpha,p\in(0,\infty)$ and $\tilde{p}\in(0,p)$ and yet the embedding $M^{\alpha,p}(\mathbb{N},d,\mu)\hookrightarrow L^{p}(\mathbb{N},\mu)$ fails to be compact. Moreover, since $\lambda\geq1$ in \eqref{doubdim}, it is always possible to choose an exponent $s\in(0,\infty)$ satisfying \eqref{doubdim} and $s>\beta u$, for any fixed $\beta,u\in(0,\infty)$.

\begin{proof}
Fix $\beta,u\in(0,\infty)$, and $v\in(0,\infty]$, and suppose that $s\in(0,\infty)$ is as in \eqref{doubdim}. First we claim that $(X,d)$ is totally bounded. Indeed, if $\tilde{p}=p$ then total boundedness of $(X,d)$ follows from Theorem~\ref{czwarte}. If $\tilde{p}<p$ then $\mu(X)<\infty$ by Theorem~\ref{drugie_plus} which, together with Lemma~\ref{doubbdd} and Proposition~\ref{tot1}, imply that $(X,d)$ is totally bounded. Since $(X,d)$ is totally bounded, $\mu(X)<\infty$ and by Proposition~\ref{tot}, we have $h(1):=\inf_{x\in X} \mu (B_d(x,1))>0$.
As such, for all $x\in X$ and $r\in(0,1]$, we can take $\lambda:=1/r\in[1,\infty)$ in \eqref{doubdim} to estimate
$$
h(1)4^{-s}\,r^s\leq 4^{-s}\,r^s\mu(B_d(x,1))\leq\mu(B_d(x,r)).
$$
Thus, $\mu$ is locally lower $s$-Ahlfors-regular with exponent $s$ and the desired conclusions now follow from Theorem~\ref{snr-3} and the fact that  $\mu(X)<\infty$.
\end{proof}

\section{Compact embeddings between $M^{\alpha}_{p,q}$ and $N^{\alpha}_{p,q}$ spaces}
\label{sect:betweenbesov}
We now record some results concerning compact embeddings between $M^{\alpha}_{p,q}$ and $N^{\alpha}_{p,q}$ spaces with different choices of exponents.

\begin{tw}\label{Triebel_comp}
Let $(X,d,\mu)$ be a quasi-metric space equipped with a nonnegative Borel measure, and let $\alpha,p \in (0,\infty)$ and $q\in (0,\infty]$. Then the following are equivalent:
\begin{enumerate}
\item The embedding $M^{\alpha}_{p,q}(X,d,\mu) \hookrightarrow L^p(X,\mu)$ is compact.
\item For all $\beta \in (0,\alpha)$, $r\in (0,\infty]$ the embeddings $M^{\alpha}_{p,q}(X,d,\mu) \hookrightarrow M^{\beta}_{p,r}(X,d,\mu)$ and $M^{\alpha}_{p,q}(X,d,\mu) \hookrightarrow N^{\beta}_{p,r}(X,d,\mu)$  are compact.
\item There exist $\beta \in (0,\alpha)$ and $r\in (0,\infty]$ such that at least one of the embeddings $M^{\alpha}_{p,q}(X,d,\mu) \hookrightarrow M^{\beta}_{p,r}(X,d,\mu)$ or $M^{\alpha}_{p,q}(X,d,\mu) \hookrightarrow N^{\beta}_{p,r}(X,d,\mu)$ is compact.
\end{enumerate}
\end{tw}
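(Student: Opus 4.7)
I will prove $(1) \Rightarrow (2) \Rightarrow (3) \Rightarrow (1)$. The implication $(2) \Rightarrow (3)$ is trivial, while $(3) \Rightarrow (1)$ is immediate upon observing that, by the very definition of the inhomogeneous (quasi-)norms, $\|u\|_{L^p(X,\mu)}\leq\|u\|_{M^{\beta}_{p,r}(X,d,\mu)}$ and $\|u\|_{L^p(X,\mu)}\leq\|u\|_{N^{\beta}_{p,r}(X,d,\mu)}$; hence both $M^{\beta}_{p,r}(X,d,\mu)\hookrightarrow L^p(X,\mu)$ and $N^{\beta}_{p,r}(X,d,\mu)\hookrightarrow L^p(X,\mu)$ are continuous, and composing either with the assumed compact embedding yields (1).

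The substantive direction is $(1) \Rightarrow (2)$. The key step is to establish the following interpolation-type inequality: for every $\beta \in (0,\alpha)$ and every $r \in (0,\infty]$, there exists a constant $C\in(0,\infty)$ (depending only on $\alpha$, $\beta$, $p$, $q$, and $r$) such that, for every $f \in M^{\alpha}_{p,q}(X,d,\mu)$,
\[
\|f\|_{\dot{M}^{\beta}_{p,r}(X,d,\mu)} + \|f\|_{\dot{N}^{\beta}_{p,r}(X,d,\mu)} \leq C\,\|f\|_{\dot{M}^{\alpha}_{p,q}(X,d,\mu)}^{\beta/\alpha}\,\|f\|_{L^p(X,\mu)}^{1-\beta/\alpha}.
\]
To prove this, I would proceed in the spirit of the constructions appearing in the proof of Proposition~\ref{embs}(v)-(vi). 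Fix an integer $K\in\mathbb{Z}$ (to be optimized later), and for any $\overrightarrow{g}=\{g_k\}_{k\in\mathbb{Z}}\in\mathbb{D}^{\alpha}_d(f)$ define a sequence $\overrightarrow{h}=\{h_k\}_{k\in\mathbb{Z}}$ by
\[
h_k := 2^{-k(\alpha-\beta)}\,g_k \text{ for } k\geq K,\qquad h_k := 2^{\beta(k+1)}\,|f| \text{ for } k<K.
\]
A direct check using $2^{-k-1}\leq d(x,y)<2^{-k}$ (and $\alpha>\beta$) shows $\overrightarrow{h}\in\mathbb{D}^{\beta}_d(f)$. Using the pointwise bound $g_k(x)\leq\|\{g_j(x)\}_{j\in\mathbb{Z}}\|_{\ell^q}$ and summing the geometric series $\sum_{k\geq K}2^{-kr(\alpha-\beta)}$ and $\sum_{k<K}2^{r\beta(k+1)}$ (treating $r=\infty$ as a supremum in the natural way) yields an estimate of the form
\[
\|\overrightarrow{h}\|_{L^p(X;\ell^{r}(\mathbb{Z}))}^p + \|\overrightarrow{h}\|_{\ell^{r}(\mathbb{Z};L^p(X,\mu))}^p \leq C\,\bigl(2^{-Kp(\alpha-\beta)}\,\|\overrightarrow{g}\|_{L^p(X;\ell^{q}(\mathbb{Z}))}^p + 2^{pK\beta}\,\|f\|_{L^p(X,\mu)}^p\bigr).
\]
Optimizing over $K\in\mathbb{Z}$ (which loses only an absolute constant relative to the continuous optimum) and then taking the infimum over all $\overrightarrow{g}\in\mathbb{D}^{\alpha}_d(f)$ produces the desired interpolation inequality.

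Given this inequality, $(1)\Rightarrow(2)$ follows quickly. Let $\{u_n\}_{n\in\mathbb{N}}$ be a bounded sequence in $M^{\alpha}_{p,q}(X,d,\mu)$. By (1), after passing to a subsequence, we may assume $\{u_n\}$ converges in $L^p(X,\mu)$ and is therefore $L^p$-Cauchy. Since the differences $u_n-u_m$ remain uniformly bounded in $\dot{M}^{\alpha}_{p,q}(X,d,\mu)$, applying the interpolation inequality to $u_n-u_m$ shows that $\{u_n\}$ is also Cauchy in $\dot{M}^{\beta}_{p,r}(X,d,\mu)$ and in $\dot{N}^{\beta}_{p,r}(X,d,\mu)$; combined with $L^p$-Cauchyness, this gives Cauchyness and therefore convergence in the inhomogeneous (quasi-)norms of $M^{\beta}_{p,r}(X,d,\mu)$ and $N^{\beta}_{p,r}(X,d,\mu)$.

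The main obstacle I anticipate is the careful bookkeeping needed to make the interpolation inequality uniform across all $q,r\in(0,\infty]$, particularly when $r$ or $p/r$ is less than $1$, so that the elementary estimate $(a+b)^s\leq C_s(a^s+b^s)$ must be invoked in place of the standard Minkowski inequality, and in verifying that the sequence $\overrightarrow{h}$ is a genuine $\beta$-gradient at all integer scales $k$, including the negative ones.
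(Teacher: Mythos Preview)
Your proof is correct and follows essentially the same approach as the paper: both construct a $\beta$-fractional gradient by splitting at an integer scale $K$, using (a rescaling of) the original $\alpha$-gradient for $k\geq K$ and $2^{\beta(k+1)}|f|$ for $k<K$, and then choose $K$ (equivalently $\varepsilon\approx 2^{-K}$) to balance the two contributions. The only cosmetic difference is that you package the resulting estimate as a multiplicative interpolation inequality before applying it to $u_n-u_m$, whereas the paper works directly with the differences throughout (via its Lemma~\ref{Lem_impr_conv}).
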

\begin{rem}
For $p,\alpha\in(0,\infty)$, $\beta\in(0,\alpha)$, and $q=r=\infty$, Theorem~\ref{Triebel_comp} gives that the embedding $M^{\alpha,p}(X,d,\mu) \hookrightarrow L^p(X,\mu)$ is compact if and only if  the embedding $M^{\alpha,p}(X,d,\mu) \hookrightarrow M^{\beta,p}(X,d,\mu)$ is compact.
\end{rem}
\begin{tw} \label{Bes_comp}
Let $(X,d,\mu)$ be a quasi-metric space equipped with a nonnegative Borel measure, and let $\alpha,p \in (0,\infty)$ and $q\in (0,\infty]$. Then the following are equivalent:
\begin{enumerate}
\item The embedding $N^{\alpha}_{p,q}(X,d,\mu) \hookrightarrow L^p(X,\mu)$ is compact.
\item For all $\beta \in (0,\alpha)$, $r\in (0,\infty]$ the embeddings $N^{\alpha}_{p,q}(X,d,\mu) \hookrightarrow M^{\beta}_{p,r}(X,d,\mu)$ and $N^{\alpha}_{p,q}(X,d,\mu) \hookrightarrow N^{\beta}_{p,r}(X,d,\mu)$  are compact.
\item There exist $\beta \in (0,\alpha)$ and $r\in (0,\infty]$ such that at least one of the embeddings $N^{\alpha}_{p,q}(X,d,\mu) \hookrightarrow M^{\beta}_{p,r}(X,d,\mu)$ or $N^{\alpha}_{p,q}(X,d,\mu) \hookrightarrow N^{\beta}_{p,r}(X,d,\mu)$ is compact.
\end{enumerate}
\end{tw}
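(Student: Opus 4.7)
The plan is to mirror the structure of Theorem~\ref{Triebel_comp}. The implication $(2)\Rightarrow(3)$ is immediate. For $(3)\Rightarrow(1)$, I will use that $M^{\beta}_{p,r}(X,d,\mu)\hookrightarrow L^p(X,\mu)$ and $N^{\beta}_{p,r}(X,d,\mu)\hookrightarrow L^p(X,\mu)$ continuously (this is built into the inhomogeneous (quasi-)norm), so composing either compact embedding in $(3)$ with the corresponding continuous map to $L^p$ yields $(1)$. The bulk of the work is the implication $(1)\Rightarrow(2)$, which I plan to handle by establishing compactness into $N^{\beta}_{p,r}$ directly and then bootstrapping to compactness into $M^{\beta}_{p,r}$.

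The key technical step will be a gradient-cutting lemma in the spirit of the proof of Proposition~\ref{embs}$v)$. Given $u\in N^{\alpha}_{p,q}(X,d,\mu)$ and any $\overrightarrow{g}=\{g_k\}_{k\in\mathbb{Z}}\in\mathbb{D}^{\alpha}_d(u)$, I will fix $N\in\mathbb{Z}$ and set
$$
h_k:=\begin{cases} 2^{-k(\alpha-\beta)}g_k & \text{if } k\geq N,\\ 2^{(k+1)\beta}|u| & \text{if } k<N. \end{cases}
$$
Working separately on the annulus $\{(x,y):2^{-k-1}\leq d(x,y)<2^{-k}\}$ for each $k$ and using $d(x,y)^{\alpha-\beta}\leq 2^{-k(\alpha-\beta)}$ for $k\geq N$, respectively $d(x,y)^{-\beta}\leq 2^{(k+1)\beta}$ for $k<N$, I will verify that $\overrightarrow{h}:=\{h_k\}\in\mathbb{D}^{\beta}_d(u)$. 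Summing two geometric series and using the elementary bound $\|g_k\|_{L^p(X,\mu)}\leq\|\overrightarrow{g}\|_{\ell^q(\mathbb{Z};L^p(X,\mu))}$ (valid for every $q\in(0,\infty]$), and taking the infimum over all $\overrightarrow{g}\in\mathbb{D}^{\alpha}_d(u)$, I will obtain a controlling inequality
$$
\|u\|_{\dot{N}^{\beta}_{p,r}(X,d,\mu)}\leq C_1\,2^{-N(\alpha-\beta)}\,\|u\|_{\dot{N}^{\alpha}_{p,q}(X,d,\mu)}+C_2\,2^{N\beta}\,\|u\|_{L^p(X,\mu)},
$$
with constants $C_1,C_2\in(0,\infty)$ depending only on $\alpha,\beta,r$; the endpoint case $r=\infty$ is handled by replacing sums by suprema in the same calculation.

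From here, compactness of $N^{\alpha}_{p,q}(X,d,\mu)\hookrightarrow N^{\beta}_{p,r}(X,d,\mu)$ follows by a standard two-scale argument. Namely, given a bounded sequence $\{u_n\}$ in $N^{\alpha}_{p,q}$, compactness of $N^{\alpha}_{p,q}\hookrightarrow L^p$ extracts a subsequence that is Cauchy in $L^p(X,\mu)$. Applying the above inequality to $u_n-u_m$ and fixing $\varepsilon>0$: first I pick $N$ large enough that the $\|\,\cdot\,\|_{\dot{N}^{\alpha}_{p,q}}$-term is uniformly smaller than $\varepsilon/2$, then, with this $N$ fixed, the $L^p$-Cauchy property forces the second term below $\varepsilon/2$ for large $n,m$. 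This yields the Cauchy property of the subsequence in $\dot{N}^{\beta}_{p,r}$, and hence in $N^{\beta}_{p,r}$.

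For compactness into $M^{\beta}_{p,r}$, rather than redoing the gradient construction against a mixed norm of type $L^p(\ell^r)$, I will bootstrap through the Besov--Triebel--Lizorkin coincidence at exponent $p$. Choose any $\beta'\in(\beta,\alpha)$. The preceding step already gives compactness of $N^{\alpha}_{p,q}(X,d,\mu)\hookrightarrow N^{\beta'}_{p,p}(X,d,\mu)$. By Proposition~\ref{embs}$iv)$ one has $N^{\beta'}_{p,p}=M^{\beta'}_{p,p}$ with identical semi-norms, so this is the same as a compact embedding into $M^{\beta'}_{p,p}(X,d,\mu)$. Then Proposition~\ref{embs}$ii)$ followed by Proposition~\ref{embs}$vi)$ (applied with smoothness parameter $\beta$ and gap $\sigma:=\beta'-\beta>0$) supplies the continuous chain
$$
M^{\beta'}_{p,p}(X,d,\mu)\hookrightarrow M^{\beta'}_{p,\infty}(X,d,\mu)\hookrightarrow M^{\beta}_{p,r}(X,d,\mu),
$$
and the composition of a compact map with continuous maps is compact, completing $(1)\Rightarrow(2)$. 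The main obstacle is the verification of the key inequality: checking that the piecewise $\overrightarrow{h}$ is a legitimate $\beta$-gradient on every annulus (including the transition at $k=N$) and organizing the two geometric sums uniformly in $r\in(0,\infty]$; the coincidence $N^{\beta'}_{p,p}=M^{\beta'}_{p,p}$ then cleanly absorbs the passage to Triebel--Lizorkin targets and avoids any Minkowski-type comparison between $\ell^r(L^p)$ and $L^p(\ell^r)$.
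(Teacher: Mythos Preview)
Your proposal is correct and uses the same core device as the paper: the piecewise $\beta$-fractional gradient that equals $2^{-k(\alpha-\beta)}g_k$ for large $k$ and $2^{(k+1)\beta}|u|$ for small $k$, together with a two-scale (choose $N$, then choose $n,m$) Cauchy argument. This is exactly the content of Lemma~\ref{Lem_impr_conv}.

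There is one organizational difference worth noting. The paper proves convergence only in the \emph{same scale with the same secondary index}, i.e.\ from boundedness in $N^{\alpha}_{p,q}$ it shows convergence in $N^{\beta}_{\tilde p,q}$ via a direct $\ell^q(L^p)$ computation, and then defers both the passage to arbitrary $r$ and the crossing to $M^{\beta}_{p,r}$ to ``in light of Proposition~\ref{embs}''. You instead obtain the $N^{\beta}_{p,r}$ bound for all $r$ in one step (by invoking $\|g_k\|_{L^p}\le\|\overrightarrow{g}\|_{\ell^q(L^p)}$ before summing the geometric series), and then make the type-crossing completely explicit via the diagonal identification $N^{\beta'}_{p,p}=M^{\beta'}_{p,p}$ of Proposition~\ref{embs}\,$iv)$ followed by $ii)$ and $vi)$. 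This buys you a transparent justification of precisely the step the paper leaves implicit, and it avoids any direct comparison between $\ell^r(L^p)$ and $L^p(\ell^r)$ norms. Conversely, the paper's single computation simultaneously handles the Triebel--Lizorkin source case of Theorem~\ref{Triebel_comp} with no extra work.
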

\begin{proof}[Proof of Theorems~\ref{Triebel_comp} and \ref{Bes_comp}]
Implications ${\it (2)}\implies {\it (3)}, {\it (3)}\implies {\it (1)}$ are trivial and the implication ${\it (1)} \implies {\it (2)}$ follows from the following lemma.


\begin{lem} \label{Lem_impr_conv}
Suppose that $\{u_{n}\}_{n\in \mathbb{N}}$ is a sequence which is bounded in $M^{\alpha}_{p,q}(X,d,\mu)$ or $N^{\alpha}_{p,q}(X,d,\mu)$ and is convergent in $L^{\tilde{p}}(X,\mu)$ for some $\tilde{p}\in(0,p]$. Then the following two statements are valid for all $\beta \in (0,\alpha)$, $r\in (0,\infty]$.
\begin{enumerate}[(i)]
\item If $\tilde{p}=p$ then $\{u_{n}\}_{n\in \mathbb{N}}$ converges in 
$M^{\beta}_{p,r}(X,d,\mu)$ and $N^{\beta}_{p,r}(X,d,\mu)$.
\item If $\tilde{p}<p$ and $\mu(X)<\infty$ then $\{u_{n}\}_{n\in \mathbb{N}}$ converges in 
$M^{\beta}_{\tilde{p},r}(X,d,\mu)$ and $N^{\beta}_{\tilde{p},r}(X,d,\mu)$.
\end{enumerate} 
\end{lem}
\begin{proof}
Fix $\beta \in (0,\alpha)$ and $r\in (0,\infty]$, and let $A\in \{M,N\}$. Since $\{u_{n}\}_{n\in\mathbb{N}}$ is bounded in $A^{\alpha}_{p,q}(X,d,\mu)$, by assumption, we have that
$C:=\sup_{n\in \mathbb{N}} \|u_n\|_{A^{\alpha}_{p,q}(X,d,\mu)}<\infty$. In light of Proposition~\ref{embs} it suffices to prove the conclusions of this lemma for spaces of the same type with $r=q$. Given that Haj\l{}asz--Besov and Haj\l{}asz--Triebel--Lizorkin spaces are always complete, we will show that for any $\beta \in (0,\alpha)$, the sequence $\{u_{n}\}_{n\in\mathbb{N}}$ is Cauchy in $A^{\beta}_{\tilde{p},q}(X,d,\mu)$.
Fix $\beta \in (0,\alpha)$ and note that for every $n\in\mathbb{N}$, we can find $\overrightarrow{g_{n}}:=\{g_{n,k}\}_{k\in\mathbb{Z}}\in \mathbb{D}^{\alpha}_{d}(u_{n})$ such that
\begin{align*}
\|\overrightarrow{g_{n}}\| \leq \|u_{n}\|_{{A}^{\alpha}_{p,q}(X,d,\mu)} \leq C,
\end{align*}
where $\| \cdot \|$ is $L^p(l^q)$ norm if $A=M$ or $l^q(L^p)$ norm if $A=N$ (see  Subsection~\ref{subsect:fnctspaces} for definitions of these norms). By the definition of $\alpha$-fractional gradient, for each $n\in\mathbb{N}$, we can find a measurable set $E_n \subseteq X$  such that $\mu(E_n) = 0$ and
\begin{align*}
|u_n(x) - u_n(y)| \leq [d(x,y)]^{\alpha}\big(g_{n,k}(x) + g_{n,k}(y)\big),
\end{align*}
for all $k\in \mathbb{Z}$ and $x,y\in X\setminus  E_n$ satisfying $2^{-k-1}\leq d(x,y)<2^{-k}$.

In order to prove that $\{u_{n}\}_{n\in\mathbb{N}}$ is Cauchy in $A^{\beta}_{\tilde{p},q}(X,d,\mu)$, we will construct a particular family of $\beta$-fractional gradients for the functions $u_n-u_m$, with $n,m\in\mathbb{N}$. To this end, we let $E:= \bigcup_{n\in \mathbb{N}}E_{n}$ and define $\overrightarrow{g_{n,m}}:=\{g_{n,m,k}\}_{k\in\mathbb{Z}}:=\overrightarrow{g_{n}} + \overrightarrow{g_{m}}\in \mathbb{D}^{\alpha}_{d}(u_{n}-u_{m})$. Note that
\begin{align*}
\|\overrightarrow{g_{n,m}}\| \leq C \kappa(p) \kappa(q),
\end{align*}
where $\kappa:(0,\infty] \to [1,\infty)$ $\kappa(t) := 1$ for $t\in[1,\infty]$ and $\kappa(t) := 2^{1/t - 1}$ for $t\in (0,1)$.
We fix $\varepsilon \in (0,\infty)$ and choose $K\in \mathbb{Z}$ such that $2^{-K}\leq \varepsilon < 2^{-K+1}$. 

We claim that for all $n,m\in\mathbb{N}$, the sequence $\overrightarrow{h_{n,m}}:=\{h_{n,m,k}\}_{k\in\mathbb{Z}}$, which is by
\begin{center}
$h_{n,m,k}(x) := \begin{cases}
2^{\beta}2^{k\beta}|u_{n}(x) - u_{m}(x)| &\mbox{ if } k<K, \\
\varepsilon^{\alpha-\beta} g_{n,m,k}(x) &\mbox{ if } k\geq K,
\end{cases}$
\end{center}
is an $\alpha$-fractional gradient of the function $u_n-u_m$. Fix $n,m\in\mathbb{N}$ and $k\in\mathbb{Z}$, and suppose that $x,y \in X\setminus E$ satisfy $2^{-k-1} \leq d(x,y) < 2^{-k}$.  If $k\geq K$ then we have
\begin{align*}
|(u_{n} - u_{m})(x) - (u_{n} - u_{m})(y)| &\leq [d(x,y)]^{\alpha}\big(g_{n,m,k}(x) + g_{n,m,k}(y)\big) \\
&\leq [d(x,y)]^{\beta}\big( \varepsilon^{\alpha-\beta} g_{n,m,k}(x) + \varepsilon^{\alpha-\beta}g_{n,m,k}(y)\big).
\end{align*}
On the other hand, if $k<K$ then
\begin{align*}
|(u_{n} - u_{m})(x) - (u_{n} - u_{m})(y)| &\leq [d(x,y)]^{\beta}\big(2^{\beta}2^{k\beta}|(u_{n} - u_{m})(x)| + 2^{\beta}2^{k\beta}|(u_{n} - u_{m})(y)|\big).
\end{align*}
Thus, $\overrightarrow{h_{n,m}} \in \mathbb{D}^{\beta}_{d}(u_{n}-u_{m})$, as wanted.

We now estimate the $L^p(l^q)$ and $l^q(L^p)$ norms of $\overrightarrow{h_{n,m}}$.

\noindent{\tt 1st case:} $A=M$

We start by estimating the $l^{q}(\mathbb{Z})$ norm of $ \{h_{n,m,k}\}_{k\in \mathbb{Z}}$. If $q=\infty$, then for $x\in X\setminus E$
\begin{align*}
\|\{h_{n,m,k}(x) \}_{k\in \mathbb{Z}}\|_{l^{\infty}(\mathbb{Z})} \leq  \frac{2^{\beta}}{\varepsilon^{\beta}}|u_{n}(x) - u_{m}(x)| + \varepsilon^{\alpha-\beta} \|\{g_{n,m,k}(x) \}_{k\in \mathbb{Z}}\|_{l^{\infty}(\mathbb{Z})}.
\end{align*}
Similarly, if $q\in(0,\infty)$ and $x\in X\setminus E$ we have
\begin{align*}
\|\{h_{n,m,k}(x) \}_{k\in \mathbb{Z}}\|_{l^{q}(\mathbb{Z})}^{q} &= \sum_{k = -\infty}^{K-1} 2^{q\beta}2^{k\beta q}|u_{n}(x) - u_{m}(x)|^{q} +  \sum_{k = K}^{\infty}\varepsilon^{(\alpha-\beta)q} g_{n,m,k}^q(x) \\
&\leq  \frac{2^{q\beta}/\varepsilon^{\beta q}}{1-2^{-\beta q}} |u_{n}(x) - u_{m}(x)|^{q} + \varepsilon^{(\alpha-\beta)q} \|\{g_{n,m,k}(x) \}_{k\in \mathbb{Z}}\|_{l^{q}(\mathbb{Z})}^{q}.
\end{align*}
Thus, for all $q\in(0,\infty]$ we have
\begin{align*}
\|\{h_{n,m,k}(x) \}_{k\in \mathbb{Z}}\|_{l^{q}(\mathbb{Z})} \leq \gamma(\beta,q)\kappa(q) \frac{2^{\beta}}{\varepsilon^{\beta}}|u_{n}(x) - u_{m}(x)| + \kappa(q)\varepsilon^{\alpha-\beta} \|\{g_{n,m,k}(x) \}_{k\in \mathbb{Z}}\|_{l^{q}(\mathbb{Z})}.
\end{align*}
where 
\begin{eqnarray}
\gamma(\beta,q) :=
\left\{
\begin{array}{cl}
\frac{1}{(1-2^{-\beta q})^{1/q}} &\mbox{ if } q\in(0,\infty), \\
1 &\mbox{ if } q = \infty.
\end{array}\right.
\end{eqnarray}
Now, taking $L^{\tilde{p}}(X,\mu)$ norm and using the H\"older inequality leads to
\begin{align*}
\|u_{n} - u_{m}\|_{\dot{M}^{\beta}_{{\tilde{p}},q}(X,d,\mu)} &\leq \gamma(\beta,q)\kappa(q)\kappa(\tilde{p}) \frac{2^{\beta}}{\varepsilon^{\beta}}\|u_{n} - u_{m} \|_{L^{\tilde{p}}(X,\mu)} + \kappa(q) \kappa(\tilde{p})\varepsilon^{\alpha-\beta}\|\overrightarrow{g_{n,m}}\|_{L^{\tilde{p}}(X;\ell^{q}(\mathbb{Z}))}\\
&\leq \gamma(\beta,q)\kappa(q)\kappa(\tilde{p}) \frac{2^{\beta}}{\varepsilon^{\beta}}\|u_{n} - u_{m} \|_{L^{\tilde{p}}(X,\mu)}
+\kappa(q)^2 \kappa(p)\kappa(\tilde{p})\lambda(\tilde{p})\varepsilon^{\alpha-\beta}C,
\end{align*}
where 
\begin{eqnarray}
\lambda(\tilde{p}) :=
\left\{
\begin{array}{cl}
1 &\mbox{ if } \tilde{p}=p, \\
\mu(X)^{\frac{1}{\tilde{p}}-\frac{1}{p}} &\mbox{ if } \tilde{p}<p\mbox{ and }\mu(X)<\infty.
\end{array}\right.
\end{eqnarray}
This claim in the case that $A=M$  now follows.

\noindent{\tt 2nd case:} $A=N$. 

We have
\begin{center}
$\|h_{n,m,k}\|_{L^{\tilde{p}}(X,\mu)} = \begin{cases}
2^{\beta}2^{k\beta}\|u_{n} - u_{m} \|_{L^{\tilde{p}}(X,\mu)} &\mbox{ if } k<K, \\
\varepsilon^{\alpha-\beta} \|g_{n,m,k}\|_{L^{\tilde{p}}(X,\mu)} &\mbox{ if } k\geq K.
\end{cases}$
\end{center}
Now if $q=\infty$, then
\begin{align*}
\|\{\|h_{n,m,k}\|_{L^{\tilde{p}}(X,\mu)}\}_{k\in \mathbb{Z}}\|_{l^{\infty}(\mathbb{Z})} \leq \frac{2^{\beta}}{\varepsilon^{\beta}}\|u_{n} - u_{m}\|_{L^{\tilde{p}}(X,\mu)} + \varepsilon^{\alpha-\beta}  \|\{\|g_{n,m,k}\|_{L^{\tilde{p}}(X,\mu)}\}_{k\in \mathbb{Z}}\|_{l^{\infty}(\mathbb{Z})}
\end{align*}
And if $q\in (0,\infty)$
\begin{align*}
\|\{\|h_{n,m,k}\|_{L^p(X,\mu)}\}_{k\in \mathbb{Z}}\|_{l^{q}(\mathbb{Z})}^q \leq \sum_{k = -\infty}^{K-1} 2^{q\beta}2^{k\beta q}\|u_{n} - u_{m}\|_{L^{\tilde{p}}(X,\mu)}^{q} +  \sum_{k = K}^{\infty}\varepsilon^{(\alpha-\beta)q} \|g_{n,m,k}\|_{L^{\tilde{p}}(X,\mu)}^q
\end{align*}
Therefore,  by using the H\"older inequality, we have
\begin{align*}
 \|u_{n} - u_{m}\|_{\dot{N}^{\beta}_{{\tilde{p}},q}(X,d,\mu)} \leq \gamma(\beta,q)\kappa(q) \frac{2^{\beta}}{\varepsilon^{\beta}}\|u_{n} - u_{m} \|_{L^{\tilde{p}}(X,\mu)} + \kappa(q)^2 \kappa(p)\lambda(\tilde{p})\varepsilon^{\alpha-\beta}C.
\end{align*}
This claim in the case that $A=N$  now follows and so the proof of Lemma~\ref{Lem_impr_conv} is now complete.
\end{proof}
This completes the proofs of Theorems~\ref{Triebel_comp} and \ref{Bes_comp}.
\end{proof}
\begin{cor}\label{beta_p/theta}
Let $(X,d,\mu)$ be a quasi-metric-measure space and $\nu$ be a measure such that $\nu<<\mu$ and $\frac{d\nu}{d\mu} \in L^{\theta'}(X,\mu)$ for some $\theta \in (1,\infty)$, where $\theta' = \theta/(\theta - 1)$. Then for any  $\alpha,p \in (0,\infty)$, $q,r\in (0,\infty]$, $\beta \in (0,\alpha)$  the embeddings 
\begin{align*}
M^{\alpha}_{p,q}(X,d,\mu) &\hookrightarrow M^{\beta}_{p/\theta,r}(X,d,\nu), \quad M^{\alpha}_{p,q}(X,d,\mu) \hookrightarrow N^{\beta}_{p/\theta,r}(X,d,\nu), \\ 
N^{\alpha}_{p,q}(X,d,\mu) &\hookrightarrow M^{\beta}_{p/\theta,r}(X,d,\nu),\quad N^{\alpha}_{p,q}(X,d,\mu) \hookrightarrow N^{\beta}_{p/\theta,r}(X,d,\nu)
\end{align*}
are compact.
\end{cor}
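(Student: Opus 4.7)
The plan is to reduce Corollary~\ref{beta_p/theta} to a combination of the compact embedding in Theorem~\ref{comp_p/theta} together with the convergence-improvement mechanism in Lemma~\ref{Lem_impr_conv}, applied in the new setting where the target measure is $\nu$ rather than $\mu$.

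As a preliminary step, I would verify that $L^{p}(X,\mu)\hookrightarrow L^{p/\theta}(X,\nu)$ is a continuous embedding. This is just H\"older's inequality with exponents $\theta$ and $\theta'$: for every measurable $f\colon X\to\mathbb{R}$,
\begin{equation*}
\int_{X} |f|^{p/\theta}\, d\nu \;=\; \int_{X} |f|^{p/\theta}\,\frac{d\nu}{d\mu}\, d\mu
\;\leq\; \|f\|_{L^{p}(X,\mu)}^{p/\theta}\,\Big\|\tfrac{d\nu}{d\mu}\Big\|_{L^{\theta'}(X,\mu)}.
\end{equation*}
By Lemma~\ref{Fakt} this continuous $L$-embedding upgrades to continuous embeddings $M^{\alpha}_{p,q}(X,d,\mu)\hookrightarrow M^{\alpha}_{p/\theta,q}(X,d,\nu)$ and $N^{\alpha}_{p,q}(X,d,\mu)\hookrightarrow N^{\alpha}_{p/\theta,q}(X,d,\nu)$, and in particular $\nu\ll\mu$, so every $\alpha$-gradient with respect to $\mu$ is also an $\alpha$-gradient with respect to $\nu$.

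Next, let $\{u_{n}\}_{n\in\mathbb{N}}$ be a bounded sequence in $M^{\alpha}_{p,q}(X,d,\mu)$ (the argument for $N^{\alpha}_{p,q}(X,d,\mu)$ is identical). Theorem~\ref{comp_p/theta} provides a subsequence, still denoted $\{u_{n}\}$, which converges in $L^{p/\theta}(X,\nu)$. By the preceding step this subsequence is also bounded in $M^{\alpha}_{p/\theta,q}(X,d,\nu)$.

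Finally, I would apply Lemma~\ref{Lem_impr_conv}(i) to $\{u_{n}\}$ viewed inside $M^{\alpha}_{p/\theta,q}(X,d,\nu)$, i.e.\ with $\nu$ in place of $\mu$ and both ``$p$'' and ``$\tilde{p}$'' of that lemma equal to $p/\theta$. The output is convergence of $\{u_{n}\}$ in $M^{\beta}_{p/\theta,r}(X,d,\nu)$ and $N^{\beta}_{p/\theta,r}(X,d,\nu)$ for every $\beta\in(0,\alpha)$ and $r\in(0,\infty]$, which is exactly the compactness asserted in the corollary. The only minor point requiring attention is that Lemma~\ref{Lem_impr_conv} is stated with a single fixed measure; however, an inspection of its proof shows that it uses only H\"older's inequality and basic measurability, so it extends verbatim to any nonnegative Borel measure on $(X,d)$ such as $\nu$. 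I do not expect any serious obstacle beyond this routine check.
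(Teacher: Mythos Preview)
Your proposal is correct and follows essentially the same route as the paper: use H\"older to get $L^{p}(X,\mu)\hookrightarrow L^{p/\theta}(X,\nu)$, invoke Lemma~\ref{Fakt} to transfer boundedness to $A^{\alpha}_{p/\theta,q}(X,d,\nu)$, extract a subsequence convergent in $L^{p/\theta}(X,\nu)$ via Theorem~\ref{comp_p/theta}, and then apply Lemma~\ref{Lem_impr_conv}(i) with $\nu$ in the role of the ambient measure. Your explicit remark that Lemma~\ref{Lem_impr_conv} is stated for a single measure but applies verbatim with $\nu$ is a fair observation; the paper makes the same tacit substitution.
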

\begin{proof}

Let $A\in\{M,N \}$ and let $\{u_{n}\}_{n\in\mathbb{N}}$ be a bounded sequence in $A^{\alpha}_{p,q}(X,d,\mu)$. By Theorem \ref{comp_p/theta} there exists a subsequence $\{u_{n_{j}}\}_{j\in\mathbb{N}}$ converging in $L^{p/\theta}(X,\nu)$. Since $\tfrac{d\nu}{d\mu} \in L^{\theta'}(X,\mu)$,  the H\"older inequality yields
\begin{align*}
L^p(X,\mu) \hookrightarrow L^{p/\theta}(X,\nu).
\end{align*}
Therefore, by Lemma~\ref{Fakt}, sequence $\{u_{n_{j}}\}_{j\in\mathbb{N}}$ is bounded in $A^{\alpha}_{p/\theta,q}(X,d,\nu)$. Thus, by Lemma~\ref{Lem_impr_conv} the sequence 
$\{u_{n_{j}}\}_{j\in\mathbb{N}}$ is converging in $M^{\beta}_{p/\theta,r}(X,d,\nu)$ and $N^{\beta}_{p/\theta,r}(X,d,\nu)$.
\end{proof}

Proceeding similarly to the proof of the previous corollary, we obtain.
\begin{cor}\label{cor1}
Let $(X,d,\mu)$ be a quasi-metric-measure space and $\nu$ be a measure such that $\nu \leq C \mu$ for some $C \in (0, \infty)$ satisfying the conditions listed in Theorem~\ref{drugie}. Then for any $\alpha,p \in (0,\infty)$,  $q,r\in (0,\infty]$ and $\beta\in (0,\alpha)$ the embeddings are compact:
\begin{align*}
M^{\alpha}_{p,q}(X,d,\mu) \hookrightarrow M^{\beta}_{p,r}(X,d,\nu) \mbox{ and } M^{\alpha}_{p,q}(X,d,\mu) \hookrightarrow N^{\beta}_{p,r}(X,d,\nu) \\ 
N^{\alpha}_{p,q}(X,d,\mu) \hookrightarrow M^{\beta}_{p,r}(X,d,\nu) \mbox{ and } N^{\alpha}_{p,q}(X,d,\mu) \hookrightarrow N^{\beta}_{p,r}(X,d,\nu).
\end{align*}
In particular, the above embeddings are compact whenever $(X, d)$ is totally bounded.

On the other hand, if $\mu =\nu$ and if there exists $\delta_0\in(0,\infty)$ such that
$\mu$ is $(C_d,\delta)$-doubling for every $\delta\in(0,\delta_0]$, where $C_d\in[1,\infty)$ is as in \eqref{C-d}, and any one of the above embeddings is compact  for some $p,\beta\in (0,\infty)$,  $q\in (0,\infty]$, and  $\alpha\in (0,\infty)$ with $\alpha\preceq_q{\rm ind}(X,d)$, then $(X,d)$ is totally bounded.
\end{cor}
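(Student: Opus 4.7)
The corollary splits into two independent directions: a sufficiency part producing compactness of the four embeddings under the hypotheses of Theorem~\ref{drugie}, and a necessity part extracting total boundedness of $(X,d)$ from compactness of any one such embedding when $\mu=\nu$ and $\mu$ is $(C_d,\delta)$-doubling. My plan is to derive both directions from machinery already developed in the paper: Theorem~\ref{drugie}, Lemma~\ref{Fakt}, Lemma~\ref{Lem_impr_conv}, Theorem~\ref{czwarte}, and the trivial continuous inclusion of the inhomogeneous Besov and Triebel--Lizorkin spaces into $L^p$. The argument structurally mirrors the treatment of Corollary~\ref{beta_p/theta}.

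For sufficiency, let $A\in\{M,N\}$ and fix a bounded sequence $\{u_n\}_{n\in\mathbb{N}}\subseteq A^{\alpha}_{p,q}(X,d,\mu)$. Since the hypotheses of Theorem~\ref{drugie} are assumed, that theorem extracts a subsequence $\{u_{n_j}\}$ converging in $L^p(X,\nu)$. Because $\nu\leq C\mu$, the H\"older-type bound $\|f\|_{L^p(X,\nu)}\leq C^{1/p}\|f\|_{L^p(X,\mu)}$ gives a continuous embedding $L^p(X,\mu)\hookrightarrow L^p(X,\nu)$, so Lemma~\ref{Fakt} promotes the $A^{\alpha}_{p,q}(X,d,\mu)$-boundedness of $\{u_{n_j}\}$ to boundedness in $A^{\alpha}_{p,q}(X,d,\nu)$. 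Applying Lemma~\ref{Lem_impr_conv}(i) in the measure space $(X,d,\nu)$ with $\tilde{p}=p$ then upgrades the $L^p(X,\nu)$-convergence of $\{u_{n_j}\}$ to convergence in both $M^{\beta}_{p,r}(X,d,\nu)$ and $N^{\beta}_{p,r}(X,d,\nu)$ for every $\beta\in(0,\alpha)$ and $r\in(0,\infty]$. This handles all four embeddings simultaneously. The ``in particular'' clause is immediate: if $(X,d)$ is totally bounded then Proposition~\ref{TB-int} shows that $\mu$ is integrable, and since $\nu\leq C\mu$ the argument in the proof of Corollary~\ref{remarkoint} verifies the conditions (i) and (ii) of Theorem~\ref{drugie}.

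For necessity, assume $\mu=\nu$, that $\mu$ is $(C_d,\delta)$-doubling for all $\delta\in(0,\delta_0]$, and that at least one of the four embeddings $A^{\alpha}_{p,q}(X,d,\mu)\hookrightarrow B^{\beta}_{p,r}(X,d,\mu)$ is compact, with $\alpha\preceq_q{\rm ind}(X,d)$. The definition of the inhomogeneous (quasi-)norm built into $B^{\beta}_{p,r}$ gives the pointwise bound $\|u\|_{L^p(X,\mu)}\leq\|u\|_{B^{\beta}_{p,r}(X,d,\mu)}$, hence a continuous inclusion $B^{\beta}_{p,r}(X,d,\mu)\hookrightarrow L^p(X,\mu)$. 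Composing this continuous inclusion with the assumed compact embedding produces a compact embedding $A^{\alpha}_{p,q}(X,d,\mu)\hookrightarrow L^p(X,\mu)$. Since the $(C_d,\delta)$-doubling assumption on $\mu$ and the condition $\alpha\preceq_q{\rm ind}(X,d)$ are exactly the hypotheses of Theorem~\ref{czwarte}, that theorem concludes that $(X,d)$ is totally bounded.

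There is no substantive obstacle here: the corollary is essentially a packaging of results already established in Sections~\ref{sect:prelim}--\ref{sect:cptembedd}. The only point that requires a little care is that in the sufficiency argument Lemma~\ref{Lem_impr_conv} must be applied in the measure space $(X,d,\nu)$ rather than $(X,d,\mu)$, so the Lemma~\ref{Fakt} transfer (made possible precisely by the assumption $\nu\leq C\mu$) needs to be performed before the bootstrap step.
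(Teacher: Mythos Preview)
Your proposal is correct and follows essentially the same route as the paper: the paper's proof simply says ``Proceeding similarly to the proof of the previous corollary,'' and your sufficiency argument (Theorem~\ref{drugie} $\Rightarrow$ convergent subsequence in $L^p(X,\nu)$, then Lemma~\ref{Fakt} to transfer boundedness to $A^{\alpha}_{p,q}(X,d,\nu)$, then Lemma~\ref{Lem_impr_conv}(i)) is exactly that adaptation of Corollary~\ref{beta_p/theta}. Your handling of the ``in particular'' clause via Proposition~\ref{TB-int}/Corollary~\ref{remarkoint} and of the necessity part via composition with the trivial inclusion $B^{\beta}_{p,r}\hookrightarrow L^p$ followed by Theorem~\ref{czwarte} is also the intended (and only natural) argument.
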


\begin{cor} \label{beta}
Let $(X,d,\mu)$ be a quasi-metric-measure space and $\nu$ be a finite measure such that $\nu \leq C \mu$ for some constant $C\in (0,\infty)$. Then for any $\alpha,p \in (0,\infty)$, $q,r\in (0,\infty]$, $\beta\in (0,\alpha)$ and $\tilde{p} \in (0,p)$, the embeddings
\begin{align*}
M^{\alpha}_{p,q}(X,d,\mu) &\hookrightarrow M^{\beta}_{\tilde{p},r}(X,d,\nu), \quad M^{\alpha}_{p,q}(X,d,\mu) \hookrightarrow N^{\beta}_{\tilde{p},r}(X,d,\nu), \\ 
N^{\alpha}_{p,q}(X,d,\mu) &\hookrightarrow M^{\beta}_{\tilde{p},r}(X,d,\nu), \quad N^{\alpha}_{p,q}(X,d,\mu) \hookrightarrow N^{\beta}_{\tilde{p},r}(X,d,\nu)
\end{align*} 
are compact. In particular, if  $\mu(X)<\infty$, then the embeddings
\begin{align*}
M^{\alpha}_{p,q}(X,d,\mu) &\hookrightarrow M^{\beta}_{\tilde{p},r}(X,d,\mu), \quad M^{\alpha}_{p,q}(X,d,\mu) \hookrightarrow N^{\beta}_{\tilde{p},r}(X,d,\mu), \\ 
N^{\alpha}_{p,q}(X,d,\mu) &\hookrightarrow M^{\beta}_{\tilde{p},r}(X,d,\mu), \quad N^{\alpha}_{p,q}(X,d,\mu) \hookrightarrow N^{\beta}_{\tilde{p},r}(X,d,\mu)
\end{align*} 
are compact.

On the other hand,  if $\mu=\nu$ and if any one of the above embeddings is continuous for some $p,\beta\in (0,\infty)$, $\tilde{p}\in(0,p)$, $q\in (0,\infty]$, and  $\alpha\in (0,\infty)$ with $\alpha\preceq_q{\rm ind}(X,d)$, then  $\mu(X)<\infty$.
\end{cor}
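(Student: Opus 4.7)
The plan is to derive both halves of Corollary~\ref{beta} directly from the technology already built up: the $L^{\tilde p}(\nu)$-compactness criterion in Corollary~\ref{Idontknow}, the smoothness self-improvement lemma (Lemma~\ref{Lem_impr_conv}), and the necessity half of Theorem~\ref{drugie_plus}. First, for sufficiency, I will observe that the hypotheses $\nu\leq C\mu$ and $\nu(X)<\infty$ imply $d\nu/d\mu\in L^\infty(X,\mu)\cap L^1(X,\mu)$ and hence $d\nu/d\mu\in L^{\theta'}(X,\mu)$ for every $\theta'\in[1,\infty]$. Applying the second (stronger) conclusion of Corollary~\ref{Idontknow} thus yields compactness of the anchor embeddings
\[
M^{\alpha}_{p,q}(X,d,\mu)\hookrightarrow L^{\tilde p}(X,\nu)\quad\text{and}\quad N^{\alpha}_{p,q}(X,d,\mu)\hookrightarrow L^{\tilde p}(X,\nu)
\]
for every $\tilde p\in(0,p)$.

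Next, given a bounded sequence $\{u_n\}_{n\in\mathbb N}$ in $A^{\alpha}_{p,q}(X,d,\mu)$ with $A\in\{M,N\}$, I will use the first step to extract a subsequence that converges in $L^{\tilde p}(X,\nu)$. Since $\nu\leq C\mu$, a direct Hölder argument gives the continuous embedding $L^p(X,\mu)\hookrightarrow L^{\tilde p}(X,\nu)$, so by Lemma~\ref{Fakt} the original sequence is bounded in $A^{\alpha}_{\tilde p,q}(X,d,\nu)$. Now I apply Lemma~\ref{Lem_impr_conv}(i) \emph{to the quasi-metric-measure space $(X,d,\nu)$}, with the role of $p$ there played by our $\tilde p$: the subsequence is bounded in $A^{\alpha}_{\tilde p,q}(X,d,\nu)$ and converges in $L^{\tilde p}(X,\nu)$, hence it converges in both $M^{\beta}_{\tilde p,r}(X,d,\nu)$ and $N^{\beta}_{\tilde p,r}(X,d,\nu)$ for every $\beta\in(0,\alpha)$ and every $r\in(0,\infty]$. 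This delivers all four compact embeddings. The ``in particular'' case is then immediate: when $\mu(X)<\infty$, one may take $\nu:=\mu$ (which satisfies $\nu\leq 1\cdot\mu$ and is finite).

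For the necessity half, suppose $\mu=\nu$ and that one of the embeddings, say $A^{\alpha}_{p,q}(X,d,\mu)\hookrightarrow B^{\beta}_{\tilde p,r}(X,d,\mu)$ with $B\in\{M,N\}$, is continuous, for some $\alpha,\beta,p,\tilde p$ with $\tilde p<p$ and $\alpha\preceq_q\mathrm{ind}(X,d)$. The definition of the inhomogeneous (quasi-)norm of $B^{\beta}_{\tilde p,r}$ contains $\|\cdot\|_{L^{\tilde p}(X,\mu)}$ as a summand, so the trivial embedding $B^{\beta}_{\tilde p,r}(X,d,\mu)\hookrightarrow L^{\tilde p}(X,\mu)$ is continuous. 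Composing, we obtain a continuous embedding $A^{\alpha}_{p,q}(X,d,\mu)\hookrightarrow L^{\tilde p}(X,\mu)$ with $\tilde p<p$ and $\alpha\preceq_q\mathrm{ind}(X,d)$. The second assertion of Theorem~\ref{drugie_plus} then forces $\mu(X)<\infty$.

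There is no substantive obstacle here beyond bookkeeping: the entire corollary is a packaging of Corollary~\ref{Idontknow}, Lemmas~\ref{Fakt} and \ref{Lem_impr_conv}, and Theorem~\ref{drugie_plus}. The only subtlety is being careful to apply Lemma~\ref{Lem_impr_conv} on the space $(X,d,\nu)$ (rather than $(X,d,\mu)$), so that the $L^{\tilde p}(X,\nu)$ convergence coming out of Corollary~\ref{Idontknow} matches the hypothesis of Lemma~\ref{Lem_impr_conv}; this in turn is what requires the preliminary application of Lemma~\ref{Fakt} to transfer $A^{\alpha}_{p,q}(X,d,\mu)$-boundedness to $A^{\alpha}_{\tilde p,q}(X,d,\nu)$-boundedness.
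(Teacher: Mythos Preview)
Your proof is correct and follows essentially the same route as the paper. The paper's proof simply cites Corollary~\ref{beta_p/theta} (choosing $\theta=p/\tilde p$) for the compactness part and Theorem~\ref{drugie_plus} for the necessity; since the proof of Corollary~\ref{beta_p/theta} is precisely the combination of Theorem~\ref{comp_p/theta}/Corollary~\ref{Idontknow}, Lemma~\ref{Fakt}, and Lemma~\ref{Lem_impr_conv}(i) applied on $(X,d,\nu)$, you have just unfolded that intermediate corollary inline.
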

\begin{proof}
Since $\nu(X)<\infty$ and $\nu\leq C\mu$ we have $\frac{d\nu}{d\mu} \in L^{\theta'}(X,\mu)$ for every $\theta \in [1,\infty)$, where $\theta'=\theta/(\theta-1)$. Therefore, by Corollary~\ref{beta_p/theta} the proof of compactness follows. The second part of the corollary follows from Theorem \ref{drugie_plus}.
\end{proof}
\medskip

{\bf Funding} 
 PG and AS have been supported by (POB Cybersecurity and data analysis) of Warsaw University of Technology within the Excellence Initiative: Research University (IDUB) programme.
\medskip

{\bf Conflict of interest} All authors declare that they have no Conflict of interest.
\medskip

{\bf  Ethical approval} This article does not contain any studies with human participants or animals performed by
the authors.
\medskip

{\bf Data Availability Statement} Data sharing not applicable to this article as no datasets were generated or
analyzed during the current study.

\bigskip
{\small Ryan Alvarado}\\
\small{Department of Mathematics and Statistics,}\\
\small{Amherst College,}\\
\small{Amherst, MA, USA} \\
{\tt rjalvarado@amherst.edu}\\
\\
{\small Przemys{\l}aw  G\'orka}\\
\small{Faculty of Mathematics and Information Sciences,}\\
\small{Warsaw University of Technology,}\\
\small{Pl. Politechniki 1, 00-661 Warsaw, Poland} \\
{\tt przemyslaw.gorka@pw.edu.pl}\\
\\
{\small Artur S{\l}abuszewski}\\
\small{Faculty Mathematics and Information Sciences,}\\
\small{Warsaw University of Technology,}\\
\small{Pl. Politechniki 1, 00-661 Warsaw, Poland} \\
{\tt artur.slabuszewski@pw.edu.pl}\\

\begin{thebibliography}{99}
\bibitem{ad}
{\sc R.A. Adams}, Sobolev Spaces,
Pure and Applied Mathematics  65, Academic Press, New York-London, 1975.


\bibitem{agh20} {\sc R. Alvarado, P. G\'orka, P. Haj\l{}asz},
Sobolev embedding for $M^{1,p}$ spaces is equivalent
to a lower bound of the measure, J. Funct.
Anal. 279 (2020), 108628, 39 pp.


\bibitem{AGS-reg} {\sc R. Alvarado, P. G\'orka, A. S\l{}abuszewski}, Borel regularity is equivalent to Lusin's theorem,
(\textit{preprint})

\bibitem{AM15}  {\sc R. Alvarado, M. Mitrea}, 
Hardy Spaces on Ahlfors-Regular Quasi Metric Spaces, Lecture Notes in Mathematics 2142, Springer, New York, 2015.


\bibitem{AMM13} {\sc R. Alvarado, I. Mitrea, M. Mitrea},  
Whitney-type extensions in quasi-metric spaces, Commun. Pure Appl. Anal. 12 (2013),  59-88.


\bibitem{AWYY21}  {\sc R. Alvarado, F. Wang, D. Yang, W. Yuan},
Pointwise characterization of Besov and Triebel--Lizorkin spaces
on spaces of homogeneous type,  Studia Math. 268 (2023), no. 2, 121–166.


\bibitem{AYY22} {\sc R.Alvarado, D. Yang, W. Yuan}, 
A measure characterization of embedding and extension domains for Sobolev, Triebel-Lizorkin, and Besov spaces on spaces of homogeneous type,
J. Funct. Anal. 283 (2022), no. 12, Paper No. 109687, 71 pp.

\bibitem{AYY21} {\sc R. Alvarado, D.Yang, W.Yuan},
Optimal embeddings for Triebel--Lizorkin and Besov spaces
on quasi-metric measure spaces,   Math. Z. 307, 50 (2024).

\bibitem{BG} {\sc R. Bandaliyev, P. G\'{o}rka}, Relatively compact sets in variable-exponent Lebesgue spaces, Banach J. Math. Anal. (2018), no. 2, 331--346.

\bibitem{BK} {\sc J. Bj\"orn, A. Ka\l{}amajska}, Poincar\'e inequalities and compact embeddings from Sobolev type spaces into weighted $L^q$ spaces on metric spaces, J. Funct. Anal. 282 (2022), no. 11, Paper No. 109421, 47 pp.

\bibitem{DG} {\sc M. Dybowski, P. G\'{o}rka}, The Axiom of Choice in metric measure spaces and maximal $\delta$-separated sets, Archive for Mathematical Logic 62 (2023), 735–749.

\bibitem{GGP} {\sc M. Gaczkowski, P. G\'{o}rka, D. Pons},
Symmetry and compact embeddings for critical exponents in metric-measure spaces, J. Differential Equations 269 (2020), no.11, 9819–9837.


\bibitem{GKZ13} {\sc A. Gogatishvili, P. Koskela,Y. Zhou},
Characterizations of Besov and Triebel--Lizorkin
spaces on metric measure spaces,
Forum Math. 25 (2013),   787-819.

\bibitem{G} {\sc P. G\'{o}rka},  Separability of a metric space is equivalent to the existence of a Borel measure, The American Mathematical Monthly, 128(1):84–86, 2020.


\bibitem{G18} {\sc P. G\'{o}rka} Looking for compactness in Sobolev spaces on noncompact metric spaces, Ann. Acad. Sci. Fenn. Math.43 (2018), no.1, 531–540.

\bibitem{G17} {\sc P. G\'{o}rka}, In metric-measure spaces Sobolev embedding
is equivalent to a lower bound for the measure, Potential Anal. 47 (2017), 13-19.


\bibitem{GK} {\sc P. G\'{o}rka, T. Kostrzewa}, Sobolev spaces on metrizable groups, Ann. Acad. Sci. Fenn. Math. 40
(2015) 837–849.

\bibitem{GS23} {\sc P. G\'{o}rka,  A. S\l{}abuszewski}, Embedding of fractional Sobolev spaces is equivalent to regularity of the measure, Studia Math. 268 (2023), 333-343.

\bibitem{GS}   {\sc P. G\'{o}rka,  A. S\l{}abuszewski}, Embeddings of the fractional Sobolev spaces on metric-measure spaces,  Nonlinear Anal. 221 (2022), Paper No. 112867, 23 pp.


\bibitem{Hajlasz2} {\sc P. Haj{\l}asz}, Sobolev spaces on metric-measure spaces,
In: Heat Kernels and Analysis on Manifolds, Graphs, and Metric Spaces (Paris, 2002),  pp. 173-218,
Contemp. Math.  338, Amer. Math. Soc., Providence, RI, 2003.

\bibitem{Hajlasz}  {\sc P. Haj{\l}asz}, Sobolev spaces on an arbitrary metric space, Potential Anal.  5 (1996), no. 4, 403--415.

\bibitem{H}  {\sc P. Haj{\l}asz}, A new characterization of the Sobolev space. 
Dedicated to Professor Aleksander Pe\l{}czy\'nski on the occasion of his 70th birthday (Polish).
Studia Math. 159 (2003), no. 2, 263--275.

\bibitem{HK} {\sc P. Haj\l{}asz, P. Koskela},
Sobolev met Poincar\'e. Mem.\ Amer.\ Math.\ Soc. 145 (2000), no. 688.

\bibitem{hajlaszkt1}
{\sc P. Haj\l{}asz, P. Koskela and H. Tuominen},
Sobolev embeddings, extensions and measure density condition,
J. Funct. Anal. 254 (2008), 1217--1234.

\bibitem{hhhpl21} {\sc Y. Han, Y. Han, Z. He, C. Pereyra, J. Li},
Geometric characterization of embedding theorems -- for Sobolev,
Besov, and Triebel--Lizorkin spaces on spaces
of homogeneous type -- via orthonormal wavelets, J. Geom. Anal.
31 (2021), 8947-8978.


\bibitem{hebey} {\sc E. Hebey}, Sobolev Spaces on Riemannian Manifolds,
Lecture Notes in Mathematics 1635, Springer-Verlag, Berlin, 1996.

\bibitem{HK21} {\sc T. Heikkinen, N. Karak}, Orlicz--Sobolev embeddings, extensions and Orlicz--Poincar\'e inequalities, J. Funct. Anal. 282 (2022), Paper No. 109292, 53 pp.

\bibitem{Hyt} {\sc T. Hyt\"onen}, 
A framework for non-homogeneous analysis on metric spaces, and the RBMO space of Tolsa, Publ. Mat. 54 (2010), no.2, 485–504.


\bibitem{IK} {\sc I.A. Ivanishko, V.G. Krotov}, Compactness of embeddings of Sobolev type on metric measure spaces,
Mat. Zametki 86 (2009) 829–844 (in Russian). English transl.: Math. Notes 86 (2009) 775–788.


\bibitem{K} {\sc A. Ka\l{}amajska}, On compactness of embedding for Sobolev spaces defined on
metric spaces, Ann. Acad. Sci. Ser. A. I Math. 24 (1999), 123–132.


\bibitem{Karak1} {\sc N. Karak}, Lower bound of measure and embeddings
of Sobolev, Besov and Triebel--Lizorkin spaces, Math. Nachr. 293 (2020),  120-128.

\bibitem{Karak2} {\sc N. Karak}, Measure density and embeddings of Haj{\l}asz--Besov
and Haj{\l}asz--Triebel--Lizorkin spaces,
J. Math. Anal. Appl. 475 (2019), 966-984.


\bibitem{KYZ11} {\sc P. Koskela, D. Yang, Y. Zhou}, Pointwise
characterizations of Besov and Triebel--Lizorkin spaces
and quasiconformal mappings, Adv. Math. 226 (2011),  3579-3621.

\bibitem{Krotov} {\sc V.G. Krotov}, Compactness criteria in the spaces $L^p$, $p\geq 0$. (in  Russian),  Mat.  Sb. 203 (2012), no. 7, 129–148; English translation in Sb. Math. 203 (2012), no. 7–8, 1045–1064.

\bibitem{MaSe79} {\sc R.A. Mac\'{i}as and C. Segovia}, 
Lipschitz functions  on spaces of homogeneous type, Adv. in Math., 33 (1979), 257--270. 


\bibitem{MMMM13} {\sc D. Mitrea, I. Mitrea, M. Mitrea, S. Monniaux},
Groupoid Metrization Theory with Applications to
Analysis on Quasi-Metric Spaces
and Functional Analysis, Applied and Numerical Harmonic Analysis,
Birkh\"auser/Springer,
New York, 2013.

\bibitem{R} {\sc N.N. Romanovski\u{\i}}, Sobolev classes on an arbitrary metric measure space: compactness of embedding
operators, Sib. Mat. Zh. 54 (2013) 450–467 (in Russian). English transl.: Sib. Math. J. 54 (2013)
353–367.

\bibitem{Y03} {\sc D. Yang}, New characterizations of Haj\l{}asz--Sobolev spaces on metric spaces, Sci. China Ser. A 46 (2003), 675-689.

\end{thebibliography}
\end{document}